\documentclass[11pt]{article}
\usepackage{geometry}
 \usepackage{graphicx}
 \usepackage{subcaption}
 \usepackage{titling,authblk}
 \usepackage{enumerate,float}
 \usepackage{enumitem}
 \usepackage{algorithm}
\usepackage{algpseudocode}
\algnewcommand\algorithmicinput{\textbf{INPUT:}}
\algnewcommand\INPUT{\item[\algorithmicinput]}
\algnewcommand\algorithmicoutput{\textbf{OUTPUT:}}
\algnewcommand\OUTPUT{\item[\algorithmicoutput]}
 \usepackage{amsmath}
\usepackage{amssymb}
\usepackage{bbm}
\usepackage{tikz}
\usetikzlibrary{positioning, arrows.meta}
\usepackage{amsthm, amsfonts}
\usepackage[dvipsnames]{xcolor}

\usepackage[round]{natbib}
\usepackage{appendix}
\usepackage[normalem]{ulem}

\usepackage{tocloft}
\usepackage{etoc}

\usepackage[colorlinks=true,
    linkcolor=blue,
    filecolor=magenta,      
    urlcolor=cyan,
      citecolor=blue]{hyperref}
\usepackage{cleveref}

\usepackage[english]{babel}
\newtheorem{theorem}{Theorem}
\newtheorem{lemma}[theorem]{Lemma}
\newtheorem{proposition}[theorem]{Proposition}
\newtheorem{corollary}[theorem]{Corollary}
\newtheorem{definition}{Definition}
\newtheorem{remark}{Remark}
\newtheorem{assumption}{Assumption}

\allowdisplaybreaks

\title{Online change point detection under heavy-tailedness and contamination}
\author[1]{Edwin Y.~N.~Tang}
\author[1]{Yudong Chen}
\author[2]{Mengchu Li}
\author[1]{Yi Yu}

\affil[1]{Department of Statistics, University of Warwick}
\affil[2]{School of Mathematics, University of Birmingham}
\date{\today}

\newcommand{\norm}[1]{\left \lVert #1 \right \rVert}
\DeclareMathOperator{\E}{\mathbb{E}}
\DeclareMathOperator{\Var}{\mathrm{Var}}
\DeclareMathOperator{\R}{\mathbb{R}}
\DeclareMathOperator{\Pb}{\mathbb{P}}

\newcommand{\med}{\mathrm{median}}
\DeclareMathOperator{\esssup}{ess\,sup}
\newcommand{\RUME}{\mathrm{RUME}}
\newcommand{\Gram}{\mathrm{Gram}}

\DeclareMathOperator{\Sum}{\mathrm{Sum}}
\newcommand{\op}{\mathrm{op}}
\newcommand{\iprod}[1]{\langle #1 \rangle}
\newcommand{\supp}[1]{\mathrm{supp}(#1)}

\begin{document}

\maketitle

\begin{abstract}
    We study an online version of the robust mean change point detection problem under a dynamic Huber contamination model with arbitrary contamination distribution and inlier distribution possessing exponentially- or polynomially-decaying tails. This robustness framework is systematically studied for the first time in the change point literature. For univariate data, we characterise the detection delay by partitioning the parameter space into four regimes, in terms of the true change location, signal size and contamination level.  Efficient detection procedures are accompanied by matching lower bounds, up to poly-logarithmic factors. For the multivariate setting, we devise an efficient robust mean testing procedure and apply this to the robust online change point problem.  
    The theoretical analysis of the robust mean testing procedure is the first in dealing with both Huber contamination and heavy-tailedness, and is thus of independent interest. 
    Extensive numerical experiments are conducted to support our theoretical findings.
\end{abstract}

\addtocontents{toc}{\protect\setcounter{tocdepth}{0}}

\section{Introduction}
Change point analysis has a long history \citep[e.g.][]{Page_1954, Lorden_1971} and has experienced a marked renaissance in recent years, particularly due to its applications in fields such as finance, cybersecurity, and manufacturing. Modern technology enables the real-time monitoring of evolving processes, including stock market fluctuations \citep[e.g.][]{Chen_1997}, internet traffic \citep[e.g.][]{Peng_2004} and industrial quality control \citep[e.g.][]{wadinger_2024}. For these streaming data, the ability to detect change points quickly and respond promptly is more desirable than conducting retrospective analyses on the entire dataset \citep{Chen_2022}. However, the presence of outliers or heavy-tailedness can complicate this online detection task. Traditional methods, such as cumulative sum techniques \citep[e.g.][]{WangSamworth2018, Wang_Yu_Rinaldo_2020, Yu_2023}, that rely on Gaussianity or sub-Gaussianity assumptions often fail to perform effectively in these scenarios. 

To address these challenges, we leverage ideas from robust statistics to design robust online change point algorithms.  
The rest of the paper is organised as follows. \Cref{subsect:problemsetup} introduces the problem setup, followed by a summary of our main results in \Cref{subsect:resultsummary}. A review of related work is presented in \Cref{subsect:litreview}. In \Cref{sect:univariate}, we investigate the univariate problem, providing lower bounds on detection delay and presenting an optimal procedure with its theoretical guarantees. We then devise and analyse an efficient robust multivariate mean testing algorithm in \Cref{sect:hdtest}, and apply this to the multivariate change point problem in \Cref{sect:high_dim_cpt}. Simulation studies that verify our theoretical results are presented in \Cref{sect:sim_study}. All the proofs are deferred to the Appendices.   

\subsection{Problem setup}\label{subsect:problemsetup}

We consider the online mean change point detection problem under both Huber contamination and heavy-tailed noise, for both univariate and multivariate data sequences. The model setup is detailed below. 

\subsubsection{Contamination and heavy-tailedness}

We consider a variant of the classical Huber contamination model \citep{Huber1965} tailored for streaming data, namely the dynamic Huber $\varepsilon$-contamination model defined in \Cref{modelasmmp}. This model has been previously studied in the context of offline robust change point detection in \cite{Li_2021}.  

\begin{definition}[Dynamic Huber $\varepsilon$-contamination model with inlier class $\mathcal{D}$]\label{modelasmmp}
    Let $p\in\mathbb{N}$, $\varepsilon \in [0, 1/2)$ and $\mathcal{D}$ be a class of distributions on $\mathbb{R}^p$. Let $\{Q_i\}_{i \in \mathbb{N}}$ be a sequence of distributions on $\mathbb{R}^p$. We say that $\{Q_i\}_{i \in \mathbb{N}}$ is a dynamic Huber $\varepsilon$-contamination model with inlier class $\mathcal{D}$, if 
    \begin{equation}\label{huber}
       Q_i= (1-\varepsilon_i)F_i+\varepsilon_i H_i, \quad i \in \mathbb{N},
    \end{equation}
    where $F_i \in \mathcal{D}$ is an inlier distribution, $H_i$ is an arbitrary distribution and the proportion of contamination satisfies $\varepsilon_i\leq\varepsilon$. 
\end{definition}

We consider inlier distributions with either exponentially- or polynomially-decaying tails.  For univariate random variables, they are specified in Definitions \ref{def-galphak} and \ref{def-palphak}, respectively.

\begin{definition}[$\mathcal{G}_{\theta, M}$ class of distributions] \label{def-galphak}
For $M >0$ and $0 < \theta \leq 2$, let $\mathcal{G}_{\theta, M}$ denote the class of distributions on $\mathbb{R}$ satisfying that 
\[
    \mathbb{E}_{W \sim P \in \mathcal{G}_{\theta, M}} \Big[\exp\bigl\{(|W-\E W|/M)^\theta\bigr\}\Big] \leq 2.
\]
\end{definition}

This $\mathcal{G}_{\theta, M}$ class consists of sub-Weibull distributions of order $\theta$ with the Orlicz $\psi_\theta$-norm upper bounded by $M$, satisfying 
$\mathbb{P}(|W-\E W| \geq x) \leq 2e^{-(x/M)^\theta}$ for any $x > 0$ and $W \sim F \in \mathcal{G}_{\theta,M}$. See \Cref{subsect:subw_prop} for details.
    
\begin{definition}[$\mathcal{P}_{v, \phi}$ class of distributions]\label{def-palphak}
    For $0<\phi<\infty$ and $v \geq 2$, let $\mathcal{P}_{v,\phi}$  denote the class of distributions on $\mathbb{R}$ satisfying that
\[
    \mathbb{E}_{W \sim P \in \mathcal{P}_{v, \phi}}\big[(|W-\E W|/\phi)^v\big] \leq 1.
\]
\end{definition}
In other words, each distribution within the class $\mathcal{P}_{v,\phi}$ has its absolute $v$-th central moment bounded above by $\phi^v$ and possesses a polynomially-decaying tail, as $\mathbb{P}(|W-\E W| \geq x) \leq (\phi/x)^{v}$ for any $x > 0$ and $W \sim F \in \mathcal{P}_{v, \phi}$, using Markov's inequality. This is typically much heavier than an exponentially-decaying tail and thus more challenging statistically.  

Extending to multivariate settings, let $\mathcal{G}_{\theta,M}^p$ and $\mathcal{P}_{v,\phi}^p$ denote the classes of distributions on $\mathbb{R}^p$ whose coordinates are independent, each with a marginal distribution belonging to $\mathcal{G}_{\theta,M}$ and $\mathcal{P}_{v,\phi}$, respectively.

\subsubsection{Online change point detection}

In view of the dynamic Huber contamination model defined in \Cref{modelasmmp}, the change point setup is based on the inlier distribution sequence $\{F_i\}_{i \in \mathbb{N}}$, formalised in \Cref{assum:kappa}.

\begin{assumption}\label{assum:kappa}
Assume that $\{X_i\}_{i\in\mathbb{N}}$ is a sequence of independent random variables drawn from a dynamic Huber $\varepsilon$-contamination model with inlier class $\mathcal{D}$, as defined in \Cref{modelasmmp}. For $i \in \mathbb{N}$, let $f_i = \mathbb{E}_{Y \sim F_i}[Y]$ be the mean of the inlier distribution $F_i$.  Assume that there exists $\Delta\in\mathbb{N} \cup\{\infty\}$ such that
    \begin{equation*}
        f_1=\dots=f_\Delta\neq f_{\Delta+1}=f_{\Delta+2}=\cdots.
    \end{equation*}
When $\Delta \neq \infty$, let $\kappa=\norm{f_\Delta-f_{\Delta+1}}_2$ be the jump size.
\end{assumption}

We write the probability of any event induced by any distribution satisfying \Cref{assum:kappa} as~$\mathbb{P}_{\Delta}(\cdot)$. 

An online change point procedure is characterised by an extended stopping time $\hat{t}\in\mathbb{N}\cup\{\infty\}$, with respect to the natural filtration generated by the data. The false alarm probability of an online change point procedure is $\mathbb{P}_{\Delta}(\hat{t}<\Delta)$, $\Delta \in \mathbb{N} \cup \{\infty\}$.  
For $\Delta < \infty$, define the detection delay as $(\hat{t}-\Delta)^+ = \max\{\hat{t} - \Delta, \, 0\}$.
We aim to develop a procedure that simultaneously controls the false alarm probability at a prescribed level $\alpha > 0$, and guarantees that the detection delay $(\hat{t}-\Delta)^+$ is small for all $\Delta<\infty$. 

\subsection{Summary of the results} \label{subsect:resultsummary}

We study the robust online change point detection problem for both univariate and multivariate data streams.  In \Cref{sect:univariate}, for univariate sequences, we present a complete picture. 
In particular, we characterise the detection delay for the univariate ($p=1$) online change point problem in different regimes of $(\kappa, \varepsilon, \Delta)$, when the inlier distribution has exponentially- or polynomially-decaying tails, shown in Tables~\ref{table:delayG} and~\ref{table:delayP}, respectively.
\begin{table}[H]
\centering
\begin{tabular}{lll}
\hline\hline
Regime & Signal size  & $(\hat{t}-\Delta)^+$\\
\hline
1 & $\kappa \lesssim \varepsilon \log^{1/\theta}({1}/{\varepsilon})\vee\Delta^{-1/2}$ &     No consistent estimator     \\
2 & $\varepsilon \log^{1+1/\theta}({1}/{\varepsilon})\vee\Delta^{-1/2}\lesssim\kappa\lesssim 1$   &   $\kappa^{-2}\log(\Delta/\alpha)$    \\
3 & $1\lesssim\kappa\lesssim \log^{1/\theta}({1}/{\varepsilon})$ &   $\kappa^{-\theta}\log(\Delta/\alpha)$   \\
4 & $\kappa\gtrsim \log^{1/\theta}({1}/{\varepsilon})$ & ${\log(1/\varepsilon)}^{-1}\log(\Delta/\alpha)$ \\
\hline
\end{tabular}
\caption{Order of detection delay for inlier class $\mathcal{D}=\mathcal{G}_{\theta,M}$, as a function of signal size $\kappa$, true change location $\Delta$, contamination level $\varepsilon$ and false alarm probability $\alpha$.}
\label{table:delayG}
\end{table}

\begin{table}[H]
\centering
\begin{tabular}{lll}
\hline\hline
Regime & Signal size       & $(\hat{t}-\Delta)^+$\\
\hline
1&$\kappa\lesssim\varepsilon^{1-1/v}\vee\Delta^{-1/2}$ &     No consistent estimator     \\
2&$\varepsilon^{1-1/v}\vee\Delta^{-1/2}\lesssim\kappa\lesssim 1$   &   $\kappa^{-2}\log(\Delta/\alpha)$    \\
3&$1\lesssim\kappa\lesssim \varepsilon^{-1/v}$ &   $({v\log\kappa})^{-1}\log(\Delta/\alpha)$   \\
4& $\kappa\gtrsim \varepsilon^{-1/v}$ & ${\log(1/\varepsilon)}^{-1}\log(\Delta/\alpha)$ \\
\hline
\end{tabular}
\caption{Order of detection delay for inlier class $\mathcal{D}=\mathcal{P}_{v,\phi}$, as a function of signal size $\kappa$, true change location $\Delta$, contamination level $\varepsilon$ and false alarm probability $\alpha$.}
\label{table:delayP}
\end{table}

\Cref{sect:high_dim_cpt} collects the results for the substantially more challenging multivariate ($p>1$) case. At the core of this multivariate robust online change point detection problem is a multivariate robust mean testing problem, which is studied in \Cref{sect:hdtest} and is of independent interest.  We extend the methodology of \cite{Canonne2023} to handle both Huber-contaminated and heavy-tailed multivariate data, followed by an integration into the change point detection routine.

\subsection{Related literature}\label{subsect:litreview}
\subsubsection{Robust statistics}
Robust statistics aims to develop procedures that remain accurate when classical modelling assumptions, such as Gaussianity or the absence of contamination, fail. Classical work primarily focused on the robustness properties of estimators in asymptotic frameworks. Prominent examples include Huber's M-estimator \citep{Huber1964} and depth-based estimators such as Tukey's halfspace median \citep{Tukey1975Math}. While these estimators provide important conceptual foundations for robustness, some of them are computationally challenging in high dimensions.  For instance, computing the halfspace median requires exponential time in the dimension in general. 

More recent work, particularly from theoretical computer science, has shifted the focus towards algorithms with simultaneous statistical and computational guarantees. Pioneered by \cite{Diakonikolas_2016} and \cite{Lai_2016}, this line of work develops polynomial-time algorithms for robust mean estimation under strong contamination models, together with finite-sample error bounds.  This modern perspective replaces purely asymptotic robustness guarantees with non-asymptotic rates that explicitly capture the effects of sample size, dimension, contamination level and moment assumptions.

A central theme in this literature is the design of algorithms that can handle complex data.  Many robust mean estimators output a weighted average of the observed points where weights are chosen to downweight or remove suspected outliers. Several approaches have been developed for this purpose, including covariance-based filtering methods \citep[see][for a review]{Diakonikolas_Kane_2023}, semidefinite programming approaches \citep[e.g.][]{Cheng_Diakonikolas_Ge_2019}, and non-convex optimization methods \citep[e.g.][]{cheng2020}. Other robust procedures that do not rely on explicit weighting schemes include M-estimators \citep[e.g.][]{Catoni_2012}, median-of-means methods \citep[e.g.][]{Lugosi2019subG} and the trimmed mean \citep[e.g.][]{Lugosi_Mendelson_2021}.  

Modern work on robust mean estimation has led to a detailed study of different sources of distributional challenges: heavy-tailed inlier distributions and adversarial contamination. In the uncontaminated setting, one line of work seeks estimators with sub-Gaussian performance under weak moment assumptions \citep[e.g.][]{Lugosi_Mendelson_2019, Lugosi2019subG, hopkins_2021}. Under strong contamination, optimal estimation error rates have been established for several classes of inlier distributions, including Gaussian inliers with known covariance \citep[e.g.][]{Diakonikolas_2016}, sub-Gaussian inliers with unknown covariance, and distributions with bounded $k$-th moments for $k\geq 2$ \citep[e.g.][]{Steinhardt2018, Diakonikolas_Kane_Pensia_2020}. Importantly, these rates can be achieved by polynomial-time algorithms. Under the Huber contamination model, \cite[e.g.][]{pmlr-v108-prasad20a} developed a mean estimator that is optimal under both heavy-tailed and contaminated models. Recent work has also studied robust estimation under additional structural assumptions, such as sparsity of the mean \citep[e.g.][]{balakrishnan17} or mean-shift contamination models \citep[e.g.][]{Kotekal2025, Diakonikolas2025}.

While there is extensive literature on robust mean estimation, existing finite-sample theory for robust mean testing remains largely concentrated on Gaussian-type models. \cite{Diakonikolas2017} was the first to study sample complexity bounds for testing whether the mean of a Huber-contaminated Gaussian distribution is zero or not, assuming known covariance. Building on this work, \cite{Canonne2023} derived tight sample complexity bounds under strong and weak contamination models. Other recent work has investigated testing with unknown covariance \citep[e.g.][]{Canonne2021, Diakonikolas2023} and under sparsity assumptions \citep[e.g.][]{George2022}. Substantially less is known for contaminated non-Gaussian settings. For heavy-tailed distributions without contamination, practical testing procedures were studied in, e.g.~\cite{ChenQin2010} and \cite{WangPengLi2015}, but only from an asymptotic perspective.

Beyond mean estimation and testing, robustness has been studied in many other statistical problems, including covariance estimation \citep[e.g.][]{Chen_Gao_Ren_2018}, covariance testing \citep[e.g.][]{Diakonikolas2021} and linear regression \citep[see][for a review]{Yu14092017}. 

\subsubsection{Change point detection analysis}\label{subsect:cpt_lit}

Change point detection problems are commonly classified into online and offline settings. In the online setting, observations arrive sequentially, and one must decide in real time whether a change point has occurred. The goal is therefore to minimise the delay in detecting a change point. In contrast, offline change point analysis is conducted retrospectively, using a fixed dataset observed over the entire time horizon. The aim is then to accurately estimate the number and locations of the change points. 

In offline change point detection, three main statistical goals are commonly studied, namely testing, localisation and inference. Testing concerns the problem of distinguishing a no-change model from alternatives containing at least one change point. Minimax testing rates have been derived in several settings under independent-coordinate assumptions, including Gaussian models \citep[e.g.][]{Liu_Gao_Samworth_2021, Verzelen_2023}, and heavy-tailed models \citep[e.g.][]{li_2025}. Testing procedures have also been developed beyond the independent Gaussian setting, for example allowing dependence across coordinates or requiring weaker moment assumptions \citep[e.g.][]{YuChen2021, WangZhu2022}, though the type I error guarantees for these procedures are often asymptotic. Localisation aims to estimate the change point locations. As for the univariate mean change point setting, many methods have been proposed for localisation, including binary segmentation and its variants \citep[e.g.][]{Knott_1974,Fryzlewicz_2014,Kovacs2023}, as well as penalisation-based methods \citep[e.g.][]{Jackson2005, bleakley2011, Killick2012, rojas2014, Maidstone2017}. 
Finally, inference concerns uncertainty quantification after detecting changes, including constructing confidence intervals for change point locations \citep[e.g.][]{Kaul2021, Wangshao2023,  xu2024change, xue2026covariance}, as well as confidence sets for the underlying piecewise constant mean function \citep[e.g.][]{Frick2014, PeinSieling2017, FangLi2020}. 

Offline change point analysis has also generated a large body of methodological, computational, and theoretical work. Methodological developments address settings that go beyond the classical independent univariate model, including temporal dependence in the noise \citep[e.g.][]{Dehling2015, WangZhu2022, ChoFryzlewicz2024, xu2024change}, high-dimensional settings \citep[e.g.][]{Dette_Gosmann_2018, WangSamworth2018, Kaul2021} and missing data settings \citep{Follain2022}, to name but a few.  From a computational perspective, recursive search procedures such as binary segmentation and its variants provide scalable alternatives to exhaustive search, while pruning strategies for penalised segmentation methods improve the efficiency of exact or near-exact optimisation.  On the theoretical side, a central goal is to characterise the optimal localisation error and the minimal signal strength required for consistent localisation, often within a minimax framework. For instance, \cite{Wang_Yu_Rinaldo_2020} established minimax localisation rates in the univariate setting and showed that optimal partitioning and wild binary segmentation attain these rates under suitable conditions. Multivariate extensions have been considered in \cite{Verzelen_2023} and \cite{Pilliat_2023}.

Online mean change point detection has analogous goals, but with additional constraints arising from the sequential nature of the data. The first goal is sequential detection, where the aim is to declare a change as soon as possible while controlling the frequency of false alarms. Classical sequential analysis often formulates this through the average run length in the absence of change \citep{Lorden_1971}. This criterion underlies much of the early work for CUSUM and generalised likelihood-ratio (GLR) procedures \citep[e.g.][]{Page_1954, Lai1995, Mei_2006, Xie_Siegmund_2013}. Among detectors with average run length control, \cite{moustakides1986} proved that the CUSUM procedure of \cite{Page_1954} is optimal under the detection-delay benchmark of \cite{Lorden_1971}. A second line of work controls the overall probability of ever raising an alarm over the monitoring horizon \citep{Chu1996}. In recent years, this framework has attracted increasing attention, where both GLR- and CUSUM-based procedures are studied under this paradigm \citep[e.g.][]{Dette2020, Chen_2022, Gosmann2022, Yu_2023}. Within the class of procedures satisfying false-alarm-probability control, \cite{Yu_2023} showed that, for univariate sub-Gaussian observations, a CUSUM procedure based on the sample mean achieves minimax optimal detection delay. The second goal is post-detection inference, where the objective is to quantify uncertainty about the location of the change after an alarm has been raised. For example, \cite{Chen2024} studied the construction of confidence intervals for the actual change point location in the online framework.

The online setting brings computational considerations to the forefront.  Since observations arrive sequentially, practical procedures must often have low update and storage costs.  Recent work has therefore developed more efficient methods for evaluating generalised likelihood ratios \citep[e.g.][]{Romano2023, Ward_2024, Pischagina2026}, as well as grid-based search strategies that reduce update and storage costs \citep{Chen_2022, moen_2025}.

Although our focus is on mean changes, change point analysis has been studied for many other forms of structural changes. These include changes in covariance structure \citep[e.g.][]{Aue2009, Avanesov_Buzun_2018, Wang_Yu_Rinaldo_2020, LiLi2023}, regression coefficients \citep[e.g.][]{HORVATH2004, Lee2016, Rinaldo2021} and beyond; see \cite{yu2020review} for a recent review.

\subsubsection{Robust change point detection analysis}

A growing body of work has incorporated robustness into mean change point algorithms, primarily in the offline setting. To reduce the effect of outliers or contaminated observations, one approach is to replace the squared-error loss in penalised cost formulations by the Huber loss \citep{Fearnhead2019}. Another approach is to incorporate robust mean estimators into CUSUM-based procedures \citep{Li_2021}. Robustness can also be achieved through nonparametric methods,
many of which are designed to detect general distributional shifts, and are therefore less sensitive to heavy-tailed observations. In high-dimensional settings, U-statistic-based methods have been used to detect change points in distributions with undefined means \citep{Yu2022}, as well as in data with heavy-tailed and mildly dependent errors \citep{Jiang2023, xu2024change}.

In the online setting, existing robust detectors \citep[see, e.g.][]{Moustakides1985, Unnikrishnan2011, Cao2017} primarily focus on sequential testing under parametric uncertainty. For example, in a Gaussian model with unknown mean and known variance, their null and alternative hypotheses specify that the mean belongs to one or two known convex sets. In contrast, our setting involves both unknown inlier means before and after the change, and observations drawn from Huber-contaminated versions of the corresponding inlier distributions. This distinction motivates the development of new methodology.

Beyond mean change point models, robustness has also been considered in stump models \citep{Mukherjee2022}, covariance change point problems \citep{ramsay2021} and high-dimensional linear regression coefficient changes \citep[see, e.g.][]{liu2023changepoint, Cho_Owens_2024, xu2024change, zhao2024}.  These developments highlight the broad relevance of robustness in change point analysis, while also underscoring that online mean change point detection under Huber contamination remains comparatively less explored.

\subsection{Notation}
We introduce the notation used throughout the paper. Let $\mathbb{N}$ denote the set of positive integers. For $d \in \mathbb{N}$, write $[d]=\{1, \dotsc, d\}$. Let $\lceil \cdot \rceil$, $\lfloor \cdot \rfloor$ and $\Gamma(\cdot)$ denote the ceiling, floor and Gamma functions, respectively. 
For a set~$\mathcal{S}$, use $\mathbbm{1}_{\mathcal{S}}$ and $|\mathcal{S}|$ to denote its indicator function and cardinality respectively.  For a sequence of observations $\{X_t\}_{t \in \mathbb{N}}$, denote the sub-sequence as $X_{s:t} = (X_s, \dots, X_t)$.

For two sequences of positive real numbers $\{a_n\}$ and $\{b_n\}$, we write $a_n \lesssim b_n$ (or $b_n \gtrsim a_n$) if there exists a constant $C > 0$ such that $a_n \leq C b_n$ for all $n$ sufficiently large. We write $a_n \asymp b_n$ if both $a_n \lesssim b_n$ and $a_n \gtrsim b_n$ hold. In the proofs of results in Sections \ref{sect:hdtest} and \ref{sect:high_dim_cpt}, we write $a_n \lesssim_{\log} b_n$ if there exists a constant $k \geq 0$ such that $a_n \lesssim b_n \log^k(b_n)$. The notation $a_n \asymp_{\log} b_n$ is defined accordingly. Throughout the paper, $O(\cdot)$ denotes bounds up to universal multiplicative constants, while $\tilde{O}(\cdot)$ additionally suppresses logarithmic factors. Unless stated otherwise, these implicit constants, as well as those denoted by $C, C_1, C_2, \dots$, are understood to depend only on the parameters $\alpha, v, M, \theta$ and $\phi$, which are treated as fixed.

\section{Univariate change point detection: Minimax rates and optimal procedures} \label{sect:univariate} 

In this section, we present a comprehensive analysis of the univariate case.  We first establish lower bounds on the detection delay for all regimes in Tables~\ref{table:delayG} and \ref{table:delayP}, which characterise the fundamental statistical difficulty of robust online change point detection under contamination and heavy-tailed noise.  We then propose a robust online detection procedure, based on a combination of the median and the robust univariate mean estimator (RUME, \citealp{pmlr-v108-prasad20a}), and prove that it is minimax optimal, up to logarithmic factors. 

\subsection{Minimax lower bounds on detection delay} \label{sect:optimal}

We begin by formalising the parameter spaces required to state the lower bounds for the univariate online change point detection problem.  Throughout this subsection, we assume that the observed sequence $\{X_i\}_{i\in\mathbb{N}}$ satisfies \Cref{assum:kappa}. 

For a given mean sequence $f = \{f_i\}_{i \in \mathbb{N}}$ and a class of inlier distributions $\mathcal{D}$, let $\mathcal{H}_\varepsilon(f,\mathcal{D})$ denote the collection of all product measures generated under \Cref{assum:kappa}, namely
\[
    \mathcal{H}_\varepsilon(f,\mathcal{D})=\left\{\bigotimes_{i=1}^\infty Q_i:\, F_i\in\mathcal{D},\, \E_{X\sim F_i}[X]=f_i, \,\Var_{X\sim F_i}[X]=\sigma^2\right\}.
\]
The variance parameter $\sigma^2$ is assumed to be common across $i \in \mathbb{N}$, and will be suppressed from the notation when it plays no explicit role.

We next introduce the classes of mean sequences corresponding to the null and alternative hypotheses.  The null class, representing the absence of a change point, is defined by 
\[
    \mathcal{S}_0 = \{ f: \, f_i = \mu, \, i \in \mathbb{N}_+ \text{ for some } \mu \in \mathbb{R}\}.
\]
For a change point occurring at location $\Delta \in \mathbb{N}$ with jump size $\kappa > 0$, we define
\[
    \mathcal{S}(\Delta, \kappa) = \left\{ f : f_i = \mu_1 \mathbbm{1}_{\{i \le \Delta\}} + \mu_2 \mathbbm{1}_{\{i > \Delta\}} \text{ for some } \mu_1, \mu_2\in \R, \, |\mu_1 - \mu_2| = \kappa \right\}.
\]
Under this formulation, the null and alternative parameter spaces are given respectively by 
\[
    \Theta_0(\mathcal{D}) = \bigcup_{f \in \mathcal{S}_0} \mathcal{H}_\epsilon(f, \mathcal{D}) \quad \mbox{and} \quad \Theta(\Delta, \kappa,\mathcal{D}) = \bigcup_{f \in \mathcal{S}(\Delta, \kappa)} \mathcal{H}_\epsilon(f, \mathcal{D}).
\]
Let $(\mathcal{F}_t)_{t \in \mathbb{N}}$ denote the natural filtration generated by the observations, where $\mathcal{F}_t = \sigma(X_1,\ldots,X_t)$.  For a prescribed false alarm level $\alpha \in (0, 1)$, we define the class of change point estimators by
\begin{align} \label{mathcaltmean}
    &\mathcal{T}(\alpha) = \Big\{T: \, T \text{ is an extended stopping time with respect to } (\mathcal{F}_t)_{t \in \mathbb{N}}, \nonumber\\ 
    & \hspace{6cm} \sup_{P\in\Theta_0(\mathcal{D})}\Pb_{P} ( T < \infty) \leq \alpha\Big\}. 
\end{align}
This class contains all online procedures whose false alarm probability is controlled uniformly over the null parameter space.

We are now ready to state lower bounds on the detection delay over the classes $\mathcal{G}_{\theta,M}$ and $\mathcal{P}_{v,\phi}$.  These bounds reveal the fundamental limits of online change point detection under contamination and heavy-tailedness, and identify the phase transitions that distinguish the different detection regimes.

\begin{theorem}[Detection lower bounds]\label{thm:univ_lower}
Suppose that $\{X_i\}_{i\in\mathbb{N}} \subseteq \mathbb{R}$ satisfies \Cref{assum:kappa}. Let $\Delta \in \mathbb{N}$ be the change point location and $\alpha \in (0,1)$ be the false alarm probability. Consider the classes $\mathcal{G}_{\theta,M}$ and $\mathcal{P}_{v,\phi}$, defined in Definitions~\ref{def-galphak} and \ref{def-palphak}, respectively.  Consider the class of change point estimators $\mathcal{T}(\alpha)$ defined in \eqref{mathcaltmean}.  We have the following.
\begin{enumerate}[label=(\textbf{\alph*})]
    \item (Regime 1a) For the $\mathcal{G}_{\theta,M}$ class, suppose that $\kappa/M\leq {\varepsilon} \{\log[1/(2\varepsilon)]\}^{1/\theta}$  for some $\varepsilon\leq C_\theta$, where $C_\theta>0$ is an absolute constant depending on $\theta$. Then, it holds that
    \begin{align*}
        \underset{\hat{t} \in \mathcal{T}(\alpha)}{\inf} \ \sup_{P \in \Theta(\Delta, \kappa,\mathcal{G}_{\theta,M})} \Pb_{P} \left\{ \hat{t}=\infty
        \right\} \geq 1- \alpha.
    \end{align*}

    For the $\mathcal{P}_{v,\phi}$ class, suppose that $\kappa/\phi \leq2^{-1/v} \varepsilon^{1-1/v}$. Then, it holds that
    \begin{align*}
        \underset{\hat{t} \in \mathcal{T}(\alpha)}{\inf} \ \sup_{P \in \Theta(\Delta, \kappa,\mathcal{P}_{v,\phi})} \Pb_{P} \left\{ \hat{t}=\infty
        \right\} \geq 1- \alpha.
    \end{align*}
    
    \item (Regime 1b) Let $n\in\mathbb{N}$ and $\omega \in (0, 1-\alpha)$. Suppose that either
$$\mathcal{D}=\mathcal{G}_{\theta,M}
\quad\text{with}\quad M\geq C_1\sigma,
\qquad\text{or}\qquad
\mathcal{D}=\mathcal{P}_{v,\phi}
\quad\text{with}\quad \phi\geq C_2\sigma,$$
for some constant $C_1$ depending only on $\theta$ and $C_2$ depending only on $v$.
    Then there exists a constant $c>0$ depending only on $\omega$ such that if $\kappa^2 \Delta \leq c\sigma^2$, it holds that
    \begin{align*}
        \underset{\hat{t} \in \mathcal{T}(\alpha)}{\inf} \ \sup_{P \in \Theta(\Delta, \kappa,\mathcal{D})} \Pb_{P} \left\{ \hat{t} -\Delta  >  n 
        \right\} \geq 1- \alpha- \omega.
    \end{align*}
    \item (Regime 2) Suppose that either
$$\mathcal{D}=\mathcal{G}_{\theta,M}
\quad\text{with}\quad M\geq C_1\sigma,
\qquad\text{or}\qquad
\mathcal{D}=\mathcal{P}_{v,\phi}
\quad\text{with}\quad \phi\geq C_2\sigma,$$
and that
     \begin{equation*}
         \alpha+2\alpha^{1/4}<\frac{1}{2} \quad \text{and} \quad \alpha^{5/4}\log\left(\frac{1}{\alpha}\right)\leq 4\kappa^{2}\sigma^{-2}.
     \end{equation*}
     Then, it holds that
     \begin{equation*}
         \inf_{\hat{t}\in \mathcal{T}(\alpha)} \sup_{P \in \Theta(\Delta, \kappa,\mathcal{D})}\E_P\{(\hat{t}-\Delta)^+\}\geq \frac{\sigma^2}{4\kappa^{2}}\log\left(\frac{1}{\alpha}\right).
     \end{equation*}
     \item (Regime 3) For the $\mathcal{G}_{\theta,M}$ class, suppose that $\kappa\geq 4M(2\log(4e))^{1/\theta}$, 
     \begin{equation*}
         \alpha+2\alpha^{1/4}<\frac{1}{2} \quad \text{and} \quad \alpha^{-1/4}\log\left(\frac{1}{\alpha}\right)\leq (2\kappa)^{\theta}\exp(4^\theta\log(4e)).
     \end{equation*}
     Then, it holds that
     \begin{equation*}
         \inf_{\hat{t}\in \mathcal{T}(\alpha)} \sup_{P \in \Theta(\Delta, \kappa,\mathcal{G}_{\theta,M})}\E_P\{(\hat{t}-\Delta)^+\}\geq \frac{1}{4(2\kappa)^{\theta}}\log\left(\frac{1}{\alpha}\right).
     \end{equation*}

     For the $\mathcal{P}_{v,\phi}$ class, suppose that
     \begin{equation*}\label{thm_assum:ak2}
         \alpha+2\alpha^{1/4}<\frac{1}{2} \quad \text{and} \quad \alpha^{-1/4}\log^{2.02}\left(\frac{1}{\alpha}\right)\leq (2\kappa\log(\kappa)-1)^{v+0.01}(4(v+1.01)\log(\kappa))^{2.02}.
     \end{equation*}
     Then, it holds that
     \begin{equation*}
         \inf_{\hat{t}\in \mathcal{T}(\alpha)} \sup_{P \in \Theta(\Delta, \kappa,\mathcal{P}_{v,\phi})}\E_P\{(\hat{t}-\Delta)^+\}\geq \frac{1}{16v\log(\kappa)}\log\left(\frac{1}{\alpha}\right).
     \end{equation*}
     \item (Regime 4) Suppose that either
$$\mathcal{D}=\mathcal{G}_{\theta,M}
\qquad\text{or}\qquad
\mathcal{D}=\mathcal{P}_{v,\phi},$$
     and that $\alpha+\alpha^{1/4}<1/2$. Then, it holds that
     \begin{equation*}
         \inf_{\hat{t}\in \mathcal{T}(\alpha)} \sup_{P \in \Theta(\Delta, \kappa,\mathcal{D})}\E_P\{(\hat{t}-\Delta)^+\}\geq \frac{3}{16}\frac{\log(1/\alpha)}{\log((1-\varepsilon)/\varepsilon)}.
     \end{equation*}
\end{enumerate}
\end{theorem}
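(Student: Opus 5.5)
The whole theorem rests on one principle: exhibit a null law $P_0 \in \Theta_0(\mathcal{D})$ and an alternative $P_1 \in \Theta(\Delta,\kappa,\mathcal{D})$ that are indistinguishable, or nearly so, and use that $\hat t \in \mathcal{T}(\alpha)$ controls false alarms under $P_0$. The five parts differ only in how much the two laws can be distinguished, so I will build the two-point pair for Regimes 1a and 1b and then run one change-of-measure argument for Regimes 2 to 4.

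\textbf{Regime 1a.} I will construct contaminated laws with $(1-\varepsilon)F_0+\varepsilon H_0 = (1-\varepsilon)F_1+\varepsilon H_1$ exactly, where $F_0,F_1\in\mathcal{D}$ have means differing by $\kappa$.
\begin{itemize}
\item The construction is standard: take $F_1$ to be $F_0$ with an $\varepsilon/(1-\varepsilon)$ fraction of mass moved to a far point $x$, compensated by the contamination.
\item The mean shift is of order $\varepsilon x$, so the size of $x$ is limited by the tail constraint.
\item For $\mathcal{G}_{\theta,M}$, the $\psi_\theta$ condition forces $x \lesssim M\log^{1/\theta}(1/\varepsilon)$. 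This gives $\kappa \asymp M\varepsilon\log^{1/\theta}(1/(2\varepsilon))$.
\item For $\mathcal{P}_{v,\phi}$, the $v$-th moment condition gives $\varepsilon x^v \le \phi^v$, hence $\kappa \asymp \phi\,\varepsilon^{1-1/v}$.
\end{itemize}
With $P_0=\bigotimes Q_0$ and $P_1$ switching from $Q_0$ to the same marginal $Q_0$ after $\Delta$, the two product laws coincide. Therefore $P_1(\hat t<\infty)=P_0(\hat t<\infty)\le\alpha$.

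\textbf{Regime 1b.} Here the alternative law is identical to the null after $\Delta$, and the only difference lies in the pre-change mean.
\begin{itemize}
\item I will compare $P_1$ (mean $\mu_1$ before $\Delta$, $\mu_2$ after) with $P_0$ (constant mean $\mu_2$).
\item The restriction to $\mathcal{F}_\Delta$ involves $\Delta$ observations with mean shift $\kappa$. Using Gaussian-like inliers, which lie in $\mathcal{D}$ when $M\ge C_1\sigma$ or $\phi\ge C_2\sigma$, the KL divergence is $\Delta\kappa^2/(2\sigma^2)\le c/2$.
\item Pinsker then gives total variation at most $\omega$.
\item Since $P_0(\hat t<\infty)\le \alpha$, the event $\{\hat t \le \Delta+n\}$ has $P_0$-probability at most $\alpha$.
\item The law after $\Delta$ is the same under both, so the transfer costs only the TV distance on the first $\Delta$ coordinates, giving the bound $1-\alpha-\omega$.
\end{itemize}

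\textbf{Regimes 2 to 4.} I will follow the change-of-measure argument of \cite{Yu_2023} / Lai. Fix a pre- and post-change pair $Q_0,Q_1$ with KL divergence $K=\mathrm{KL}(Q_1\|Q_0)$.
\begin{itemize}
\item Let $Z_t$ be the log-likelihood ratio of $P_1$ versus $P_0$ on $\mathcal{F}_t$, which is a sum of $(t-\Delta)^+$ i.i.d.\ terms with mean $K$.
\item Set $m=\log(1/\alpha)/(cK)$. Then
\[
P_1(\Delta<\hat t\le\Delta+m)\le e^{a}P_0(\hat t\le \Delta+m)+P_1\Big(\max_{t\le m} Z_{\Delta+t}>a\Big).
\]
\item Choose $a=\tfrac34\log(1/\alpha)$. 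The first term is at most $\alpha^{1/4}$.
\item The second term is controlled by Doob or Markov once $mK\le a/2$, possibly after truncation; this is where the assumptions of the form $\alpha+2\alpha^{1/4}<1/2$ enter.
\item Hence $P_1(\hat t>\Delta+m)\ge 1/2$, and so $\E(\hat t-\Delta)^+\gtrsim m$.
\end{itemize}

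The regimes then reduce to computing $K$ for suitable constructions.
\begin{itemize}
\item \textbf{Regime 2:} Gaussian inliers give $K=\kappa^2/(2\sigma^2)$.
\item \textbf{Regime 4:} I will choose contaminated laws with disjoint inlier supports, each placing mass $\varepsilon$ on the other's inlier support. Then $K\le \log((1-\varepsilon)/\varepsilon)$, giving the stated rate.
\item \textbf{Regime 3:} This is the main obstacle. I need inlier laws in $\mathcal{G}_{\theta,M}$ or $\mathcal{P}_{v,\phi}$ with means $0$ and $\kappa$ whose KL divergence is only of order $\kappa^\theta$, or $v\log\kappa$ respectively.
  \begin{itemize}
  \item First attempt: $F_0$ puts mass $1-\delta$ near $0$ and mass $\delta$ near $\kappa$, and $F_1$ is the reverse-weighted law.
  \item For this pair, $K\approx\log(1/\delta)$.
  \item The tail constraint allows $\delta\approx e^{-\kappa^\theta}$ in the sub-Weibull case, and $\delta\approx\kappa^{-v}$ (with logarithmic adjustment) in the polynomial case.
  \item The delicate part is recentring the means so that both laws stay in the class with the exact jump $\kappa$. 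This is presumably the source of the odd constants in the hypotheses, such as $2\kappa\log\kappa-1$ and the exponent $v+0.01$.
  \end{itemize}
\end{itemize}
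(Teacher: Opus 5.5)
Your plan is correct in substance and takes a partly different route from the paper. For Regimes 1a, 2 and 4 it is essentially the paper's argument:
\begin{itemize}
\item Regime 1a: an exact-marginal two-atom construction, as in the paper.
\item Regimes 2 and 4: Lai's change of measure with $a=\tfrac34\log(1/\alpha)$. In Regime 4 this uses the pair $(1-\varepsilon)\delta(0)+\varepsilon\delta(\kappa)$ versus $(1-\varepsilon)\delta(\kappa)+\varepsilon\delta(0)$. Regime 2 is simply cited from Yu et al.\ in the paper.
\end{itemize}

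You differ in two places.

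\textbf{Regime 1b.} The paper imports Proposition 8 of Moen, which puts a Rademacher prior on the pre-change mean and uses the chi-square lemmas of Liu, Gao and Samworth. Your single alternative plus Pinsker on the first $\Delta$ coordinates is more elementary and suffices here, with $c=4\omega^2$.

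\textbf{Regime 3.} The paper uses location families: symmetric Pareto with shape $v+1.01$, and the generalised Gaussian. Their log-likelihood ratio is unbounded, so the paper bounds $Z_i\le (v+1.01)\{\log\kappa+(1+|X_i-\kappa|)/\kappa\}$ and then controls the maximal term by a union bound over $t\le d$. It combines this with polynomial or sub-exponential tail bounds on $\sum_i|X_i-\kappa|$ (resp.\ $\sum_i|X_i-\kappa|^\theta$). That step, not recentring, is the source of $(2\kappa\log\kappa-1)^{v+0.01}$, the power $2.02$ and the condition $\kappa\ge 4M(2\log(4e))^{1/\theta}$.

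Your two-atom inlier pair needs no delicate work:
\begin{itemize}
\item Put the atoms at $0$ and $b=\kappa/(1-2\delta)$. This gives an exact jump $\kappa$ and equal variances $\delta(1-\delta)b^2$.
\item Both laws induce the same distribution of $|W-\E W|$, so one class-membership check suffices.
\item Take $\delta=\tfrac12 e^{-(2\kappa/M)^\theta}$, resp.\ $\delta=\tfrac12(\phi/(2\kappa))^v$. This works once $\kappa/M$, resp.\ $\kappa/\phi$, is large enough.
\item The per-step log-likelihood ratio is then bounded by $\log\{(1-\delta)/\delta\}$ almost surely. So the maximal term vanishes identically, exactly as in the paper's Regime 4 proof.
\end{itemize}
You obtain, up to constants, the rates $(M/\kappa)^{\theta}\log(1/\alpha)$ and $\log(1/\alpha)/\{v\log(\kappa/\phi)\}$, with no condition coupling $\alpha$ and $\kappa$. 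What you give up is the literal statement: your constants and side conditions differ, with $(\kappa/M)^\theta$ in place of the statement's $(2\kappa)^\theta$. So you prove the rate rather than the displayed inequality under the displayed hypotheses.

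\textbf{One step needs tightening.} The phrase ``Doob or Markov once $mK\le a/2$'' is too weak if read as an $L^2$ bound. For the Gaussian log-likelihood walk it gives only a bound of order $1/\log(1/\alpha)$. That neither yields $\alpha^{1/4}$ nor guarantees $P_1(\hat t>\Delta+m)\ge 1/2$ for every $\alpha$ allowed by $\alpha+2\alpha^{1/4}<1/2$. Use the exponential (Ville) maximal inequality instead:
\begin{itemize}
\item The increments have mean $K=\kappa^2/(2\sigma^2)$ and variance $2K$.
\item With $mK=\tfrac14\log(1/\alpha)$, one gets $P_1(\max_{t\le m}Z_{\Delta+t}\ge a)\le\exp\{-(a-mK)^2/(4mK)\}=\alpha^{1/4}$.
\item Hence $\E(\hat t-\Delta)^+\ge m(1-\alpha-2\alpha^{1/4})\ge\sigma^2\log(1/\alpha)/(4\kappa^2)$, which is the stated constant.
\end{itemize}
In Regime 4, and in your Regime 3, what is actually used is the almost-sure bound on each increment, not the KL divergence $K$.
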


Theorem \ref{thm:univ_lower} identifies a series of phase transitions in the robust univariate online change point detection problems.  As the signal strength increases, the principal statistical bottleneck shifts successively from the change point location, to the tail behaviour, and finally to the contamination level. 

In Regime 1, no change point estimator is guaranteed to detect the change point.  More precisely, when the jump size is too small relative either to the contamination level, or to the change point location, no procedure can reliably detect the change point.  Regime 1a reflects a contamination-driven impossibility.  The adversarial component can mask the mean shift effectively, such that consistent detection is impossible.  Regime 1b captures a different obstruction.  Even without contamination, if the change occurs too early relative to the signal size, then the available pre-change sample is insufficient to localise the change with non-trivial accuracy; see \citet[][Proposition 8]{moen_2025}. 

Once the signal exceeds this detectability threshold, the problem enters Regime 2, where the detection delay exhibits the familiar sub-Gaussian-type scaling $\sigma^2\kappa^{-2} \log(1/\alpha)$ \citep{Yu_2023}. In this regime, the signal is strong enough to be detectable, but not yet so that robustness or tail effects dominate.  Consequently, the delay is governed primarily by the number of observations required to average out the intrinsic noise in the inlier distribution. 

Regime 3 marks the onset of genuinely heavy-tail behaviour.  Further increases in the signal no longer translate into the Gaussian rate of improvement as in Regime 2.  Instead, the detection delay is controlled by the tail decay of the inlier distribution.  For sub-Weibull inliers, the lower bound scales as $\kappa^{-\theta} \log(1/\alpha)$, and $(v\log(\kappa))^{-1} \log(1/\alpha)$ for the finite-$v$th-moment class.  Intuitively, although the signal is now large, extreme inlier observations can dominate the detection delay.

In Regime 4, the contamination level becomes the dominant constraint.  Beyond this point, increasing the signal strength no longer yields a corresponding reduction in the detection delay.  The delay is lower bounded by a quantity of order $\log(1/\alpha)/\log((1-\varepsilon)/\varepsilon)$, reflecting the intrinsic difficulty of separating contaminated from uncontaminated observations in an online setting.  This regime may therefore be viewed as the contamination-limited phase.

Taken together, these four regimes provide a unified picture of the minimax difficulty of the univariate problem by precisely characterising the qualitatively distinct phenomena, depending on the relative magnitudes of the jump size, contamination proportion and tail parameters.

\subsection{A robust online change point detection procedure} \label{sect:rume_cpt}

In this subsection, we develop a robust online change point detection procedure for univariate data. This procedure is designed to retain power under both Huber contamination and heavy-tailed inlier noise.  It follows the standard online mean change point detection routine.  For a sequence $\{X_t\}_{t \in \mathbb{N}} \subseteq \mathbb{R}$ and for a mean estimator $\widehat{\mu}_{s:t}$ based on data $X_{s:t}$, we declare a change point at $t \in \mathbb{N}$, if 
\[
    \max_{1 \leq s < \lfloor t/2 \rfloor} \big| \widehat{\mu}_{1:s} - \widehat{\mu}_{(s+1):t}\big|
\]
exceeds a prescribed threshold. This type of procedure has been incorporated in many recent algorithms; see e.g.~\cite{Cho2016}, \cite{Liu_Gao_Samworth_2021}, \cite{Follain2022}, \cite{Yu_2023},  \cite{Zhang2023} and \cite{gong2024}. In view of the contamination and heavy-tailedness, we summon some robust mean estimators as choices for~$\widehat{\mu}$.  In particular, we consider both medians and the robust univariate mean estimator \citep[RUME,][]{pmlr-v108-prasad20a}, depending on the available sample size.  The algorithm is detailed in \Cref{alg:cusum_rume}.  

To be specific, for a pair of integers $s, t$ satisfying $h_t\leq s \leq \lfloor t/2 \rfloor$ and $2 | s$, where $h_t$ is some threshold (to be specified) that depends on $t$, we consider $\widehat{\mu}_{s:t}= \mathrm{RUME}(X_{s:t})$, otherwise $\widehat{\mu}_{s:t} = \med(X_{s:t})$.  This means that employing the RUME estimator only when the available sample is sufficient, as required by the theoretical guarantees to be shown in Lemmas~\ref{lem:rume_moment} and \ref{prop2}.

The thresholds $\{\chi_{s,t}\}$ and $\{\zeta_{s,t}\}$ are parametrised by the contamination level and the tail behaviour of the inliers.  The theoretical choices for these thresholds will be presented in Theorems \ref{thm:rume_cpt_subW} and \ref{thm:rume_cpt} below, with practical guidance discussed in \Cref{subsect:univ_sim}. 

We note that the algorithm requires the contamination level $\varepsilon$ and the tail behaviour of the inlier distribution as inputs.  These are commonly seen in the robust statistics literature; see e.g.~\cite{Cheng_Diakonikolas_Ge_2019}, \cite{pmlr-v108-prasad20a} and \cite{Li_2021} for the former, and e.g.~\cite{Comminges2021} and \cite{li_2025} for the latter.  In practice, tail parameters may be estimated from pilot data when available \citep[see, e.g.][]{Hill1975, Vladimirova_Girard_Nguyen_Arbel_2020}, or calibrated under a conservative tail assumption otherwise, for example by assuming only finite variance.  In \Cref{sect:discuss} we provide some further discussions on relaxation of the requirement on $\varepsilon$.

\begin{algorithm}[H]
\caption{Online univariate change point detection via RUME and medians}\label{alg:cusum_rume}
\begin{algorithmic}
\INPUT{Dataset $\{X_i\}_{i\in\mathbb{N}}$, Minimum sample size thresholds $\{h_{t}\}_{t\in\mathbb{N}}$, Contamination level $\varepsilon$, Median detection thresholds~$\{\chi_{s,t}\}_{(s,t)\in\mathbb{N}^2, 1\leq s<h_t}$, RUME detection thresholds $\{\zeta_{s,t}\}_{(s,t)\in\mathbb{N}^2, h_t\leq s<\lfloor t/2\rfloor}$} 
\State $t \gets 1, \mathrm{FLAG} \gets 0$
\While{$\mathrm{FLAG} = 0$}
    \State $t \gets t + 1$
    \For{$s \in \{1, \ldots,  \lfloor t/2 \rfloor\}$}
        \If{$s < h_t$ \textbf{and} $\frac{1}{2e} \left[ \frac{4\alpha}{3(t^3-t)} \right]^{2/s} > \varepsilon$}
            \State $\mathrm{FLAG} \gets \mathbbm{1}\{|\med(X_{1:s}) - \med(X_{(t-s+1):t})| > \chi_{s,t}\}$
        \ElsIf{$s \geq h_t$ \textbf{and} $2 \mid s$}
            \State $\mathrm{FLAG} \gets  \mathbbm{1}\{|\RUME(X_{1:s})-\RUME(X_{(t-s+1):t})| > \zeta_{s,t}\}$ \Comment{See \Cref{algo:RUME}}
        \EndIf        
    \EndFor
\EndWhile
\OUTPUT{$t$}
\end{algorithmic}
\end{algorithm}

\subsubsection{Detection guarantees under sub-Weibull assumptions}

To understand the performance of \Cref{alg:cusum_rume}, we first start with signal strength conditions, corresponding to Regimes 2 and 3 in \Cref{table:delayG}, respectively.

\begin{assumption} \label{assume-2}
We assume that one of the following holds.
\begin{enumerate}[label=(\textbf{\alph*})]
    \item \label{assum: rumecpt_subW} Assume $\kappa\gtrsim \varepsilon\bigl(\log(1/\varepsilon)\bigr)^{1+1/\theta}$ and $\Delta\gtrsim \max\left({\kappa^{-2}},1\right)\log({\Delta}/{\alpha})$.
    \item \label{assum: medcpt_subW}  Assume $\kappa\gtrsim 1$ and $\Delta\gtrsim \max\bigl(\kappa^{-\theta}, \log^{-1}(1/\varepsilon) \bigr) \log({\Delta}/{\alpha})$.
\end{enumerate}
\end{assumption}

\begin{theorem}[Change point detection for class $\mathcal{G}_{\theta,M}$]\label{thm:rume_cpt_subW}
Suppose $\{X_i\}_{i\in\mathbb{N}}$ satisfies Assumption \ref{assum:kappa}, where $\mathcal{D}=\mathcal{G}_{\theta,M}$ for some $\theta\in(0,2]$ and $M>0$. Let $\varepsilon\in[0,0.09)$, $\alpha\in (0,1)$ and $\hat{t}$ be the stopping time returned by Algorithm \ref{alg:cusum_rume} with sample size threshold
\begin{align*}
    h_t= \left\lceil20\log\left\{\frac{3(t^3-t)}{8\alpha}\right\}\right\rceil
\end{align*} and detection thresholds
    \begin{align}
        \chi_{s,t}&=2M\log^{1/\theta} \left[\frac{4e}{\left(\frac{4\alpha}{3(t^3-t)}\right)^{2/s}-2e\varepsilon}\right], \label{eqn:subW_thres1} \\
        \zeta_{s,t}&=2C_1\varepsilon_{s,t}'\log^{1+1/\theta} \left(\frac{1}{\varepsilon_{s,t}'}\right)+2C_2\sqrt{\frac{2}{s}\log \left[\frac{3(t^3-t)}{8\alpha}\right]}, \label{eqn:subW_thres2}
    \end{align}
    where $\varepsilon'_{s,t}=\max\left\{\varepsilon, 2s^{-1}\log\left[\frac{3(t^3-t)}{8\alpha}\right]\right\}$ and $C_1,C_2 > 0$ are absolute constants defined in Lemma \ref{lem:rume_moment}(b). We then have the following.
    \begin{enumerate}[label=(\textbf{\alph*})]
        \item \label{subthm:subWcpt1} $\mathbb{P}_{\infty}(\hat{t}<\infty)\leq \alpha$.
        \item \label{subthm:subWcpt2} $\mathbb{P}_{\Delta}(\hat{t}\leq\Delta)\leq \alpha$ for any $\Delta\geq1$.
        \item \label{subthm:subWcpt3} Under \Cref{assume-2}\ref{assum: rumecpt_subW}, it holds that 
        \begin{equation*}
        \mathbb{P}_\Delta\left(\Delta<\hat{t}\leq\Delta+\left\lceil C \max\left(\frac{1}{\kappa^2},1\right)\log\left(\frac{\Delta}{\alpha}\right) \right\rceil\right)\geq 1-\alpha,
        \end{equation*}
        for some constant $C > 0$ depending only on $\theta$ and $M$.
        \item \label{subthm:subWcpt4} Under \Cref{assume-2}\ref{assum: medcpt_subW}, it holds that
        \begin{equation*}
            \mathbb{P}_\Delta\left(\Delta<\hat{t}\leq\Delta+\left\lceil C'\max\left(\frac{1}{\kappa^\theta},\frac{1}{\log(1/\varepsilon)}\right)\log\left(\frac{\Delta}{\alpha}\right)\right\rceil\right)\geq 1-\alpha,
        \end{equation*}
        for some constant $C' > 0$ depending only on $\theta$ and $M$.
    \end{enumerate}
\end{theorem}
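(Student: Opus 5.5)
The proof is a union bound over all pairs $(s,t)$ examined by \Cref{alg:cusum_rume}, combined with two single-statistic concentration results: one for the sample median of $s$ contaminated sub-Weibull points, and one for RUME, taken from Lemma~\ref{lem:rume_moment}(b). The per-pair failure budget is $\delta_{s,t}\asymp \alpha/(t^3-t)$. The exact constants $4\alpha/(3(t^3-t))$ and $8\alpha/(3(t^3-t))$ are chosen so that $\sum_{t\ge 2}\sum_{s\le t/2}\delta_{s,t}\le\alpha$, using $\sum_{t\ge2} t/(t^3-t)=\sum_{t\ge 2}1/(t^2-1)=3/4$.

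\emph{Median bound.} Let $\mu$ be the inlier mean. If $|\med(X_{1:s})-\mu|>x$, then at least half of the $s$ points exceed $\mu+x$ (or fall below $\mu-x$). Each point does so with probability at most $p:=\varepsilon+2e^{-(x/M)^\theta}$. Hence
\[
\Pb\bigl(|\med(X_{1:s})-\mu|>x\bigr)\le 2\binom{s}{\lceil s/2\rceil}p^{s/2}\le 2(4p)^{s/2}.
\]
Setting this equal to $\delta$ and solving for $x$ gives $x=\chi_{s,t}/2$ in \eqref{eqn:subW_thres1}. The side condition $\frac{1}{2e}[\cdot]^{2/s}>\varepsilon$ in the algorithm is exactly what makes the argument of the logarithm positive. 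I would fix the constants $e$ and $4$ by direct bookkeeping.

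\emph{RUME bound.} Lemma~\ref{lem:rume_moment}(b) gives, for $s\ge h_t$,
\[
|\RUME(X_{1:s})-\mu|\le C_1\varepsilon'\log^{1+1/\theta}(1/\varepsilon')+C_2\sqrt{\log(1/\delta)/s}
\]
with probability at least $1-\delta$. Here $\varepsilon'$ absorbs the requirement that the effective contamination dominates $\log(1/\delta)/s$; the algorithm uses only even $s$ because RUME splits the sample in half. This matches $\zeta_{s,t}/2$.

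\emph{Parts (a) and (b).} With no change before $t$, both windows $X_{1:s}$ and $X_{(t-s+1):t}$ consist of pre-change observations. On the good event, each estimator lies within half the threshold of the common mean, so no pair triggers. The union bound then gives (a) for $\Delta=\infty$ and (b) for all $t\le\Delta$.

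\emph{Part (c).} Take $t=\Delta+m$ with $m=\lceil C\max(\kappa^{-2},1)\log(\Delta/\alpha)\rceil$ and $s=2\lfloor m/2\rfloor$ (minus at most one, to keep it even). Then $X_{(t-s+1):t}$ is purely post-change. The assumption $\Delta\gtrsim m$ ensures $s\le\lfloor t/2\rfloor$, so $X_{1:s}$ is purely pre-change, and that $s\ge h_t$ because $\log t\lesssim\log(\Delta/\alpha)$. On the good event the RUME difference is at least $\kappa-\zeta_{s,t}$. So it suffices that $\zeta_{s,t}<\kappa/2$. The square-root term is at most $\kappa/4$ once $C$ is large. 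The contamination term needs $\varepsilon\log^{1+1/\theta}(1/\varepsilon)\lesssim\kappa$ together with $\log(t/\alpha)/s\lesssim\kappa^2\wedge 1$. The second condition makes $(\log(t/\alpha)/s)\log^{1+1/\theta}$ small relative to $\kappa$; this uses $\kappa\lesssim 1$ in this regime, or, if $\kappa\gtrsim1$, the max with $1$ handles it.

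\emph{Part (d).} Here I would use the median test instead, with $s$ of order $\max(\kappa^{-\theta},1/\log(1/\varepsilon))\log(\Delta/\alpha)$ and $s<h_t$. The key step is to check that $\chi_{s,t}<\kappa/2$. Write $a=[4\alpha/(3(t^3-t))]^{2/s}=\exp(-2\log(\cdot)/s)$. The choice $s\ge C'\log(t/\alpha)/\log(1/\varepsilon)$ gives $a\ge \varepsilon^{c}$ with $c<1$ small, so $a-2e\varepsilon\gtrsim a$. Then
\[
\chi_{s,t}\lesssim M\bigl(\log(4e)+2\log(t/\alpha)/s\bigr)^{1/\theta},
\]
which is below $\kappa/2$ when $s\gtrsim \kappa^{-\theta}\log(t/\alpha)$ and $\kappa\gtrsim1$ is larger than a constant. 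Part (d) also requires that the median branch's side condition holds and that $s<h_t$. If $s$ would exceed $h_t$, the RUME branch at the same $s$ succeeds instead, by the argument of (c) with $\kappa\gtrsim 1$.

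\emph{Main obstacle.} I expect the main obstacle to be this last step of (d). It requires balancing the $2e\varepsilon$ subtraction inside the logarithm of \eqref{eqn:subW_thres1} against the delay, which is precisely where the $1/\log(1/\varepsilon)$ term in the delay comes from. It also requires handling the case split between the median and RUME branches cleanly.
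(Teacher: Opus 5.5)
Your proposal is correct and follows the paper's proof: the same budget $\delta_t=8\alpha/(3(t^3-t))$ per pair, the same union bound for (a)--(b), and for (c)--(d) a single pair $(s,t)\approx(d,\Delta+d)$ on whose good event $\kappa$ exceeds twice the estimation error. Your binomial count for the median is equivalent to the paper's multiplicative Chernoff bound once you drop the extra factor $2$, because the event $|\med(X_{1:s})-\mu|>x$ already forces $\lceil s/2\rceil$ points with $|X_i-\mu|>x$ and $p$ is already a two-sided per-point bound; after that, $(4p)^{s/2}\le(2ep)^{s/2}$ fits the stated thresholds exactly, whereas keeping the factor $2$ overshoots the budget for $s\le 4$. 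Your explicit checks in (c) that $s$ is even with $s\ge h_t$, and your fallback to the RUME branch in (d) when $s\ge h_t$, are more careful than the paper: its proof of (d) uses the median event at $s=d_4$ without checking $d_4<h_{\Delta+d_4}$ (it even writes $d_4=h_{2\Delta}\ge h_{\Delta+d_4}$), and this check genuinely fails for $\varepsilon$ near $0.09$.
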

In the no-change regime $\Delta = \infty$, \Cref{thm:rume_cpt_subW}\ref{subthm:subWcpt1} shows that the false alarm probability is controlled at level $\alpha$. Moreover, \Cref{thm:rume_cpt_subW}\ref{subthm:subWcpt2} ensures that, even when a change is present, the procedure does not stop before the change point with probability at least $1 - \alpha$.  \Cref{alg:cusum_rume} thus controls false alarm uniformly over both the null and pre-change segments. 

More importantly, \Cref{thm:rume_cpt_subW} provides high-probability upper bounds on the detection delay under exponentially decaying tails, and these bounds match the lower bounds in \Cref{thm:univ_lower}, up to a factor of $\log(\Delta)$. Under \Cref{assume-2}\ref{assum: rumecpt_subW}, corresponding to the moderate-signal regime, the delay is of order $\kappa^{-2}\log(\Delta/\alpha)$ when the contamination level is not too severe.  In this range, the RUME-based component of the procedure drives detection, and the rate is the same Gaussian-type scaling in Regime~2 of the lower bound. 
With a larger signal strength, the delay in \Cref{thm:rume_cpt_subW}\ref{subthm:subWcpt3} is instead of the order $\log(\Delta/\alpha)$, reflecting the sample size required for RUME to operate reliably under contamination. This illustrates that RUME recovers optimal rates only when detecting changes of moderate signal sizes. 

Under \Cref{assume-2}\ref{assum: medcpt}, the theorem enters the large-signal regime. In this range, the median-based component of the procedure drives detection. When $\kappa$ is of order at least $O(1)$ but still satisfies $\kappa \lesssim \log^{1/\theta}(1/\varepsilon)$, \Cref{thm:rume_cpt_subW}\ref{subthm:subWcpt4} shows that the detection delay is of the order $\kappa^{-\theta}\log(\Delta/\alpha)$, corresponding to Regime 3 of the lower bound. This reveals explicitly how the sub-Weibull parameter $\theta$ influences the detection problem: fixing the signal strength, the delay increases as $\theta$ decreases. Once the signal is sufficiently strong, the remaining difficulty is determined by the heaviness of the inlier tails rather than by Gaussian-type averaging. 

When $\kappa \gtrsim \log^{1/\theta}(1/\varepsilon)$, the delay no longer improves with the jump size and is instead of order $(\log (1/\varepsilon))^{-1}\log(\Delta/\alpha)$. This behaviour is consistent with Regime 4 of the lower bound. At this point, detection is limited by the sample needed to observe a sufficient number of uncontaminated samples, rather than by any lack of separations between the pre- and post-change means.

\subsubsection{Detection guarantees under finite-moment assumptions}

We now present a similar result under the finite-moment assumptions. We again start with signal strength conditions, corresponding to Regimes 2 and 3 in \Cref{table:delayP}, respectively.

\begin{assumption} \label{assume-1}
We assume that one of the following holds.
\begin{enumerate}[label=(\textbf{\alph*})]
    \item \label{assum: rumecpt} Assume $\kappa\gtrsim {\varepsilon}^{1-1/v}$ and $\Delta\gtrsim \max\left({\kappa^{-2}},1\right)\log({\Delta}/{\alpha})$.
    \item \label{assum: medcpt} Assume $\kappa\gtrsim 1$ and $\Delta\gtrsim \max\bigl( (v\log(\kappa))^{-1}, \log^{-1}(1/\varepsilon) \bigr) \log({\Delta}/{\alpha})$.
\end{enumerate}
\end{assumption}

\begin{theorem}[Change point detection for class $\mathcal{P}_{v,\phi}$]\label{thm:rume_cpt}
    Suppose $\{X_i\}_{i\in\mathbb{N}}$ satisfies \Cref{assum:kappa}, where $\mathcal{D}=\mathcal{P}_{v,\phi}$ for some $v\geq2$ and $\phi > 0$. Let $\varepsilon\in[0,0.09)$, $\alpha\in (0,1)$ and $\hat{t}$ be the stopping time returned by  \Cref{alg:cusum_rume} with sample size threshold \begin{align*}
    h_t=\left\lceil20\log\left\{\frac{3(t^3-t)}{8\alpha}\right\}\right\rceil
\end{align*} and detection thresholds
    \begin{align}
        \chi_{s,t}&=2\phi\left\{{\frac{1}{2e}\left[\frac{4\alpha}{3(t^3-t)}\right]^{2/s}-\varepsilon}\right\}^{-1/v}, \label{eqn:fm_thres1} \\
        \zeta_{s,t}&=2\phi\left\{C_3\varepsilon'^{1-1/v}_{s,t}+C_4\sqrt{\frac{2}{s}\log\left[\frac{3(t^3-t)}{8\alpha}\right]}\right\}, \label{eqn:fm_thres2} 
    \end{align}
    where $\varepsilon'_{s,t}=\max\left\{\varepsilon, 2s^{-1} \log\left[\frac{3(t^3-t)}{8\alpha}\right]\right\}$ and $C_3, C_4 > 0$ are absolute constants defined in Lemma \ref{lem:rume_moment}(a). We then have the following.
    \begin{enumerate}[label=(\textbf{\alph*})]
        \item $\mathbb{P}_{\infty}(\hat{t}<\infty)\leq \alpha$.
        \item $\mathbb{P}_{\Delta}(\hat{t}\leq\Delta)\leq \alpha$ for any $\Delta\geq1$.
        \item Under \Cref{assume-1}\ref{assum: rumecpt}, it holds that
        \begin{equation*}
            \mathbb{P}_\Delta\left(\Delta<\hat{t}\leq \Delta+\left\lceil C \max\left(\frac{1}{\kappa^2},1\right)\log\left(\frac{\Delta}{\alpha}\right)\right\rceil\right)\geq 1-\alpha,
        \end{equation*}
        for some constant $C>0$ depending only on $\phi$.
        \item Under \Cref{assume-1}\ref{assum: medcpt}, it holds that
        \begin{equation*}
            \mathbb{P}_\Delta\left(\Delta<\hat{t}\leq \Delta+\left\lceil C'\max\left(\frac{1}{v\log(\kappa)},\frac{1}{\log(1/\varepsilon)}\right)\log\bigg(\frac{\Delta}{\alpha}\bigg)\right\rceil\right)\geq 1-\alpha,
        \end{equation*}
        for some absolute constant $C'>0$.
    \end{enumerate}
\end{theorem}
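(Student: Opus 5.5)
The plan mirrors the argument for \Cref{thm:rume_cpt_subW}, with sub-Weibull tail bounds replaced by polynomial (Markov-type) tail bounds. Two ingredients are needed. The first is a concentration bound for the sample median of $s$ contaminated observations with inlier class $\mathcal{P}_{v,\phi}$. The second is the RUME guarantee of Lemma~\ref{lem:rume_moment}(a): for $s\ge h_t$ and failure probability $\delta$, with probability at least $1-\delta$,
\[
|\RUME(X_{1:s})-f| \le \phi\Bigl\{C_3\varepsilon'^{\,1-1/v}+C_4\sqrt{(2/s)\log(1/\delta)}\Bigr\}.
\]

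For the median, let $W=X_i-f$ under $Q_i=(1-\varepsilon_i)F_i+\varepsilon_iH_i$. Each observation exceeds $f+x$ with probability at most $q=\varepsilon+(\phi/x)^v$. The median exceeds $f+x$ only if at least $s/2$ of the $s$ observations do. This happens with probability at most $\binom{s}{\lceil s/2\rceil}q^{s/2}\le (4q)^{s/2}$, or at most $(2eq)^{s/2}$ via $\binom{s}{k}\le (es/k)^k$. Set $\delta_{s,t}=\tfrac{4\alpha}{3(t^3-t)}$, which equals $(t^3-t)/3$ pairs summed, up to constants. Solving $(2eq)^{s/2}\le\delta_{s,t}/4$ gives
\[
x=\phi\Bigl\{\tfrac{1}{2e}\delta_{s,t}^{2/s}-\varepsilon\Bigr\}^{-1/v}.
\]
This is exactly half of $\chi_{s,t}$, and it explains the algorithm's side condition $\tfrac{1}{2e}\delta^{2/s}>\varepsilon$. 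The RUME failure probability is chosen as $\tfrac{8\alpha}{3(t^3-t)}$ per estimator, and $\zeta_{s,t}$ is twice the RUME radius.

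For parts (a) and (b), I take a union bound over all $t\ge2$ and $1\le s\le\lfloor t/2\rfloor$. When there is no change before time $t\le\Delta$, both windows $X_{1:s}$ and $X_{(t-s+1):t}$ have the same inlier mean. By the triangle inequality, a false flag at $(s,t)$ requires one of the two estimators to deviate by more than half the threshold. Summing the per-pair probabilities gives
\[
\sum_t\sum_{s\le t/2} O\bigl(\alpha/(t^3-t)\bigr)\le\alpha,
\]
because $\sum_{t\ge2} t/(t^3-t)$ is bounded by a small constant. The constants $4/3$ and $3/8$ were presumably tuned so that this sum equals exactly $\alpha$. Part (a) is the case $\Delta=\infty$; part (b) restricts to $t\le\Delta$.

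For parts (c) and (d), I fix $t=\Delta+d$ and choose $s=d$, so the right window is entirely post-change. The left window is pre-change, which requires $d\le\Delta\le t/2$; this is where the $\Delta\gtrsim\cdots$ assumptions enter. On the good event the two estimators are within half-thresholds of $\mu_1$ and $\mu_2$. A flag is then raised once $\kappa>2\times$ threshold at $(s,t)=(d,\Delta+d)$.

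In case (c) I use RUME with $s=d\asymp\max(\kappa^{-2},1)\log(\Delta/\alpha)$. The square-root term is then $\lesssim\kappa$, using $\log t\asymp\log(\Delta/\alpha)$. The term $\varepsilon'^{\,1-1/v}$ splits into $\varepsilon^{1-1/v}\lesssim\kappa$, which follows from the assumption, and $(\log/s)^{1-1/v}\le(\log/s)^{1/2}\lesssim\kappa$ when $s\gtrsim\kappa^{-2}\log$ and $\kappa\lesssim1$. When $\kappa\gtrsim1$, the requirement becomes $s\gtrsim\log$, and the requirement $s\ge h_t$ is also met. Evenness of $s$ is fixed by rounding.

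In case (d) I use the median with small $s$. I need $\chi_{s,t}<\kappa$, that is,
\[
\tfrac{1}{2e}\delta^{2/s}-\varepsilon>(2\phi/\kappa)^v.
\]
I take $s=C'\max\{(v\log\kappa)^{-1},\log^{-1}(1/\varepsilon)\}\log(\Delta/\alpha)$. Then $\delta^{2/s}=\exp(-2\log(1/\delta)/s)$ is at least $\max(\kappa^{-v/2},\varepsilon^{1/2})$ up to constants. For large $C'$ this dominates both $2\varepsilon$ and $2e(2\phi/\kappa)^v$, so the side condition also holds. If this $s$ exceeds $h_t$, I fall back to case (c)-type reasoning, since then $\log(\Delta/\alpha)$ already dominates.

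I expect the main obstacle to be this bookkeeping in case (d): showing that a single $s$ satisfies $s<h_t$, the positivity condition, and $\chi_{s,t}<\kappa/2$ simultaneously. Two sources of error must be handled carefully: absorbing the factor $\log t$ versus $\log(\Delta/\alpha)$ uniformly, and handling the case $\kappa$ near $1$, where $\log\kappa$ is small.
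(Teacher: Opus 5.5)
Your proposal is correct and follows essentially the same route as the paper. Parts (a)--(b) come from a union bound over all tested pairs $(s,t)$ with per-pair budget $8\alpha/\{3(t^3-t)\}$, using the median tail $(2eq)^{s/2}$ (you get it from a union bound over $\lceil s/2\rceil$-subsets, the paper from a multiplicative Chernoff bound) and Lemma~\ref{lem:rume_moment}(a) for RUME. Parts (c)--(d) come from a single test at $(s,t)=(d,\Delta+d)$ with $\kappa$ exceeding twice the threshold, where the detection failure $\delta_{\Delta+d}$ fits inside the part of the budget with $t>\Delta$ left unused in (b); you should state this explicitly to get exactly $1-\alpha$. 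Your explicit checks that the median branch is actually executed in (d) ($s<h_t$ and the positivity condition, with a RUME fallback when $s\ge h_t$), and that $s$ is even in (c), fill in points the paper's own proof passes over.
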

By choosing thresholds $\chi_{s,t}$ and $\zeta_{s,t}$ adapted to the finite-moment class, Algorithm \ref{alg:cusum_rume} again achieves the same false alarm guarantees as in \Cref{thm:rume_cpt_subW}. When $\Delta=\infty$, the false alarm probability is controlled at level $\alpha$; when a change is present, the procedure does not stop before the change point with probability at least $1-\alpha$.

\Cref{thm:rume_cpt} also provides high-probability upper bounds on the detection delay under finite-moment assumptions, and these bounds match the lower bounds of \Cref{thm:univ_lower}, up to a factor of $\log(\Delta)$. The main distinction compared to the results under sub-Weibull assumptions lies in how the tail behaviour influences the rates. Under \Cref{assume-1}\ref{assum: rumecpt}, corresponding to the moderate-signal regime, the delay retains the Gaussian-type scaling $\kappa^{-2}\log(\Delta/\alpha)$ under mild contamination. However, under \Cref{assume-1}\ref{assum: medcpt}, corresponding to the large-signal regime, the dependence on the number of moments $v$ becomes explicit. Specifically, when $\kappa=O(1)$ and $\kappa \lesssim \varepsilon^{-1/v}$, the delay scales as $(v\log(\kappa))^{-1}\log(\Delta/\alpha)$, showing that heavier tails slow detection relative to the Gaussian case. Once $\kappa \gtrsim \varepsilon^{-1/v}$, the delay saturates at order $(\log(1/\varepsilon))^{-1}\log(\Delta/\alpha)$, reflecting the need to accumulate sufficiently many uncontaminated observations.

Putting Theorems~\ref{thm:univ_lower}, \ref{thm:rume_cpt_subW} and \ref{thm:rume_cpt} together, we obtain a complete characterisation of the rates of detection delay for the univariate change point problem, up to logarithmic factors. We note that the lower bound in \Cref{thm:univ_lower} is stated in terms of the expected detection delay, while Theorems~\ref{thm:rume_cpt_subW} and~\ref{thm:rume_cpt} provide high-probability upper bounds.  This discrepancy is commonly seen in the literature, and recent works \citep[e.g.][]{chhor2025generalized, MaVerchandSamworth2026} have shown remedies for unification under weak conditions.

\section{Robust multivariate mean testing algorithms} \label{sect:hdtest}

In this section, we develop a robust multivariate mean testing procedure, which will serve as the main subroutine in our change point detection algorithm. This strategy is motivated by viewing change point detection as a sequence of tests for equality of mean vectors between two subsamples. Rather than estimating the pre- and post-change means and then comparing the resulting estimates, as in our univariate procedure, we directly test whether the corresponding mean vectors are equal. This distinction is relevant in the multivariate setting, where testing and estimation may exhibit different sample-size requirements; see, e.g.~the introduction sections of \cite{Diakonikolas2017} and \cite{Canonne2023} for further discussion.

We consider the robust mean testing problem under the dynamic Huber $\varepsilon$-contamination model~\eqref{huber}. Given a sample $\mathcal{Y}=\{Y_1,\ldots,Y_n\}\subseteq \R^p$, we assume that the data are generated independently according to
\begin{equation}\label{eqn:huberhd}
Y_i \sim (1-\varepsilon)F + \varepsilon H_i,
\end{equation}
where $F \in \mathcal{D}$ represents the inlier distribution with mean $\mu$, $\norm{\mu}_2=\kappa$, and identity covariance $I$. The contamination distributions $H_i$'s are arbitrary and may vary with $i \in [n]$. In this section, we consider the hypothesis testing problem
\begin{equation}\label{eqn:hypotheses}
    \mathrm{H}_0: \mu=\mathbf{0} \quad \mbox{vs.}\quad \mathrm{H}_1: \mu\neq \mathbf{0}.
\end{equation}
This problem is well understood in the Gaussian context. In particular, suppose that $F = \mathcal{N}_p(\mu, I)$ and the alternative is written as $\|\mu\|_2 \geq \kappa_{\min}$. In the absence of contamination, i.e.\ when $\varepsilon = 0$, the minimax testing rate satisfies $\kappa_{\min}^2\asymp_{\log} \sqrt{p}/n$ \cite[e.g.][]{Baraud2002,IngsterSuslina2003}. Under contamination, \cite{Canonne2023} established minimax lower and upper bounds of the form
\begin{equation*}
    \frac{\sqrt{p}}{n}+\varepsilon^{3/2}\sqrt{\frac{p}{n}}\lesssim \kappa_{\min}^2
    \lesssim
    \frac{\sqrt{p}}{n}+\varepsilon\sqrt{\frac{p}{n}}+\varepsilon^2\log(1/\varepsilon).
\end{equation*}
To the best of our knowledge, finite-sample analyses have yet been established for the contaminated setting in \eqref{eqn:huberhd}, where the inlier class $\mathcal{D}$ may contain heavy-tailed distributions.

To address the testing problem in \eqref{eqn:hypotheses}, we propose robust procedures for two classes of heavy-tailed inlier distributions: the sub-Weibull class $\mathcal{G}^p_{\theta,M}$ and the finite-moment class $\mathcal{P}^p_{v,\phi}$. The algorithms and theoretical guarantees for each class are presented in Sections~\ref{subsect:testalgo} and~\ref{subsect:testalgo2}, respectively. The procedures build upon the soft-filter framework developed in Section~8 of \citet{Canonne2023}. While \citet{Canonne2023} analyse this framework for $\mathcal{N}_p(\mu,I)$ inliers, we extend the analysis to accommodate heavy-tailed inliers. 

\subsection{Robust testing for the sub-Weibull class}\label{subsect:testalgo}
We first develop a robust mean testing procedure, \Cref{alg:efficient-mean-tester}, for the sub-Weibull inlier class $\mathcal{G}_{\theta,M}^p$. In a nutshell, \Cref{alg:efficient-mean-tester} uses filtering-based algorithms to downweight the suspected outliers, and then rejects or accepts the null hypothesis based on the weighted sum of the observations. 

The filtering mechanism here consists of two steps. First, a spectral filtering procedure (\Cref{alg:spectralfilter1} or~\ref{alg:spectralfilter2}, depending on whether the sample size is smaller than the number of dimensions) iteratively downweights observations by examining the leading eigenvector of $M(w,\mathcal{Y})-nI$ or $\Gram(w,\mathcal{Y})-nI$, where 
\[
    M (w, \mathcal{Y}) = \sum_{i \in \mathcal{Y}} w_i Y_i Y_i^\top \in \R^{p\times p}\quad \mbox{and} \quad \Gram (w,\mathcal{Y})_{ij} = \sqrt{w_i w_j} \iprod{Y_i, Y_j} 
\]
are the second moment matrix and Gram matrix respectively. Observations with more extreme projections along this leading eigenvector are more likely to be outliers or possess heavy tails, and are therefore downweighted more. Second, the weights are further refined using \Cref{alg:rowsumfilter}, which removes the $un$ observations with the largest deviation scores $\tau_k = \left| \left\langle \sqrt{w_k} Y_k, \Sum(w,\mathcal{Y})\right\rangle - w_k p \right|$,  where $\Sum(w,\mathcal{A})=\sum_{k\in\mathcal{A}}\sqrt{w_k}Y_k$ for any subset \(\mathcal{A}\subseteq \mathcal{Y}\). This step helps bound the cross term $\iprod{\Sum(w,\mathcal{Y}),\Sum(w,\mathcal{B})}$ that arises from expanding the test statistic in \eqref{proof_eqn:expansion}, where $\mathcal{B}$ is the collection of contaminated samples or those with extreme behaviour.

This procedure is an instance of the filtering-based methods commonly used in robust statistics.  The objective is to construct weights that preserve the signal contributed by the inliers while reducing the influence of contaminated observations \citep[see, e.g.][]{Lai_2016, Diakonikolas_2016, Diakonikolas2017, Cheng_Diakonikolas_Ge_2019}. Further details of \Cref{alg:efficient-mean-tester} are collected in \Cref{subsect:detail_routine}. The theoretical choices for the algorithmic thresholds are discussed next in \Cref{thm:poly-time-main}.

\begin{algorithm}[H]
\caption{Robust mean testing ($\mathrm{RobustMeanTest}(\{Y_{i}\}_{i=1}^n, \mathcal{D}; \kappa_0, \delta,\varepsilon, C_\gamma, T_u)$)}
\label{alg:efficient-mean-tester}
    \begin{algorithmic}
        \INPUT{Dataset $Y_1,\ldots,Y_n \in \R^p$, Class of inlier distributions $\mathcal{D}\in\{\mathcal{G}_{\theta,M}^p,\mathcal{P}^p_{v,\phi}\}$, Signal size input $\kappa_0$, Significance level $\delta$, Contamination level $\varepsilon$, Filter strength constant $C_\gamma$,  Detection sensitivity factor~$T_u$}
        \If{$\mathcal{D}=\mathcal{G}_{\theta,M}^p$}
            \State $u\leftarrow \varepsilon+n^{-1}+\sqrt{\frac{2(\varepsilon+n^{-1})\log(4/\delta)}{n}}+\frac{2\log(4/\delta)}{3n}$
        \Else
        \State {$u\leftarrow\varepsilon+\frac{1}{20}\min\left(1,(\frac{p}{n})^{v/4}\right)+\sqrt{\frac{2(\varepsilon+0.05\min\left(1,({p}/{n})^{v/4}\right))\log(4/\delta)}{n}}+\frac{2\log(4/\delta)}{3n}$}
        \EndIf
        \State $R_{f}\leftarrow C_{\gamma} \left(unp\log(1/u)+(\sqrt{np}+p)\log(2p/\delta)+n \kappa^2_0 \right)$
         \If{$n \leq p$}
            \State $w\leftarrow \mathrm{GramFilter}(\{Y_i\}_{i=1}^n, R_{f})$  \Comment{See Algorithm~\ref{alg:spectralfilter1}}
        \Else 
            \State $w\leftarrow \mathrm{MomentFilter}(\{Y_i\}_{i=1}^n, R_{f},u)$ \Comment{See Algorithm~\ref{alg:spectralfilter2}}
        \EndIf 
        \State $w'\leftarrow \mathrm{RowSumFilter}(\{Y_i\}_{i=1}^n, w,u)$ \Comment{See Algorithm~\ref{alg:rowsumfilter}}
         \If{$|\norm{\Sum (w', \mathcal{Y})}^2 - p \norm{w'}_1| > 0.5T_u \kappa_0^2 n^2$} 
            \State \textbf{return} $1$ 
        \Else
            \State \textbf{return} $0$
        \EndIf
    \end{algorithmic}
\end{algorithm}

\begin{proposition}[Mean testing with inlier in $\mathcal{G}^{p}_{\theta,M}$]
\label{thm:poly-time-main}
   Let $n, p \in \mathbb{N}$, $\delta > 0$ and $\varepsilon\in[0,0.08)$. Let $\{Y_i\}_{i \in [n]}$ be independently generated according to \eqref{eqn:huberhd}, with $\mathcal{D}=\mathcal{G}^{p}_{\theta,M}$ for some  $\theta\in(0,2]$ and $M>0$. Assume
\begin{equation}\label{eqn:u_val}
u=\varepsilon+n^{-1}+\sqrt{\frac{2(\varepsilon+n^{-1})\log(4/\delta)}{n}}+\frac{2\log(4/\delta)}{3n}\leq 0.08.
\end{equation}
Then, Algorithm~\ref{alg:efficient-mean-tester} with the detection sensitivity factor input $T_u=(1-7u)^2-2u$ has the following guarantees.
\begin{enumerate}[label=(\textbf{\alph*})]
    \item The algorithm has runtime $O((\varepsilon+n^{-1}) n^2 p \min(n, p) + np)$.
    \item If $\mu=\mathbf{0}$ and the input $\kappa_0$ satisfies
    \[
        \kappa_0^2\gtrsim \left\{\frac{\sqrt{p}}{n}+\varepsilon^2 p\log(1/\varepsilon)+\frac{p}{n^2}\right\}\cdot \mathrm{polylog}(\theta, n, p, 1/\delta),
    \]    
    then the algorithm outputs 0 with probability at least $1-\delta$.
      \item If $\kappa = \|\mu\|_2$ satisfies $0 < 2/C_\gamma\leq \kappa_0^2/\kappa^2\leq \bar{c}$, with $\bar{c} > 0$ being an absolute constant, and  
      \begin{equation}\label{eqn:test_t2e}
          \kappa^2\gtrsim \left\{ \frac{\sqrt{p}}{n}+\varepsilon^2 p\log(1/\varepsilon)+\frac{p}{n^2}\right\} \cdot \mathrm{polylog}(\theta, n, p, 1/\delta),
      \end{equation} 
      then the algorithm outputs 1 with probability at least $1-\delta$.
  \end{enumerate}
\end{proposition}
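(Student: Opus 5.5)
We follow the soft-filter analysis of \citet[Section~8]{Canonne2023}, replacing every Gaussian concentration step by a sub-Weibull one and absorbing the extra tail cost into the polylogarithmic factors.

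\textbf{Step 1: the good event.} Fix a set $\mathcal{G}\subseteq[n]$ of inliers and let $\mathcal{B}=[n]\setminus\mathcal{G}$ collect both the contaminated points and the rare inliers with extreme norms.
\begin{enumerate}[label=(\roman*)]
\item A Bernstein bound on the number of contaminated draws, together with at most $n\cdot n^{-1}$ extreme inliers, gives $|\mathcal{B}|\le un$ with probability at least $1-\delta/4$. This is exactly where the formula \eqref{eqn:u_val} for $u$ comes from.
\item On $\mathcal{G}$, truncate at radius of order $\sqrt{p}+M\log^{1/\theta}(np/\delta)$ and establish three deterministic stability conditions:
\begin{itemize}
\item[--] the spectral bound $\norm{\sum_{i\in S}(Y_i-\mu)(Y_i-\mu)^\top-|S|I}_{\op}\lesssim unp\log(1/u)+(\sqrt{np}+p)\log(2p/\delta)$, uniformly over $S\subseteq\mathcal{G}$ with $|S|\ge(1-u)|\mathcal{G}|$;
\item[--] the analogous Gram-matrix bound for $n\le p$;
\item[--] control of $\norm{\sum_{i\in S}(Y_i-\mu)}_2$ and of the centred quadratic form $\norm{\sum_{\mathcal{G}}Y_i}^2-p|\mathcal{G}|-|\mathcal{G}|^2\kappa^2$, at scale $\sqrt{p}\,n$.
\end{itemize}
\end{enumerate}
The main tool is a matrix Bernstein/Hanson--Wright type inequality for independent sub-Weibull coordinates (\Cref{subsect:subw_prop}) applied after truncation. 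The $\log(1/u)$ factor comes from the worst-case mass of a $u$-fraction of tails, which for sub-Weibull vectors is of order $u\log^{2/\theta}(1/u)$ per direction. I expect this step to be the main obstacle: the uniform-over-subsets spectral bound must hold with heavy sub-Weibull tails and only polylog losses. I would first try a generic-chaining-free approach, namely a union over $\binom{n}{un}$ subsets combined with the concentration bounds; if that is too lossy, I would switch to a weighted version handled via the quadratic-form duality of Canonne et al.

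\textbf{Step 2: the filters.} Conditional on the good event, the argument is deterministic and mirrors Canonne et al.
\begin{itemize}
\item[--] \emph{Spectral filter.} MomentFilter/GramFilter runs until the top eigenvalue of $M(w,\mathcal{Y})-nI$ (respectively of the Gram version) is at most $R_f$. Each iteration removes more weight from $\mathcal{B}$ than from $\mathcal{G}$, by a potential argument using the spectral bound. Hence the final weights keep $\sum_{\mathcal{G}}(1-w_i)\le u n$, with at most $O(un)$ iterations. Each iteration costs one eigenvector computation, giving runtime (a).
\item[--] \emph{Row-sum filter.} RowSumFilter removes only $un$ further points. Since $\mathcal{B}$ carries small spectral mass, this bounds the cross term $|\iprod{\Sum(w',\mathcal{G}),\Sum(w',\mathcal{B})}|$ by $\sqrt{R_f}\cdot$(row-sum bound).
\end{itemize}

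\textbf{Step 3: the statistic.} Expand as in \eqref{proof_eqn:expansion}:
\[
\norm{\Sum(w',\mathcal{Y})}^2-p\norm{w'}_1=\Bigl[\norm{\Sum(w',\mathcal{G})}^2-p\textstyle\sum_{\mathcal{G}}w'_i\Bigr]+2\iprod{\Sum(w',\mathcal{G}),\Sum(w',\mathcal{B})}+\Bigl[\norm{\Sum(w',\mathcal{B})}^2-p\textstyle\sum_{\mathcal{B}}w'_i\Bigr].
\]
\begin{itemize}
\item[--] The third term is at most $R_f\lesssim n\kappa_0^2+\text{noise}$ by the filter termination condition.
\item[--] The cross term is bounded via Step 2.
\item[--] The first term equals $(\sum_{\mathcal{G}} w_i')^2\kappa^2$ plus fluctuations of order $n\sqrt{p}\cdot\mathrm{polylog}$ plus a down-weighting error. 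Since at most $7un$ weight is lost from the good points, the signal term is at least $(1-7u)^2n^2\kappa^2$, and the $-2u$ in $T_u$ accounts for the cross-term loss.
\end{itemize}

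\textbf{Conclusion.} Under $\mathrm{H}_0$ (part (b)), all three terms are at most $0.5T_un^2\kappa_0^2$ once $\kappa_0^2$ dominates
\[
\frac{\sqrt{p}}{n}+\frac{unp\log(1/u)}{n^2}\approx \frac{\sqrt p}{n}+\frac{\varepsilon p\log(1/\varepsilon)}{n}+\frac{p}{n^2}
\]
up to polylog factors. The stated $\varepsilon^2p\log(1/\varepsilon)$ form needs care: I would expect it to come from bounding the cross term by $\sqrt{\varepsilon n\cdot R_f}\cdot\varepsilon n\sqrt{p}$-type arguments, using the row-sum filter to gain the extra factor of $\varepsilon$.

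Under $\mathrm{H}_1$ (part (c)), the choice $\kappa_0^2\le\bar c\kappa^2$ and $C_\gamma\kappa_0^2\ge 2\kappa^2$ ensures that the signal is not filtered away, since $R_f\ge 2n\kappa^2$. The statistic then exceeds $T_un^2\kappa^2-0.5T_un^2\kappa_0^2\ge 0.5T_un^2\kappa_0^2$.

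Finally, a union bound over the $\delta/4$ failure events yields probability at least $1-\delta$.
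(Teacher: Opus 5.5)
The gap is in Step~3, in your bound on the bad-point term, and it is exactly why you cannot find the $\varepsilon^2p\log(1/\varepsilon)$ term.

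\emph{What termination actually gives.} Termination of the spectral filter, $\norm{M(w,\mathcal{Y})-nI}_{\op}<5R_f$ (or its Gram analogue), only gives $\sum_{i\in\mathcal{B}}w_i\iprod{v,Y_i}^2=O(R_f)$ for each unit vector $v$. To pass to $\norm{\Sum(w,\mathcal{B})}_2^2=\sup_{\norm{v}_2=1}\bigl(\sum_{i\in\mathcal{B}}\sqrt{w_i}\iprod{v,Y_i}\bigr)^2$ you need Cauchy--Schwarz over the $|\mathcal{B}|\le un$ indices, which costs a factor $un$. The correct bound is $\norm{\Sum(w,\mathcal{B})}_2^2=O(unR_f)$ (Lemmas~\ref{lem:spectralfilter1} and~\ref{lem:spectralfilter2}). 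Termination cannot give better: $un$ outliers placed at a single point $z$ with $\norm{z}_2^2\asymp R_f/(un)$ leave the filter idle, while $\norm{\Sum(w,\mathcal{B})}_2^2\asymp unR_f$. So ``third term at most $R_f$'' is false.

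\emph{Why your resulting rate cannot be right.} Taken at face value, your accounting yields the sufficient rate $\sqrt p/n+\varepsilon p\log(1/\varepsilon)/n+p/n^2$, which tends to $0$ as $n\to\infty$ with $\varepsilon,p$ fixed. But for Gaussian inliers any $\norm{\mu}_2\lesssim\varepsilon$ is undetectable at every $n$: the laws $(1-\varepsilon)\mathcal{N}(0,I)+\varepsilon H_0$ and $(1-\varepsilon)\mathcal{N}(\mu,I)+\varepsilon H_1$ coincide for suitable $H_0,H_1$ (the mechanism of Proposition~\ref{thm:low}). So there is no extra factor of $\varepsilon$ to be ``gained'' in the cross term.

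\emph{Where the term actually comes from.} The missing term is simply $\beta_2=unR_f$. The quantity $\beta_2/n^2$ contains $C_\gamma u^2p\log(1/u)$, which with $u\lesssim\varepsilon+1/n$ gives $\varepsilon^2p\log(1/\varepsilon)+p/n^2$ up to logarithms. The leftover pieces $u\sqrt{p/n}+up/n$ are absorbed by AM--GM. Note also that $\beta_2$ contains $C_\gamma un^2\kappa_0^2$, only a factor $u$ below the threshold $0.5T_un^2\kappa_0^2$, so the smallness of $u$ is genuinely used there; your $R_f\lesssim n\kappa_0^2$ bound hides this as well.

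Second, the cross term is only gestured at. ``$\sqrt{R_f}\cdot$(row-sum bound)'' is not an argument. Plain Cauchy--Schwarz, $\norm{\Sum(w',\mathcal{G})}_2\norm{\Sum(w',\mathcal{B})}_2$, is too lossy: for $\varepsilon=0$ and $p\gg n$ it gives order $p/n^{3/2}$ after dividing by $n^2$, against the target $\sqrt p/n+p/n^2$. What you need is the paper's row-sum argument, in three steps:
\begin{enumerate}
\item Use the form \eqref{proof_eqn:expansion}, where the cross term is $\iprod{\Sum(w',\mathcal{Y}),\Sum(w',\mathcal{B})}-p\norm{w'_{\mathcal{B}}}_1$. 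This is the sum over $\mathcal{B}$ of the signed row-sum scores.
\item Show that any at most $un$ good points have total score at most $un^2\kappa^2+O(\beta_1+\beta_2)$ (Lemma~\ref{lem:rowsumfilter-1}). Sub-Weibull concentration handles the good--good part; polarisation of small-set sums, again $O(unR_f)$, handles the good--bad part.
\item RowSumFilter deletes the $un$ largest scores and $|\mathcal{B}|\le un$. Hence each surviving bad point can be matched to a deleted good point with a larger score, which transfers the bound to $\mathcal{B}$ (Lemma~\ref{lem:rowsumfilter}). The resulting $\pm un^2\kappa^2$ is what the $-2u$ in $T_u$ pays for.
\end{enumerate}
A smaller point: the spectral filter need not remove more weight from $\mathcal{B}$ than from $\mathcal{G}$. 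Its invariant is only that good mass removed is at most five times bad mass removed, so up to $5un$ good weight may be lost; your final $(1-7u)n$ is nevertheless correct. With these corrections your outline essentially coincides with the paper's proof.
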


\Cref{thm:poly-time-main} provides finite-sample guarantees for \Cref{alg:efficient-mean-tester}, under sub-Weibull inliers and Huber contamination.  It shows that \Cref{alg:efficient-mean-tester}, which runs in polynomial time, controls both type I and II errors, provided the input signal size $\kappa_0$ is appropriately chosen.  The input $\kappa_0$ is the signal size used in the testing procedure to set the rejection threshold.  Under the null hypothesis, $\kappa_0$ should be large enough so the threshold is above the statistical fluctuation of the test statistics.  Under the alternative, the true signal strength $\kappa$ should satisfy the lower bound in \eqref{eqn:test_t2e}, and $\kappa_0$ is required to be of the same order as~$\kappa$. 

The condition $2/C_\gamma \leq \kappa_0^2/\kappa^2 \leq \bar{c}$ has two purposes.  The upper bound ensures that the rejection threshold is not too large compared with the true signal, so that the signal can still be detected. Similar upper bound requirements are also observed in the change point literature \citep[e.g.][]{Chen_2022, Padilla2022}. The lower bound ensures that the filtering radius $R_{f}$ in \Cref{alg:efficient-mean-tester} is large enough to contain the typical fluctuations in the operator norm of the empirical second-moment matrix or Gram matrix.  This is an artefact of the proof and a potential relaxation is discussed in \Cref{rmk:symmetrize}. 

The condition~\eqref{eqn:u_val} on the \emph{de facto} outlier level $u$ ensures that the combined proportion of contaminated observations and extreme inlier observations is controlled with high probability. Similar small-contamination conditions are used in robust estimation and testing;
see, e.g.~\cite{pmlr-v108-prasad20a} and \cite{Lugosi_Mendelson_2021}.  

The theoretical choice of the detection sensitivity factor is $T_u = (1-7u)^2 - 2u$. This factor appears in the rejection threshold and corresponds to a worst-case lower bound on the total weight retained on the set of inlier points close to the mean after filtering under the alternative. In practice, the filtering subroutines, GramFilter or MomentFilter, typically retain a much larger fraction of these weights. Thus, in \Cref{sect:sim_study}, we use a calibrated form $T_u = (1-ku)^2$, where $k$ is chosen empirically to ensure that the observed false alarm rate does not exceed $\alpha$. 

The minimum signal strength requirement in \eqref{eqn:test_t2e} consists of three terms. The first term, $\sqrt{p}/n$, is the classical Gaussian mean testing rate. The second term, $\varepsilon^2 p\log(1/\varepsilon)$, reflects the effect of Huber contamination on the filtering step.  The third term $p/n^2$ arises from controlling the second-moment matrix under sub-Weibull inliers. 

It is useful to compare this with the Gaussian strong contamination setting studied by \citet{Canonne2023}. Their results imply a minimum signal strength requirement of the form
\begin{equation}\label{eqn:gaussiantestrate}
    \kappa^2
    \gtrsim
    \left[\frac{\sqrt{p}}{n}+\varepsilon^2\log(1/\varepsilon)+\varepsilon\sqrt{\frac{p}{n}}\right] \cdot \mathrm{polylog}(p,n, 1/\varepsilon,1/\delta).
\end{equation}
When $\sqrt{p}/n$ is the dominating term in both \eqref{eqn:test_t2e} and \eqref{eqn:gaussiantestrate}, \Cref{thm:poly-time-main} recovers the Gaussian result.  In other regimes, our bound has an additional factor of \(p\) in the contamination-related term, as well as an extra $p/n^2$ term.
This discrepancy arises from our use of the matrix Bernstein inequality (\Cref{thm:matrix_bernstein}) to control the truncated second-moment matrix.
The analysis of \cite{Canonne2023}, who considered only the Gaussian setting, uses an $\varepsilon$-net argument to obtain sharper concentration of the second-moment matrix. This approach, although leading to optimal rates in the Gaussian setting, would incur an additional factor of~$p^{1/\theta}$ in the final rates under the sub-Weibull setting.

\subsection{Robust testing for the finite-moment class}\label{subsect:testalgo2}
When \Cref{alg:efficient-mean-tester} is used under the sub-Weibull assumption, i.e.~$\mathcal{D}=\mathcal{G}^p_{\theta,M}$, the minimum required signal strength to control type I and II errors depends only logarithmically on the failure probability. 
In contrast, when the same procedure is used under finite-moment assumptions, i.e.~$\mathcal{D}=\mathcal{P}^p_{v,\phi}$, the corresponding signal strength requirement depends polynomially on the failure probability; see \Cref{thm:poly-time-main_fm}. 
While this is not a major issue for the stand-alone mean testing problem when the failure probability is treated as a fixed constant, it becomes problematic when \Cref{alg:efficient-mean-tester} is applied repeatedly for online change point detection. Distributing the type~I error budget across many tests would make the minimum detection threshold needed for non-trivial power grow quickly over time.

To address this issue, in \Cref{alg:efficient-mean-tester-mom}, we use a median-of-means-based approach.  We note that median-of-means-type statistics have been applied in a wide range of statistical problems \citep[see, e.g.][]{lerasle2011, Lugosi2019subG, hopkins_2021, lecue2020, li_2025}. To be specific, we split the sample into $K$ blocks, run \Cref{alg:efficient-mean-tester} on each block and aggregate the resulting statistics by their median.  This turns constant-probability block-level control into high-probability control with only logarithmic dependence on the failure probability.  We reject the null when this median is large.  The full procedure is presented in \Cref{alg:efficient-mean-tester-mom}.

\begin{algorithm}[H]
\caption{Robust mean testing via medians ($\mathrm{RobustMeanTest}_{\text{MoM}}(\{Y_{i}\}_{i=1}^n,\mathcal{P}_{v,\phi}^p; \kappa_0,K,\varepsilon, C_\gamma, T_u)$)}
\label{alg:efficient-mean-tester-mom}
    \begin{algorithmic}
        \INPUT{Dataset $Y_1,\ldots,Y_n \in \R^p$, Class of inlier distribution $\mathcal{P}_{v,\phi}^p$, Signal size input $\kappa_0$, Group number $K$, Contamination level $\varepsilon$, Filtering strength $C_\gamma$, Detection sensitivity factor $T_u$}
        \State Partition the dataset uniformly at random into $K$ disjoint blocks $\mathcal{Y}_1,\dots,\mathcal{Y}_K$ of equal size $n_0 = \lfloor n/K \rfloor$.
        \State {$u\leftarrow\varepsilon+\frac{1}{20}\min\left\{1, \left(\frac{p}{n_0}\right)^{v/4}\right\}+\sqrt{\frac{2\left[\varepsilon+0.05\min\{1, (p/n_0)^{v/4}\}\right]\log(16K)}{n_0}}+\frac{2\log(16K)}{3n_0}$}
        \State $R_{f}\leftarrow C_{\gamma} \left(un_0p\log(1/u)+(\sqrt{n_0p}+p)\log(8Kp)+ n_0 \kappa^2_0\right) $
        \For{$i\in\{1,\ldots,K\}$}
         \If{$n_0 \leq p$} 
            \State $w\leftarrow \mathrm{GramFilter}(\{Y_i\}_{i=1}^{n_0}, R_{f})$  \Comment{See Algorithm~\ref{alg:spectralfilter1}}
        \Else 
            \State $w\leftarrow \mathrm{MomentFilter}(\{Y_i\}_{i=1}^{n_0}, R_{f}, u)$ \Comment{See Algorithm~\ref{alg:spectralfilter2}}
        \EndIf 
        \State $w'\leftarrow \mathrm{RowSumFilter}(\{Y_i\}_{i=1}^{n_0}, w,u)$ \Comment{See Algorithm~\ref{alg:rowsumfilter}}
        \State Form the statistic $U_i \leftarrow \Big| \norm{\Sum(w',\mathcal{Y}_i)}_2^2 - p \norm{w'_{\mathcal{Y}_i}}_1 \Big|$.
        \EndFor
         \If{$\med(\{U_{i}\}_{i=1}^K) > 0.5T_u \kappa_0^2 n_0^2$} 
            \State \textbf{return} $1$ 
        \Else
            \State \textbf{return} $0$
        \EndIf
    \end{algorithmic}
\end{algorithm}

\begin{proposition}[Mean testing with inlier in $\mathcal{P}^{p}_{v,\phi}$]
\label{thm:poly-time-main-mom}
Let $n, p \in \mathbb{N}$, $\omega > 0$, and $\varepsilon \in [0,0.08)$. Let $\{Y_i\}_{i \in [n]}$ be independently generated according to \eqref{eqn:huberhd}, with $\mathcal{D}=\mathcal{P}^{p}_{v,\phi}$ for some $v\geq 4$ and $\phi>0$. Assume
\begin{equation}\label{eqn:u_val-mom}
    u = \varepsilon + \frac{1}{20}\min\left(1, \left(\frac{p}{n_0}\right)^{v/4}\right)
+ \sqrt{\frac{2\left[\varepsilon + 0.05\min\left\{1, \left({p}/{n_0}\right)^{v/4}\right\}\right]\log(16K)}{n_0}}
+ \frac{2\log(16K)}{3n_0}\leq 0.08.
\end{equation}
Then, \Cref{alg:efficient-mean-tester-mom} with the detection sensitivity factor $T_u=(1-7u)^2-2u$ and group number $K=\lceil 8 \log(1/\omega) \rceil$ has the following guarantees.
  \begin{enumerate}[label=(\textbf{\alph*})]
      \item The algorithm has runtime $O(K[\varepsilon +\min\{1,(p/n_0)^{v/4}\}] n_0^2 p \min(n_0, p) + np)$.
      \item If $\mu=\mathbf{0}$ and the input $\kappa_0$ satisfies
      \begin{equation*}
            \kappa_0^2\gtrsim\begin{cases}
                \left[\frac{\sqrt{p}}{n}+\varepsilon^2p\log(1/\varepsilon)+\left(\frac{p}{n}\right)^{2-2/v}+\frac{p^{1+v/2}}{n^{v/2}}\right]\cdot\mathrm{polylog}(v, n, p, 1/\omega),& n>pK,\\
                p \cdot\mathrm{polylog}(v, n, p, 1/\omega),  & n\leq pK,
            \end{cases} 
        \end{equation*}
      then the algorithm outputs 0 with probability at least $1-\omega$.
      \item If $\kappa = \|\mu\|_2$ satisfies $0 < 2/C_\gamma\leq \kappa_0^2/\kappa^2\leq \bar{c}$, with $\bar{c} > 0$ being an absolute constant, and 
\begin{equation}\label{eqn:test_t2e_mom}
    \kappa^2\gtrsim\begin{cases}
                \left[\frac{\sqrt{p}}{n}+\varepsilon^2p\log(1/\varepsilon)+\left(\frac{p}{n}\right)^{2-2/v}+\frac{p^{1+v/2}}{n^{v/2}}\right]\cdot\mathrm{polylog}(v, n, p, 1/\omega),& n>pK,\\
                p \cdot\mathrm{polylog}(v, n, p, 1/\omega),  & n\leq pK,
            \end{cases} 
\end{equation}
then the algorithm outputs 1 with probability at least $1-\omega$.
  \end{enumerate}
\end{proposition}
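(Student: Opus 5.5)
The plan is to reduce the proposition to a constant-probability guarantee for a single block, and then boost it through the median. The block-level guarantee is the finite-moment analogue of \Cref{thm:poly-time-main}; the paper refers to it as \Cref{thm:poly-time-main_fm}. I will use it in the following form. Take one block of $n_0=\lfloor n/K\rfloor$ points, failure probability $\delta_0=1/(4K)$ for the part controlling the de facto outlier proportion, and overall block failure probability at most $1/4$. Then:
\begin{itemize}
\item Under $\mathrm{H}_0$, $U_i\le 0.5T_u\kappa_0^2n_0^2$ with probability at least $3/4$.
\item Under $\mathrm{H}_1$ with $2/C_\gamma\le\kappa_0^2/\kappa^2\le\bar c$, $U_i>0.5T_u\kappa_0^2n_0^2$ with probability at least $3/4$.
\end{itemize}
Both hold provided $\kappa_0^2$ (resp.\ $\kappa^2$) exceeds the block rate with $n$ replaced by $n_0$. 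The choice $\log(16K)=\log(4/\delta_0)$ in $u$ matches \eqref{eqn:u_val-mom}, which is why the block level $\delta_0$ is tied to $K$.

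The block rate has several sources, in this order.
\begin{itemize}
\item The inlier variance contributes $\sqrt{p}/n_0$.
\item Filtering under contamination contributes $\varepsilon^2p\log(1/\varepsilon)$, exactly as in the sub-Weibull proof.
\item Extreme inliers give the term $0.05\min\{1,(p/n_0)^{v/4}\}$ in $u$. This proportion is obtained from Markov's inequality on $\|Y-\mu\|_2^{v/2}$ against a radius of order $\sqrt{p}\,(n_0/p)^{1/2}$. With $v\ge4$, the squared mass of these points after filtering gives $(p/n_0)^{2-2/v}$.
\item Controlling the truncated second-moment matrix of the remaining inliers uses the matrix Bernstein inequality (\Cref{thm:matrix_bernstein}). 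With constant failure probability only, the heavy-tail remainder becomes $p^{1+v/2}/n_0^{v/2}$.
\end{itemize}
When $n_0\le p$, i.e.\ $n\le pK$, the fraction $u$ saturates and the bounds degenerate to the trivial requirement $\kappa^2\gtrsim p$ up to polylogs. Replacing $n_0$ by $n/K$ with $K\asymp\log(1/\omega)$ costs only polylog factors, which gives the displayed rates. Randomness of the partition is harmless: given the assignment, each block is an independent sample from \eqref{eqn:huberhd}. Blocks are disjoint, so the $U_i$ are independent.

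For the aggregation, let $B_i$ be the indicator that block $i$ fails. The $B_i$ are independent with mean at most $1/4$. The median errs only if $\sum_i B_i\ge K/2$. By Hoeffding, this has probability at most $\exp(-2K(1/4)^2)=\exp(-K/8)\le\omega$ for $K=\lceil8\log(1/\omega)\rceil$. This proves (b) and (c).

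For (a), each block costs $O(u\,n_0^2p\min(n_0,p)+n_0p)$ by the filter runtime analysis. Here $u\lesssim\varepsilon+\min\{1,(p/n_0)^{v/4}\}$, which bounds the number of filtering iterations. Summing over the $K$ blocks gives the stated runtime.

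The main obstacle is the block-level heavy-tail analysis: obtaining the $(p/n_0)^{2-2/v}$ and $p^{1+v/2}/n_0^{v/2}$ terms. This requires calibrating the truncation radius so that Markov's inequality controls the number of extreme inliers, while the truncated points are bounded enough for matrix Bernstein to control the operator norm within $R_f$. I would try a radius of order $\sqrt{p}\,(n_0/p)^{1/4+\eta}$ and optimise the resulting bias term $\|\mathbb{E}[(Y-\mu)\mathbbm{1}\{\|Y-\mu\|>r\}]\|^2$ against the concentration term. The rest of the proof then mirrors \Cref{thm:poly-time-main}.
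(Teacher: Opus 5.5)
Your proposal matches the paper's proof. The paper obtains constant-probability block-level guarantees from the single-block finite-moment analysis (\Cref{thm:poly-time-main_fm}) with $n\to n_0$ and $\delta=1/(4K)$, so that $\log(4/\delta)=\log(16K)$ and $\log(2p/\delta)=\log(8Kp)$ agree with $u$ and $R_f$. It then boosts these through the median with $K=\lceil 8\log(1/\omega)\rceil$, and your Hoeffding argument for independent, non-identically distributed blocks is, if anything, cleaner than the paper's appeal to \Cref{prop:med}.

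The only slip is heuristic: $p^{1+v/2}/n_0^{v/2}$ comes from the filtering term $u^2p\log(1/u)$ with $u\asymp(p/n_0)^{v/4}$, not from matrix Bernstein. Since you invoke \Cref{thm:poly-time-main_fm} as a black box, the radius calibration you flag as the main obstacle is already done there and need not be redone.
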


\Cref{thm:poly-time-main-mom} gives finite-sample guarantees on type I and II error rates for \Cref{alg:efficient-mean-tester-mom} under finite-moment inliers. Similar to \Cref{thm:poly-time-main}, a large enough $\kappa_0$ is needed to control the type I error. To control type~II error, $\kappa_0$ must be of the same order as the true signal strength $\kappa$, with $\kappa$ satisfying \eqref{eqn:test_t2e_mom}. The replacement of condition \eqref{eqn:u_val} by \eqref{eqn:u_val-mom} reflects the need to control the proportion of contaminated observations and extreme-valued inliers for each group for the median-of-means construction.

The median-of-means step is used to obtain high-probability control.  Each block only needs a constant-probability guarantee, and taking the median across $K = \lceil 8\log(1/\omega) \rceil$ blocks amplifies this to a failure probability of at most $\omega$.  As a result, the signal requirement depends only logarithmically on $1/\omega$, which is essential for the online change point settings in \Cref{sect:high_dim_cpt}.

When $n>pK$, the minimum signal strength requirement in \eqref{eqn:test_t2e_mom} consists of the Gaussian testing term $\sqrt{p}/n$, the contamination term $\varepsilon^2 p \log(1/\varepsilon)$, and two finite moment terms $(p/n)^{2-2/v}+ p^{1+v/2}/n^{v/2}$.  The latter two terms reflect the cost of controlling second-moment fluctuations under only a finite $v$-th moment assumption, decreasing with increasing $v$.
Compared with the Gaussian result of \citet{Canonne2023},  \Cref{thm:poly-time-main-mom} recovers the same leading rate when $\sqrt{p}/n$ dominates. The additional terms quantify the price of finite-moment inliers and the matrix concentration argument used after truncation. When $n\leq pK$, the minimum signal strength requirement is of order $p$, independent of $n$. This could be an artefact of the proof; see \Cref{rem:contam_rate_fm} for detailed explanations.

The runtime of \Cref{alg:efficient-mean-tester-mom} is essentially $K = \lceil 8 \log(1/\omega) \rceil$ times that of \Cref{alg:efficient-mean-tester} applied to a block of size~$n_0$. Thus, the algorithm still runs in polynomial time.

\section{Robust multivariate online change point detection} \label{sect:high_dim_cpt}

In this section, we apply the robust mean testing algorithms developed in \Cref{sect:hdtest} to the online change point detection problem.  The resulting procedure is given in \Cref{alg:cpd_highd}. Its theoretical properties are studied in Theorems~\ref{thm:cpt_hd_subW} and \ref{thm:cpt_hd_fm}, for the sub-Weibull class $\mathcal{G}_{\theta,M}^p$ and the finite-moment class $\mathcal{P}_{v,\phi}^p$, respectively.

\begin{algorithm}[ht]
\caption{Online change point detection via robust mean testing}\label{alg:cpd_highd}
\begin{algorithmic}
\INPUT Dataset $\{X_u\}_{u\in\mathbb{N}}$, Class of inlier distributions $\mathcal{D}=\{\mathcal{G}_{\theta,M}^p,\mathcal{P}_{v,\phi}^p\}$, Signal size input $\kappa_0>0$, Standard deviation $\sigma>0$, False alarm probability $\alpha\in(0,1)$, Minimum sample size $h_t$, Outlier control threshold $\Omega$, Contamination level $\varepsilon\in [0,\Omega)$, Filtering strength $C_\gamma$, Detection sensitivity factor $T_u$, Group number constant $K_c$ (for $\mathcal{D}=\mathcal{P}_{v,\phi}^p$ only) 
\State $t \gets 2$
\State $\mathrm{FLAG} \gets 0$
\While{$\mathrm{FLAG} = 0$}
    \State $t \gets t + 1$
    \State $\delta_t\gets \frac{4\alpha}{t^2(t+1)}$
    \State $Y_{r,t} \gets (X_{t-r+1} - X_s)/(\sqrt{2}\sigma), \forall 1\leq r\leq \lfloor t/2 \rfloor$ 
    \For{$s \in\{ h_t, \ldots, \lfloor t/2 \rfloor\}$}
    \If{$\mathcal{D}=\mathcal{G}_{\theta,M}^p$}
    \State $u \gets 2\varepsilon + \frac{1}{n}
+ \sqrt{\frac{2(2\varepsilon + 1/n)\log(4/\delta_t)}{n}}
+ \frac{2\log(4/\delta_t)}{3n}$
        \If{$u\leq \Omega$}
        \State $\mathrm{FLAG} \leftarrow \mathrm{RobustMeanTest}(\{Y_{i,t}\}_{i=1}^s,\mathcal{G}_{\theta,M}^p;\tfrac{\kappa_0}{\sqrt{2}}, \delta_t, 2\varepsilon, C_\gamma, T_u)$ \Comment{See \Cref{alg:efficient-mean-tester}}
        \EndIf
        \Else
        \State $K\leftarrow \lceil K_c \log(1/\delta_t) \rceil$
        \State $n_0\gets \lfloor n/K \rfloor$ 
        \State $u \gets 2\varepsilon + \frac{1}{20}\min\left(1,\left(\frac{p}{n_0}\right)^{v/4}\right)
+ \sqrt{\frac{2(2\varepsilon + 0.05\min((p/n_0)^{v/4},1))\log(16K)}{n_0}}
+ \frac{2\log(16K)}{3n_0}$
        \If{$u\leq \Omega$}
        \State $\mathrm{FLAG}  \leftarrow \mathrm{RobustMeanTest}_{\text{MoM}}(\{Y_{i,t}\}_{i=1}^s,\mathcal{P}_{v,\phi}^p; \tfrac{\kappa_0}{\sqrt{2}}, K, 2\varepsilon, C_\gamma, T_u)$ \Comment{See \Cref{alg:efficient-mean-tester-mom}}
        \EndIf
        \EndIf
        \If{$\mathrm{FLAG} = 1$} \textbf{break} \EndIf
    \EndFor
\EndWhile
\State \Return $t$
\end{algorithmic}
\end{algorithm}

The main idea of Algorithm~\ref{alg:cpd_highd} is to reduce the online change point detection problem to a sequence of robust mean testing problems. At each $t \geq 2$, we compare observations from the beginning and the end of the current data stream by forming pairwise differences
\[
Y_{r,t} = \frac{X_{t-r+1} - X_r}{\sqrt{2}\sigma}, \quad r = 1, \ldots, \lfloor t/2 \rfloor.
\]
This construction, also used in \citet{li_2025}, transforms a possible change in the mean of $\{X_t\}$ into a non-zero mean in the derived variables $\{Y_{r, t}\}$.  Under the no-change model, these variables have zero inlier means. Under the alternative, the inlier mean is non-zero for a range of $r$; see \Cref{prop:properties_Ds} for details.

\Cref{alg:cpd_highd} scans over time $t$ and candidate window sizes $s$.  For each pair $(s, t)$ satisfying the minimum sample size requirement, the procedure applies the appropriate robust mean testing algorithm to the derived sample $\{Y_{i, t}\}_{i = 1}^s$.  Depending on the tail behaviour of the data, we apply either Algorithm~\ref{alg:efficient-mean-tester} or its median-of-means variant, Algorithm~\ref{alg:efficient-mean-tester-mom}. We note that this choice depends on the knowledge of the tail properties of data points.  The procedure stops as soon as one of these tests rejects the null hypothesis.

We now state the signal size conditions under which \Cref{alg:cpd_highd} achieves false alarm control and detection delay guarantees.

\begin{assumption} \label{assum: hd_cpt}
We assume that one of the following holds.
\begin{enumerate}[label=(\textbf{\alph*})]
    \item \label{assum: hd_cpt1} Assume that $\kappa$ satisfies
    \[
        \kappa^2\gtrsim \left\{ \frac{\sqrt{p}}{\Delta}+\frac{p}{\Delta^2}+\varepsilon^2 p\log(1/\varepsilon)\right\} \cdot \mathrm{polylog}(\theta, \Delta, p, 1/\alpha).
    \]

    \item \label{assum: hd_cpt2} Assume that $\kappa$ satisfies
    \begin{equation*}
    \kappa^2\gtrsim\begin{cases}
                \left[\frac{\sqrt{p}}{\Delta}+\varepsilon^2p \log(1/\varepsilon)+\left(\frac{p}{\Delta}\right)^{2-2/v}+\frac{p^{1+v/2}}{\Delta^{v/2}}\right]\cdot\mathrm{polylog}(v, \Delta, p, 1/\alpha),& \Delta\gtrsim p \log(\Delta/\alpha),\\
                p \cdot\mathrm{polylog}(v, \Delta, p, 1/\alpha),  & \Delta\lesssim p \log(\Delta/\alpha).
    \end{cases} 
\end{equation*}
\end{enumerate}
\end{assumption}

\Cref{assum: hd_cpt} is the change point analogue of the mean testing signal conditions in \Cref{sect:hdtest}. In particular, \Cref{assum: hd_cpt}\ref{assum: hd_cpt1} corresponds to \eqref{eqn:test_t2e} for sub-Weibull inliers, and \Cref{assum: hd_cpt}\ref{assum: hd_cpt2} to \eqref{eqn:test_t2e_mom} for finite-moment inliers. This connection arises because, after a change occurs at time \(\Delta\), the pairwise differences used in \Cref{assum: hd_cpt} form a mean testing problem with signal size proportional to \(\kappa\) and sample size of order $\Delta$.  The extra logarithmic factors come from repeated testing over many times and window sizes, since the total false alarm probability $\alpha$ is split across all tests.

When the term \(\sqrt{p}/\Delta\) dominates, \Cref{assum: hd_cpt} reduces, up to logarithmic factors, to $\kappa^2 \Delta \gtrsim_{\log} \sqrt{p}$. This matches the rate-optimal signal strength requirements in multivariate online change point detection \citep[e.g.~Proposition~8 in][]{moen_2025}.
The remaining terms reflect the additional cost of contamination and heavy-tailedness.

\begin{theorem}[Multivariate change point detection for class $\mathcal{G}_{\theta,M}^p$] \label{thm:cpt_hd_subW}
    Suppose $\{X_i\}_{i\in\mathbb{N}}$ satisfies \Cref{assum:kappa} with $\mathcal{D}=\mathcal{G}^p_{\theta,M}$ for some $\theta\in (0,2]$ and $M>0$. Let $\varepsilon\in[0,0.08)$, $\alpha\in (0,1)$ and $\hat{t}$ be the stopping time returned by \Cref{alg:cpd_highd}, with signal size input $\kappa_0\asymp \kappa$, $\Omega=0.08$, minimum sample size 
    \begin{equation}\label{eqn:min_h_t1}
        h_t=C\left(1+\frac{\sqrt{p}}{\kappa^2}+\frac{\sqrt{p}}{\kappa}\right) \cdot\mathrm{polylog} (p, 1 / \varepsilon, 1 / \kappa, {t}/{\alpha}) 
    \end{equation} and detection sensitivity factor $T_u=(1-7u)^2-2u$. We then have the following.
    \begin{enumerate}[label=(\textbf{\alph*})]
        \item $\mathbb{P}_{\infty}(\hat{t}<\infty) \leq  \alpha$.
        \item $\mathbb{P}_{\Delta}(\hat{t}\leq\Delta) \leq  \alpha$ for any $\Delta\geq1$.
        \item Under \Cref{assum: hd_cpt}\ref{assum: hd_cpt1}, it holds that
        \begin{equation*}
            \mathbb{P}_\Delta\left\{\Delta<\hat{t} \leq \Delta + C\left(1+\frac{\sqrt{p}}{\kappa^2}+\frac{\sqrt{p}}{\kappa}\right)\cdot\mathrm{polylog} (p, 1 / \varepsilon, 1 / \kappa, {\Delta}/{\alpha})\right\}\geq 1-\alpha,
        \end{equation*}
        for some constant $C>0$ depending only on $\theta$ and $M$.
    \end{enumerate}
\end{theorem}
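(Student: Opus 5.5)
The plan is to reduce everything to repeated applications of \Cref{thm:poly-time-main} to the derived samples $\{Y_{r,t}\}_{r=1}^s$, combined with a union bound over all pairs $(s,t)$. The test at time $t$ uses level $\delta_t = 4\alpha/\{t^2(t+1)\}$, and there are at most $t/2$ window sizes. Summing gives $\sum_{t\ge 3}(t/2)\,\delta_t \le 2\alpha\sum_{t\ge3} 1/\{t(t+1)\}\le \alpha$.

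The first ingredient is the structure of the derived variables, \Cref{prop:properties_Ds}. For $r\le \lfloor t/2\rfloor$, the indices $r$ and $t-r+1$ are distinct, so $Y_{r,t}$, $r=1,\dots,s$, are independent. Each $Y_{r,t}$ follows a Huber model with contamination at most $2\varepsilon$, since it is an outlier only if either endpoint is. Its inlier coordinates are independent differences of two sub-Weibull variables scaled by $\sqrt2\sigma$, hence sub-Weibull with comparable parameters (up to a constant depending on $\theta$) and identity covariance. The inlier mean is $0$ unless $r\le \Delta< t-r+1$, in which case it equals $(f_{t-r+1}-f_r)/(\sqrt2\sigma)$, with norm $\kappa/(\sqrt2\sigma)$. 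One technical point needs care: the contaminated components and inlier laws may vary with $r$, whereas \eqref{eqn:huberhd} uses a common $F$. I would observe that the proof of \Cref{thm:poly-time-main} only uses independence, the tail class and the mean, so it applies verbatim. The threshold $u\le\Omega=0.08$ enforced by the algorithm matches \eqref{eqn:u_val} with $\varepsilon$ replaced by $2\varepsilon$.

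For parts (a) and (b): under $\mathbb{P}_\infty$, or for $t\le\Delta$ under $\mathbb{P}_\Delta$, every tested sample has $\mu=\mathbf 0$. By \Cref{thm:poly-time-main}(b), each test wrongly rejects with probability at most $\delta_t$, provided $\kappa_0^2/2$ exceeds $\{\sqrt p/s+\varepsilon^2p\log(1/\varepsilon)+p/s^2\}\,\mathrm{polylog}(\theta,s,p,1/\delta_t)$ for every $s\ge h_t$. This is where $h_t$ is used. Since $\kappa_0\asymp\kappa$, the condition $s\ge h_t\gtrsim (\sqrt p/\kappa^2+\sqrt p/\kappa)\,\mathrm{polylog}$ makes the $\sqrt p/s$ and $p/s^2$ terms at most $\kappa^2$. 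The contamination term is handled by assuming $\kappa^2\gtrsim\varepsilon^2p\log(1/\varepsilon)\,\mathrm{polylog}$, as in \Cref{assum: hd_cpt}\ref{assum: hd_cpt1}. For (a) and (b) themselves I would note that this term is implicitly part of the calibration of $\kappa_0$. The union bound then yields (a) and (b).

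For part (c), take $t^\ast=\Delta+\lceil C(1+\sqrt p/\kappa^2+\sqrt p/\kappa)\,\mathrm{polylog}\rceil$ and choose $s$ of order $\min(\Delta,\,t^\ast-\Delta)$, with $s\ge h_{t^\ast}$ and $s\le\lfloor t^\ast/2\rfloor$. The case split is as follows.
\begin{itemize}
\item If $\Delta \ge t^\ast-\Delta$, take $s=t^\ast-\Delta$. Then $r\le s\le\Delta$ and $t^\ast-r+1>\Delta$ for all $r\le s$, so every pair straddles the change and the mean is $\mu$ throughout.
\item If $\Delta< t^\ast-\Delta$, take $s=\Delta$. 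All $r\le\Delta$ straddle because $t^\ast-\Delta+1>\Delta$. Here the sample size is $\Delta$, and \Cref{assum: hd_cpt}\ref{assum: hd_cpt1} is exactly condition \eqref{eqn:test_t2e} with $n=\Delta$.
\end{itemize}
In the first case \eqref{eqn:test_t2e} with $n=s\ge h_{t^\ast}$ holds by the choice of $h$. In both cases \Cref{thm:poly-time-main}(c), with $\kappa_0/\sqrt2$ versus $\kappa/(\sqrt2\sigma)$ satisfying the ratio condition by $\kappa_0\asymp\kappa$ and suitable $C_\gamma$, gives rejection with probability at least $1-\delta_{t^\ast}$. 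Combined with (b), this gives the bound with probability at least $1-2\alpha$; rescaling $\alpha$ gives $1-\alpha$. The polylog in $t^\ast$ must be converted to $\mathrm{polylog}(\Delta/\alpha)$. This follows because $t^\ast\lesssim \Delta+h$, and $h$ itself depends only logarithmically on $t$, so the implicit equation for $t^\ast$ is solvable.

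The main obstacle is bookkeeping the self-referential minimum sample size: $h_t$ depends on $\log(t/\alpha)$, while $t^\ast$ depends on $h$. Alongside this, I must verify that the theoretical constant conditions ($u\le 0.08$, $2/C_\gamma\le\kappa_0^2/\kappa^2\le\bar c$) hold simultaneously for all tested $(s,t)$. Making $h_t$ large enough also forces $1/s$ and $\log(1/\delta_t)/s$ to be small, which is what keeps $u\le\Omega$.
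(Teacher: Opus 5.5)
Your skeleton matches the paper's. You use the same union bound with $\delta_t=4\alpha/\{t^2(t+1)\}$ for (a)--(b), the reduction through \Cref{prop:properties_Ds}, and one application of \Cref{thm:poly-time-main}(c) to a fully straddling window for (c). Two of your remarks are fair: the $\varepsilon^2p\log(1/\varepsilon)$ term must be absorbed into $\kappa_0$ even for (a)--(b), and the inlier laws may depend on $r$. The paper relies on both implicitly, the former through the proviso $\kappa\gtrsim\varepsilon\sqrt{p\log(1/\varepsilon)}$ in \Cref{rmk:poly-time-main}.

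The step that fails is your second case in (c). When $\Delta<t^\ast-\Delta$ you use the window $s=\Delta$ at time $t^\ast$, but \Cref{alg:cpd_highd} only runs windows with $s\ge h_{t^\ast}$. Suppose $t^\ast-\Delta$ has the same form as $h$, say $t^\ast-\Delta=h_{2\Delta}$. Then $h_t$ is nondecreasing in $t$ and $t^\ast>2\Delta$, so $\Delta<h_{2\Delta}\le h_{t^\ast}$, and that test is never performed. Verifying \eqref{eqn:test_t2e} at $n=\Delta$ therefore gives power to a test the algorithm does not execute. If you instead enlarge $t^\ast-\Delta$ to make the case consistent, you still need $\Delta\ge h_{t^\ast}$ from the assumption, and you never derive it.

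That is the missing ingredient, and it is what the paper uses. \Cref{assum: hd_cpt}\ref{assum: hd_cpt1} is \eqref{eqn:test_t2e} with $n=\Delta$, i.e.\ $\Delta\gtrsim(\sqrt p/\kappa^2+\sqrt p/\kappa)\,\mathrm{polylog}$. The paper reads this as $\Delta\ge d:=h_{2\Delta}$, with the polylog factor implicitly also covering the constant term of $h_t$. Your second case then never arises.

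This also removes what you call the main obstacle. With $d=h_{2\Delta}\le\Delta$ and $t^\ast=\Delta+d\le 2\Delta$, monotonicity gives $h_{t^\ast}\le d\le\lfloor t^\ast/2\rfloor$ and $\delta_{t^\ast}\ge\delta_{2\Delta}$. Hence:
\begin{itemize}
\item the window $s=d$ is executed;
\item every pair $r\le d\le\Delta$ straddles the change;
\item the hypotheses of \Cref{thm:poly-time-main}(c) at level $\delta_{t^\ast}$ follow from those at level $\delta_{2\Delta}$.
\end{itemize}
No implicit equation for $t^\ast$ is needed, and $\mathrm{polylog}(t^\ast/\alpha)\lesssim\mathrm{polylog}(\Delta/\alpha)$ is immediate.

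Finally, you do not need to rescale $\alpha$. The event $\{\hat t\le\Delta\}$ and the failure of the single test at $(d,\Delta+d)$ are charged to disjoint terms of the same budget $\sum_t\sum_{s\le t/2}\delta_t\le\alpha$. So the bound $1-\alpha$ holds for the algorithm run with $\alpha$ itself.
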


\begin{theorem}[Multivariate change point detection for class $\mathcal{P}^p_{v,\phi}$] \label{thm:cpt_hd_fm}
    Suppose $\{X_i\}_{i\in\mathbb{N}}$ satisfies \Cref{assum:kappa} with $\mathcal{D}= \mathcal{P}^p_{v,\phi}$ for some $v\geq 4$, $\phi>0$. Let $\varepsilon\in[0,0.04)$, $\alpha\in (0,1)$ and $\hat{t}$ be the stopping time returned by \Cref{alg:cpd_highd}, with signal size input $\kappa_0\asymp \kappa$, $\Omega=0.08$, minimum sample size 
    \begin{equation}\label{eqn:min_h_t2}
        h_t=Cg(p,\kappa)\mathrm{poly}\log(p,1/\varepsilon,1/\kappa, t/\alpha),    
    \end{equation}
    group number constant $K_c=8$ and detection sensitivity factor $T_u=(1-7u)^2-2u$, where
    \[
        g(p,\kappa)=\begin{cases}
            \sqrt{p}/\kappa^2, & \varepsilon\sqrt{p}\log(1/\varepsilon)\lesssim \kappa \lesssim p^{-\frac{(2v-2)\vee (v+4)}{4v-8}}, \\
            {p^{1+2/v}}/{\kappa^{4/v}}+{p}/{\kappa^{v/(v-1)}}, & p^{-\frac{(2v-2)\vee (v+4)}{4v-8}}\lesssim \kappa \lesssim \sqrt{p}, \\
            1, & \kappa \gtrsim \sqrt{p}.
        \end{cases}
    \]
    We then have the following.
    \begin{enumerate}[label=(\textbf{\alph*})]
        \item $\mathbb{P}_{\infty}(\hat{t}<\infty) \leq  \alpha$.
        \item $\mathbb{P}_{\Delta}(\hat{t}\leq\Delta) \leq  \alpha$ for any $\Delta \geq 1$.
        \item Under \Cref{assum: hd_cpt}\ref{assum: hd_cpt2}, it holds that
        \begin{equation*}
            \mathbb{P}_\Delta\left(\Delta < \hat{t} \leq \Delta + Cg(p,\kappa)\mathrm{poly} \log(p,1/\varepsilon,1/\kappa,\Delta/\alpha)\right)\geq 1-\alpha,
        \end{equation*}
        for some constant $C>0$ depending only on $v$ and $\phi$.
    \end{enumerate}
\end{theorem}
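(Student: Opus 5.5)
The proof reduces Theorem~\ref{thm:cpt_hd_fm} to repeated applications of \Cref{thm:poly-time-main-mom}, combined with a union bound over the pairs $(s,t)$ scanned by \Cref{alg:cpd_highd}.

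\textbf{Step 1: structure of the differences.} For each $t$ and $r\le\lfloor t/2\rfloor$, the variable $Y_{r,t}=(X_{t-r+1}-X_r)/(\sqrt2\sigma)$ is a function of two independent Huber-contaminated observations. It is therefore itself Huber contaminated with level at most $1-(1-\varepsilon)^2\le 2\varepsilon$. Its inlier part is the difference of two independent $\mathcal{P}^p_{v,\phi}$ vectors, rescaled. I would verify three properties of this inlier law:
\begin{itemize}
\item coordinates are independent;
\item covariance is $I$;
\item each coordinate has $v$-th central moment bounded by $(C\phi/\sigma)^v$, via $|a-b|^v\le 2^{v-1}(|a|^v+|b|^v)$.
\end{itemize}
So the inlier law lies in $\mathcal{P}^p_{v,\phi'}$ with $\phi'\asymp\phi/\sigma$. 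This is the content of \Cref{prop:properties_Ds}. For fixed $t$ the pairs $\{r,t-r+1\}$ are disjoint, so $\{Y_{r,t}\}_{r\le s}$ are independent.

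The mean of $Y_{r,t}$ depends on where the change sits:
\begin{itemize}
\item if $t\le\Delta$, it is $\mathbf 0$;
\item if $t>\Delta$ and $r\le\min(t-\Delta,\Delta)$, it is $(f_{\Delta+1}-f_\Delta)/(\sqrt2\sigma)$, of norm $\kappa/(\sqrt2\sigma)$;
\item otherwise it is $\mathbf 0$.
\end{itemize}
Because $\varepsilon<0.04$, we have $2\varepsilon<0.08$, which matches the range required by \Cref{thm:poly-time-main-mom} together with the check $u\le\Omega$.

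\textbf{Step 2: false alarm, parts (a) and (b).} For every $t\le\Delta$ (all $t$ when $\Delta=\infty$) and every $s\ge h_t$ at which a test is run, the sample $\{Y_{i,t}\}_{i\le s}$ satisfies $\mathrm H_0$. I would apply \Cref{thm:poly-time-main-mom}(b) with $\omega=\delta_t=4\alpha/(t^2(t+1))$. This requires $K=\lceil 8\log(1/\delta_t)\rceil$, which is exactly $K_c=8$, and requires $\kappa_0^2/2$ to exceed the null threshold at sample size $s$. That threshold is decreasing in $s$, so it suffices to check it at $s=h_t$. The choice of $h_t$ in \eqref{eqn:min_h_t2} is designed so that $\kappa^2$ dominates each term at $n=h_t$:
\begin{itemize}
\item $\sqrt p/n$ is dominated once $n\gtrsim\sqrt p/\kappa^2$;
\item $(p/n)^{2-2/v}$ is dominated once $n\gtrsim p/\kappa^{v/(v-1)}$;
\item $p^{1+v/2}/n^{v/2}$ is dominated once $n\gtrsim p^{1+2/v}/\kappa^{4/v}$;
\item the $n\le pK$ case, with threshold $p$, is handled by $\kappa\gtrsim\sqrt p$, which gives $g=1$;
\item $\varepsilon^2p\log(1/\varepsilon)$ is dominated by the lower-end condition on $\kappa$.
\end{itemize}
Matching these piecewise gives the three regimes of $g(p,\kappa)$. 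A union bound then gives
\[
\sum_t\sum_{s\le t/2}\delta_t\le\sum_t \frac{2\alpha}{t(t+1)}\le\alpha .
\]

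\textbf{Step 3: delay, part (c).} Set $t^*=\Delta+\lceil Cg(p,\kappa)\,\mathrm{polylog}\rceil$ and $s^*=\min(t^*-\Delta,\lfloor t^*/2\rfloor)$. For this pair, all $Y_{r,t^*}$ with $r\le s^*$ share the mean of norm $\kappa/(\sqrt2\sigma)$. Two conditions then need checking:
\begin{itemize}
\item $s^*\ge h_{t^*}$, using $\log t^*\lesssim\log\Delta$, which holds under \Cref{assum: hd_cpt}\ref{assum: hd_cpt2} since $t^*\lesssim\Delta\,\mathrm{polylog}$;
\item the signal condition \eqref{eqn:test_t2e_mom} at $n=s^*$, using $\kappa_0\asymp\kappa$ so that $2/C_\gamma\le\kappa_0^2/\kappa^2\le\bar c$ holds after rescaling.
\end{itemize}
When $s^*=t^*-\Delta$, the signal condition at $n=s^*$ is the same computation as in Step 2. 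When $s^*\asymp\Delta$, it follows directly from \Cref{assum: hd_cpt}\ref{assum: hd_cpt2}. \Cref{thm:poly-time-main-mom}(c) then gives rejection with probability at least $1-\delta_{t^*}$. Combining this with part (b) bounds the total failure probability by $\alpha$, after halving $\alpha$ in the constants.

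\textbf{Main obstacle.} The delicate part is the bookkeeping that inverts \eqref{eqn:test_t2e_mom} into $g(p,\kappa)$. This covers the crossover points $p^{-((2v-2)\vee(v+4))/(4v-8)}$ and $\sqrt p$, the switch between the cases $n>pK$ and $n\le pK$ with $K\asymp\log(t/\alpha)$, and the requirement that $u\le 0.08$ holds at both $s=h_t$ and $s=s^*$. To settle this I would compare the terms pairwise, determine which one dominates in each $\kappa$ range, and absorb the remaining logarithmic discrepancies into the polylog factor.
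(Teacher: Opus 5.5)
Your proposal is essentially the paper's proof, which is the proof of \Cref{thm:cpt_hd_subW} with $\tilde G$ and $\mathrm{RobustMeanTest}_{\mathrm{MoM}}$ substituted. Both arguments apply \Cref{thm:poly-time-main-mom} with $\omega=\delta_t$ to every executed test and union-bound over $t\le\Delta$ for (a)--(b). For (c) the paper uses a single test at $(d,\Delta+d)$ with $d=h_{2\Delta}\le\Delta$ by \Cref{assum: hd_cpt}. Your case $s^*\asymp\Delta$ therefore never arises, which is just as well, since a cap of $\lfloor t^*/2\rfloor$ could exceed $\Delta$ and mix in zero-mean pairs. No halving of $\alpha$ is needed either, because $\mathbb{P}_\Delta(\hat t\le\Delta)+\delta_{\Delta+d}$ is already bounded by $\sum_{t}\sum_{s\le t/2}\delta_t=\alpha$.

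One step you flag is checked neither by you nor by the paper: the condition $u\le\Omega=0.08$ at the detection test. It actually fails in the $g=1$ regime. There the block size $\lfloor s/K\rfloor$ is only polylogarithmic, hence at most $p$ for large $p$, which forces $u>2\varepsilon+0.05\ge 0.08$ whenever $\varepsilon\ge 0.015$. So no test is run at that scale, and the polylogarithmic delay cannot be obtained over the full stated range $\varepsilon<0.04$. Fixing this requires a smaller range of $\varepsilon$, not finer bookkeeping.
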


Theorems \ref{thm:cpt_hd_subW} and \ref{thm:cpt_hd_fm} provide theoretical guarantees of \Cref{alg:cpd_highd} for inlier distributions $\mathcal{G}_{\theta,M}^p$ and $\mathcal{P}^p_{v,\phi}$, respectively. Firstly, \Cref{alg:cpd_highd} achieves false alarm control analogous to Theorems \ref{thm:rume_cpt_subW} and \ref{thm:rume_cpt}. When there is no change point ($\Delta=\infty$), the false alarm probability is at most $\alpha$, while in the presence of a change point, the procedure does not stop before the change with probability at least $1-\alpha$. 

Secondly, when there is a change and \Cref{assum: hd_cpt} holds, the detection delay of \Cref{alg:cpd_highd} can be controlled. Similar to Regimes~2--4 in Tables~\ref{table:delayG} and \ref{table:delayP} for the univariate setting, three distinct detection delay regimes emerge in the multivariate case once the change becomes detectable. For relatively small signal strengths satisfying $\kappa \gtrsim \varepsilon \sqrt{p\log(1/\varepsilon)}$, the detection delay scales as $\sqrt{p}/\kappa^2$. This rate corresponds to Regime~2 in the univariate setting, which is recovered by setting $p=1$. It also matches the expected detection delay in the online setting under Gaussian assumptions for dense changes obtained in \cite{Chen_2022}. As $\kappa$ increases further, the improvement in detection delay slows down: for sub-Weibull inliers, the rate becomes $\sqrt{p}/\kappa$, while for inliers with only finite $v$-th moment, the rate becomes $$\frac{p^{1+2/v}}{\kappa^{4/v}}+\frac{p}{\kappa^{v/(v-1)}}.$$ We conjecture that this slower decay rate reflects the increased difficulty of robust detection under heavy-tailed distributions, motivated by the observations from Regime~3 of the univariate setting. Finally, in the strong-signal regime where $\kappa\gtrsim \sqrt{p}$, the detection delay becomes $\tilde{O}(1)$. This saturation is not caused by a lack of signal strength, but rather by the minimum sample requirements needed to guarantee the statistical accuracy of the mean tests under contamination and heavy-tailed noise; see \eqref{eqn:u_val} and \eqref{eqn:u_val-mom}. We conjecture that this corresponds to Regime~4 in the univariate setting, where the detection delay is also of constant order up to logarithmic factors. The rates for the intermediate and large $\kappa$ regimes may not be tight, as they follow from the mean testing analysis. 

To further illustrate the above result, we briefly compare with the relevant literature, although scarce. \cite{li_2025} studied the minimax testing problem for offline high-dimensional mean change point detection. Their main results suggest that when the change is dense the minimax testing rates are essentially unaffected by the tail behaviour of the underlying distribution. This message is consistent with our detection-delay guarantees in the small \(\kappa\) regime, where the delay is not affected by the heavy-tailed noise compared to the Gaussian case.

We note that the parameter choices stated in Theorems \ref{thm:cpt_hd_subW} and \ref{thm:cpt_hd_fm} are designed to transfer the type I error and power guarantees
of the robust mean testing subroutines to the change point procedure. In particular, the minimum sample size threshold $(h_t)_{t\in\mathbb{N}}$ is chosen according to \eqref{eqn:min_h_t1} and \eqref{eqn:min_h_t2}, which are the minimum sample sizes for Algorithms~\ref{alg:efficient-mean-tester} and \ref{alg:efficient-mean-tester-mom} to detect a mean change of magnitude $\kappa$ with type I and II error control under $\mathcal{G}_{\theta,M}^p$ and $\mathcal{P}_{v,\phi}^p$, respectively. Thus, whenever a test is performed, the corresponding robust mean testing subroutine is applied with a sufficient sample size.
Practical guidance on choosing the thresholds in \eqref{eqn:min_h_t1} and \eqref{eqn:min_h_t2} is given in \Cref{subsect:sim_hdcpt}.

\section{Simulation studies}\label{sect:sim_study}
We present simulation results to empirically validate our theoretical predictions. Specifically, we investigate the empirical performance of \Cref{alg:cusum_rume} for univariate online change point detection (\Cref{subsect:univ_sim}), \Cref{alg:cpd_highd} with a slight variant for multivariate online change point detection (\Cref{subsect:sim_hdcpt}) and \Cref{alg:efficient-mean-tester} for multivariate mean testing (\Cref{subsect:hdtest}). The code for reproducing our experiments can be found at \url{https://github.com/edwintang903/online-robustcpt}.

\subsection{Univariate online change point detection}\label{subsect:univ_sim}
In this subsection, we run simulations to illustrate the four regimes of detection delay as presented in Tables \ref{table:delayG} and \ref{table:delayP}, using the proposed online change point detection procedure \Cref{alg:cusum_rume}. We consider $n = 2400$ independent observations $\{X_i\}_{i=1}^n$ in $\mathbb{R}$ arriving sequentially. For each $i \in \{1, \dots, n\}$, the observation is drawn from the distribution $Q_i$ defined in~\eqref{huber}, with the contamination proportion set to be $\varepsilon=0.1$. The outlier distribution $H_i$ is chosen to be a high-variance normal distribution $\mathcal{N}(0,100)$ and we consider the following two choices for the inlier distribution $F_i$:
\begin{itemize}
    \item $\text{Laplace}(\mu, b=\sqrt{10.5})$: a Laplace distribution with location parameter $\mu$ and scale parameter $b = \sqrt{10.5}$, so that the variance is $\sigma^2 = 2b^2 = 21$;
    \item $t(\mu,\sigma=\sqrt{21}, \nu=2.1)$: a rescaled Student’s $t$-distribution with $\nu = 2.1$ degrees of freedom, mean $\mu$ and variance $\sigma^2=21$. Specifically, if $Z \sim t_{\nu}$, we have $X = \sigma\sqrt{\frac{\nu-2}{\nu}}Z + \mu\sim t(\mu,\sigma, \nu)$.
\end{itemize}
These two distributions represent distinct heavy-tailed classes with the same variance: specifically, $\text{Laplace}(\mu, \sqrt{10.5}) \in \mathcal{G}_{1, \sqrt{10.5}}$ and $t(\mu, \sqrt{21}, 2.1) \in \mathcal{P}_{2,21}$. Using these distributions allows us to examine how detection delay varies under different tail behaviours.

We set the desired false alarm probability to be $\alpha=0.2$. In Algorithm~\ref{alg:cusum_rume}, we use the detection thresholds $\chi_{s,t}$ and $\zeta_{s,t}$ specified in~\eqref{eqn:subW_thres1} and~\eqref{eqn:subW_thres2}, respectively, for $t(\mu, \sqrt{21}, 2.1) \in \mathcal{P}_{2,21}$ and the thresholds specified in~\eqref{eqn:fm_thres1} and~\eqref{eqn:fm_thres2} for $\text{Laplace}(\mu, \sqrt{10.5}) \in \mathcal{G}_{1, \sqrt{10.5}}$. Note that for $\chi_{s,t}$, in~\eqref{eqn:subW_thres1} and~\eqref{eqn:fm_thres1}, we replace $M$ and $\phi$, respectively, by tunable constants to reduce the conservativeness of the procedure in practice. We then calibrate all constants appearing in these thresholds under the no-change setting so that the empirical false alarm rate is bounded by $\alpha = 0.2$.

To empirically evaluate the performance of Algorithm~\ref{alg:cusum_rume} under a change, we set a single change point at $\Delta = 600$, where the mean $f_i$ of the inlier distribution $F_i$ undergoes a jump of size $\kappa$:
$$
f_i=
\begin{cases}
    0 & \text{if } i \leq \Delta,\\
    \kappa & \text{if } i > \Delta.
\end{cases}
$$
For each jump size $\kappa$ in a suitably chosen grid, we conduct 2000 simulation runs to estimate the detection rate and the average detection delay. The detection rate denotes the probability that the detector raises an alarm after $\Delta$ but before the arrival of the last observation. For the Laplace distribution, the regime transitions are illustrated in \Cref{fig:th1plot}: panel (a) shows the detection rate as a function of $\kappa$ for small jump sizes, while panels (b) and (c) plot the mean detection delay against $\kappa$ for moderate and large jump sizes, respectively. Analogous plots for the $t$-distribution are presented in \Cref{fig:v2plot}. These figures empirically verify our theoretical results and reveal all four regimes for detection delay as shown in Tables~\ref{table:delayG} and~\ref{table:delayP}, for inlier distributions with exponentially- and polynomially-decaying tails, as we discuss in the following.

As shown in Figures~\ref{fig:th1(a)} and~\ref{fig:v2(a)}, when $\kappa = 0$, the empirical false alarm rate is controlled at the desired level $0.2$. For small signal-to-noise ratios, specifically $\kappa/\sigma \leq  0.03$ for $\text{Laplace}(\mu, \sqrt{10.5})$ and $\kappa/\sigma \leq 0.01$ for $t(\mu, \sqrt{21}, 2.1)$, the empirical probability of raising an alarm during the monitoring period is the same as the false alarm rate under no change. This indicates that, in this regime, the change point is effectively undetectable using our procedure, in agreement with Regime~1 of Tables~\ref{table:delayG} and~\ref{table:delayP}. As $\kappa/\sigma$ increases, the detection problem becomes easier, and the empirical non-detection probability decreases towards zero. Although the theoretical transition between Regimes~1 and~2 is sharp, the finite monitoring horizon results in a more gradual transition in the non-detection rate being observed in practice. Nevertheless, the phase transition behaviour remains clearly visible in the simulations.

Figures~\ref{fig:th1(b)} and~\ref{fig:v2(b)} illustrate a transition in the detection decay dynamics between Regimes 2 and 3. The key observation is that no single relationship between the mean detection delay and the jump size provides a good fit across the entire range of medium jump strength. For smaller values of $\kappa/\sigma$, an approximate inverse square law fits the data points, but this is no longer the case when the signal becomes larger. Towards the right part of both figures, we observe that the fit depends on the noise distribution: an inverse relationship $d \propto 1/\kappa$ for the Laplace distribution, and an inverse-logarithmic trend $d \propto 1/\log(\kappa)$ for the $t$-distribution. These observations agree with the order of the detection delay for Regimes 2 and 3 in Tables~\ref{table:delayG} and~\ref{table:delayP}.

As we increase $\kappa/\sigma$ further above 1, both Figures~\ref{fig:th1(c)} and~\ref{fig:v2(c)} demonstrate that the detection delay of our procedure eventually plateaus and no longer depends on the jump size, which agrees with Regime 4 in both tables.

\begin{figure}[htbp]
     \centering
     \begin{subfigure}[b]{0.48\textwidth}
         \centering
         \includegraphics[width=\textwidth]{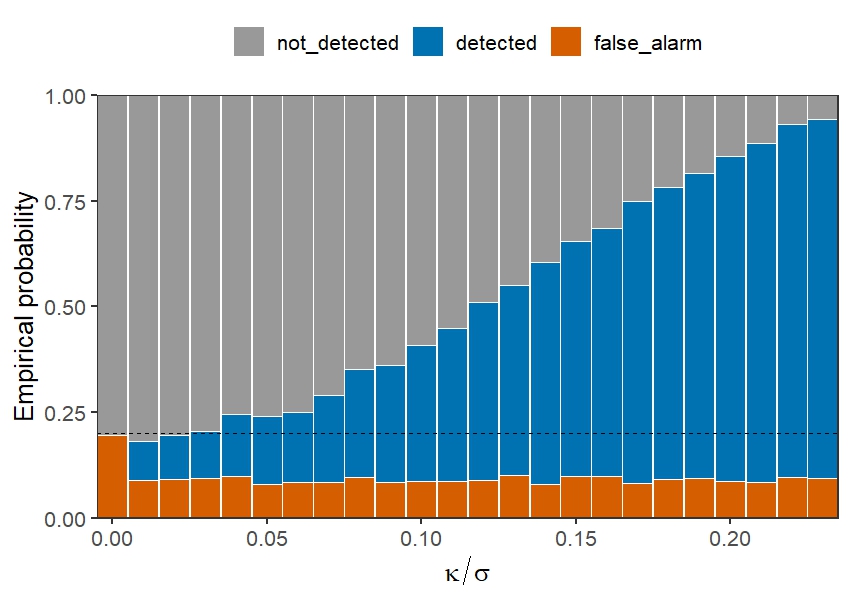}
         \caption{Empirical false alarm rates and detection rates against $\kappa/\sigma\in[0,0.23]$. For each value of $\kappa$, we record the proportion of simulation runs in which the algorithm stopped at or before time $\Delta=600$ (false\_alarm), between times $601$ and $2400$ (detected), or failed to stop by time $2400$ (not\_detected). The black dotted line represents the nominal false alarm rate control ($\alpha = 0.2$).}
         \label{fig:th1(a)}
     \end{subfigure}\\
     \begin{subfigure}[b]{0.48\textwidth}
         \centering
         \includegraphics[width=\textwidth]{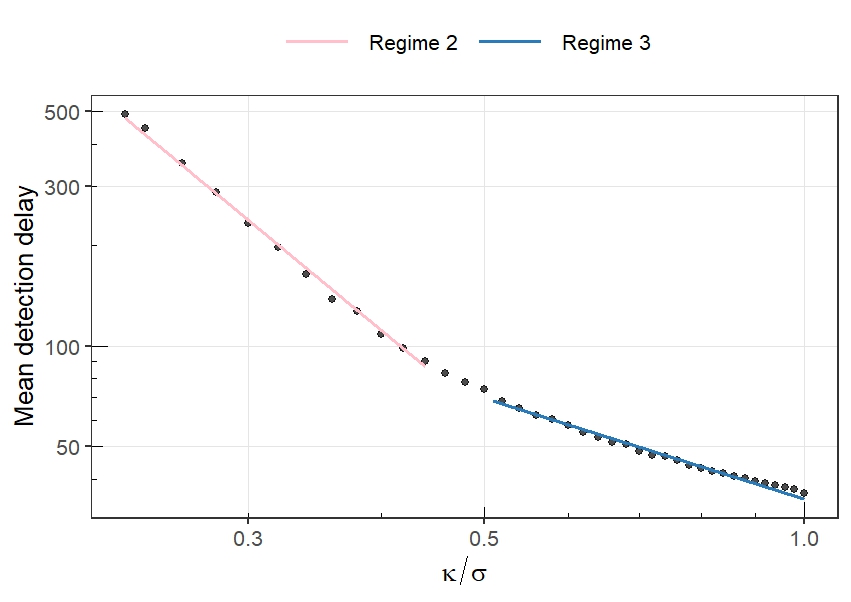}
         \caption{Mean detection delay ($d$) against $\kappa/\sigma\in[0.23,1]$. The pink curve on the left corresponds to a weighted least-squares fit over the small-$\kappa$ regime of the form $d \propto \kappa^{-2.63}$, while the blue curve on the right corresponds to a weighted least-squares fit over the moderate-$\kappa$ regime of the form $d \propto \kappa^{-1}$. In both cases, the weights are given by the inverse empirical standard deviation of the detection delay at each $\kappa$.}
         \label{fig:th1(b)}
     \end{subfigure}
     \hfill
     \begin{subfigure}[b]{0.48\textwidth}
         \centering
         \includegraphics[width=\textwidth]{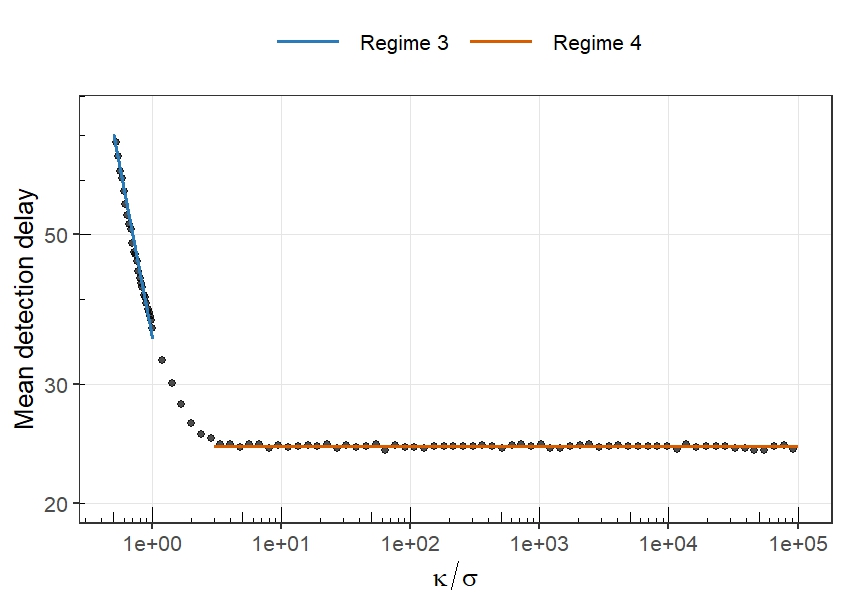}
         \caption{Mean detection delay ($d$) against $\kappa/\sigma\in[0.5,10^5]$. The blue curve on the left corresponds to a weighted least-squares fit over the moderate-$\kappa$ regime of the form $d \propto \kappa^{-1}$, while the blue curve on the right corresponds to a weighted least-squares fit over the larger-$\kappa$ regime of the form $d \propto 1$. In both cases, the weights are given by the inverse empirical standard deviation of the detection delay at each $\kappa$.}
         \label{fig:th1(c)}
     \end{subfigure}
     \caption{Illustration of different regimes (\Cref{table:delayP}) with the Laplace distribution as the inlier. Results are based on running \Cref{alg:cusum_rume} on 2000 simulated datasets for each value of $\kappa$.}
     \label{fig:th1plot}
\end{figure}

\begin{figure}[!htbp]
     \centering
     \begin{subfigure}[b]{0.48\textwidth}
         \centering
         \includegraphics[width=\textwidth]{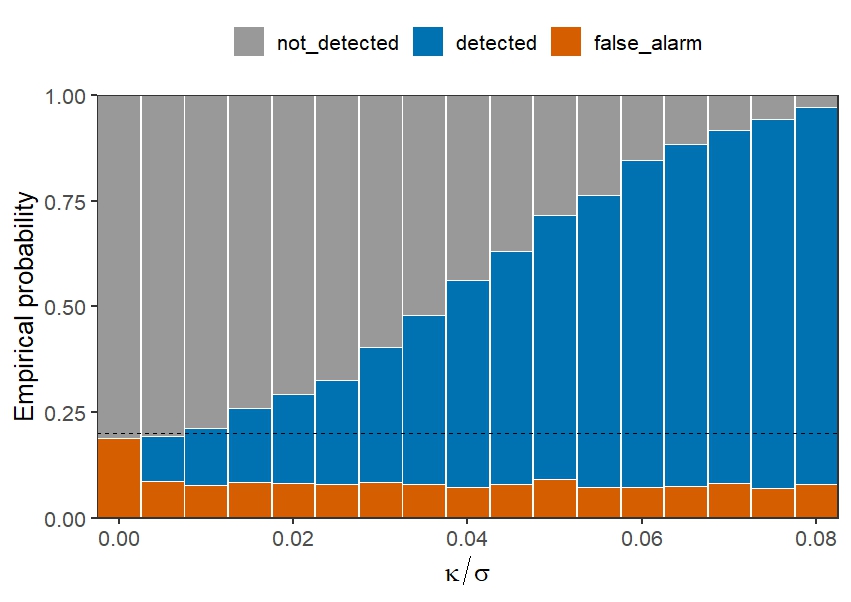}
         \caption{Empirical false alarm rates and detection rates against $\kappa/\sigma\in [0, 0.08]$. For each value of $\kappa$, we record the proportion of simulation runs in which the algorithm stopped at or before time $\Delta=600$ (false\_alarm), between times $601$ and $2400$ (detected), or failed to stop by time $2400$ (not\_detected). The black dotted line represents the nominal false alarm rate control ($\alpha = 0.2$).}
         \label{fig:v2(a)}
     \end{subfigure}\\
     \begin{subfigure}[b]{0.48\textwidth}
         \centering
         \includegraphics[width=\textwidth]{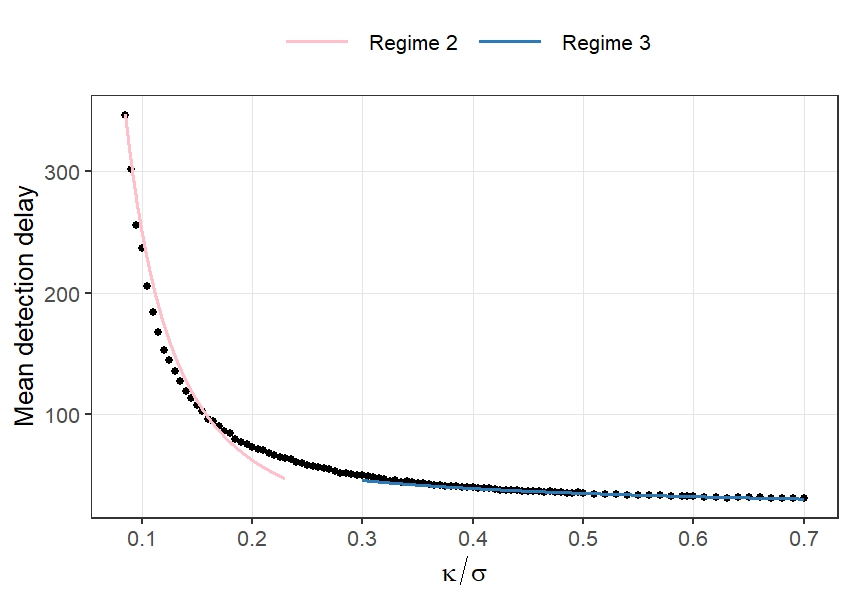}
         \caption{Mean detection delay ($d$) against $\kappa/\sigma\in [0.07, 0.7]$. The pink curve on the left corresponds to a weighted least-squares fit over the small-$\kappa$ regime of the form $d \propto \kappa^{-2}$, while the blue curve on the right corresponds to a weighted least-squares fit over the moderate-$\kappa$ regime of the form $d \propto (\log(4\kappa))^{-1}$. In both cases, the weights are given by the inverse empirical standard deviation of the detection delay at each $\kappa$.}
         \label{fig:v2(b)}
     \end{subfigure}
     \hfill
     \begin{subfigure}[b]{0.48\textwidth}
         \centering
         \includegraphics[width=\textwidth]{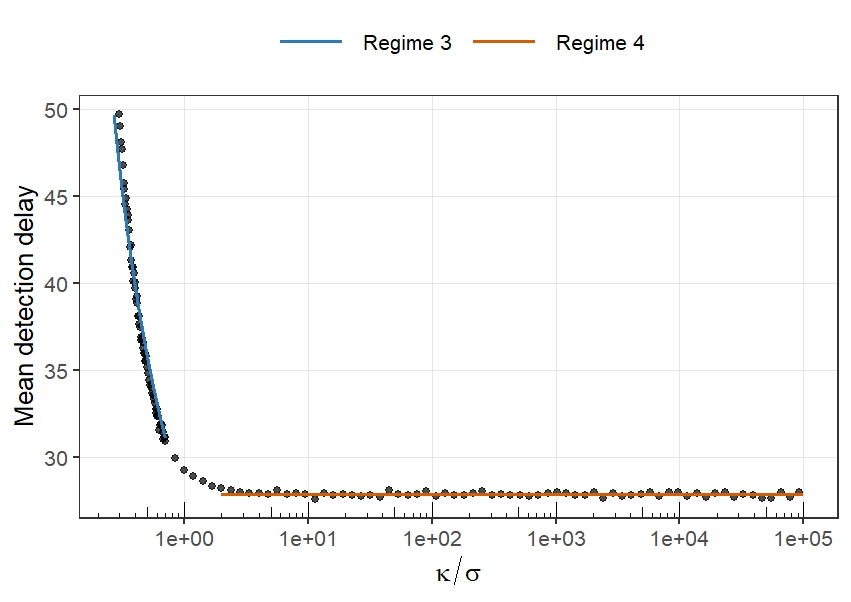}
         \caption{Mean detection delay ($d$) against $\kappa/\sigma\in[0.3,10^5]$. The blue curve on the left corresponds to a weighted least-squares fit over the moderate-$\kappa$ regime of the form $d \propto (\log(4\kappa))^{-1}$, while the blue curve on the right corresponds to a weighted least-squares fit over the larger-$\kappa$ regime of the form $d \propto 1$. In both cases, the weights are given by the inverse empirical standard deviation of the detection delay at each $\kappa$.}
         \label{fig:v2(c)}
     \end{subfigure}
     \caption{Illustration of the different regimes (\Cref{table:delayP}) with $t$-distribution as the inlier. Results are based on running \Cref{alg:cusum_rume} on 2000 simulated datasets for each value of $\kappa$.}
     \label{fig:v2plot}
\end{figure}

\subsection{Multivariate mean testing}\label{subsect:hdtest}

In this subsection, we empirically evaluate the robust mean testing algorithm 
(\Cref{alg:efficient-mean-tester}). In our simulations, we consider $n$ independent observations $\{X_i\}_{i=1}^n$ in $\mathbb{R}^p$. For each $i \in \{1, \dots, n\}$, the observation is drawn from the distribution $Q_i$ defined in~\eqref{eqn:huberhd}, with the contamination level set to be $\varepsilon=0.01$. The outlier distribution $H_i$ is chosen to be $\mathcal{N}_p(-\mathbf{1}_p, I)$ and we consider the following two choices for the inlier distribution $F_i$:
\begin{itemize}
    \item $(\text{Laplace}(\mu, 1/\sqrt{2}))^{\otimes p}$,
    \item $(t(\mu,\sigma=1,\nu=4.1))^{\otimes p}$.
\end{itemize}
These two distributions represent distinct heavy-tailed classes: specifically, $t(\mu, 1, 4.1) \in \mathcal{P}_{4,63}$ and $\text{Laplace}(\mu, 1/\sqrt{2}) \in \mathcal{G}_{1, \sqrt{0.5}}$. By using these distributions, we can quantify the difficulty of mean testing with respect to varying tail behaviours. Parameters for the distributions were chosen so that both distributions have variance 1.

The remaining algorithmic inputs are chosen as follows. The desired type~I error level is set to $\delta = 0.1$, and the detection sensitivity factor is $T_u = (1 - 6u)^2$ with $u$ chosen as specified in \Cref{alg:efficient-mean-tester}. Other parameters, including the sample size $n$, dimension $p$, signal size input $\kappa_0$, and filtering strength $C_\gamma$, are varied in the subsequent simulations. Since $\delta=0.1$ is large, we use \Cref{alg:efficient-mean-tester} rather than \Cref{alg:efficient-mean-tester-mom} here for distributions under finite-moment assumptions, which gives a lower signal strength threshold.

\subsubsection{Sensitivity analysis}\label{subsubsect:sensitivity}
We first evaluate the robustness of the algorithm to different choices of the signal size input $\kappa_0$ and filtering strength $C_\gamma$. We consider $n=500$ samples with dimension $p=\{10,600\}$. Recall that the filtering subroutine used in \Cref{alg:efficient-mean-tester} depends on whether $n > p$. Specifically, when $p=10$, we use \Cref{alg:spectralfilter1}, whereas when $p=600$, we use \Cref{alg:spectralfilter2}. For each value of $\kappa_0\in\{0.1,0.2,\ldots,2.5\}$ and $C_\gamma=\{0.01, 0.03, 0.05, 0.07, 0.1, 0.15, 0.2\}$, we run \Cref{alg:efficient-mean-tester} on 1000 simulated datasets to obtain the empirical type I error rate and power. To compute the type~I error, we set $\mu = \mathbf{0}$ and record the proportion of false rejections. To evaluate the power, we fix $\mu = (1/\sqrt{p}) \mathbf{1}_p$, corresponding to a true signal strength $\kappa = 1$, and compute the proportion of correct rejections. In \Cref{fig:hd_misspec}, we overlay the type~I error and power profiles for different values of $C_\gamma$ in a single panel to aid comparison. The corresponding faceted plots are provided in \Cref{fig:individual-cgamma} of \Cref{append:facet_plot}, where the same trends are shown separately for each value of $C_\gamma$.

Across most settings in \Cref{fig:hd_misspec}, the type I error rate decreases rapidly to zero as $\kappa_0$ increases from zero, whereas the power remains close to one over a wider range of $\kappa_0$ and begins to decrease only when $\kappa_0$ is close to, or exceeds, the true signal size $\kappa=1$. This contrast is primarily driven by the role of $\kappa_0$ in the rejection threshold. When $\kappa_0$ is small, the threshold is low, so rejection is easy under both the null and the alternative, which leads to both high type I error and high power. As $\kappa_0$ increases, the threshold becomes more stringent, reducing false rejections and hence lowering the type I error. Under the alternative, however, the non-zero mean inflates the test statistic, so the power remains high until the threshold becomes sufficiently large.

For small values of $C_\gamma$, namely $C_\gamma\in\{0.01,0.03\}$ in \Cref{fig:misspec(b)} and \Cref{fig:misspec(c)}, the behaviour differs from the general trend described above. When $C_\gamma=0.01$, both the type I error rate and the power are essentially zero throughout the range $\kappa_0\in[0.1,2.5]$. This is because a smaller $C_\gamma$ makes the filtering step too aggressive, causing nearly all weights to be set to zero. As a result, the test statistic is unable to trigger rejection. When $C_\gamma=0.03$, the power curve exhibits a bell-shaped pattern centred near the true signal size $\kappa=1$. This behaviour is due to the term $C_\gamma n\kappa_0^2$ in the filtering radius $R_{f}$ in \Cref{alg:efficient-mean-tester}. For small $\kappa_0$, the filtering radius is too small, so signal-bearing observations are filtered out too aggressively. As $\kappa_0$ increases, the radius becomes large enough to retain enough of these observations, and the power increases. For even larger $\kappa_0$, the rejection threshold becomes too stringent, and the power decreases for the same reason discussed above.

The effect of $C_\gamma$ itself is also clear from the plots. Increasing $C_\gamma$ shifts both the type I error and power curves to the right. This is because a larger $C_\gamma$ widens the filtering radius, allowing data points to retain larger weights and thereby inflating the test statistic under both the null and the alternative. Consequently, a larger $\kappa_0$ is required both to control the type I error under the null and to induce a decay in power under the alternative. Once $C_\gamma$ is sufficiently large, for example $C_\gamma\geq 0.07$, the type I error and power profiles effectively overlap. This suggests that the filtering radius is then large enough for the spectral filtering step to become effectively inactive, so further increases in $C_\gamma$ have little impact on the empirical behaviour of the test.

These observations are consistent with Propositions \ref{thm:poly-time-main} and \ref{thm:poly-time-main_fm}, which impose two requirements on the choice of $\kappa_0$: it must be sufficiently large for type I error control, and it must be of the same scale as the true signal size $\kappa$ for power control. Under $C_\gamma=0.1$, panels (a), (b), and (c) suggest that choosing $\kappa_0\in[0.8,1.4]$ achieves both type I error control and high power, while panel (d) suggests that choosing $\kappa_0\in[1,2.5]$ also achieves both. Motivated by these findings, we set $C_\gamma=0.1$ for the remaining simulations.

\begin{figure}[htbp]
     \centering
     \begin{subfigure}[b]{0.48\textwidth}
         \centering
         \includegraphics[width=0.86\textwidth]{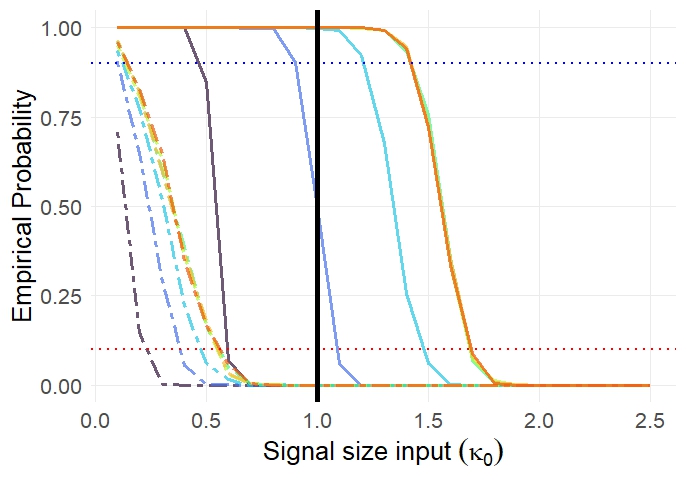}
         \caption{Laplace distribution, $p=10, \varepsilon=0.01$.}
         \label{fig:misspec(a)}
     \end{subfigure}
     \hfill
     \begin{subfigure}[b]{0.48\textwidth}
         \centering
         \includegraphics[width=\textwidth]{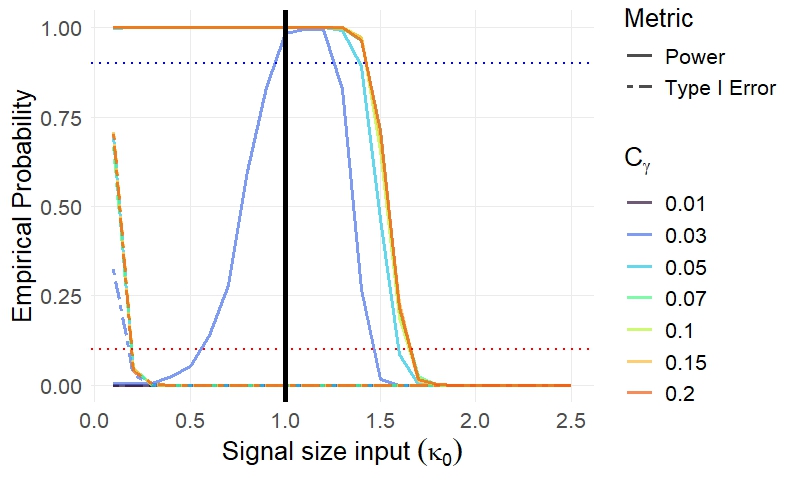}
         \caption{$t$-distribution, $p=10, \varepsilon=0.01$.}
         \label{fig:misspec(b)}
     \end{subfigure}
     \begin{subfigure}[b]{0.48\textwidth}
         \centering
         \includegraphics[width=0.86\textwidth]{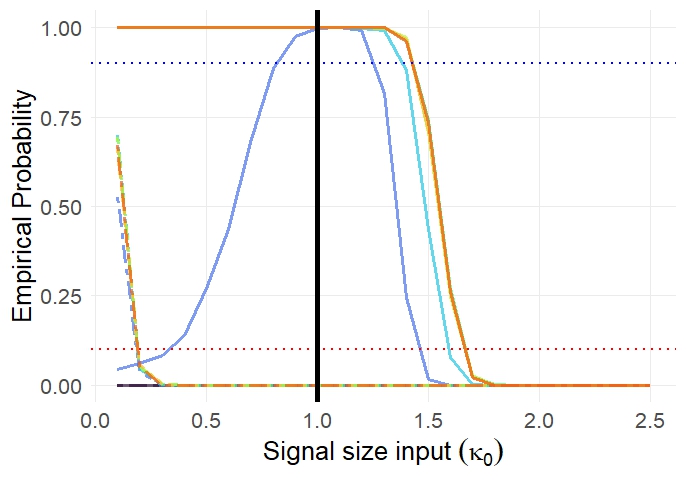}
         \caption{Laplace distribution, $p=600, \varepsilon=0.01$.}
         \label{fig:misspec(c)}
     \end{subfigure}
     \hfill
     \begin{subfigure}[b]{0.48\textwidth}
         \centering
         \includegraphics[width=\textwidth]{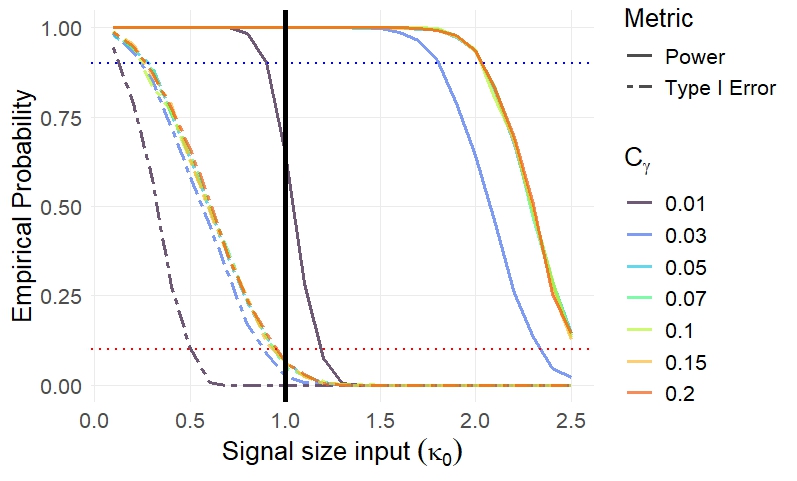}
         \caption{$t$-distribution, $p=600, \varepsilon=0.01$.}
         \label{fig:misspec(d)}
     \end{subfigure}
     \caption{Empirical type I error and power in testing between hypotheses in \eqref{eqn:hypotheses} using \Cref{alg:efficient-mean-tester} under different specifications of $\kappa_0$. True signal size is $\kappa=1$, indicated by a vertical black line. For each value of $\kappa_0$ and $C_\gamma$, we report the proportion of simulation runs which resulted in false rejections (type I error) and correct rejections (Power). The horizontal red line indicates the 10\% threshold for type I error control and the horizontal blue line indicates the 90\% threshold for power.}
     \label{fig:hd_misspec}
\end{figure}

\subsubsection{Detectability regimes}\label{subsubsect:detectable}

We empirically investigate the detectability of the signal strength $\kappa$ as a function of the sample size $n$ and the dimension $p$. We set $\kappa_0 = \kappa$, matching the true signal strength at each value of $\kappa$, and fix $C_\gamma = 0.1$. We define detection as reliable for $(n,\kappa,\delta)$ when both Type~I and Type~II error rates remain below $\delta$ for a signal of size $\kappa$ based on $n$ samples.

To characterise the detection threshold, we run \Cref{alg:efficient-mean-tester} over two parameter grids. In \Cref{fig:hd_detectable}, the top row varies $n\in \{400,500,\ldots,3000 \}$ and $\kappa\in\{0.15,0.16,\ldots,0.5\}$ with $p=100$ fixed, whereas the bottom row varies $p\in \{10,20,\ldots,500\}$ and $\kappa\in\{0.2,0.22,\ldots,0.7\}$ with $n=500$ fixed. The left and right columns correspond to the Laplace and $t$ distributions, respectively.

In all panels of \Cref{fig:hd_detectable}, the transition between the red (unreliable) and blue (reliable) regions provides an empirical estimate of the detection threshold, namely the minimum signal strength required for reliable detection as a function of $n$ or $p$. Holding $p$ fixed, Figures~\ref{subfig:detectable(a)} and~\ref{subfig:detectable(b)} show that this threshold decreases as $n$ increases. In particular, it exhibits a power-law scaling in $n$, with $\kappa\propto n^{-0.45}$ for both the Laplace and $t$-distribution settings. Meanwhile, holding $n$ fixed, Figures~\ref{subfig:detectable(c)} and~\ref{subfig:detectable(d)} show that the detection threshold increases with $p$. The empirical relationship is well approximated by $\kappa \propto p^{0.20}$ for the Laplace distribution and $\kappa \propto p^{0.18}$ for the $t$-distribution. These empirical findings are broadly consistent with the rate $\kappa \propto p^{0.25}n^{-0.5}$ established in \Cref{thm:poly-time-main,thm:poly-time-main_fm}. Although the full theoretical bounds contain additional terms, we conjecture that the parameter ranges considered in \Cref{fig:hd_detectable} lie in the regime $n\gtrsim \sqrt{p}$, where the term $\kappa \propto p^{0.25}n^{-0.5}$ dominates in the Laplace setting.

\begin{figure}[htbp!]
     \centering
     \begin{subfigure}[b]{0.48\textwidth}
         \centering
         \includegraphics[width=\textwidth]{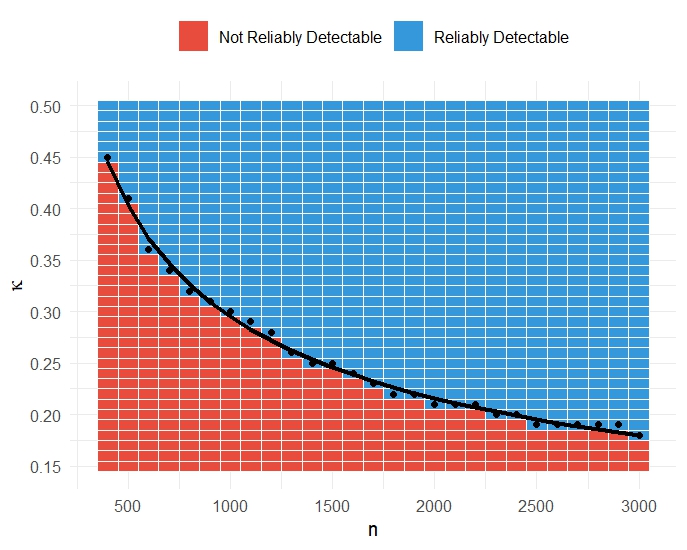}
         \caption{Laplace distribution inlier with $p=100$ fixed. The curve fitted to the boundary points follows $\kappa\propto n^{-0.45}$.}
         \label{subfig:detectable(a)}
     \end{subfigure}
     \hfill
     \begin{subfigure}[b]{0.48\textwidth}
         \centering
         \includegraphics[width=\textwidth]{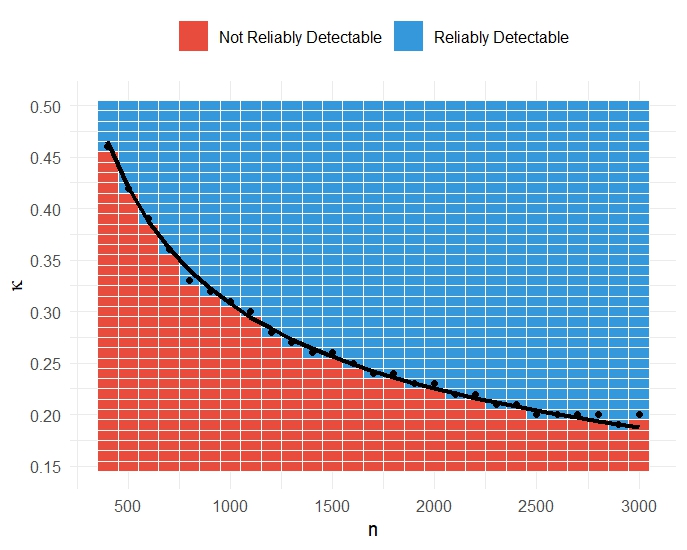}
         \caption{$t$-distribution inlier with $p=100$ fixed. The curve fitted to the boundary points follows $\kappa\propto n^{-0.45}$.}
         \label{subfig:detectable(b)}
     \end{subfigure}
     \begin{subfigure}[b]{0.48\textwidth}
         \centering
         \includegraphics[width=\textwidth]{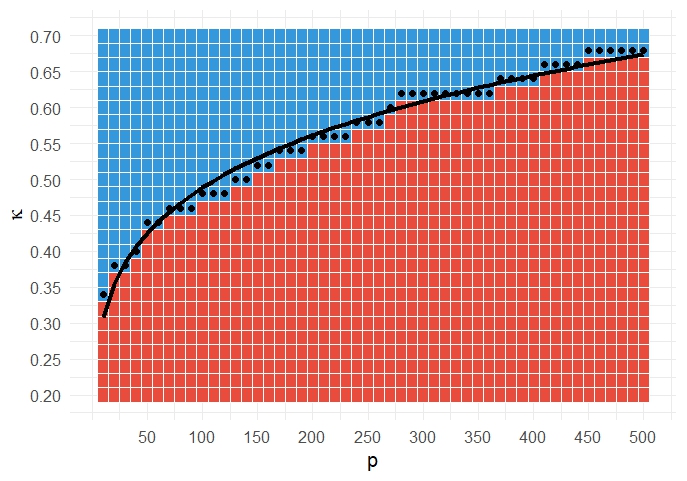}
         \caption{Laplace distribution inlier with $n=500$ fixed. The curve fitted to the boundary points follows $\kappa\propto p^{0.2}$.}
         \label{subfig:detectable(c)}
     \end{subfigure}
     \hfill
     \begin{subfigure}[b]{0.48\textwidth}
         \centering
         \includegraphics[width=\textwidth]{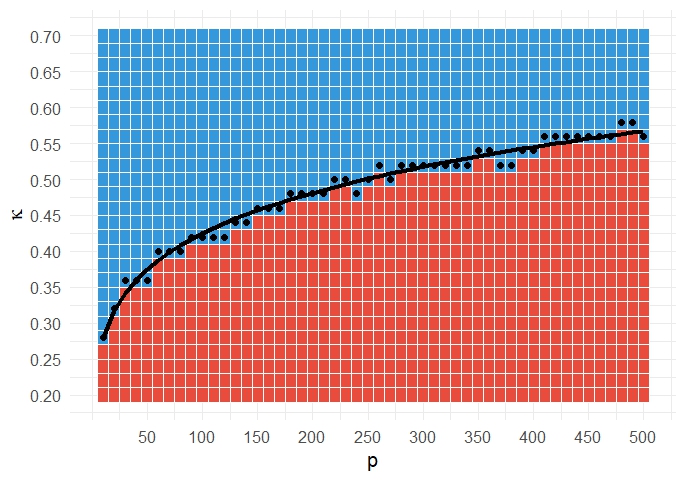}
         \caption{$t$-distribution inlier with $n=500$ fixed. The curve fitted to the boundary points follows $\kappa\propto p^{0.18}$.}
         \label{subfig:detectable(d)}
     \end{subfigure}
     \caption{Empirical minimum signal size requirement of \Cref{alg:efficient-mean-tester} under two inlier distributions: the Laplace distribution and the $t$-distribution. Panels (a)--(b) display detection reliability over a grid of pairs $(n,\kappa)$, with $n\in\{400,500,\ldots,3000\}$ and $\kappa\in\{0.15,0.16,\ldots,0.5\}$, while $p$ is fixed. Panels (c)--(d) display detection reliability over a grid of pairs $(p,\kappa)$, with $p\in\{10,20,\ldots,500\}$ and $\kappa\in\{0.2,0.22,\ldots,0.7\}$, while $n$ is fixed. Each cell is based on 1000 simulation runs: blue indicates reliable detection, defined by empirical Type~I and Type~II error rates at most $0.1$, while red indicates otherwise. A best-fit line is fitted to the blue cells adjacent to red cells, providing an empirical approximation of the detectability relationship with respect to $n$ or $p$.}
     \label{fig:hd_detectable}
\end{figure}

\subsection{Multivariate online change point detection}\label{subsect:sim_hdcpt}

In this subsection, we illustrate the four regimes of detection delay using a slight variant of the proposed multivariate change point detector, \Cref{alg:cpd_highd}; see \Cref{alg:cpd_highd_sim} in \Cref{append:sim_study} for the full modified algorithm. 
The main modification concerns the allocation of the false alarm probability. In \Cref{alg:cpd_highd}, the level $\alpha$ is distributed over all potential tests by a union bound, leading to a per-test level of order $t^{-2}(t+1)^{-1}$. This can be overly conservative since tests are only performed at times for which the outlier-control condition $u \leq \Omega$ holds. We therefore allocate the error budget only over executed tests. If $t_r$ denotes the
$r$-th time at which testing is performed, then for each potential change location $s \leq t_r/2$ considered at time $t_r$, we assign a significance level of order $\alpha t_r^{-1}r^{-2}$. Since there are at most $t_r$ such locations at time $t_r$, the total budget at the $r$-th executed testing time is $O(\alpha r^{-2})$, and hence the overall false alarm probability remains controlled by a union bound.

In our simulations, we consider $n$ independent observations $\{X_i\}_{i=1}^n$ in $\mathbb{R}^p$ arriving sequentially, where $p=10$. For each $i \in \{1, \dots, n\}$, the observation is drawn from the distribution $Q_i$ defined in~\eqref{huber}, with contamination proportion set to be $\varepsilon=0.01$. The outlier distribution $H_i$ is chosen to be $\mathcal{N}(-\mathbf{1}_{p}, I)$ and we consider the following two choices for the inlier distribution $F_i$:
\begin{itemize}
    \item $(\text{Laplace}(f_i, 1/\sqrt{2}))^{\otimes p}$;
    \item $(t(f_i,\sigma=1, \nu=4.1))^{\otimes p}$.
\end{itemize}

The inputs of \Cref{alg:cpd_highd_sim} are chosen as follows: the signal size parameter is set to $\kappa_0 = 0.5$, the filter strength to $C_\gamma = 0.1$, and the minimum sample size to $h_t = 1$. We set the desired false alarm probability to $\alpha = 0.1$. To achieve this level of control, we calibrate the remaining parameters (namely, the detection sensitivity factor $T_u$, the outlier control threshold $\Omega$ and the group number constant $K_c$) under the no-change setting so that the empirical false alarm rate does not exceed $\alpha$.

To empirically evaluate the performance of Algorithm~\ref{alg:cpd_highd_sim} under a change, we set a single change point at $\Delta$, where the mean $f_i$ of the inlier distribution $F_i$ undergoes a jump of magnitude $\kappa$:
$$f_i=\begin{cases}
    0 &\text{if $i\leq \Delta$,}\\
    (\kappa/\sqrt{p})\mathbf{1}_{p}  &\text{if $i > \Delta$.}
\end{cases}$$ 
For each jump size $\kappa$ in a suitably chosen grid, we conduct 1000 simulation runs to estimate the detection rate (defined in \Cref{subsect:univ_sim}) and the average detection delay. 

For the Laplace distribution, we set $n=2000$ and $\Delta=500$, with algorithmic inputs $T_u=(1-2.8u)^2$ and $\Omega=0.09$. The regime transitions are illustrated in \Cref{fig:hdcpt_detectable}: \Cref{fig:hd_th1(a)} shows the detection rate as a function of $\kappa$ for small jump sizes, while \Cref{fig:hd_th1(b)} plots the mean detection delay against $\kappa$ for larger jump sizes. Analogous plots for the $t$-distribution are presented in \Cref{fig:hdcpt_detectable_t}, where we set $n=7500$ and $\Delta=3000$, with inputs $T_u=(1-0.4u)^2$, $K_c=1$, and $\Omega=0.15$. The larger values of $n$ and $\Delta$ reflect the increased difficulty of robust detection under the $t$-distribution, which has only a finite fourth moment.

Firstly, Figures~\ref{fig:hd_th1(a)} and~\ref{fig:hd_v4(a)} show that the calibrated parameters control the false alarm probability at level $\alpha\leq 0.1$. As in Regime~1 of the univariate setting, the change point is effectively undetectable by our procedure when $\kappa$ is small. For example, when $\kappa\leq 0.05$ in \Cref{fig:hd_th1(a)} or $\kappa\leq 0.15$ in \Cref{fig:hd_v4(a)}, the empirical probability of raising an alarm during the monitoring period is essentially the same as the false alarm rate under no change. This illustrates the necessity of \Cref{assum: hd_cpt}: effective detection is only possible once the signal strength exceeds a minimum threshold. By contrast, once $\kappa\geq 0.5$, the change is detected with probability at least $0.9$ for both settings. This mirrors the transition between Regimes~1 and~2 observed in the univariate setting.

Secondly, within the detectable regime, Figures~\ref{fig:hd_th1(b)} and~\ref{fig:hd_v4(b)} show that the mean detection delay initially decreases rapidly as $\kappa$ increases, before the rate of decrease slows and eventually plateaus. This behaviour is consistent with the three regimes predicted by Theorems~\ref{thm:cpt_hd_subW} and~\ref{thm:cpt_hd_fm}, and mirrors Regimes~2--4 in the univariate setting. For moderate $\kappa$, the Laplace setting exhibits an intermediate decay that is well described by $d\propto \kappa^{-1}$. For larger $\kappa$, the detection delay stabilises at order one, as predicted by our theory in the strong-signal regime. 

\begin{figure}[htbp]
     \centering
     \begin{subfigure}[b]{0.48\textwidth}
         \centering
         \includegraphics[width=\textwidth]{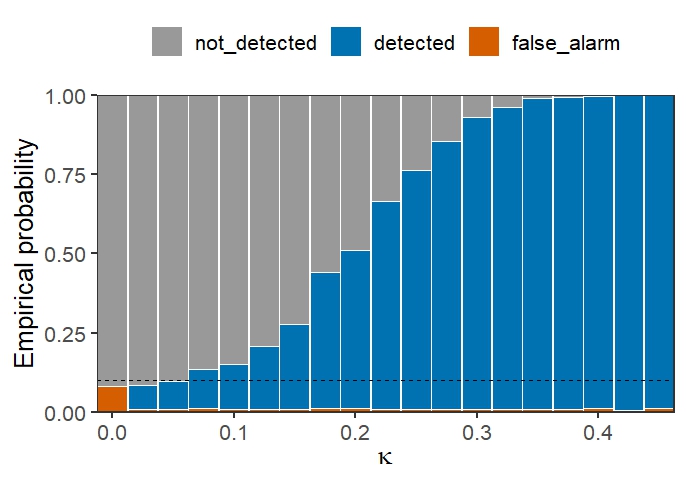}
         \caption{Empirical false alarm rates and detection rates against $\kappa\in [0, 0.45]$. For each value of $\kappa$, we record the proportion of simulation runs in which the algorithm stopped at or before time $\Delta=500$ (false\_alarm), between times $501$ and $2000$ (detected), or failed to stop by time $2000$ (not\_detected). The black dotted line represents the nominal false alarm rate control ($\alpha = 0.1$).}
         \label{fig:hd_th1(a)}
     \end{subfigure}
     \hfill
     \begin{subfigure}[b]{0.48\textwidth}
         \centering
         \includegraphics[width=\textwidth]{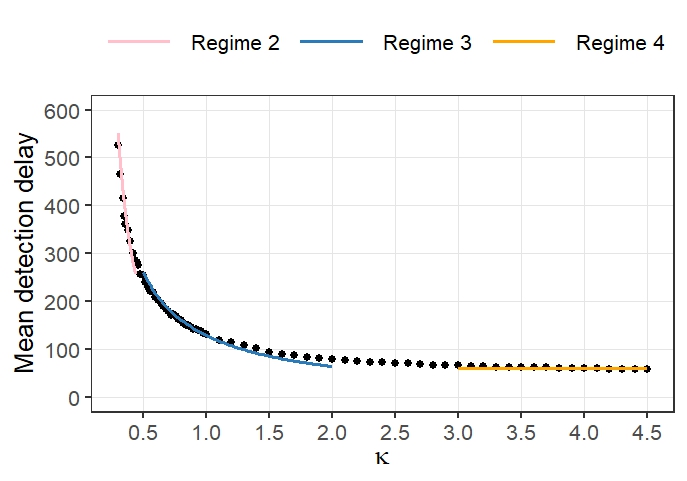}
         \caption{Mean detection delay $d$ as a function of \mbox{$\kappa \in [0.35,4.5]$}. The fitted curves illustrate the three predicted regimes: $d \propto \kappa^{-2}$ for small $\kappa$ (pink), $d \propto \kappa^{-1}$ for intermediate $\kappa$ (blue), and $d \propto 1$ for large $\kappa$ (orange). The fits are obtained by weighted least squares, with weights equal to the inverse empirical standard deviation of the detection delay at each value of $\kappa$.}
         \label{fig:hd_th1(b)}
     \end{subfigure}
    \caption{Illustration of the different regimes with inlier distribution $(\text{Laplace}(f_i, 1/\sqrt{2}))^{\otimes p}$ and $p=10$. Results are based on running \Cref{alg:cpd_highd_sim} on 1000 simulated datasets for each value of $\kappa$.}
    \label{fig:hdcpt_detectable}
\end{figure}

\begin{figure}[htbp]
     \centering
     \begin{subfigure}[b]{0.48\textwidth}
         \centering
         \includegraphics[width=\textwidth]{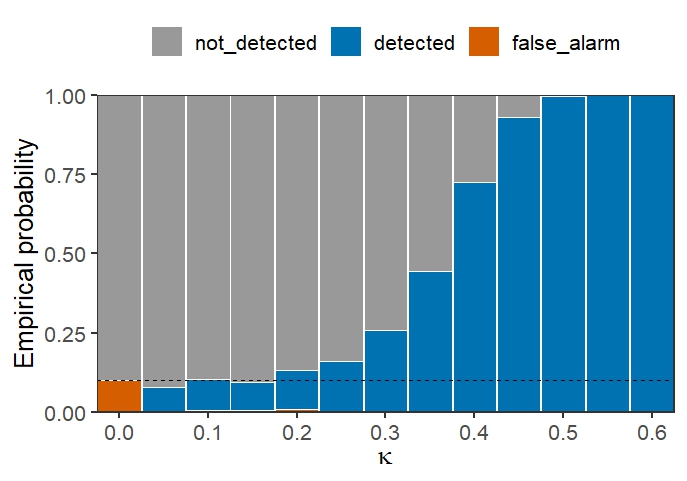}
         \caption{Empirical false alarm rates and detection rates against $\kappa\in[0,0.6]$. For each $\kappa$, we record the proportion of simulation runs in which the algorithm stopped at or before time $\Delta=3000$ (false\_alarm), between times $3001$ and $7500$ (detected), or failed to stop by time $7500$ (not\_detected). The black dotted line represents the nominal false alarm rate control ($\alpha = 0.1$).}
         \label{fig:hd_v4(a)}
     \end{subfigure}
     \hfill
     \begin{subfigure}[b]{0.48\textwidth}
         \centering
         \includegraphics[width=\textwidth]{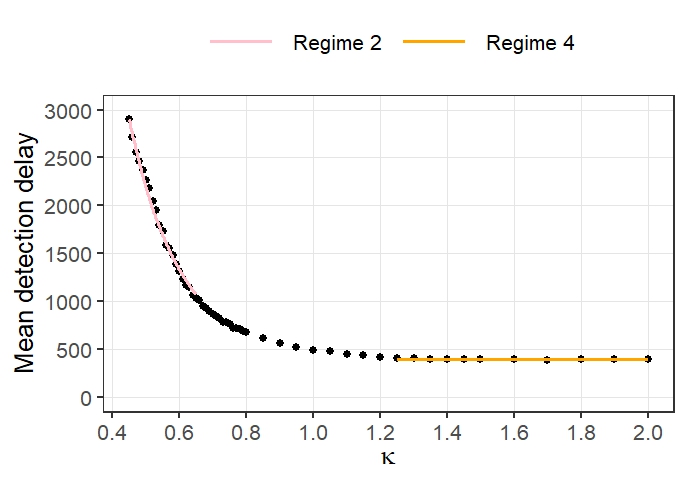}
         \caption{Mean detection delay $d$ as a function of \mbox{$\kappa \in [0.45,2]$}. The fitted curves illustrate the three predicted regimes: $d \propto \kappa^{-2.7}$ for small $\kappa$ (pink) and $d \propto 1$ for large $\kappa$ (orange). The fits are obtained by weighted least squares, with weights equal to the inverse empirical standard deviation of the detection delay at each value of $\kappa$.}
         \label{fig:hd_v4(b)}
     \end{subfigure}
     \caption{Illustration of the different regimes with inlier distribution $(t(f_i,1,4.1))^{\otimes p}$ and $p=10$. Results are based on running \Cref{alg:cpd_highd_sim} on 1000 simulated datasets for each value of $\kappa$.}
     \label{fig:hdcpt_detectable_t}
\end{figure}

\section{Discussion}\label{sect:discuss}
In this work, we studied online mean change point detection under both Huber contamination and heavy-tailed inlier distributions.  For univariate data, we characterised the detection delay across different regimes of signal strength, contamination level and tail behaviour. For multivariate data, we developed robust mean testing procedures and used them as subroutines for online change point detection. Several questions remain open.

First, our procedures require prior knowledge of the contamination level $\varepsilon$.  A conservative upper bound can be used in practice, but this may lead to unnecessarily large thresholds and hence larger detection delays.  One possible direction is to adapt results on robust mean estimation with unknown contamination level without sacrificing the order of the estimation error \citep[e.g.][]{jain2022}.  For example, in the univariate setting, one could run the median and RUME procedures over a geometric grid of candidate contamination levels, and then select a mean estimator by intersecting the corresponding confidence sets. This approach, however, is currently better suited to estimation than testing.  Extending it to robust testing and subsequently to multivariate change point detection is an interesting direction for future work. 
Second, the multivariate rates obtained in the moderate- and large-signal regimes may not be sharp. This is largely a consequence of relying on the matrix Bernstein inequality, which may not have fully exploited the tail structure of sub-Weibull or finite-moment inliers.  Sharper bounds would likely require a more refined  understanding of the spectral properties of empirical covariance matrices under heavy-tailed assumptions.
Third, our multivariate theory assumes coordinate-wise independence.  This assumption allows us to exploit existing concentration properties of univariate sub-Weibull random variables.  Removing it would require new concentration tools for dependent heavy-tailed random vectors, especially under contamination. Developing such tools would broaden the applicability of the proposed methods.
Finally, while the present paper focuses primarily on statistical optimality and minimax detection delay, it would be useful to design robust online procedures with lower update and storage costs.

\section*{Acknowledgements}

Tang is supported by the Chancellors' Scholarship scheme and the Statistics Centre for Doctoral
Training at the University of Warwick.  Yu is partially supported by the Philip Leverhulme Prize
and EPSRC programme grant EP/Z531327/1. The authors would like to thank Tengyao Wang for helpful discussions.

\bibliographystyle{apalike}
\bibliography{ref}

\newpage
\appendix

\setcounter{section}{0}
\setcounter{theorem}{0}
\setcounter{definition}{0}
\setcounter{equation}{0}
\setcounter{figure}{0}
\setcounter{table}{0}
\setcounter{algorithm}{0}
\setcounter{remark}{0}

\renewcommand{\thesection}{S\arabic{section}}   
\renewcommand{\thetheorem}{S\arabic{theorem}}
\renewcommand{\thedefinition}{S\arabic{definition}}
\renewcommand{\thelemma}{S\arabic{lemma}}
\renewcommand{\theproposition}{S\arabic{proposition}}
\renewcommand{\thecorollary}{S\arabic{corollary}}
\renewcommand{\theequation}{S\arabic{equation}}  
\renewcommand{\thefigure}{S\arabic{figure}}
\renewcommand{\thetable}{S\arabic{table}}
\renewcommand{\thealgorithm}{S\arabic{algorithm}}
\renewcommand{\theremark}{S\arabic{remark}}

\section*{Appendices}
The proofs of all theoretical results are presented in the Appendices. A summary of proof dependencies is presented in \Cref{sect:roadmap}. \Cref{sect:univproof} contains proofs of the univariate change point results, namely  Theorems \ref{thm:univ_lower}--\ref{thm:rume_cpt}. \Cref{sect:hdtestproof} contains technical details of the subroutines of \Cref{alg:efficient-mean-tester}, as well as proofs of the multivariate testing results, namely Propositions \ref{thm:poly-time-main}--\ref{thm:poly-time-main-mom}. \Cref{sect:hdcptproof} collects the proofs of Theorems \ref{thm:cpt_hd_subW}--\ref{thm:cpt_hd_fm} for the multivariate change point problem. The full pseudocode for the practical multivariate change point algorithm and supplementary plots for \Cref{subsect:hdtest} can be found in \Cref{sect:hdcptalgo}. \Cref{sect:auxillarylem} contains auxiliary results.

\addcontentsline{toc}{section}{Appendices}
\renewcommand{\contentsname}{Content of Appendices}
\tableofcontents
\addtocontents{toc}{\protect\setcounter{tocdepth}{7}}

\newpage
\section{Summary of proof dependencies}\label{sect:roadmap}
We outline the roadmap of our proofs using directed acyclic graphs. An arrow from Result A to Result B (denoted $A \to B$) indicates that Result A is a prerequisite for the proof of Result B.

\subsection{Proofs for Section \ref{sect:univariate}}
\begin{figure}[H]
\centering
\begin{tikzpicture}[
    node distance=1.2cm and 1cm,
    thm_box/.style={rectangle, draw=blue!80, fill=blue!5, very thick, minimum width=2.5cm, font=\bfseries},
    pre_box/.style={rectangle, draw=black!60, fill=gray!5, rounded corners=2pt, minimum width=2.5cm}
]

\node (univ_low) [thm_box] {\Cref{thm:univ_lower}};
\node (p1) [pre_box, left=of univ_low] {\Cref{thm:low}};
\node (p2) [pre_box, above left=of univ_low] {\Cref{prop:meanminimax}};
\node (p3) [pre_box, above=of univ_low] {\Cref{thm:yu}};
\node (p4) [pre_box, above right=of univ_low] {\Cref{prop:fm_lowbd}};
\node (p5) [pre_box, right=of univ_low] {\Cref{prop:subW_lowbd}};
\node (p6) [pre_box, below=of univ_low] {\Cref{thm: regime4_lowerbd}};

\foreach \n in {p1,p2,p3,p4,p5,p6} \draw [->] (\n) -- (univ_low);

\end{tikzpicture}
\caption{Dependencies of the proof of \Cref{thm:univ_lower}}
\end{figure}

\begin{figure}[H]
    \centering
\begin{tikzpicture}[
    node distance=1.2cm and 1cm,
    thm_box/.style={rectangle, draw=blue!80, fill=blue!5, very thick, minimum width=2.5cm, font=\bfseries},
    pre_box/.style={rectangle, draw=black!60, fill=gray!5, rounded corners=2pt, minimum width=2.5cm, font=\small}
]

\coordinate (center_axis) at (0,0);

\node (rume_cpt) [thm_box, left=0.75cm of center_axis] {\Cref{thm:rume_cpt}};
\node (rume_subw) [thm_box, right=0.75cm of center_axis] {\Cref{thm:rume_cpt_subW}};

\node (s1) [pre_box, below=of center_axis] {\Cref{prop2}};
\node (s2) [pre_box, above=of center_axis] {\Cref{lem:rume_moment}};
\node (var_shift) [pre_box, left=of s2] {\Cref{lem:variance_shift}};

\node (s3) [pre_box, right=of rume_subw] {\Cref{thm:median_subWeib}};
\node (subW_prop) [pre_box, right=of s2] {\Cref{thm:subWeib}}; 
\node (s4) [pre_box, left=of rume_cpt] {\Cref{thm:median_moment}};

\draw [->] (var_shift) -- (s2);
\draw [->] (subW_prop) -- (s2);

\draw [->] (s1) -- (rume_cpt.south);
\draw [->] (s1) -- (rume_subw.south);

\draw [->] (s2) -- (rume_cpt.north);
\draw [->] (s2) -- (rume_subw.north);

\draw [->] (subW_prop) -- (s3);
\draw [->] (s3) -- (rume_subw);
\draw [->] (s4) -- (rume_cpt);

\end{tikzpicture}
\caption{Dependencies of the proofs for Theorems \ref{thm:rume_cpt_subW} and \ref{thm:rume_cpt}}
\end{figure}

\subsection{Proofs for Section \ref{sect:hdtest} and \ref{sect:high_dim_cpt}}
\begin{figure}[H]
    \centering
\begin{tikzpicture}[
    node distance=0.8cm and 1.5cm,
    thm_box/.style={rectangle, draw=blue!80, fill=blue!5, very thick, minimum width=2.5cm, font=\bfseries},
    pre_box/.style={rectangle, draw=black!60, fill=gray!5, rounded corners=2pt, minimum width=1cm, font=\small},
    align=center,
    >={Stealth[inset=0pt, length=5pt]}
]

\node (vec_bern)  [pre_box] { \ref{thm:vector_berstein} };
\node (mat_bern)  [pre_box, below=of vec_bern]  { \ref{thm:matrix_bernstein} };
\node (rosenthal) [pre_box, below=of mat_bern] { \ref{lemma:rosenthal} };
\node (quad_weib) [pre_box, below=of rosenthal] { \ref{prop:quadraticweibulltail} };
\node (app_cov)    [pre_box, below=of quad_weib, yshift=-1cm] { \ref{lem:approx_cov} };

\node (cov_conc)   [pre_box, right=of vec_bern] { \ref{lem:cov_matrix_conc} };
\node (normbound)  [pre_box, below=of cov_conc] { \ref{lem:normbound} };
\node (fact_small) [pre_box, below=of normbound] { \ref{fact:small-subset-deviations} };
\node (cond_red)   [pre_box, below=of fact_small] {\ref{lemma:weib_concentration} and \ref{lem:conditioning_reduction}};
\node (crossterm) [pre_box, below=of cond_red] { \ref{lem:crossterm} };

\node (norm_subw)  [pre_box, right=of cov_conc] { \ref{lem:normbound_subW}};
\node (cov_matrix0) [pre_box, below=of norm_subw] { \ref{lem:cov_matrix_conc0}};
\node (rowsum1)    [pre_box, below=of cov_matrix0, yshift=-1cm] { \ref{lem:rowsumfilter-1} };
\node (app_mean)   [pre_box, below=of rowsum1] { \ref{lem:approx_mean} };
\node (rowsum1b)   [pre_box, below= of app_mean] { \ref{lem:rowsumfilter-1b} };

\node (reg_weights)  [pre_box, right=of norm_subw] { \ref{lem:regular-weights-imply-tester} };
\node (reg_weights4) [pre_box, below=of reg_weights, yshift=-0.4cm] { \ref{lem:regular-weights-imply-tester4mom} };
\node (spectral)     [pre_box, below=of reg_weights4] { \ref{lem:spectralfilter2} };
\node (rowsum)       [pre_box, below=of spectral] { \ref{lem:rowsumfilter} };
\node (spec1)        [pre_box, below=of rowsum] { \ref{lem:spectralfilter1} };
\node (prop_med)     [pre_box, below=of spec1] { \ref{prop:med} };

\node (main_main) [thm_box, right=of reg_weights]  { \Cref{thm:poly-time-main} };
\node (cpt_subW) [thm_box, below=of main_main, yshift=-1cm]  { \Cref{thm:cpt_hd_subW} };
\node (main_fm)   [thm_box, below=of cpt_subW, yshift=-1cm] { \Cref{thm:poly-time-main_fm} };
\node (main_mom)  [thm_box, below=of main_fm]  { \Cref{thm:poly-time-main-mom} };
\node (cpt_fm) [thm_box, below=of main_mom]  { \Cref{thm:cpt_hd_fm} };

\draw [->] (vec_bern)  -- (normbound);
\draw [->] (app_mean)  -- (normbound);
\draw [->] (quad_weib) -- (normbound);

\draw [->] (mat_bern)  -- (cov_conc);
\draw [->] (app_cov)   -- (cov_conc);
\draw [->] (rosenthal) -- (cov_conc);
\draw [->] (quad_weib) -- (cov_conc);

\draw [->] (main_main) -- (cpt_subW);
\draw [->] (main_mom) -- (cpt_fm);

\draw [->] (cov_conc) -- (cov_matrix0);
\draw [->] (normbound) -- (cov_matrix0);

\draw [->] (cov_matrix0) -- (spec1);
\draw [->] (cov_matrix0) -- (spectral);
\draw [->] (fact_small)  -- (spectral);
\draw [->] (rowsum1)     -- (rowsum);
\draw [->] (rowsum1b)    -- (rowsum);

\draw [->] (reg_weights) -- (main_main);
\draw [->] (spectral)    -- (main_main);
\draw [->] (rowsum)      -- (main_main);
\draw [->] (spec1)       -- (main_main);

\draw [->] (reg_weights4) -- (main_fm);
\draw [->] (spectral)     -- (main_fm);
\draw [->] (rowsum)       -- (main_fm);
\draw [->] (spec1)       -- (main_main);

\draw [->] (main_fm)  -- (main_mom);
\draw [->] (prop_med) -- (main_mom);

\draw [->] (cond_red)  -- (reg_weights);
\draw [->] (norm_subw) -- (reg_weights);

\draw [->] (app_mean) -- (reg_weights4);
\draw [->] (app_cov)  -- (reg_weights4);

\draw [->] (cond_red) -- (rowsum1);
\draw [->] (spec1) -- (rowsum1);
\draw [->] (spectral) -- (rowsum1);
\draw [->] (crossterm) -- (rowsum1);

\draw [->] (app_mean) -- (rowsum1b);
\draw [->] (app_cov) -- (rowsum1b);
\draw [->] (spec1) -- (rowsum1b);
\draw [->] (spectral) -- (rowsum1b);
\end{tikzpicture}
\caption{Dependencies of the proofs for Propositions \ref{thm:poly-time-main}-- \ref{thm:poly-time-main-mom} and Theorems \ref{thm:cpt_hd_subW}--\ref{thm:cpt_hd_fm}.}
\end{figure}

\newpage
\section{Proofs of results in Section \ref{sect:univariate}}\label{sect:univproof}
\subsection{Proof of Theorem \ref{thm:univ_lower}}
The lower bounds in the five cases can be shown via the following propositions. 

\subsubsection*{Regime 1a}
Regime 1 characterises the information-theoretic lower bounds of the problem. It addresses the scenarios where Assumptions~\ref{assume-2} or~\ref{assume-1} are violated. To illustrate Regime~1a, \Cref{thm:low} shows that when contamination level is large relative to the signal strength, there exist inlier and contamination distributions such that any change point estimator is inconsistent.

\begin{proposition} \label{thm:low}
Suppose $\{X_i\}_{i\in\mathbb{N}}$ satisfies Assumption \ref{assum:kappa}, where $\varepsilon>0$ and
\begin{align*}
    Q_i=\begin{cases}
        (1-\varepsilon)\delta(0)+\varepsilon \delta(\kappa /\varepsilon)=q_a, &\text{if $i\leq\Delta$,} \\
        (1-\varepsilon)q_a+\varepsilon q_a, &\text{if $i>\Delta$,}
    \end{cases}
\end{align*}
where $\kappa\geq 0$. Denote inlier distributions as $F_1=\delta(0)$ and $F_{\Delta+1}=q_a$.
\begin{enumerate}[label=(\textbf{\alph*})]
    \item Assume that $\kappa\leq2^{-1/v}\phi\varepsilon^{1-1/v}$ for some $\phi>0$ and $v\in\mathbb{N}$. Then $F_i\in \mathcal{P}_{v,\phi}$, which implies that~$Q\in \Theta(\Delta,\kappa,\mathcal{P}_{v,\phi})$. 
    \item Assume that $\kappa/M\leq {\varepsilon} \{\log[1/(2\varepsilon)]\}^{1/\theta}$ for some $M>0$, $\theta>0$ and $\varepsilon\leq C_\theta$, where $C_\theta>0$ is a constant depending on $\theta$. Then $F_i\in\mathcal{G}_{\theta,M}$, which implies that $Q\in \Theta(\Delta,\kappa,\mathcal{G}_{\theta,M})$.
\end{enumerate}
With this choice of Q, we have
\begin{align*}
        \underset{\hat{t} \in \mathcal{T}(\alpha)}{\inf} \Pb_{Q} \left\{ \hat{t}=\infty
        \right\} \geq 1- \alpha.
    \end{align*}
\end{proposition}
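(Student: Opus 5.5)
The proof has two parts: a membership check that the constructed $Q$ lies in the alternative class, and an indistinguishability argument. The key observation is that the observed distribution is the same before and after the change, since $Q_i = q_a$ for every $i\in\mathbb{N}$. Indeed, for $i\le\Delta$ we have $Q_i=(1-\varepsilon)\delta(0)+\varepsilon\delta(\kappa/\varepsilon)=q_a$, and for $i>\Delta$ we have $Q_i=(1-\varepsilon)q_a+\varepsilon q_a=q_a$. Hence the joint law of $\{X_i\}$ is $q_a^{\otimes\mathbb{N}}$, and it does not depend on $\Delta$.

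For the indistinguishability step, note that $q_a^{\otimes\mathbb{N}}$ is also the law of a null model. Take inliers $F_i=q_a$ for all $i$, with contamination $H_i=q_a$. The inlier mean is then constant, equal to $\kappa$, so $f\in\mathcal{S}_0$ and this product measure lies in $\Theta_0(\mathcal{D})$. The variance is common across $i$, and class membership of $q_a$ is checked below. Therefore any $\hat t\in\mathcal{T}(\alpha)$ satisfies $\Pb_{q_a^{\otimes\mathbb{N}}}(\hat t<\infty)\le\alpha$. Since $\Pb_Q=\Pb_{q_a^{\otimes\mathbb{N}}}$, this gives $\Pb_Q(\hat t=\infty)\ge1-\alpha$ for every such $\hat t$, and taking the infimum yields the claim.

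One wrinkle is the common-variance requirement in $\mathcal{H}_\varepsilon$. The inlier $\delta(0)$ has variance $0$, while $q_a$ has variance $\kappa^2(1-\varepsilon)/\varepsilon$, so they differ unless $\kappa=0$. I would handle this either by reading $\sigma^2$ as suppressed, as the paper permits "when it plays no explicit role", or by noting that $\sigma$ matters only in Regimes 1b–2. If needed, one could convolve everything with a common small noise, but I would not pursue that.

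For the membership check, $\delta(0)$ trivially lies in both classes. For $q_a$, the mean is $\kappa$, so $W-\E W$ equals $-\kappa$ with probability $1-\varepsilon$ and $\kappa(1-\varepsilon)/\varepsilon$ with probability $\varepsilon$.

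\begin{enumerate}[label=(\textbf{\alph*})]
\item In the polynomial case, the $v$-th central moment is
\[
(1-\varepsilon)\kappa^v+\varepsilon\kappa^v(1-\varepsilon)^v\varepsilon^{-v}\le \kappa^v+\kappa^v\varepsilon^{1-v}\le 2\kappa^v\varepsilon^{1-v}.
\]
Under $\kappa\le 2^{-1/v}\phi\varepsilon^{1-1/v}$ we have $\kappa^v\le \phi^v\varepsilon^{v-1}/2$, so the moment is at most $\phi^v$.

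\item In the sub-Weibull case, the Orlicz expectation is at most
\[
(1-\varepsilon)e^{(\kappa/M)^\theta}+\varepsilon\, e^{(\kappa/(M\varepsilon))^\theta}.
\]
Using $\kappa/(M\varepsilon)\le\log^{1/\theta}(1/(2\varepsilon))$, the second term is at most $\varepsilon\cdot\frac{1}{2\varepsilon}=\frac12$. The first term is at most $\exp(\varepsilon^\theta\log(1/(2\varepsilon)))$, which is at most $3/2$ once $\varepsilon\le C_\theta$, since $\varepsilon^\theta\log(1/\varepsilon)\to0$. The total is then at most $2$.
\end{enumerate}

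The main obstacle is only the bookkeeping in (b): choosing $C_\theta$ so that $\varepsilon^\theta\log(1/(2\varepsilon))\le\log(3/2)$. This holds for sufficiently small $\varepsilon$ depending on $\theta$, which is exactly what the statement allows.
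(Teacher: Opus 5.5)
Your proposal is correct and follows essentially the same route as the paper. You use the same central-moment bound $\kappa^v(1+\varepsilon^{1-v})\le 2\kappa^v\varepsilon^{1-v}\le\phi^v$ in (a), the same $3/2+1/2$ split of the Orlicz expectation in (b), and the same observation that $Q_i=q_a$ for every $i$, so the observed law coincides with a model in $\Theta_0$ and hence $\Pb_Q(\hat t<\infty)\le\alpha$. You are slightly more explicit than the paper in exhibiting the null model and in flagging the common-variance requirement of $\mathcal{H}_\varepsilon$, which the paper's proof passes over without comment.
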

The idea of the proof is that when signal-to-noise ratio is very low, contamination distributions for the pre-change and post-change distributions can be constructed such that $Q_i$ are identical for all $i\in\mathbb{N}$. Thus, we are unable to tell when a change in mean has occurred.
 
\begin{proof}
    
    (a) We first note that the difference in means before and after the change point satisfies
    $$|\E_{X\sim F_1}[X]-\E_{X\sim F_{\Delta+1}}[X]|=\kappa.$$
    Then, we show that $F_1$ and $F_{\Delta+1}$ have a finite absolute $v$th central moment upper bounded by $\phi^{v}$. It is straightforward to show that $\E_{X\sim F_1}[|X-\E[X]|^v]=0$. For $F_{\Delta+1}$, we have
    \begin{align*}
    \E_{X\sim F_{\Delta+1}}[|X-\E[X]|^v]&=(1-\varepsilon)|0-\kappa|^v+\varepsilon|\kappa/\varepsilon-\kappa|^v\\
    &\leq \kappa^v(1+\varepsilon^{1-v})\leq2\kappa^v\varepsilon^{1-v}\leq\phi^{v},
    \end{align*}
    where in the second inequality, we used the fact that $\varepsilon^{1-v}\geq(1/2)^{1-v}\geq 1$ and the final inequality used the assumption $\kappa\leq2^{-1/v}\phi\varepsilon^{1-1/v}$.

    (b) Next, we show that $F_1$ and $F_{\Delta+1}$ are sub-weibull$(\theta)$ with the Orlicz norm $M$.
    It is straightforward to show that for $X\sim F_1$, $\norm{X-\E[X]}_{\psi_\theta}=0$. For $X\sim F_{\Delta+1}$, 
    \begin{align*}
    \E_{X\sim F_{\Delta+1}}[\exp(|X-\E[X]|/M)]&=(1-\varepsilon)e^{(|0-\kappa|/M)^\theta}+\varepsilon e^{({|\kappa/\varepsilon-\kappa|}/{M})^\theta}\leq3/2+1/2=2, 
    \end{align*} 
    allowing us to conclude that $\norm{X-\E[X]}_{\psi_\theta}\leq M$. For the calculation of Orlicz norm above, we used the inequality
    $$(\kappa/M)^\theta\leq \left(\frac{\varepsilon}{1-\varepsilon}\right)^{\theta} \log \frac{1}{2\varepsilon}\leq \log \frac{3}{2},$$
    where the first inequality holds by the assumption $\kappa/M\leq {\varepsilon} (\log(1/(2\varepsilon)))^{1/\theta}$ and the second inequality holds for $0\leq\varepsilon<C_\theta$ where $C_\theta$ is a constant that depends on $\theta$.
    
    (c) Since $Q_1=Q_{\Delta+1}$, this implies $\{X_i\}_{i\in\mathbb{N}}$ is i.i.d.~under the above restriction on $\kappa$ regardless of the value of $\Delta$. Thus, for any $T\in \mathcal{T}(\alpha)$, we must have
   $$\Pb_{\Delta}(T<\infty)\leq \Pb_{\infty}(T<\infty)<\alpha.$$
\end{proof}

\subsubsection*{Regime 1b}
At the same time, \Cref{prop:meanminimax} establishes the impossibility of detection for the regime where $\kappa \lesssim \Delta^{-1/2}$. The proof proceeds by restricting attention to Gaussian inliers without contamination $(\varepsilon=0)$, and showing that any procedure satisfying the false-alarm constraint can have arbitrarily large detection delay with non-zero probability. To be precise, we consider the family of normal distributions with fixed variance $\sigma^2$, and denote the class of distributions by 
$$\mathcal{N}_\sigma = \{ \mathcal{N}(\mu, \sigma^2) : \mu \in \mathbb{R} \}.$$ 
Since $\mathcal{N}_{\sigma}\subset\mathcal{G}_{\theta,M}$ whenever $M\geq C_1\sigma$, and $\mathcal{N}_{\sigma}\subset\mathcal{P}_{v,\phi}$ whenever $\phi\geq C_2\sigma$, for large enough constants constants $C_1, C_2>0$, the same lower bound extends to these larger distribution classes.

We define the following notation for the proof. Since any measure $P \in \Theta(\Delta, \kappa, \mathcal{N}_\sigma)$ is uniquely determined by its mean sequence $f \in \mathcal{S}(\Delta, \kappa)$, we denote the probability measure associated with a specific sequence $f$ as $\mathbb{P}_f$. This identification similarly holds for the null space, where each $P_0 \in \Theta_0(\mathcal{N}_\sigma)$ is uniquely associated with a constant mean sequence $f \in \mathcal{S}_0$.

\begin{proposition}[\citealp{moen_2025}, Proposition 8]\label{prop:meanminimax}
    For any $n,\Delta \in \mathbb{N}$, $\sigma>0$, $\alpha \in (0,1)$ and $\omega \in (0, 1-\alpha)$, there exist a constant $c>0$ depending only on $\omega$ such that if $\kappa^2 \Delta \leq c\sigma^2$, we have
    \begin{align*}
        \underset{\hat{t} \in \mathcal{T}(\alpha)}{\inf} \ \underset{P \in \Theta(\Delta, \kappa, \mathcal{N}_\sigma)}{\sup} \Pb_{P} \left\{ \hat{t} -\Delta  >  n 
        \right\} \geq 1- \alpha - \omega.
    \end{align*}
\end{proposition}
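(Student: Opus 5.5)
The plan is a change-of-measure argument that compares a no-change null measure with an alternative whose change occurs at $\Delta$. Scaling reduces everything to $\sigma=1$, since the statement is invariant under $X\mapsto X/\sigma$. Take the null $P_0$ with $X_i\stackrel{\text{iid}}{\sim}\mathcal{N}(0,1)$. For the alternatives, use the two measures $P^{\pm}$ with $f_i=0$ for $i\le\Delta$ and $f_i=\pm\kappa$ for $i>\Delta$. Both lie in $\Theta(\Delta,\kappa,\mathcal{N}_\sigma)$.

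The key observation is that a detector that stops within $n$ steps of $\Delta$ yields a test based on $X_{1:\Delta+n}$. That test would have to separate the pre-change segment from the post-change segment. Because only $\Delta$ pre-change samples are available, the pre-change mean is itself uncertain at the scale $\Delta^{-1/2}$.

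To make this precise, I would not fix the pre-change mean. Instead I would place a prior on the pre-change level: $\mu_1\sim\mathcal{N}(0,\tau^2)$, with post-change mean $\mu_1+\kappa$. I would compare this mixture with the null mixture in which all $X_i\sim\mathcal{N}(\mu,1)$ and $\mu$ has some prior chosen to nearly match. This is the device of Moen's Proposition 8.

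Concretely, consider the event $A=\{\hat t\le\Delta+n\}$, which belongs to $\mathcal{F}_{\Delta+n}$. Under any null measure, $\Pb(A)\le\alpha$, because $\hat t\in\mathcal{T}(\alpha)$. Hence
\[\Pb_{\mathrm{alt}}(A)\le \Pb_{\mathrm{null}}(A)+\mathrm{TV}\bigl(\mathcal{L}_{\mathrm{null}}(X_{1:\Delta+n}),\mathcal{L}_{\mathrm{alt}}(X_{1:\Delta+n})\bigr)\le\alpha+\mathrm{TV},\]
where both sides are mixtures over the priors. Averaging over the priors gives some alternative $P$ in the class with $\Pb_P(\hat t-\Delta\le n)\le\alpha+\mathrm{TV}$. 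Showing $\mathrm{TV}\le\omega$ then gives the claim.

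The main step is the TV computation, and this is the expected obstacle: the bound must hold uniformly in $n$. Both mixtures are Gaussian vectors, so their distance can be computed explicitly. For the alternative, use the mean vector $\mu_1\mathbf 1+\kappa\mathbf 1_{(\Delta,\Delta+n]}$ and randomise $\mu_1$ broadly. For the null, try the prior $\mu\sim\mathcal{N}(\kappa,\tau^2)$. With $\tau\to\infty$, both laws become essentially determined by the contrasts orthogonal to $\mathbf 1$. The remaining discrepancy is the projection of the step $\kappa\mathbf 1_{(\Delta,\Delta+n]}$ onto $\mathbf 1^\perp$. Its squared norm is
\[\kappa^2\,\frac{n\Delta}{\Delta+n}\le\kappa^2\Delta,\]
which is bounded uniformly in $n$. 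Bounding TV by Hellinger distance or KL divergence, with $\mathrm{KL}\lesssim\kappa^2\Delta\le c$, then gives $\mathrm{TV}\le\sqrt{c/2}\le\omega$ for $c=2\omega^2$.

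The delicate part is the limit $\tau\to\infty$, since improper priors must be avoided. I would handle it by a finite $\tau$, accepting a TV error for the mismatch in the $\mathbf 1$-direction. That error is of order $\kappa\sqrt{\Delta+n}/\tau$ and can be made $<\omega/2$ by taking $\tau$ large for fixed $n$. Because the statement is for each fixed $n$, this suffices; the constant $c$ then depends only on $\omega$.
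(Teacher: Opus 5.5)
Your argument is correct, but it takes a different and heavier route than the paper. The paper puts a Rademacher prior on the pre-change level, $f_i=u\kappa\mathbbm{1}\{i\le\Delta\}$ with $u\in\{\pm1\}$, so that the post-change level equals the null level $0$. Using the second-moment lemmas of Liu--Gao--Samworth, it then only needs $\E\exp(\sigma^{-2}\langle f^{(1)},f^{(2)}\rangle)=\cosh(\Delta\kappa^2/\sigma^2)\le 1+2\omega^2$. Uniformity in $n$ is automatic there, because the observations after $\Delta$ have the same law under both hypotheses.

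You instead randomise the common level with Gaussian priors under both hypotheses and use the Gaussian KL divergence plus Pinsker. This is self-contained, and your projection computation shows directly why the effective signal $\kappa^2\Delta n/(\Delta+n)$ saturates at $\kappa^2\Delta$.

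The step you call delicate is, however, unnecessary. In your normalisation $\sigma=1$, both mixtures share the covariance $I+\tau^2\mathbf 1\mathbf 1^\top\succeq I$ and differ in mean by $\kappa\mathbf 1_{[1,\Delta]}$. So the exact KL is $\frac{\kappa^2\Delta}{2}\bigl(1-\frac{\tau^2\Delta}{1+(\Delta+n)\tau^2}\bigr)\le\frac{\kappa^2\Delta}{2}$ for every $\tau\ge0$, including $\tau=0$. Hence you need neither the limit $\tau\to\infty$ nor a separate error term for the $\mathbf 1$-direction. The single alternative with levels $(0,\kappa)$ against the single null with constant level $\kappa$ already gives $\mathrm{TV}\le\frac12\sqrt{\kappa^2\Delta}\le\frac12\sqrt{c}$, and $c=4\omega^2$ finishes the proof. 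The paper's prior is equally inessential: levels $(\kappa,0)$ against null $0$ give $\chi^2=e^{\Delta\kappa^2/\sigma^2}-1$.

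This also fixes your constants. As written, you get $\mathrm{TV}\le\omega$ from $c=2\omega^2$ and then add a further $\omega/2$, which overshoots $\omega$.
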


\begin{proof}
To begin, note first that for any $f\in \mathcal{S}(\Delta,\kappa)$, $f_0\in \mathcal{S}_{0}$, $T \in \mathcal{T}(\alpha)$ from \eqref{mathcaltmean}, we have
\begin{align*}
     \Pb_{f}(\hat{t} > \Delta + n) &= \Pb_{f}(\hat{t} > \Delta + n) + \Pb_{f_0}(\hat{t} \leq \Delta + n) - \Pb_{f_0}(\hat{t} \leq  \Delta + n).
\end{align*}
Since $\hat{t} \in \mathcal{T}(\alpha)$, we have $\Pb_{f_0}(\hat{t} \leq \Delta + n) \leq \Pb_{f_0}(\hat{t} < \infty) \leq \alpha$ for any such $f_0$, and thus
\begin{align*}
     \Pb_{f}(\hat{t} > \Delta + n) &\geq \Pb_{f}(\hat{t} > \Delta + n) + \underset{f_0 \in \mathcal{S}_{0}}{\sup} \Pb_{f_0}(\hat{t} \leq \Delta + n) -\alpha.
\end{align*}
Write $l = \Delta + n$.  Since $\hat{t}$ is an extended stopping time with respect to the filtration $(\mathcal{F}_t)_{t \in \mathbb{N}}$ generated by the $Y_i$, there exists a measurable function $\psi : \R^{l} \mapsto \{0,1\}$ such that we may write $\mathbbm{1}\{\hat{t} \leq l\} = \psi(Y_1, \ldots, Y_l)$. Since $\hat{t} \in \mathcal{T}(\alpha)$ was arbitrary, it therefore follows that   
\begin{align*}
     &\underset{\hat{t} \in \mathcal{T}(\alpha)}{\inf} \ \underset{P \in \Theta(\Delta, \kappa, \mathcal{N}_{\sigma})}{\sup}  \Pb_{P}(\hat{t} - \Delta  > n)=\underset{\hat{t} \in \mathcal{T}(\alpha)}{\inf} \ \underset{f\in\mathcal{S}(\Delta,\kappa)}{\sup}  \Pb_{f}(\hat{t} - \Delta  > n) \\
     \geq & \underset{\psi \in \Psi(l)}{\inf} \ \left[ \underset{f\in\mathcal{S}(\Delta,\kappa)}{\sup}  \Pb_{f}\{\psi(Y^{(l)}) = 0\} + \underset{f_0\in \mathcal{S}_0}{\sup}  \Pb_{f_0}\{\psi(Y^{(l)}) = 1\} \right] - \alpha,
\end{align*}
where $Y^{(l)} = (Y_1, \ldots, Y_l)$ and $\Psi(l)$ is the set of all measurable functions $\psi : \R^{l} \mapsto \{0,1\}$. 
To prove Proposition \ref{prop:meanminimax}, it suffices to choose a sufficiently small value of $c>0$ (depending only on $\omega$), such that 
\begin{align}
    \underset{\psi \in \Psi(l)}{\inf} \ \left[ \underset{f\in\mathcal{S}(\Delta,\kappa)}{\sup}  \Pb_{f}\{\psi(Y^{(l)}) = 0\} + \underset{f_0\in \mathcal{S}_0}{\sup}  \Pb_{f_0}\{\psi(Y^{(l)}) = 1\}  \right] \geq 1 - \omega, \label{minimaxproofclaim}
\end{align}
for any $\kappa>0$. 

To prove \eqref{minimaxproofclaim},  
we will argue in a very similar fashion as in the proof of Proposition 3 in \cite{Liu_Gao_Samworth_2021}. Due to Lemmas 21 and 23 in \cite{Liu_Gao_Samworth_2021}, given any $\alpha>0$ it suffices to find a value of $c$ depending only on $\omega$ and a prior distribution $\nu$ with support on $\mathcal{S}(\Delta, \kappa)$ such that
    \begin{equation*}
    \E_{(f^{(1)}, f^{(2)}) \sim \nu\otimes\nu} \exp\left( \frac{1}{\sigma^2} \sum_{i \in [l]}  f_{i}^{(1)} f_{i}^{(2)} \right) \leq 1+ 2\omega^2.
    \end{equation*}
    Define the prior distribution $\nu$ to be the distribution of $f \in \mathcal{S}(\Delta, \kappa)$ generated according to the following process. $f_i = u \kappa \mathbbm{1}_{\{i\leq \Delta\}}$ where $u\overset{\mathrm{i.i.d.}}{\sim} \mathrm{Unif}(\{-1, 1\})$.

Now, let $f^{(1)}_i=u\kappa\mathbbm{1}_{\{i\leq \Delta\}}$ and $f^{(2)}_i=m\kappa\mathbbm{1}_{\{i\leq \Delta\}}$, where $m,u\overset{\mathrm{i.i.d.}}{\sim} \mathrm{Unif}(\{-1, 1\})$. Thus we have 
    \begin{align*}
        \sigma^{-2} \sum_{i \in [l]} f_{i}^{(1)} f_{i}^{(2)}
        &=\Delta \kappa^2\sigma^{-2}mu\leq c mu.
    \end{align*}
Thus, for any value of $\kappa$, it holds that 
    \begin{align*}
         \E_{(f^{(1)}, f^{(2)}) \sim \nu\otimes\nu} \exp\left( \sigma^{-2} \sum_{i \in [l]} f_{i}^{(1)} f_{i}^{(2)}\right) \leq \E_{m,u} [e^{cmu}]= \frac{1}{2}(e^c+e^{-c})=\cosh(c). 
    \end{align*}
    where the second inequality follows from the fact that $mu \sim \mathrm{Unif}(\{-1,1\})$. Thus, we may take $c=\cosh^{-1}(1+2\omega^2)$.
\end{proof}

\subsubsection*{Regime 2}
For Regime 2, \cite{Yu_2023} established a minimax lower bound for the normal family in the absence of contamination; this result is recalled in \Cref{thm:yu}. By the same inclusion argument as in Regime 1b, this lower bound extends to the larger distribution classes $\mathcal{G}_{\theta,M}$ and $\mathcal{P}_{v,\phi}$, provided that $M$ and $\phi$ are sufficiently large.

\begin{proposition}[\citealp{Yu_2023}, Proposition 7] \label{thm:yu}
    Suppose $\{X_i\}_{i\in\mathbb{N}}$ satisfies Assumption \ref{assum:kappa}, where $\varepsilon=0$ and
    \begin{align*}
    Q_i=\begin{cases}
        \mathcal{N}(0,\sigma^2) &\text{if $i\leq\Delta$,} \\
        \mathcal{N}(\kappa,\sigma^2) &\text{if $i>\Delta$,}
    \end{cases}
    \end{align*}
    for some $\kappa>0$ and $\sigma^2>0$. Then, we have $Q\in\Theta(\Delta,\kappa,\mathcal{N}_\sigma)$. Moreover, for any $\alpha\in(0,1)$ satisfying
     \begin{equation*}
         \alpha+2\alpha^{1/4}<\frac{1}{2} \quad \text{and} \quad \alpha^{5/4}\log\left(\frac{1}{\alpha}\right)\leq (2\kappa)^{2}\sigma^{-2},
     \end{equation*}
     it holds that
     \begin{equation*}
         \inf_{\hat{t}\in \mathcal{T}(\alpha)} \E_Q\{(\hat{t}-\Delta)^+\}\geq \frac{\sigma^2}{4\kappa^{2}}\log\left(\frac{1}{\alpha}\right).
     \end{equation*}
\end{proposition}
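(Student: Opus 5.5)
The membership claim $Q\in\Theta(\Delta,\kappa,\mathcal{N}_\sigma)$ is immediate: the inlier means jump from $0$ to $\kappa$ at $\Delta$, the variance is $\sigma^2$ throughout, and $\varepsilon=0$. The substantive part is the expected-delay bound. For it I would use the standard change-of-measure argument for online detection under a false-alarm-probability constraint, as in \cite{Yu_2023}.

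\textbf{Step 1 (delay window).} Fix $\hat t\in\mathcal{T}(\alpha)$ and set $m=\lfloor \sigma^2\log(1/\alpha)/(2\kappa^2)\rfloor$. Because the pre-change law coincides with the null $\mathcal{N}(0,\sigma^2)^{\otimes\infty}$, we get $\mathbb{P}_Q(\hat t\le\Delta)\le\alpha$. By Markov's inequality,
\[
\E_Q\{(\hat t-\Delta)^+\}\ge m\,\mathbb{P}_Q(\hat t\ge \Delta+m).
\]
It therefore suffices to show $\mathbb{P}_Q(\Delta<\hat t<\Delta+m)\le 2\alpha^{1/4}$. Combined with $\alpha+2\alpha^{1/4}<1/2$, this gives $\mathbb{P}_Q(\hat t\ge\Delta+m)\ge 1/2$, and hence a bound of order $\sigma^2\kappa^{-2}\log(1/\alpha)/4$.

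\textbf{Step 2 (change of measure).} On $\mathcal{F}_{\Delta+m}$, the log-likelihood ratio of $Q$ against the null $P_0$ is
\[
L_k=\sum_{i=\Delta+1}^{\Delta+k}\frac{\kappa X_i-\kappa^2/2}{\sigma^2}.
\]
Under $Q$, $L_k$ is a Gaussian random walk with drift $\kappa^2/(2\sigma^2)$ and variance $\kappa^2/\sigma^2$ per step. On the event $\{\Delta<\hat t<\Delta+m,\ \max_{k<m}L_k\le \tfrac34\log(1/\alpha)\}$, Wald's change of measure at the stopping time gives
\[
\mathbb{P}_Q(\cdot)\le \alpha^{-3/4}\,P_0(\hat t<\infty)\le \alpha^{1/4}.
\]

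\textbf{Step 3 (the main obstacle).} It remains to bound $\mathbb{P}_Q\bigl(\max_{k<m}L_k>\tfrac34\log(1/\alpha)\bigr)$. The mean of $L_k$ is at most $m\kappa^2/(2\sigma^2)\le \tfrac14\log(1/\alpha)$, so the deviation needed is at least $\tfrac12\log(1/\alpha)$. Kolmogorov's or Doob's maximal inequality applied to the centred martingale gives a bound of order
\[
\frac{m\kappa^2/\sigma^2}{\log^2(1/\alpha)}\lesssim \frac{1}{\log(1/\alpha)}.
\]
I would instead use the exponential (Gaussian) maximal inequality, which bounds this probability by $\exp\{-c\log(1/\alpha)\}\le\alpha^{1/4}$ for a suitable constant. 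The condition $\alpha^{5/4}\log(1/\alpha)\le (2\kappa)^2\sigma^{-2}$ is what makes the rounding in the definition of $m$ harmless: it ensures $m\ge1$ or that the floor does not destroy the constant, and the main work is calibrating the constants so that it suffices. This bookkeeping is where I expect the difficulty to lie, and I would follow \cite{Yu_2023} closely there.
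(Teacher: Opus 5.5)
Your strategy works, and your Step~3 is cleaner than the template the paper relies on. Two points in the write-up need correcting, though.

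\textbf{Comparison with the paper.} The paper gives no proof here; it only cites \cite{Yu_2023}. Its analogous Lai-type proofs for Regimes~3 and~4 (\Cref{prop:fm_lowbd,prop:subW_lowbd}) handle the maximum of the partial log-likelihood sums differently from you. They condition on $\{T>\varphi\}$ and take a union bound over $t<d$, which costs a factor $d$. Their side conditions on $\alpha$ exist to absorb that factor. The Gaussian condition $\alpha^{5/4}\log(1/\alpha)\le 4\kappa^2\sigma^{-2}$ is equivalent to $\sigma^2\log(1/\alpha)/(4\kappa^2)\le \alpha^{-5/4}$, an upper bound on the window length, so it presumably plays the same role.

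Your exponential maximal inequality removes the union bound entirely, and the constants calibrate exactly, so no bookkeeping is left. Set $d=\sigma^2\log(1/\alpha)/(2\kappa^2)$ and $W_k=L_k-k\kappa^2/(2\sigma^2)$. Every $k<d$ has $k\kappa^2/(2\sigma^2)<\tfrac14\log(1/\alpha)$ and $\mathrm{Var}_Q(W_k)<\tfrac12\log(1/\alpha)$. Doob's inequality for $\exp(\lambda W_k)$ then gives
\[
\mathbb{P}_Q\Bigl(\max_{1\le k<d}L_k>\tfrac34\log(1/\alpha)\Bigr)\le \mathbb{P}_Q\Bigl(\max_{1\le k<d}W_k>\tfrac12\log(1/\alpha)\Bigr)\le \exp\Bigl\{-\frac{(\tfrac12\log(1/\alpha))^2}{2\cdot\tfrac12\log(1/\alpha)}\Bigr\}=\alpha^{1/4}.
\]
So your route needs only $\alpha+2\alpha^{1/4}<1/2$, and the second hypothesis is not used at all.

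\textbf{What is wrong.} You say the condition $\alpha^{5/4}\log(1/\alpha)\le(2\kappa)^2\sigma^{-2}$ controls the rounding and ensures $m\ge 1$. That is false: it bounds $\sigma^2\log(1/\alpha)/\kappa^2$ from \emph{above}. It holds for every sufficiently large $\kappa$, and there $m=\lfloor\sigma^2\log(1/\alpha)/(2\kappa^2)\rfloor=0$, so your final bound $m/2$ is vacuous. More generally, $m/2$ can be strictly smaller than the claimed $\sigma^2\log(1/\alpha)/(4\kappa^2)$.

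The fix is simply not to floor. Work with the real $d$ above, use the window $\{\Delta<\hat t<\Delta+d\}$ in Steps~2--3, and apply Markov as
\[
\E_Q\{(\hat t-\Delta)^+\}\ge d\,\mathbb{P}_Q(\hat t-\Delta\ge d)\ge d(1-\alpha-2\alpha^{1/4})>d/2 .
\]
This is exactly how the paper uses its real-valued $d$. If $d\le 1$ the window is empty and the bound follows from $\mathbb{P}_Q(\hat t>\Delta)\ge 1-\alpha$. With that change your argument is a complete proof.
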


\subsubsection*{Regime 3}
We now derive a minimax lower bound on the expected detection delay over the inlier distributions belonging to the class of distributions with finite $v$-th moment, $\mathcal{P}_{v,\phi}$. Rather than considering the whole class of distributions, it is sufficient to consider the case where the inlier distributions belong to the symmetric type II Pareto family and there is no contamination ($\varepsilon = 0$). To be specific, let $\mathfrak{P}_s = \{ \mathrm{Par}(\mu,s) : \mu \in \mathbb{R} \}$ denote the set of symmetric type II Pareto distributions with shape parameter $s > 2$, where the density is given by
$$p(x; \mu, s) = \frac{s-1}{2}(1 + |x - \mu|)^{-s}, \quad x \in \mathbb{R}.$$
The proof adapts arguments used in Theorem 2 of \cite{Lai_1998}. 

\begin{proposition}[$\mathcal{P}_{v,\phi}$ lower bound]\label{prop:fm_lowbd} 
Suppose $\{X_i\}_{i\in\mathbb{N}}$ satisfies Assumption \ref{assum:kappa}, where $\varepsilon=0$ and
\begin{align*}
    Q_i=\begin{cases}
        \mathrm{Par}(0,v+1.01) &\text{if $i\leq\Delta$,} \\
        \mathrm{Par}(\kappa,v+1.01) &\text{if $i>\Delta$,}
    \end{cases}
\end{align*}
for some $v\geq2$ and $\kappa>0$. Then, we have $Q\in\Theta(\Delta,\kappa,\mathcal{P}_{v,\phi})$. Moreover, for any $\alpha\in(0,1)$ satisfying
    \begin{equation}\label{v2_assum:ak2}
         \alpha+2\alpha^{1/4}<\frac{1}{2} \quad \text{and} \quad \alpha^{-1/4}\log^{2.02}\left(\frac{1}{\alpha}\right)\leq (2\kappa\log\kappa-1)^{v+0.01}(4v\log\kappa)^{2.02},
     \end{equation}
     and for any change point time $\Delta$, it holds that
     \begin{equation*}
         \inf_{\hat{t}\in \mathcal{T}(\alpha)} \E_Q\{(\hat{t}-\Delta)^+\}\geq \frac{1}{16v\log{\kappa}}\log\left(\frac{1}{\alpha}\right).
     \end{equation*}
\end{proposition}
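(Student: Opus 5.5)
The proof has two parts. First I check membership, $\mathrm{Par}(\mu,s)\in\mathcal{P}_{v,\phi}$ with $s=v+1.01$. Then I prove the delay bound with the change-of-measure argument of \citet[Theorem 2]{Lai_1998}, the same route as in \Cref{thm:yu}. The only distribution-specific input there is the Kullback--Leibler divergence $I=\mathrm{KL}(\mathrm{Par}(\kappa,s)\,\|\,\mathrm{Par}(0,s))$ per post-change observation. The target then becomes $\E(\hat t-\Delta)^+\gtrsim \log(1/\alpha)/I$, and it suffices to show $I\le 4v\log\kappa$ up to constants.

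\textbf{Membership.} For $W\sim\mathrm{Par}(\mu,s)$ the density is symmetric about $\mu$, so $\E W=\mu$. Moreover
\[
\E|W-\mu|^v=(s-1)\int_0^\infty x^v(1+x)^{-s}\,dx=(s-1)B(v+1,s-v-1),
\]
which is finite because $s-v-1=0.01>0$. Taking $\phi^v$ at least this constant (which depends only on $v$) gives $Q\in\Theta(\Delta,\kappa,\mathcal{P}_{v,\phi})$.

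\textbf{KL bound.} The log-likelihood ratio of one observation is
\[
Z=\log\frac{p(X;\kappa,s)}{p(X;0,s)}=s\log\frac{1+|X|}{1+|X-\kappa|}.
\]
By the triangle inequality, $1+|X|\le 1+\kappa+|X-\kappa|\le(1+\kappa)(1+|X-\kappa|)$, so $|Z|\le s\log(1+\kappa)$ almost surely. Hence $I\le s\log(1+\kappa)\le 4v\log\kappa$ for $\kappa$ not too small, since $s=v+1.01\le 2v$ when $v\ge2$. Because $Z$ is bounded, all the moment and maximal-inequality requirements in the Lai argument hold automatically.

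\textbf{Change of measure.} Fix $m=\lfloor \log(1/\alpha)/(8I)\rfloor$ and take any $\hat t\in\mathcal{T}(\alpha)$. Conditional on $\hat t>\Delta$, bound $\Pb_\Delta(\Delta<\hat t\le\Delta+m)$ as follows. On the event $\{\max_{j\le m}\sum_{i=\Delta+1}^{\Delta+j}Z_i\le \tfrac34\log(1/\alpha)\}$, the likelihood-ratio identity gives
\[
\Pb_\Delta(\hat t\le\Delta+m,\ \text{event})\le \alpha^{-3/4}\,\Pb_\infty(\hat t\le \Delta+m)\le\alpha^{1/4}.
\]
For the complement, $\E Z=I$ and $|Z|\le s\log(1+\kappa)$, so by Hoeffding or Doob's maximal inequality the probability that the centred partial sums exceed $\tfrac34\log(1/\alpha)-mI\ge \tfrac58\log(1/\alpha)$ is at most $\exp\{-c\log^2(1/\alpha)/(m s^2\log^2\kappa)\}$. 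Since $m\asymp\log(1/\alpha)/(v\log\kappa)$, this is $\alpha^{c'/v}$ up to polylog factors in $\kappa$. This is where the assumption \eqref{v2_assum:ak2} relating $\alpha^{-1/4}$ to powers of $\kappa\log\kappa$ and $\log\kappa$ enters: it makes this deviation probability at most $\alpha^{1/4}$. Combined with $\Pb_\Delta(\hat t\le\Delta)\le\alpha$, this gives $\Pb(\hat t-\Delta>m)\ge 1-\alpha-2\alpha^{1/4}>1/2$. Markov's inequality then yields $\E(\hat t-\Delta)^+\ge m/2\ge \log(1/\alpha)/(16v\log\kappa)$.

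\textbf{Main obstacle.} The delicate step is the concentration step: matching the exact form of \eqref{v2_assum:ak2}, including the exponents $v+0.01$ and $2.02$ that come from $s=v+1.01$. This likely needs a truncation at level $2\kappa\log\kappa$, using the Pareto tail $\Pb(|X|>x)\asymp x^{-(s-1)}$, rather than the crude bound $|Z|\le s\log(1+\kappa)$. If the crude Hoeffding route does not reproduce those constants, I would instead control the second moment of $Z$ on the truncated region and invoke a Bernstein-type maximal inequality, following Lai's original proof.
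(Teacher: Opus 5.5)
Your membership computation and the change-of-measure step are fine, and $|Z|\le s\log(1+\kappa)$ (with $s=v+1.01$) is correct. The proof fails at the concentration step.

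Write $L=\log(1/\alpha)$. With the two-sided range $2s\log(1+\kappa)$ and $m\le L/(8I)$ steps, Hoeffding or Doob gives a tail of order $\exp\{-cLI/(s^2\log^2(1+\kappa))\}$. Because $I\le s\log(1+\kappa)$, this is no smaller than $\alpha^{c/(s\log(1+\kappa))}$. So the factor $v\log\kappa$ sits in the exponent of $\alpha$, not in a polylogarithmic prefactor. The bound is therefore not $\le\alpha^{1/4}$ once $v\log\kappa$ exceeds a constant. Condition \eqref{v2_assum:ak2} cannot repair this, since it only bounds $\alpha^{-1/4}$ from above by a power of $\kappa$. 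The final constant does not follow either: $m=\lfloor L/(8I)\rfloor$ together with only $I\le 4v\log\kappa$ gives about $L/(64v\log\kappa)$, not $L/(16v\log\kappa)$.

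The missing idea is to use your bound one-sidedly and almost surely, instead of concentrating around $I$. Take $m=\lfloor 3L/(4s\log(1+\kappa))\rfloor$. Then $\sum_{i=\Delta+1}^{\Delta+j}Z_i\le\tfrac34 L$ for every $j\le m$ with certainty, so the bad event is empty. The likelihood ratio on $\{\Delta<\hat t\le\Delta+m\}$ is then at most $\alpha^{-3/4}$, which gives $\Pb_\Delta(\Delta<\hat t\le\Delta+m)\le\alpha^{-3/4}\Pb_\infty(\Delta<\hat t\le\Delta+m)\le\alpha^{1/4}$. Hence
$\E(\hat t-\Delta)^+\ge(m+1)(1-\alpha-\alpha^{1/4})\ge 3L/\{8(v+1.01)\log(1+\kappa)\}$.
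This exceeds $L/(16v\log\kappa)$, because \eqref{v2_assum:ak2} forces $2\kappa\log\kappa>1$ and hence $\kappa>1.4$. This is the same mechanism as the paper's Regime~4 proof.

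For this proposition the paper argues differently. It uses the weaker bound $Z_i\le s\{\log\kappa+(1+|X_i-\kappa|)/\kappa\}$ with $d=L/(4s\log\kappa)$, a union bound over $t<d$, and the Pareto tail union bound $\Pb_\varphi(\sum_{i=\varphi+1}^{\varphi+t}|X_i-\kappa|>x)\le t^{v+1.01}x^{-(v+0.01)}$. That is where $(2\kappa\log\kappa-1)^{v+0.01}$ and the power $2.02$ in \eqref{v2_assum:ak2} come from, so no truncation argument is needed to reproduce them.

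Incidentally, since $2\log\kappa<\kappa$, the right side of \eqref{v2_assum:ak2} is below $\kappa^{2v+0.02}(4v\log\kappa)^{2.02}$. This forces $L<15.9\,v\log\kappa$. So the claimed bound is below $1-\alpha$, and it already follows from $\E(\hat t-\Delta)^+\ge\Pb_\Delta(\hat t>\Delta)\ge 1-\alpha$.
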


\begin{proof}
To show $Q\in\Theta(\Delta,\kappa,\mathcal{P}_{v,\phi})$, it suffices to show that the distribution $\mathrm{Par}(0,v+1.01)$ has $v$ finite central moments. Indeed, if $X\sim\mathrm{Par}(0,v+1.01)$ and $k\leq v$, we have
\begin{align*}
    \E[|X|^k]&=\frac{v+0.01}{2}\int_{-\infty}^\infty |x|^k(1 + |x|)^{-v-1.01} dx\\
    &=(v+0.01)\int_{0}^\infty x^k(1 + x)^{-v-1.01} dx\\
    &\leq(v+0.01)\left[\int_{0}^1 x^k dx+\int_{1}^\infty (1+x)^{k-v-1.01} dx\right]<\infty.
\end{align*}
The first inequality follows from the fact that $(1+x)^{-v-1.01}\leq 1$ for $x\in [0,1]$ and the second inequality follows from the fact that $k-v-1.01<-1$.

    We now prove the lower bound on the infimum of expected detection delay.
     
     Step 1: Denote the joint distribution of $\{X_i\}_{i=1,2,\ldots}$ by $\mathbb{P}_{\Delta}=\bigotimes_{i=1}^\infty Q_i$. For any $n$, let $\mathbb{P}^n_\Delta$ be the restriction of a distribution $\mathbb{P}_\Delta$ to $\mathcal{F}_n$, i.e.~the $\sigma$-algebra generated by the observations $\{X_i\}^n_{i=1}$. For any $\varphi\geq1$ and $n\geq \varphi$, we have that for any $n\geq \Delta$, it holds that
     $$\frac{\mathrm{d}\mathbb{P}^n_{\varphi}}{\mathrm{d}\mathbb{P}^n_{\infty}}=\exp\left(\sum_{i=\varphi+1}^{n} Z_i\right),$$
     where 
     $$Z_i=(v+1.01)\log\left(\frac{1+|X_i-0|}{1+|X_i-\kappa|}\right).$$
     For any $\varphi \geq 1$, define the event
     $$E_{\varphi}=\left\{\varphi<T<\varphi+\frac{1}{4(v+1.01)\log{\kappa}}\log\left(\frac{1}{\alpha}\right),\quad\sum_{i=\varphi+1}^TZ_i<\frac{3}{4}\log\left(\frac{1}{\alpha}\right)\right\}.$$
     Define
     $$d=\frac{\log(1/\alpha)}{4(v+1.01)\log{\kappa}}.$$
    Then we have
    $$\Pb_{\varphi}(E_{\varphi})\leq \int_{E_c} \exp \left(\sum_{i=\varphi+1}^{n} Z_i\right) dP_{\infty}\leq \alpha^{-3/4}\Pb_{\infty}(E_{\varphi})\leq\alpha^{-3/4}\alpha=\alpha^{1/4},$$
    where the first two inequalities follow from the definition of $E_\varphi$, and the last inequality follows from the definition of $\mathcal{T}(\alpha)$.
    
    Step 2: We first show two useful results.
    Firstly, we note that $X_i$ has polynomially-decaying tails $\forall i\geq\varphi+1$, i.e.
    \begin{equation}\label{eqn_proof:polydec}
        \Pb_{\varphi}(|X_i-\kappa|\geq s)=\frac{1}{(1+s)^{v+0.01}}\leq\frac{1}{s^{v+0.01}}, \quad \forall s>0, \,\forall i\geq\varphi+1,
    \end{equation}
    where the equality follows directly from evaluating the cumulative density function. A corollary of \eqref{eqn_proof:polydec} is the following concentration inequality for $\sum_{i=\varphi+1}^{\varphi+t}|X_i-\kappa|$, where $t\in \mathbb{N}$, given by
    \begin{equation}\label{eqn_proof:conc_poly}
        \Pb_{\varphi}\left(\sum_{i=\varphi+1}^{\varphi+t}|X_i-\kappa|>s\right)\leq \frac{t^{v+1.01}}{s^{v+0.01}} \quad \forall s>0.
    \end{equation}
    This can be shown by a union bound argument as follows.
    \begin{align*}
        \Pb_{\varphi}\left(\sum_{i=\varphi+1}^{\varphi+t}|X_i-\kappa|>s\right)&\leq \Pb_{\varphi}\left(\bigcup_{i=\varphi+1}^{\varphi+t}\left\{|X_i-\kappa|>\frac{s}{t}\right\}\right)\\
        & \leq \sum_{i=\varphi+1}^{\varphi+t}\Pb_{\varphi}\left(|X_{i}-\kappa|>\frac{s}{t}\right) \\
        &\leq t\left(\frac{s}{t}\right)^{-(v+0.01)} =\frac{t^{v+1.01}}{s^{v+0.01}}.
    \end{align*}
    Secondly, we can show that
    \begin{equation}\label{eqn:ZdomY2}
        Z_i\leq (v+1.01)\left(\log\kappa+\frac{1+|X_{i}-\kappa|}{\kappa}\right).
    \end{equation}
    Indeed, \eqref{eqn:ZdomY2} follows directly from the definition of $\kappa$ and the mean value theorem. Firstly, by triangle inequality, we have that 
    \begin{align*}
        (v+1.01)^{-1}Z_i=\log\left(1+\frac{|X_i-0|-|X_i-\kappa|}{1+|X_i-\kappa|}\right)\leq\log\left(1+\frac{\kappa}{1+|X_i-\kappa|}\right).
    \end{align*}
    Meanwhile, given for any $a>b>0$, by the mean value theorem, we have that for some $c\in(a,b)$, 
    \begin{align*}
    \frac{\log(a)-\log(b)}{a-b}=\frac{1}{c} \leq \frac{1}{b}.
    \end{align*}
    Substituting $a=\kappa/(1+|X_i-\kappa|)+1$ and $b=\kappa/(1+|X_i-\kappa|)$, we get
    \begin{align*}
    \log\left(1+\frac{\kappa}{1+|X_i-\kappa|}\right)\leq\log\left(\frac{\kappa}{1+|X_i-\kappa|}\right)+\frac{1+|X_i-\kappa|}{\kappa}.
    \end{align*}
    The desired result is obtained by noting that $\kappa/(1+|X_i-\kappa|)\leq\kappa$. 
    
    Step 3: For any $\varphi\geq1$ and $T\in \mathcal{T}(\alpha)$, since $\{T\geq \varphi\}\in\mathcal{F}_{\varphi-1},$ 
    \begin{align*}
        &\Pb_{\varphi}\left(\varphi<T<\varphi+d,\sum_{i=\varphi+1}^TZ_i\geq \frac{3}{4}\log\left(\frac{1}{\alpha}\right)| T>\varphi\right)\\
        &\leq \esssup \Pb_{\varphi}\left(\max_{1\leq t\leq d-1}\sum_{i=\varphi+1}^{\varphi+t}Z_i\geq \frac{3}{4}\log\left(\frac{1}{\alpha}\right)\right)\\
        &\leq \esssup d \max_{1\leq t\leq d-1} \Pb_{\varphi}\left(\sum_{i=\varphi+1}^{\varphi+t}Z_i\geq \frac{3}{4}\log\left(\frac{1}{\alpha}\right)\right)\\
       & \leq \esssup d \max_{1\leq t\leq d-1} \Pb_{\varphi}\left(\sum_{i=\varphi+1}^{\varphi+t}\left(\log\kappa+\frac{1+|X_{i}-\kappa|}{\kappa}\right)\geq \frac{3}{4(v+1.01)}\log\left(\frac{1}{\alpha}\right)\right) \\
       &= \esssup d \max_{1\leq t\leq d-1} \Pb_{\varphi}\left(\sum_{i=\varphi+1}^{\varphi+t}{|X_{i}-\kappa|}\geq \kappa\left(\frac{3}{4(v+1.01)}\log\left(\frac{1}{\alpha}\right) - t \log\kappa\right)-t\right)\\
      & \leq \esssup d \max_{1\leq t\leq d-1} \Pb_{\varphi}\left(\sum_{i=\varphi+1}^{\varphi+t}{|X_{i}-\kappa|}\geq \kappa\left(\frac{3}{4(v+1.01)}\log\left(\frac{1}{\alpha}\right) - d \log\kappa\right)-d\right)\\
      & = \esssup d \max_{1\leq t\leq d-1} \Pb_{\varphi}\left(\sum_{i=\varphi+1}^{\varphi+t}{|X_{i}-\kappa|}\geq \frac{2\kappa\log{\kappa}-1}{4(v+1.01)\log{\kappa}}\log\left(\frac{1}{\alpha}\right)\right)\\
      &\leq d \Pb_{\varphi}\left(\sum_{i=\varphi+1}^{\varphi+d}{|X_{i}-\kappa|}\geq \frac{2\kappa\log{\kappa}-1}{4(v+1.01)\log{\kappa}}\log\left(\frac{1}{\alpha}\right)\right)\\
      & \leq \left(\frac{\log(1/\alpha)}{4(v+1.01)\log{\kappa}}\right)^{v+2.01}\left(\frac{2\kappa\log{\kappa}-1}{4(v+1.01)\log{\kappa}}\log\left(\frac{1}{\alpha}\right)\right)^{-v-0.01}\\
      & = \left(\frac{\log(1/\alpha)}{4(v+1.01)\log{\kappa}}\right)^{2.02}\left(2\kappa\log{\kappa}-1\right)^{-v-0.01} \leq \alpha^{1/4},
    \end{align*}
    where the third inequality follows from \eqref{eqn:ZdomY2}; the sixth inequality is due to \eqref{eqn_proof:conc_poly}; and the seventh inequality is due to the assumption stated in \eqref{v2_assum:ak2}. Combining the results above, we have
    \begin{equation*}
        \sup_{\varphi\geq1} \Pb_{\varphi}\left(\varphi<T<\varphi+\frac{\log(1/\alpha)}{4(v+1.01)\log{\kappa}}\right)<2\alpha^{1/4}.
    \end{equation*}
    
    Step 4: We now have for any change point time $\Delta$,
    \begin{align*}
        \E_{\Delta}[(T-\Delta)^+]&\geq \frac{1}{4(v+1.01)\log{\kappa}}\log\left(\frac{1}{\alpha}\right)\Pb_{\Delta}\left(T-\Delta\geq \frac{1}{4(v+1.01)\log{\kappa}}\log\left(\frac{1}{\alpha}\right)\right)\\
        &\geq \frac{1}{8v\log{\kappa}}\log\left(\frac{1}{\alpha}\right)\left[\Pb_{\Delta}(T>\Delta)-\Pb_{\Delta}\left(\Delta<T<\Delta+\frac{1}{8v\log{\kappa}}\log\left(\frac{1}{\alpha}\right)\right)\right]\\
        &\geq \frac{1}{8v\log{\kappa}}\log\left(\frac{1}{\alpha}\right)[1-\alpha-2\alpha^{1/4}]\geq \frac{1}{16v\log{\kappa}}\log\left(\frac{1}{\alpha}\right).
    \end{align*}
    The second inequality holds since $v+1.01\leq 2v$ for $v\geq 2$, and the last inequality holds since we assumed $\alpha+2\alpha^{1/4}<1/2$.
\end{proof}

Similarly, to give a minimax lower bound on the  expected detection delay over the sub-Weibull class $\mathcal{G}_{\theta,M}$, it suffices to consider the Generalised Gaussian distribution in the absence of contamination ($\varepsilon = 0$). Let $\mathcal{GG}_{\theta,M} = \{ GG(\mu, \theta, M) : \mu \in \mathbb{R} \}$ denote the family of Generalised Gaussian distributions with mean $\mu$, shape parameter $\theta > 0$ and scale parameter $M>0$, defined by the densities:
$$p(x; \mu, \theta, M) = \frac{\theta}{2M\Gamma(1/\theta)} \exp\left( -\left[ \frac{|x-\mu|}{M} \right]^\theta \right), \quad x \in \mathbb{R}.$$

\begin{proposition}[$\mathcal{G}_{\theta,M}$ lower bound]\label{prop:subW_lowbd}
     Suppose $\{X_i\}_{i\in\mathbb{N}}$ satisfies Assumption \ref{assum:kappa}, where $\varepsilon=0$ and
\begin{align*}
    Q_i=\begin{cases}
        \mathrm{GG}(0,\theta, M) &\text{if $i\leq\Delta$,} \\
        \mathrm{GG}(\kappa,\theta, M), &\text{if $i>\Delta$,}
    \end{cases}
\end{align*} 
for some $\theta\in(0,2]$, $\kappa>0$ and $M>0$. Then we have $Q \in \Theta(\Delta, \kappa, \mathcal{G}_{\theta,M})$. Moreover, suppose that $\kappa\geq 4M(2\log(4e))^{1/\theta}$. Then, for any $\alpha\in(0,1)$ satisfying
     \begin{equation}\label{assum:subw_low}
         \alpha+2\alpha^{1/4}<\frac{1}{2} \quad \text{and} \quad \alpha^{-1/4}\log\left(\frac{1}{\alpha}\right)\leq (2\kappa)^{\theta}\exp(4^\theta\log(4e)),
     \end{equation}
     and for any change point time $\Delta$, it holds that
     \begin{equation*}
         \inf_{\hat{t}\in \mathcal{T}(\alpha)} \E_Q\{(\hat{t}-\Delta)^+\}\geq \frac{1}{4(2\kappa)^{\theta}}\log\left(\frac{1}{\alpha}\right).
     \end{equation*}
\end{proposition}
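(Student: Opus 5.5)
The plan is to mirror the four-step structure of the proof of \Cref{prop:fm_lowbd}, adapted in the style of Theorem 2 of \cite{Lai_1998}.

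\textbf{Membership.} First I will check that $Q\in\Theta(\Delta,\kappa,\mathcal{G}_{\theta,M})$. For $W\sim\mathrm{GG}(\mu,\theta,M)$, the substitution $y=(|x-\mu|/M)^\theta$ gives
\[
\E\exp\{(|W-\mu|/cM)^\theta\}=\Gamma(1/\theta)^{-1}\int_0^\infty y^{1/\theta-1}e^{-y(1-c^{-\theta})}\,dy=(1-c^{-\theta})^{-1/\theta}.
\]
So the Orlicz norm is a constant multiple of $M$. If the definition is meant literally with scale $M$, I would absorb this constant into $M$ by a rescaling, and note that the constants in the theorem tolerate it.

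\textbf{Likelihood ratio.} For a change at $\varphi$, $d\Pb^n_\varphi/d\Pb^n_\infty=\exp(\sum_{i=\varphi+1}^n Z_i)$ with
\[
Z_i=M^{-\theta}\bigl(|X_i|^\theta-|X_i-\kappa|^\theta\bigr).
\]
Set $d=\log(1/\alpha)/\{4(2\kappa)^\theta\}$; the $M^{-\theta}$ factor is taken as $1$ after normalising $M$, or else carried along. Step 1 is a change of measure exactly as before. On $E_\varphi=\{\varphi<T<\varphi+d,\ \sum_{\varphi+1}^T Z_i<\tfrac34\log(1/\alpha)\}$ we get
\[
\Pb_\varphi(E_\varphi)\le\alpha^{-3/4}\Pb_\infty(T<\infty)\le\alpha^{1/4}.
\]

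\textbf{Tail control of the partial sums.} Step 2 bounds $Z_i$ pointwise. Since $\theta\le2$, use $|a+b|^\theta\le 2^{\theta-1\vee0}(|a|^\theta+|b|^\theta)$ with $|X_i|\le\kappa+|X_i-\kappa|$. This gives $Z_i\le c_\theta(\kappa^\theta+|X_i-\kappa|^\theta)$ with $c_\theta\le2$; for $\theta\le1$, subadditivity gives directly $Z_i\le\kappa^\theta$. So $Z_i\le(2\kappa)^\theta/2+|X_i-\kappa|^\theta$, say. Under $\Pb_\varphi$ the post-change $|X_i-\kappa|^\theta$ are i.i.d.\ $\mathrm{Gamma}(1/\theta,1)$-like variables. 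Step 3 conditions on $\{T>\varphi\}\in\mathcal{F}_\varphi$ and takes a union bound over $t\le d$:
\[
\Pb_\varphi\Bigl(\max_{t\le d}\sum_{i=\varphi+1}^{\varphi+t}Z_i\ge\tfrac34\log(1/\alpha)\Bigr)\le d\,\Pb\Bigl(\sum_{i=1}^{d}|X_i-\kappa|^\theta\ge\tfrac34\log(1/\alpha)-\tfrac d2(2\kappa)^\theta\Bigr).
\]
By the choice of $d$, the right-hand threshold is at least $\tfrac58\log(1/\alpha)$. The key sub-step is to bound this probability by a Chernoff bound using the exact MGF, $\E e^{\lambda|X-\kappa|^\theta}=(1-\lambda)^{-1/\theta}$. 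Taking $\lambda=1/2$ gives
\[
\Pb\le 2^{d/\theta}\exp\{-\tfrac{5}{16}\log(1/\alpha)\}.
\]
The lower bound $\kappa\ge4M(2\log(4e))^{1/\theta}$ makes $d\log2/\theta$ small relative to $\log(1/\alpha)$. Combined with the factor $d$ and assumption \eqref{assum:subw_low}, this should yield at most $\alpha^{1/4}$. This is where the specific constants $4^\theta\log(4e)$ enter, and I expect matching them exactly to be the main obstacle. If the clean Chernoff route does not reproduce them, I would adjust the split of $\tfrac34\log(1/\alpha)$ between the deterministic part and the tail part, and the choice of $\lambda$.

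\textbf{Conclusion.} Step 4 combines the two bounds:
\[
\sup_\varphi\Pb_\varphi(\varphi<T<\varphi+d)\le2\alpha^{1/4}.
\]
Hence
\[
\E_\Delta(T-\Delta)^+\ge d\bigl(1-\alpha-2\alpha^{1/4}\bigr)\ge d/2,
\]
which gives $\log(1/\alpha)/\{4(2\kappa)^\theta\}$ up to the factor $1/2$, to be absorbed by defining $d$ with $2^{-1}$ slack.
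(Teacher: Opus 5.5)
Your skeleton matches the paper's: the change of measure on $E_\varphi$, a bound on the post-change partial sums of the log-likelihood ratio conditional on $\{T>\varphi\}$, and $\E_\Delta(T-\Delta)^+\ge d(1-\alpha-2\alpha^{1/4})$. The routes differ in the tail step.

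The paper bounds $Z_i$ by a multiple of $|X_i-\kappa|^\theta+\kappa^\theta$ and uses only a sub-Weibull tail for $|X_i-\kappa|$. It then controls the sum coarsely and arrives at $\exp\{-\tfrac12(\kappa/M)^\theta\}\log(1/\alpha)/(2\kappa)^\theta$. That bound does not decay in $\alpha$, which is why the paper needs the second half of \eqref{assum:subw_low}.

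You instead use the exact cancellation $Z_i\le\kappa^\theta$ for $\theta\le1$, and the exact $\mathrm{Gamma}(1/\theta,1)$ law of $|X_i-\kappa|^\theta$ otherwise. This yields a power of $\alpha$. So you need not match the constant $4^\theta\log(4e)$ in that hypothesis: on your route it is superfluous, and only the lower bound on $\kappa$ is used. The paper's estimate needs only a tail bound rather than the exact law, but it pays for this with the extra condition on $\alpha$.

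Your likelihood ratio with $M^{-\theta}$ is correct; the paper writes $M^{-1}$. Your Orlicz computation is also correct. The paper's Step~2 claims the norm is $(1-2^{-\theta})^{1/\theta}M$, but its displayed integrand has a positive exponent. From your formula the true value is $(1-2^{-\theta})^{-1/\theta}M>M$. So the rescaling is genuinely required, and not only for $Q\in\Theta$. Step~1 needs $\Pb_\infty(T<\infty)\le\alpha$, and $\mathcal{T}(\alpha)$ guarantees this only when the pre-change law lies in the null class.

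As written, though, the constants do not close, and three concrete adjustments are needed.

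\emph{(i)} With $d=\log(1/\alpha)/\{4(2\kappa)^\theta\}$ you only get $\log(1/\alpha)/\{8(2\kappa)^\theta\}$. Doubling $d$ lowers the residual threshold to $\tfrac12\log(1/\alpha)$. Then $\lambda=1/2$ gives $d\,2^{d/\theta}\alpha^{1/4}$, which is not at most $\alpha^{1/4}$. Take $\lambda=3/4$ instead. The union-bound factor $d$ can also be dropped. Since $|X_i-\kappa|^\theta\ge0$, for $\theta\in(1,2]$ every partial sum with $t<d$ is at most $\tfrac14\log(1/\alpha)+\sum_{i=\varphi+1}^{\varphi+\lceil d\rceil-1}|X_i-\kappa|^\theta$. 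Chernoff then gives $4^{d/\theta}\alpha^{3/8}$. Using $\theta>1$ and $M\ge1$, the condition on $\kappa$ gives $(2\kappa)^\theta\ge16\log(4e)$, hence $4^{d/\theta}\le\alpha^{-0.02}$ and the bound is at most $\alpha^{1/4}$.

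\emph{(ii)} Keep the case $\theta\le1$ separate. There every partial sum with $t<d$ is below $d\kappa^\theta\le\tfrac12\log(1/\alpha)$, so the bad event is empty. Your merged bound $Z_i\le(2\kappa)^\theta/2+|X_i-\kappa|^\theta$ fails for small $\theta$. The mean of the Gamma sum, about $d/\theta$, exceeds the threshold once $\theta(2\kappa)^\theta$ is below a constant, and the $\kappa$-condition allows this for small $\theta$. For the same reason, your claim that $d\log2/\theta$ is small relative to $\log(1/\alpha)$ holds only for $\theta$ bounded below.

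\emph{(iii)} Normalising $M$ is an extra assumption, not a reduction. The claimed bound contains no $M$, and it is false for small $M$. As $M\to0$ the observations become nearly noiseless, and a procedure in $\mathcal{T}(\alpha)$ can stop at $\Delta+1$ with high probability. You therefore need the GG scale bounded below, i.e.\ a lower bound on $M$. The paper makes the same implicit restriction, through an unstated ``$M\ge4/3$'' in its Step~3.
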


\begin{proof}
     Step 1: Denote the joint distribution of $\{X_i\}_{i=1,2,\ldots}$ as $\mathbb{P}_{\Delta}=\bigotimes_{i=1}^\infty Q_i$. For any $n$, let $\mathbb{P}^n_\Delta$ be the restriction of a distribution $\mathbb{P}_{\Delta}$ to $\mathcal{F}_n$, i.e.~the $\sigma$-algebra generated by the observations $\{X_i\}^n_{i=1}$. For any $\varphi\geq1$ and $n\geq \varphi$, we have that for any $n\geq \Delta$, it holds that
     $$\frac{\mathrm{d}\Pb^n_{\varphi}}{\mathrm{d}\Pb^n_{\infty}}=\exp\left(\sum_{i=\varphi+1}^{n} Z_i\right),$$
     where 
     \begin{equation*}
         Z_i=M^{-1}(|X_i-0|^\theta-|X_i-\kappa|^\theta).
     \end{equation*}
     For any $\varphi \geq 1$, define the event
     $$E_{\varphi}=\left\{\varphi<T<\varphi+\frac{1}{2(2\kappa)^{\theta}}\log\left(\frac{1}{\alpha}\right),\quad\sum_{i=\varphi+1}^TZ_i<\frac{3}{4}\log\left(\frac{1}{\alpha}\right)\right\}.$$
     Define
     $$d=\frac{\log(1/\alpha)}{2(2\kappa)^{\theta}}.$$
    Then we have
    $$\Pb_{\varphi}(E_{\varphi})\leq \int_{E_c} \exp \left(\sum_{i=\varphi+1}^{n} Z_i\right) dP_{\infty}\leq \alpha^{-3/4}\Pb_{\infty}(E_{\varphi})\leq\alpha^{-3/4}\alpha=\alpha^{1/4},$$
    where the first two inequalities follow from the definition of $E_\varphi$, and the last inequality follows from the definition of $\mathcal{T}(\alpha)$.
    
    Step 2: We first show three useful results.
    Firstly, we show that 
    \begin{equation}\label{eqn_proof:orlicz}
        \norm{X_i-f_i}_{\psi_\theta}=(1-2^{-\theta})^{1/\theta}M\leq M, \quad \forall i\in\mathbb{N}.
    \end{equation}
    This can be shown by evaluating the expectation stated in Definition \ref{defn:orlicz}.
    \begin{align*}
        \E \exp\left\{\left(\frac{|X_i-f_i|}{(1-2^{-\theta})^{1/\theta}M}\right)^\theta\right\}&=\int_{-\infty}^\infty \frac{\theta}{2M\Gamma(1/\theta)} \exp\left\{\frac{|x-f_i|^\theta}{(1-2^{-\theta})M^\theta}-\frac{|x-f_i|^\theta}{M^\theta}\right\}dx\\
        &=\int_{-\infty}^\infty \frac{\theta}{2M\Gamma(1/\theta)} \exp\left\{\frac{|x-f_i|^\theta}{(2^\theta-1)M^\theta}\right\} dx\\
        &=(2^\theta-1)^{1/\theta}\leq2.
    \end{align*}
    Secondly, we show that $|X_i-\kappa|^\theta$ is sub-exponential for all $i\geq \varphi+1$. Indeed, for all $s>0$, we have
    \begin{equation}\label{eqn_proof:subexp}
        \mathbb{P}_{\varphi}(|X_i-\kappa|^\theta\geq s)=\mathbb{P}_{\varphi}(|X_i-\kappa|\geq s^{1/\theta})\leq 2\exp\{-(s^{1/\theta}/M)^\theta\}=2\exp(-s/M^\theta), 
    \end{equation}
    where the inequality follows directly from \eqref{eqn_proof:orlicz} and Lemma \ref{thm:subWeib}. Thirdly, we consider the random variable
    \begin{equation*}
        Y_i=2^\theta(|X_i-\kappa|^\theta+\kappa^\theta), \quad\forall i\in \mathbb{N},
    \end{equation*}
    and we can show that
    \begin{equation}\label{eqn:ZdomY1}
        Z_i\leq M^{-1}Y_i, \quad\forall \theta> 0.
    \end{equation}
    \eqref{eqn:ZdomY1} follows directly from the inequality
    \begin{align*}
    |a+b|^\theta\leq|2\max(a,b)|^\theta\leq2^\theta(|a|^\theta+|b|^\theta).
    \end{align*}
    Subtracting $|a|^\theta$ from both sides give
    \begin{equation*}
        |a+b|^\theta-|a|^\theta\leq2^\theta(|a|^\theta+|b|^\theta).
    \end{equation*}
    Substitute $a=X_i-\kappa$, $b=\kappa-0$ to get desired result.

    Step 3: For any $\varphi\geq1$ and $T\in \mathcal{T}(\alpha)$, since $\{T\geq \varphi\}\in\mathcal{F}_{\varphi-1},$ 
    \begin{align*}
        &\mathbb{P}_{\varphi}\left(\varphi<T<\varphi+\frac{1}{2(2\kappa)^{\theta}}\log\left(\frac{1}{\alpha}\right),\sum_{i=\varphi+1}^TZ_i\geq \frac{3}{4}\log\left(\frac{1}{\alpha}\right)| T>\varphi\right)\\
        &\leq \esssup \mathbb{P}_{\varphi}\left(\max_{1\leq t\leq d-1}\sum_{i=\varphi+1}^{\varphi+t}Z_i\geq \frac{3}{4}\log\left(\frac{1}{\alpha}\right)\right)\\
        &\leq \esssup d \max_{1\leq t\leq d-1} \mathbb{P}_{\varphi}\left(\sum_{i=\varphi+1}^{\varphi+t}Z_i\geq \frac{3}{4}\log\left(\frac{1}{\alpha}\right)\right)\\
       & \leq \esssup d \max_{1\leq t\leq d-1} \mathbb{P}_{\varphi}\left(\sum_{i=\varphi+1}^{\varphi+t}Y_i\geq \frac{3M}{4}\log\left(\frac{1}{\alpha}\right)\right) \\
       &= \esssup d \max_{1\leq t\leq d-1} \mathbb{P}_{\varphi}\left(\sum_{i=\varphi+1}^{\varphi+t}2^{-\theta}Y_i - \kappa^{\theta}\geq \frac{3M}{4\times2^{\theta}}\log\left(\frac{1}{\alpha}\right) - t \kappa^{\theta}\right)\\
      & \leq \esssup d \max_{1\leq t\leq d-1} \mathbb{P}_{\varphi}\left(\sum_{i=\varphi+1}^{\varphi+t}|X_i-\kappa|^\theta\geq \frac{3M}{4\times2^{\theta}}\log\left(\frac{1}{\alpha}\right) - \frac{\log(1/\alpha)}{2 (2\kappa)^{\theta}} \kappa^{\theta}\right)\\
       &  = \esssup d \max_{1\leq t\leq d-1} \mathbb{P}_{\varphi}\left(\sum_{i=\varphi+1}^{\varphi+t}|X_i-\kappa|^\theta \geq \left(3\times2^{-\theta-2}M - 2^{-\theta -1}\right)\log(1/\alpha)\right) \\
       &\leq \frac{2\log(1/\alpha)}{2(2\kappa)^{\theta}} \exp\left\{-\frac{\left(3\times2^{-\theta-2}M - 2^{-\theta -1}\right)\log(1/\alpha) M^{-\theta}}{\log(1/\alpha)/(2\kappa)^{\theta}}\right\}\\
        &\leq \frac{\exp\{-2^{-1}(\kappa/M)^{\theta}\}}{(2\kappa)^{\theta}}\log\left(\frac{1}{\alpha}\right)\\
        &\leq \frac{\exp\{-4^{\theta}\log(4e)\}}{(2\kappa)^{\theta}}\log\left(\frac{1}{\alpha}\right) \leq \alpha^{1/4},
    \end{align*}
    where the third inequality follows from \eqref{eqn:ZdomY1}; the fifth inequality is due to \eqref{eqn_proof:subexp} alongside the concentration property of sums of exponential random variables; the sixth inequality is due to the restriction $M\geq4/3$; the seventh inequality is given by the regime $\kappa\geq 4M(2\log(4e))^{1/\theta}$; and the eighth inequality follows from \eqref{assum:subw_low}.  Combining the results above, 
\begin{equation*}
    \sup_{\varphi\geq1} \Pb_{\varphi}\left(\varphi<T<\varphi+\frac{1}{2(2\kappa)^{\theta}}\log\left(\frac{1}{\alpha}\right)\right)<2\alpha^{1/4}.
\end{equation*}

Step 4: We now have for any change point time $\Delta$,
\begin{align*}
    \E_{\Delta}[(T-\Delta)^+]&\geq \frac{1}{2(2\kappa)^{\theta}}\log\left(\frac{1}{\alpha}\right)\Pb_{\Delta}\left\{T-\Delta\geq \frac{1}{2(2\kappa)^{\theta}}\log\left(\frac{1}{\alpha}\right)\right\}\\
    &\geq \frac{1}{2(2\kappa)^{\theta}}\log\left(\frac{1}{\alpha}\right)\left[\Pb_{\Delta}(T>\Delta)-\Pb_{\Delta}\left\{\Delta<T<\Delta+\frac{1}{2(2\kappa)^{\theta}}\log\left(\frac{1}{\alpha}\right)\right\}\right]\\
    &\geq \frac{1}{2(2\kappa)^{\theta}}\log\left(\frac{1}{\alpha}\right)[1-\alpha-2\alpha^{1/4}]\geq \frac{1}{4(2\kappa)^{\theta}}\log\left(\frac{1}{\alpha}\right).
\end{align*}
The last inequality holds since $\alpha+2\alpha^{1/4}<1/2$.
\end{proof}

\subsubsection*{Regime 4}
Finally, we discuss Regime 4, the case where the signal is very large. To show a minimax lower bound on the the expected detection delay, the idea here is to construct a distribution on two points $\{0,\kappa\}$ and derive a lower bound on the expected detection delay for this model.

\begin{proposition} \label{thm: regime4_lowerbd}
Suppose $\{X_i\}_{i\in\mathbb{N}}$ satisfies Assumption \ref{assum:kappa}, where $\varepsilon>0$, $\kappa>0$, and
\begin{align*}
    Q_i=\begin{cases}
        (1-\varepsilon)\delta(0)+\varepsilon \delta(\kappa), &\text{if $i\leq\Delta$,} \\
        (1-\varepsilon)\delta(\kappa)+\varepsilon \delta(0), &\text{if $i>\Delta$.}
    \end{cases}
\end{align*}  
\begin{enumerate}[label=(\textbf{\alph*})]
    \item Denote $F_1=\delta(0)$, $F_{\Delta+1}=\delta(\kappa)$. Then $F_i\in\mathcal{P}_{v,0}$ for all $v \geq1$ and $F_i\in\mathcal{G}_{\theta,0}$ for all $\theta\in\R^{+}$.
    \item 
    For any $\alpha\in(0,1)$ such that 
    \begin{equation*}
         \alpha+\alpha^{1/4}<1/2,
     \end{equation*}
     it holds that, for any change point time $\Delta$,
     \begin{equation*}
         \inf_{\hat{t}\in \mathcal{T}(\alpha)}\E_Q\{(\hat{t}-\Delta)^+\}\geq \frac{3}{16}\frac{\log(1/\alpha)}{\log((1-\varepsilon)/\varepsilon)}.
     \end{equation*}
\end{enumerate}
\end{proposition}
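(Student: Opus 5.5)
Part (a) is immediate. A point mass has zero central moments, so $\E|W-\E W|^v=0\le\phi^v$ for every $\phi$; likewise $\E\exp\{(|W-\E W|/M)^\theta\}=1\le 2$. Hence $\delta(0)$ and $\delta(\kappa)$ belong to both classes, interpreting the degenerate scale $0$ as the limit, or equivalently as membership for every positive scale. The means are $0$ and $\kappa$, so $Q\in\Theta(\Delta,\kappa,\mathcal{D})$.

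For part (b) I would follow the four-step change-of-measure template used in Propositions~\ref{prop:fm_lowbd} and \ref{prop:subW_lowbd}, which itself adapts \cite{Lai_1998}.

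\textbf{Step 1 (likelihood ratio).} Since $X_i\in\{0,\kappa\}$, the log-likelihood ratio of a post-change to a pre-change observation is
\[
Z_i=\log\frac{Q_{\Delta+1}(X_i)}{Q_1(X_i)}=L\bigl(\mathbbm{1}\{X_i=\kappa\}-\mathbbm{1}\{X_i=0\}\bigr),\qquad L=\log\frac{1-\varepsilon}{\varepsilon}.
\]
It follows that $|Z_i|\le L$ almost surely. This deterministic bound replaces the tail estimates needed in the previous two propositions, and it is the key simplification. Then $\mathrm{d}\Pb^n_\varphi/\mathrm{d}\Pb^n_\infty=\exp(\sum_{i=\varphi+1}^n Z_i)$.

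\textbf{Step 2 (choice of window and change of measure).} Set $d=\tfrac{3}{8}\log(1/\alpha)/L$. On the event $\{\varphi<T<\varphi+d\}$ we have $\sum_{i=\varphi+1}^T Z_i\le (T-\varphi)L< \tfrac38\log(1/\alpha)\le\tfrac34\log(1/\alpha)$. The large-LLR part of the event used in the earlier propositions is therefore empty, and Step~3 of those proofs becomes trivial. Changing measure gives
\[
\Pb_\varphi(\varphi<T<\varphi+d)\le \alpha^{-3/4}\,\Pb_\infty(T<\infty)\le \alpha^{1/4},
\]
uniformly in $\varphi$, for every $T\in\mathcal{T}(\alpha)$.

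\textbf{Step 3 (conclude).} By Markov-type reasoning,
\[
\E_\Delta(T-\Delta)^+\ge d\bigl[\Pb_\Delta(T>\Delta)-\Pb_\Delta(\Delta<T<\Delta+d)\bigr].
\]
Since $\{T\le\Delta\}$ depends only on pre-change data, $\Pb_\Delta(T\le \Delta)\le\Pb_\infty(T<\infty)\le\alpha$. The right-hand side is therefore at least $d(1-\alpha-\alpha^{1/4})\ge d/2$ under $\alpha+\alpha^{1/4}<1/2$. This gives $\tfrac{3}{16}\log(1/\alpha)/\log((1-\varepsilon)/\varepsilon)$, exactly the stated constant, which confirms the choice $d=\tfrac38\log(1/\alpha)/L$.

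The only mildly delicate point is Step~2: one must make sure that $T-\varphi<d$ bounds the sum through $T$ inclusively, and that $\varepsilon<1/2$ so that $L>0$. Everything else is routine, because the bounded likelihood ratio removes any need for concentration arguments.
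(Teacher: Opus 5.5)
Your proposal is correct and follows essentially the same route as the paper. Both use the likelihood ratio $Z_i=\pm\log((1-\varepsilon)/\varepsilon)$, the window $d=\tfrac38\log(1/\alpha)/\log((1-\varepsilon)/\varepsilon)$, the change-of-measure bound $\alpha^{1/4}$, and the final step $d(1-\alpha-\alpha^{1/4})\ge d/2$. The only difference is cosmetic: you remove the large-LLR event via the deterministic bound $|Z_i|\le L$, while the paper rewrites that event as a Bernoulli average exceeding $1$ and treats $d\le 1$ as a separate case; these are the same observation.
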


\begin{proof}
(a) We first show that the sequence of distributions $\{F_i\}_{i\in\mathbb{N}}$ is sub-Weibull$(\theta)$ for any $\theta\in\R^+$ and has finite $v$-th moment for all $v\in \mathbb{N}$. The finite-moment property is immediate since we just have a point mass.
    $$\E[|X_i-\E[X_i]|^v]=0,$$
    Similarly, since we have a point mass, we have that $\{F_i\}_{i\in\mathbb{N}}$ is sub-Weibull$(\theta)$ for any $\theta\in\R^+$ with Orlicz norm
    $$\inf \{t>0:\exp((0/t)^\theta)\leq2\}=0.$$
    
    (b) Now we prove the lower bound for the delay.
    
    Step 1. Denote the joint distribution of $\{X_i\}_{i=1,2,\ldots}$ as $\mathbb{P}_{\Delta}=\bigotimes_{i=1}^\infty Q_i$. For any $n$, let $\mathbb{P}^n_\Delta$ be the restriction of a distribution $\mathbb{P}_{\Delta}$ to $\mathcal{F}_n$, i.e.~the $\sigma$-algebra generated by the observations $\{X_i\}^n_{i=1}$. For any $\varphi\geq1$ and $n\geq \varphi$, we have that for any $n\geq \Delta$, it holds that
     $$\frac{\mathrm{d}\Pb^n_{\varphi}}{\mathrm{d}\Pb^n_{\infty}}=\exp\left(\sum_{i=\varphi+1}^{n} Z_i\right),$$
     where 
     $$Z_i=\left(\frac{2}{\kappa}X_i-1\right)\log\left(\frac{1-\varepsilon}{\varepsilon}\right).$$
     For any $\varphi \geq 1$, define the event
     $$E_{\varphi}=\left\{\varphi<T<\varphi+\left\lceil\frac{3}{8}\frac{\log(1/\alpha)}{\log((1-\varepsilon)/\varepsilon)}\right\rceil,\quad\sum_{i=\varphi+1}^TZ_i<\frac{3}{4}\log\left(\frac{1}{\alpha}\right)\right\}.$$
     Define
     $$d=\frac{3}{8}\frac{\log(1/\alpha)}{\log((1-\varepsilon)/\varepsilon)}.$$
    Then we have
    $$\Pb_{\varphi}(E_{\varphi})\leq \int_{E_c} \exp \left(\sum_{i=\varphi+1}^{n} Z_i\right) dP_{\infty}\leq \alpha^{-3/4}\Pb_{\infty}(E_{\varphi})\leq\alpha^{-3/4}\alpha=\alpha^{1/4},$$
    where the first two inequalities follow from the definition of $E_\varphi$, and the last inequality follows from the definition of $\mathcal{T}(\alpha)$.
    
    Step 2: Suppose $d>1$. For any $\varphi\geq1$ and $T\in \mathcal{T}(\alpha)$, since $\{T\geq \varphi\}\in\mathcal{F}_{\varphi-1},$ 
    \begin{align*}
        &\Pb_{\varphi}\left(\varphi<T<\varphi+\lceil d\rceil,\sum_{i=\varphi+1}^TZ_i\geq \frac{3}{4}\log\left(\frac{1}{\alpha}\right)| T>\varphi\right)\\
        &\leq \esssup \Pb_{\varphi}\left(\max_{1\leq t\leq \lceil d\rceil-1}\sum_{i=\varphi+1}^{\varphi+t}Z_i\geq \frac{3}{4}\log\left(\frac{1}{\alpha}\right)\right)\\
        &\leq \esssup  \Pb_{\varphi}\left(\max_{1\leq t\leq \lceil d\rceil-1}\sum_{i=\varphi+1}^{\varphi+t}(\frac{2X_i}{\kappa}-1)\geq \frac{3}{4}\log\left(\frac{1}{\alpha}\right)(\log(\frac{1-\varepsilon}{\varepsilon}))^{-1}\right)\\
        &= \esssup \Pb_{\varphi}\left(\max_{1\leq t\leq \lceil d\rceil-1}\frac{1}{t}\sum_{i=\varphi+1}^{\varphi+t}W_i\geq \frac{1}{2}+\frac{d}{t}\right)\\
        &=0,
    \end{align*}
    where in the first equality, $W_i=\frac{X_i}{\kappa}\sim Bern(1-\varepsilon)$ for all $i\in\{\varphi+1,\ldots,\varphi+t\}$, and we note that $d/t\geq 1$ in the second equality for $1\leq t\leq \lceil d\rceil-1$. Meanwhile, for $d\leq1$, since $\{T:\varphi<T<\varphi+\lceil d\rceil\}$ is empty, it immediately follows that 
    $$\Pb_{\varphi}\left(\varphi<T<\varphi+\lceil d\rceil,\sum_{i=\varphi+1}^TZ_i\geq \frac{3}{4}\log\left(\frac{1}{\alpha}\right)| T>\varphi\right)=0.$$
    Combining the results above, we have
    \begin{equation*}
        \sup_{\varphi\geq1} \Pb_{\varphi}\left(\varphi<T<\varphi+\lceil{d}\rceil\right)\leq\alpha^{1/4}.
    \end{equation*}
    
    Step 3: We now have for any change point time $\Delta$,
    \begin{align*}
        \E_{\Delta}[(T-\Delta)^+]&\geq \frac{3}{8}\frac{\log(1/\alpha)}{\log((1-\varepsilon)/\varepsilon)} \Pb_{\Delta}\left(T-\Delta\geq \frac{3}{8}\frac{\log(1/\alpha)}{\log((1-\varepsilon)/\varepsilon)}\right)\\
        &\geq \frac{3}{8}\frac{\log(1/\alpha)}{\log((1-\varepsilon)/\varepsilon)}\left[\Pb_{\Delta}(T>\Delta)-\Pb_{\Delta}\left(\Delta<T<\Delta+\lceil d\rceil\right)\right]\\
        &\geq \frac{3}{8}\frac{\log(1/\alpha)}{\log((1-\varepsilon)/\varepsilon)}[1-\alpha-\alpha^{1/4}]\geq \frac{3}{16}\frac{\log(1/\alpha)}{\log((1-\varepsilon)/\varepsilon)}.
    \end{align*}
    The last inequality holds since $\alpha+\alpha^{1/4}<1/2$.
\end{proof}

\subsection{Proof of Theorem \ref{thm:rume_cpt_subW}}
\begin{proof}
Define $\delta_t=8\alpha/(3(t^3-t))$. Lemma \ref{prop2}, which is a slight modification of Proposition D.1 of \cite{Li_2021}, shows that conditions \eqref{A1_moment} for Lemma \ref{lem:rume_moment} are satisfied by choosing this $h_t$. Thus by Lemma \ref{lem:rume_moment} and a union bound argument, $\forall f_1,f_{t}\in\mathbb{R}$ and $\forall s\in\mathbb{N}$ such that $s\geq h_t$ and $2|s$, the event 
    \begin{equation} \label{eqn:eventE1b}
    E_{s,t}=\{|\RUME(X_{1:s})-f_1|\leq a_{s,t}\}\cap\{|\RUME(X_{(t-s+1):t}) - f_{t}|\leq a_{s,t}\},
    \end{equation}
    happens with probability at least $1-\delta_{t}$, where 
    $$a_{s,t}=C_3 \varepsilon_{s,t}'\log^{1+1/\theta}\left( \frac{1}{\varepsilon_{s,t}'}\right)+C_4\sqrt{\frac{\log(1/\delta_{t})}{s}}.$$
    Similarly, by \Cref{thm:median_subWeib}, $\forall f_1,f_{s+1}\in\mathbb{R}$ and $\forall s,t\in\mathbb{N}$, the event 
    \begin{equation} \label{eqn:eventE2b}
    \mathcal{E}_{s,t}=\{|\med(X_{1:s})-f_1|\leq c_{s,t}\}\cap\{|\med(X_{(t-s+1):t}) - f_{t}|\leq c_{s,t}\},
    \end{equation}
    happens with probability at least $1-\delta_{t}$, where $$c_{s,t}=M\log^{1/\theta}\left( \frac{4e}{(\delta_t/2)^{2/s}-2e\varepsilon}\right).$$
    
    (a) We can write the event $\{\hat{t}=\infty\}$ as
   \begin{equation*}
   \begin{aligned}
   \{\hat{t}=\infty\}&=\left\{\forall s,t\in\mathbb{N},\ t\geq 2,\ s\in [h_t,\lfloor t/2\rfloor]:Z_{s,t}\leq \zeta_{s,t} \right\} \\
   &\quad{}\cap \left\{\forall s,t\in\mathbb{N},\ t\geq 2,\ s<h_t,\ s\leq\lfloor t/2\rfloor: Y_{s,t}\leq \chi_{s,t} \right\}.
\end{aligned}
\end{equation*}
    By a union bound argument, it holds that 
    \begin{align}
        \mathbb{P}_\infty(\hat{t}<\infty)&\leq \sum_{t=2}^{\infty} \sum_{h_t\leq s\leq\lfloor t/2\rfloor, 2|s} \mathbb{P}_\infty(Z_{s,t}\geq \zeta_{s,t})+\sum_{t=2}^{\infty}\sum_{s<h_t,s\leq\lfloor t/2\rfloor} \mathbb{P}_\infty(Y_{s,t}\geq \chi_{s,t}) \nonumber\\
        &\leq \sum_{t=2}^{\infty}\sum_{s=1}^{\lfloor t/2\rfloor} \delta_{t}\leq \frac{1}{2}\sum_{t=2}^{\infty} t\delta_{t}=\sum_{t=2}^{\infty} \frac{4\alpha}{3(t-1)(t+1)}=\frac{2\alpha}{3}\sum_{t=2}^{\infty} \left(\frac{1}{t-1}-\frac{1}{t+1} \right)=\alpha, \label{proof_eqn:t1e}
    \end{align}
    where the second inequality follows from considering the event $E_{s,t}$ in \eqref{eqn:eventE1b} and $\mathcal{E}_{s,t}$ in \eqref{eqn:eventE2b}, each of which happens with probability $1-\delta_{t}$.

(b) Note that $(X_1,\ldots, X_\Delta)$ have the same law under $\Pb_\infty$ and $\Pb_\Delta$. Thus,
    \begin{equation}\label{proof_eqn:t1e_subW_early}
\mathbb{P}_\Delta(\hat{t}\leq\Delta)=\mathbb{P}_\infty(\hat{t}<\Delta)\leq \sum_{t=2}^{\Delta}\sum_{s=1}^{\lfloor t/2\rfloor} \delta_{t}\leq \alpha, 
    \end{equation}
    where the first inequality follows from truncating the sum in \eqref{proof_eqn:t1e}.

(c) For notational simplicity, define
$$d_3=\left\lceil\max\left(\frac{16C_6^2}{\kappa^{2}},10\right)\log\bigg(\frac{3\Delta^3}{\alpha}\bigg)\right\rceil.$$
Our objective is to prove that the detection delay is bounded by $d_3$ with high probability:
\begin{equation*}
\mathbb{P}(\Delta < \hat{t} \leq \Delta + d_3)= \mathbb{P}(\hat{t} > \Delta) - \mathbb{P}(\hat{t} > \Delta + d_3) \geq 1 - \alpha.
\end{equation*}
From \eqref{proof_eqn:t1e_subW_early}, we have $\mathbb{P}(\hat{t} > \Delta) \geq 1 - \sum_{t=2}^{\Delta}\sum_{s=1}^{\lfloor t/2\rfloor} \delta_t$. Given that $\alpha$ is the infinite sum of $\delta_t$, the lower bound $1 - \alpha$ holds provided we show:
\begin{equation*}
\mathbb{P}(\hat{t} > \Delta + d_3) \leq \delta_{\Delta+d_3}.
\end{equation*}
By construction, $d_3$ satisfies the requirements of Lemma \ref{prop2} ($d_3= h_{2\Delta} \geq h_{\Delta+d_3}$). 
By considering the event $E_{d_3,\Delta+d_3}$, we have that with probability $1-\delta_{\Delta+d_3}$,
$$|\RUME(X_{1:d_3}) - f_1| + |\RUME(X_{(\Delta+1):(\Delta+d_3)}) - f_{\Delta + 1}|\leq a_{d_3,\Delta+d}+a_{d_3,\Delta+d}=\zeta_{d_3,\Delta+d_3}.$$
We can rewrite this as 
\begin{equation} \label{proof_eqn:rume_cpt_upperbd}
    \Pb(|\RUME(X_{1:d_3}) - f_1| + |\RUME(X_{(\Delta+1):(\Delta+d_3)}) - f_{\Delta + 1}|>\zeta_{d_3,\Delta+d_3})\leq \delta_{\Delta+d_3}.
\end{equation}
Denoting $q_{d,t}=\log(1/\delta_t)/d$, we have 
\begin{align} 
    \zeta_{d_3,\Delta+d_3}&= 2C_3\left(\varepsilon_{d_3,\Delta+d_3}'\log^{1+1/\theta}\left( \frac{1}{\varepsilon_{d_3,\Delta+d_3}'}\right)\right)+2C_4\sqrt{q_{d_3,\Delta+d_3}}. \nonumber\\
    &\leq 2C_3\left(\varepsilon\log^{1+1/\theta} \left(\frac{1}{\varepsilon}\right)+q_{d_3,\Delta+d_3}\left(\log \frac{1}{q_{d_3,\Delta+d_3}}\right)^{1+1/\theta}\right)+2C_4\sqrt{q_{d_3,\Delta+d_3}} \nonumber\\
    &\leq 2C_3\varepsilon\log^{1+1/\theta}\left( \frac{1}{\varepsilon}\right)+C_4'\sqrt{q_{d_3,\Delta+d_3}} \nonumber\\
    &\leq 2C_6\max\left(\varepsilon\log^{1+1/\theta} \left(\frac{1}{\varepsilon}\right), \sqrt{\frac{1}{d_3}\log\frac{3\Delta^3}{\alpha}}\right),  \label{proof_eqn:b_upperbd2}
\end{align}
where the second inequality is due to $a(\log \frac{1}{a})^{1+1/\theta}\leq \left(\frac{2+2/\theta}{e}\right)^{1+1/\theta} \sqrt{a}$ for any $a\in\mathbb{R}^+$, defining $C_4'=2C_4+2C_3\left(\frac{2+2/\theta}{e}\right)^{1+1/\theta}$ and $C_6=\max(2C_3, C_4')$. Meanwhile, we can relate $\kappa$ to $\zeta_{d_3,\Delta+d_3}$. Using the definition of $d_3$, we have
$$\frac{\kappa}{4C_6}\geq\sqrt{\frac{\log(1/\delta_{2\Delta})}{d_3}}.$$
Finally, we can combine this with $\kappa\geq4C_6\varepsilon\left(\log \frac{1}{\varepsilon}\right)^{1+1/\theta}$ in \Cref{assume-2}\ref{assum: rumecpt_subW} to get
\begin{equation}\label{proof_eqn:kappa_zeta}
    \kappa\geq 4C_6\max\left(\varepsilon\left(\log \frac{1}{\varepsilon}\right)^{1+1/\theta},\sqrt{\frac{\log(1/\delta_{2\Delta})}{d_3}}\right)\geq 2\zeta_{d_3,\Delta+d_3},
\end{equation}
where the final inequality follows from \eqref{proof_eqn:b_upperbd2}.

Finally, we can prove the upper bound for $\Pb(\hat{t}>\Delta+d_3)$:
    \begin{align*}
     &\Pb(\hat{t}>\Delta+d_3)\\
\leq& \Pb(Z_{d_3,\Delta+d_3}<\zeta_{d_3,\Delta+d_3}) \\
    =& \Pb(|\RUME(X_{1:d_3})-\RUME(X_{(\Delta+1):(\Delta+d_3)})|<\zeta_{d_3, \Delta+d_3}) \\
    \leq & \mathbb{P}(\kappa - |\RUME(X_{1:d_3})-f_1| - |\RUME(X_{(\Delta+1):(\Delta+d_3)}) - f_{\Delta + 1}| < \zeta_{d_3, \Delta+d_3}) \\
    = & \mathbb{P}(|\RUME(X_{1:d_3}) - f_1| + |\RUME(X_{(\Delta+1):(\Delta+d_3)}) - f_{\Delta + 1}| > \kappa - \zeta_{d_3, \Delta+d_3})\\
    \leq & \mathbb{P}(|\RUME(X_{1:d_3}) - f_1| + |\RUME(X_{(\Delta+1):(\Delta+d_3)}) - f_{\Delta + 1}| > \zeta_{d_3, \Delta+d_3})\leq \delta_{\Delta+d_3}.
\end{align*}
where the third inequality follows from \eqref{proof_eqn:kappa_zeta} and the fourth inequality follows from \eqref{proof_eqn:rume_cpt_upperbd}.

(d) For notational simplicity, define
$$d_4=\left\lceil\max\left(\frac{4}{(\kappa/4M)^\theta},\frac{2}{\log(1/(4e\varepsilon))}\right)\log\bigg(\frac{3\Delta^3}{\alpha}\bigg)\right\rceil.$$
Our objective is to prove that the detection delay is bounded by $d_4$ with high probability:
\begin{equation*}
\mathbb{P}(\Delta < \hat{t} \leq \Delta + d_4)= \mathbb{P}(\hat{t} > \Delta) - \mathbb{P}(\hat{t} > \Delta + d_4) \geq 1 - \alpha.
\end{equation*}
From \eqref{proof_eqn:t1e_subW_early}, we have $\mathbb{P}(\hat{t} > \Delta) \geq 1 - \sum_{t=2}^{\Delta}\sum_{s=1}^{\lfloor t/2\rfloor} \delta_t$. Given that $\alpha$ is the infinite sum of $\delta_t$, the lower bound $1 - \alpha$ holds provided we show:
\begin{equation*}
\mathbb{P}(\hat{t} > \Delta + d_4) \leq \delta_{\Delta+d_4}.
\end{equation*}
By construction, $d_4$ satisfies the requirements of Lemma \ref{prop2} ($d_4= h_{2\Delta} \geq h_{\Delta+d_4}$). 
By considering the event $\mathcal{E}_{d_4,\Delta+d_4}$, we have that with probability at least $1-\delta_{\Delta+d_4}$,
\begin{equation} \label{proof_eqn:med_cpt_upperbd3}
    |\med(X_{1:d_4}) - f_1| + |\med(X_{(\Delta+1):(\Delta+d_4)}) - f_{\Delta + 1}|> c_{d_4,\Delta+d_4}+c_{d_4,\Delta+d_4}=\chi_{d_4,\Delta+d_4}.
\end{equation}
We can relate $\kappa$ and $c_{d_4,\Delta+d_4}$ as follows. Under \Cref{assume-2}\ref{assum: medcpt_subW}, specifically $\kappa\geq 4M (2\log(8e))^{1/\theta}$, we have
\begin{equation}\label{eqn:kappa_cond3}
    (\kappa/4M)^{\theta}-\log(8e)\geq\frac{1}{2}(\kappa/4M)^{\theta}.
\end{equation}
Now using the definition of $d_4$, the following inequality holds.
\begin{equation}\label{eqn:dkappa3}
    d_4\geq \frac{4}{(\kappa/4M)^{\theta}}\log\frac{3\Delta^3}{\alpha}.
\end{equation}
Combining the last two inequalities \eqref{eqn:kappa_cond3}  and \eqref{eqn:dkappa3} give
\begin{equation} \label{proof:originalkappa3}
    \kappa\geq 4M\left[\log(8e)+\frac{2}{d_4}\log\left(\frac{3\Delta^3}{\alpha}\right)\right]^{1/\theta}=4M\left[\log(8e)+\log\left(\frac{1}{(\alpha/3\Delta^3)^{2/d_4}}\right)\right]^{1/\theta}.
\end{equation}
Next, using the definition of $d_4$, we also have 
\begin{align}
    (\alpha/(3\Delta^3))^{2/d_4}&\geq4e\varepsilon, \nonumber\\
    (\alpha/(3\Delta^3))^{2/d_4}-2e\varepsilon&\geq \frac{1}{2}(\alpha/(3\Delta^3))^{2/d_4}. \label{proof:getridepsilon3}
\end{align}
Combining the inequalities \eqref{proof:originalkappa3} and \eqref{proof:getridepsilon3} implies 
\begin{equation}\label{proof_eqn:kappabound4}
    \kappa\geq 4M\left(\log\frac{4e}{(\alpha/3\Delta^3)^{2/d_4}-2e\varepsilon}\right)^{1/\theta}=4c_{d_4,2\Delta}\geq4c_{d_4,\Delta+d_4}.
\end{equation}
Finally, we can upper bound $\Pb(\hat{t}>\Delta+d_4)$.
    \begin{align*}
     &\Pb(\hat{t}>\Delta+d_4)\\
\leq& \Pb(Y_{d_4,\Delta+d_4}<\chi_{d_4,\Delta+d_4}) \\
    =& \Pb(|\med(X_{1:d_4})-\med(X_{(\Delta+1):(\Delta+d_4)})|<2c_{d_4,\Delta+d_4}) \\
    \leq & \mathbb{P}(\kappa - |\med(X_{1:d_4})-f_1| - |\med(X_{(\Delta+1):(\Delta+d_4)}) - f_{\Delta + 1}| < 2c_{d_4,\Delta+d_4}) \\
    = & \mathbb{P}(|\med(X_{1:d_4}) - f_1| + |\med(X_{(\Delta+1):(\Delta+d_4)}) - f_{\Delta + 1}| > \kappa - 2c_{d_4,\Delta+d_4}) \\
    \leq & \mathbb{P}(|\med(X_{1:d_4}) - f_1| + |\med(X_{(\Delta+1):(\Delta+d_4)}) - f_{\Delta + 1}| > 2c_{d_4,\Delta+d_4}) \leq  \delta_{\Delta+d_4}.
\end{align*}
where the third inequality follows from \eqref{proof_eqn:kappabound4} and fourth inequality follows from \eqref{proof_eqn:med_cpt_upperbd3}.
\end{proof}

\subsection{Proof of Theorem \ref{thm:rume_cpt}}
\begin{proof}
Define $\delta_t=8\alpha/(3(t^3-t))$. Lemma \ref{prop2}, which is a slight modification of Proposition D.1 of \cite{Li_2021}, shows that conditions \eqref{A1_moment} for Lemma \ref{lem:rume_moment} are satisfied by choosing this $h_t$. Thus by Lemma \ref{lem:rume_moment} and a union bound argument, $\forall f_1,f_{t}\in\mathbb{R}$ and $\forall s\in\mathbb{N}$ such that $s\geq h_t$ and $2|s$, the event 
    \begin{equation} \label{eqn:eventE1}
    E_{s,t}=\{|\RUME(X_{1:s})-f_1|\leq a_{s,t}\}\cap\{|\RUME(X_{(t-s+1):t}) - f_{t}|\leq a_{s,t}\},
    \end{equation}
    happens with probability at least $1-\delta_{t}$, where 
    $$a_{s,t}=\phi\left(C_1 \varepsilon_{s,t}'^{1-1/v}+C_2\sqrt{\frac{2\log(1/\delta_{t})}{s}}\right).$$
    Similarly, by Lemma \ref{thm:median_moment}, $\forall f_1,f_{s+1}\in\mathbb{R}$ and $\forall s,t\in\mathbb{N}$, the event 
    \begin{equation} \label{eqn:eventE2}
    \mathcal{E}_{s,t}=\{|\med(X_{1:s})-f_1|\leq c_{s,t}\}\cap\{|\med(X_{(t-s+1):t}) - f_{t}|\leq c_{s,t}\},
    \end{equation}
    happens with probability at least $1-\delta_{t}$, where $$c_{s,t}=\left(\frac{2e\phi^{v}(1-\varepsilon)}{(\delta_t/2)^{2/s}-2e\varepsilon}\right)^{1/v}.$$
    
    (a) We can write the event $\{\hat{t}=\infty\}$ as
    \begin{equation*}
       \begin{aligned}
       \{\hat{t}=\infty\}&=\left\{\forall s,t\in\mathbb{N},\ t\geq 2,\ s\in [h_t,\lfloor t/2\rfloor]:Z_{s,t}\leq \zeta_{s,t} \right\} \\
       &\quad{}\cap \left\{\forall s,t\in\mathbb{N},\ t\geq 2,\ s<h_t,\ s\leq\lfloor t/2\rfloor: Y_{s,t}\leq \chi_{s,t} \right\}.
    \end{aligned}
\end{equation*}
    By a union bound argument, it holds that 
    \begin{align}
        \mathbb{P}_\infty(\hat{t}<\infty)&\leq \sum_{t=2}^{\infty} \sum_{h_t\leq s\leq\lfloor t/2\rfloor, 2|s} \mathbb{P}_\infty(Z_{s,t}\geq \zeta_{s,t})+\sum_{t=2}^{\infty}\sum_{s<h_t,s\leq\lfloor t/2\rfloor} \mathbb{P}_\infty(Y_{s,t}\geq \chi_{s,t}) \nonumber\\
        &\leq \sum_{t=2}^{\infty}\sum_{s=1}^{\lfloor t/2\rfloor} \delta_{t}\leq \frac{1}{2}\sum_{t=2}^{\infty} t\delta_{t}=\sum_{t=2}^{\infty} \frac{4\alpha}{3(t-1)(t+1)}=\frac{2\alpha}{3}\sum_{t=2}^{\infty} \left(\frac{1}{t-1}-\frac{1}{t+1} \right)=\alpha, \label{proof_eqn:t1e_fm}
    \end{align}
    where the second inequality follows from considering the event $E_{s,t}$ in \eqref{eqn:eventE1} and $\mathcal{E}_{s,t}$ in \eqref{eqn:eventE2}, each of which happens with probability $1-\delta_{t}$.

(b) Note that $(X_1,\ldots, X_\Delta)$ have the same law under $\Pb_\infty$ and $\Pb_\Delta$. Thus,
    \begin{equation}\label{proof_eqn:t1e_fm_early}
    \mathbb{P}_\Delta(\hat{t}\leq\Delta)=\mathbb{P}_\infty(\hat{t}<\Delta)\leq \sum_{t=2}^{\Delta}\sum_{s=1}^{\lfloor t/2\rfloor} \delta_{t}\leq \alpha, 
    \end{equation}
    where the first inequality follows from truncating the sum in \eqref{proof_eqn:t1e_fm}.
    
(c) For notational simplicity, define
$$d_1=\left\lceil\max\left\{\frac{16C_5^2\phi^{2}}{\kappa^2},\frac{2}{0.5-\sqrt{2\varepsilon(1-2\varepsilon)}},20\right\}\log\bigg(\frac{3\Delta^3}{\alpha}\bigg)\right\rceil\asymp \max\left\{\frac{1}{\kappa^2}, 1\right\}\log\bigg(\frac{\Delta}{\alpha}\bigg).$$
Our objective is to prove that the detection delay is bounded by $d_1$ with high probability:
\begin{equation*}
\mathbb{P}(\Delta < \hat{t} \leq \Delta + d_1)= \mathbb{P}(\hat{t} > \Delta) - \mathbb{P}(\hat{t} > \Delta + d_1) \geq 1 - \alpha.
\end{equation*}
From \eqref{proof_eqn:t1e_fm_early}, we have $\mathbb{P}(\hat{t} > \Delta) \geq 1 - \sum_{t=2}^{\Delta}\sum_{s=1}^{\lfloor t/2\rfloor} \delta_t$. Given that $\alpha$ is the infinite sum of $\delta_t$, the lower bound $1 - \alpha$ holds provided we show:
\begin{equation*}
\mathbb{P}(\hat{t} > \Delta + d_1) \leq \delta_{\Delta+d_1}.
\end{equation*}
By construction, $d_1$ satisfies the requirements of Lemma \ref{prop2} ($d_1= h_{2\Delta} \geq h_{\Delta+d_1}$). Therefore, by considering the event $E_{d_1,\Delta+d_1}$, we have that with probability $1-\delta_{\Delta+d_1}$,
$$|\RUME(X_{1:d_1}) - f_1| + |\RUME(X_{(\Delta+1):(\Delta+d_1)}) - f_{\Delta + 1}|\leq a_{d_1,\Delta+d_1}+a_{d_1,\Delta+d_1}=\zeta_{d_1,\Delta+d_1}.$$
By \Cref{assume-1}\ref{assum: rumecpt}, we have $\delta_{\Delta+d_1}\geq\delta_{2\Delta}$ and thus
\begin{align}
    \zeta_{d_1,\Delta+d_1}&= 2a_{d_1,\Delta+d_1}\leq 2\phi\left(C_1 \max\left(\varepsilon,\frac{2\log(1/\delta_{2\Delta})}{d_1} \right)^{1-1/v}+C_2\sqrt{\frac{2\log(1/\delta_{2\Delta})}{d_1}}\right) \nonumber\\
    &\leq 2\phi C_1 \varepsilon^{1-1/v}+2\phi\max(C_1,C_2)\sqrt{\frac{2\log(1/\delta_{2\Delta})}{d_1}} \nonumber\\
    &\leq 2C_5\phi \max\left(\varepsilon^{1-1/v},\sqrt{\frac{2\log(1/\delta_{2\Delta})}{d_1}}\right), \label{proof_eqn:b_upperbd1}
\end{align}
where $C_5=\max(C_1,C_2)$. Thus we have 
\begin{equation} \label{proof_eqn:rume_cpt_upperbd1}
    \Pb\left(|\RUME(X_{1:d_1}) - f_1| + |\RUME(X_{(\Delta+1):(\Delta+d_1)}) - f_{\Delta + 1}|>\zeta_{d_1,\Delta+d_1}\right)\leq \delta_{\Delta+d_1},
\end{equation}
where the final inequality follows from \eqref{proof_eqn:b_upperbd1}. Meanwhile, we can relate $\kappa$ to $\zeta_{d_1,\Delta+d_1}$. From the definition of $d_1$, we observe that
$$\sqrt{\frac{\log(1/\delta_{2\Delta})}{d_1}} \leq \frac{\kappa}{4C_5\phi},$$
Combining this with \Cref{assume-1}\ref{assum: rumecpt}, specifically $\kappa\geq 4C_5\phi \varepsilon^{1-1/v}$, we obtain 
\begin{equation}\label{proof_eqn:minkappa_fm}
    \kappa\geq4C_5\phi \max\left(\varepsilon^{1-1/v},\sqrt{\frac{\log(1/\delta_{2\Delta})}{d_1}}\right)\geq 2\zeta_{d_1,\Delta+d_1}.
\end{equation}
We can now prove the lower bound for $\Pb(\hat{t}>\Delta+d_1)$:
\begin{align*}
     \Pb(\hat{t}>\Delta+d_1)\
\leq& \Pb(Z_{d_1,\Delta+d_1}<\zeta_{d_1,\Delta+d_1}) \\
    =& \Pb(|\RUME(X_{1:d_1})-\RUME(X_{(\Delta+1):(\Delta+d_1)})|<\zeta_{d_1, \Delta+d_1}) \\
    \leq & \mathbb{P}(\kappa - |\RUME(X_{1:d_1})-f_1| - |\RUME(X_{(\Delta+1):(\Delta+d_1)}) - f_{\Delta + 1}| < \zeta_{d_1, \Delta+d_1}) \\
    = & \mathbb{P}(|\RUME(X_{1:d_1}) - f_1| + |\RUME(X_{(\Delta+1):(\Delta+d_1)}) - f_{\Delta + 1}| > \kappa - \zeta_{d_1, \Delta+d_1}) \\
    \leq & \mathbb{P}\left(|\RUME(X_{1:d_1}) - f_1| + |\RUME(X_{(\Delta+1):(\Delta+d_1)}) - f_{\Delta + 1}| > \zeta_{d_1,\Delta+d_1} \right)\leq \delta_{\Delta+d_1}.
\end{align*}
where the third inequality follows from \eqref{proof_eqn:minkappa_fm} and the fourth inequality follows from \eqref{proof_eqn:rume_cpt_upperbd1}. 

(d) For notational simplicity, define
$$d_2=\left\lceil\max\left\{\frac{4}{v\log\kappa},\frac{2}{\log(1/(4e\varepsilon))}\right\}\log\bigg(\frac{3\Delta^3}{\alpha}\bigg)\right\rceil\asymp \max\left\{\frac{1}{v\log\kappa},\frac{1}{\log(1/\varepsilon)}\right\}\log\bigg(\frac{\Delta}{\alpha}\bigg).$$
Our objective is to prove that the detection delay is bounded by $d_2$ with high probability:
\begin{equation*}
\mathbb{P}(\Delta < \hat{t} \leq \Delta + d_2)= \mathbb{P}(\hat{t} > \Delta) - \mathbb{P}(\hat{t} > \Delta + d_2) \geq 1 - \alpha.
\end{equation*}
From \eqref{proof_eqn:t1e_fm_early}, we have $\mathbb{P}(\hat{t} > \Delta) \geq 1 - \sum_{t=2}^{\Delta}\sum_{s=1}^{\lfloor t/2\rfloor} \delta_t$. Given that $\alpha$ is the infinite sum of $\delta_t$, the lower bound $1 - \alpha$ holds provided we show:
\begin{equation*}
\mathbb{P}(\hat{t} > \Delta + d_2) \leq \delta_{\Delta+d_2}.
\end{equation*}
By construction, $d_2$ satisfies the requirements of Lemma \ref{prop2} ($d_2= h_{2\Delta} \geq h_{\Delta+d_2}$). Therefore, by considering the event $\mathcal{E}_{d_2,\Delta+d_2}$, we have that with probability $1-\delta_{\Delta+d_2}$,
$$|\med(X_{1:d_2}) - f_1| + |\med(X_{(\Delta+1):(\Delta+d_2)}) - f_{\Delta + 1}| \leq c_{d_2,\Delta+d_2}+c_{d_2,\Delta+d_2}=\chi_{d_2,\Delta+d_2}.$$
Thus we have 
\begin{equation} \label{proof_eqn:med_cpt_upperbd}
    \Pb(|\med(X_{1:\Delta}) - f_1| + |\med(X_{(\Delta+1):(\Delta+d_2)}) - f_{\Delta + 1}|>2c_{d_2,\Delta+d_2})\leq \delta_{\Delta+d_2}.
\end{equation}
Meanwhile, we can relate $\kappa$ to $c_{d_2,\Delta+d_2}$. From the definition of $d_2$, the following inequality holds.
\begin{equation}\label{eqn:dkappa}
    d_2\geq \frac{4}{v\log \kappa}\log\frac{3\Delta^3}{\alpha}.
\end{equation}
Next, under Assumption \ref{assum: medcpt}, specifically that $\kappa\geq 64(2e)^{2/v}\phi$, we can show
\begin{equation}\label{eqn:kappa_cond}
    \log(\kappa)\leq2\log\frac{\kappa}{8(2e\phi^{v})^{1/v}(1-\varepsilon)^{1/v}}.
\end{equation}
Combining the last two inequalities \eqref{eqn:dkappa} and \eqref{eqn:kappa_cond} gives
\begin{align*}
    d_2\geq \frac{2\log\frac{3\Delta^3}{\alpha}}{v\log({\kappa}/{8(2e\phi^{v})^{1/v}(1-\varepsilon)^{1/v}})},
\end{align*}
which is equivalent to 
\begin{equation} \label{proof:originalkappa}
    \kappa\geq \frac{8(2e\phi^{v}(1-\varepsilon))^{1/v}}{(\alpha/(3\Delta^3))^{2/dv}}.
\end{equation}
Using the definition of $d_2$ again, we have both
\begin{align}
    (\alpha/(3\Delta^3))^{2/d_2}&\geq4e\varepsilon, \nonumber\\
    (\alpha/(3\Delta^3))^{2/d_2}-2e\varepsilon&\geq \frac{1}{2}(\alpha/(3\Delta^3))^{2/d_2}. \label{proof:getridepsilon}
\end{align}
Finally, combining the inequalities \eqref{proof:originalkappa} and \eqref{proof:getridepsilon} implies 
\begin{equation}\label{proof_eqn:kappa_c}
    \kappa\geq 4 \frac{(2e\phi^{v}(1-\varepsilon))^{1/v}}{((\alpha/(3\Delta^3))^{2/d_2}-2e\varepsilon)^{1/v}}=4c_{d_2,2\Delta}\geq4c_{d_2,\Delta+d_2}.
\end{equation}
Finally, we can now prove the lower bound for $\Pb(\hat{t}>\Delta+d_2)$.
    \begin{align*}
     \Pb(\hat{t}>\Delta+d_2)\leq& \Pb(Z_{d_2,\Delta+d_2}<\chi_{d_2,\Delta+d_2}) \\
    =& \Pb(|\med(X_{1:d_2})-\med(X_{(\Delta+1):(\Delta+d_2)})|<\chi_{d_2,\Delta+d_2}) \\
    \leq & \mathbb{P}(\kappa - |\med(X_{1:d_2})-f_1| - |\med(X_{(\Delta+1):(\Delta+d_2)}) - f_{\Delta + 1}| < \chi_{d_2,\Delta+d_2}) \\
    = & \mathbb{P}(|\med(X_{1:d_2}) - f_1| + |\med(X_{(\Delta+1):(\Delta+d_2)}) - f_{\Delta + 1}| > \kappa - 2c_{d_2,\Delta+d_2}) \\
    \leq & \mathbb{P}(|\med(X_{1:d_2}) - f_1| + |\med(X_{(\Delta+1):(\Delta+d_2)}) - f_{\Delta + 1}| > 2c_{d_2,\Delta+d_2})
    \leq \delta_{\Delta+d_2}.
\end{align*}
where the third inequality is due to \eqref{proof_eqn:kappa_c} and the fourth inequality follows from \eqref{proof_eqn:med_cpt_upperbd}.
\end{proof}

\section{Technical details and proofs of results in Section \ref{sect:hdtest}}\label{sect:hdtestproof}

\subsection{Notation and preliminaries}
In this subsection, we introduce the notation and probabilistic preliminaries needed for the design and analysis of our robust mean testing procedure, RobustMeanTest (\Cref{alg:efficient-mean-tester}). 

\subsubsection*{Good and bad sets} Let $\mathcal{Y}=\{Y_1,\ldots, Y_n\}$ be the sample of $n$ random variables generated as follows.
Let $d_i\sim \mathrm{Bern}(\varepsilon)$ denote the contamination label, so that $(Y_i \mid d_i=0)\sim F$ and $(Y_i \mid d_i=1)\sim H_i$. Fix a truncation radius $R>0$, to be specified later.
We define the \emph{good} and \emph{bad} index sets as
\[
\mathfrak{G} = \{ i\in[n] : \norm{Y_i-\mu}_2 \le R,\ d_i=0 \},
\qquad
\mathcal{B} = [n]\setminus \mathfrak{G} .
\]
Throughout this section, $\mathcal{Y}$, $\mathfrak{G}$, and $\mathcal{B}$ will always refer to these sets. Define
\[
q = 1-\Pb(\norm{Y_1-\mu}_2 \le R,\ d_1=0),
\qquad
\gamma = \Pb_{Y\sim F}(\norm{Y-\mu}_2 > R).
\]
Then
\begin{align*}
q
&= 1-\Pb_{Y_1\sim F}(\norm{Y_1-\mu}_2 \le R)\,\Pb(d_1=0) \\
&= 1-(1-\gamma)(1-\varepsilon)
\;\le\; \varepsilon+\gamma .
\end{align*}
Since $|\mathcal{B}|\sim\mathrm{Binom}(n,q)$, Bernstein's inequality (Theorem 2.9.5, \citealp{Vershynin_2025}) implies that,
with probability at least $1-\delta$, 
\begin{equation}\label{eq:badsetsize}
|\mathcal{B}|\leq un, \quad \text{ where } u=q+\sqrt{\frac{2q\log(1/\delta)}{n}}+\frac{2\log(1/\delta)}{3n}.
\end{equation}
Denote event $A=\{|\mathcal{B}|\leq un\}$. For our proofs below, we will assume our variables $\varepsilon, p, n,\delta$ satisfy $u\leq 0.08$.

\subsubsection*{Choice of parameters}
We next describe the choices of $R$ and $\gamma$, together with an upper bound on $u$, for two classes of inlier distributions $\mathcal{D}$. These parameter values follow from the concentration result in \Cref{lem:cov_matrix_conc}.

\begin{itemize}
\item
If $\mathcal{D}=\mathcal{G}_{\theta,M}^p$, set
\begin{align*}
R^2
&=
p + M^2 \cdot O\!\left(\log^{2/\theta}(n) + \sqrt{p\log n}\right),\\
\gamma
&\le
2\exp\!\left(
- C_\theta
\min\left\{
\frac{(R^2-p)^2}{pM^4},
\left(\frac{R^2-p}{M^2}\right)^{\theta/2}
\right\}
\right)
\le \frac{1}{n},
\end{align*}
where the first inequality for $\gamma$ follows from \Cref{prop:quadraticweibulltail}, and $C_\theta>0$ is a constant depending only on $\theta$. The second inequality follows from the above choice of $R$. Thus, up to logarithmic factors, we have
$$u\lesssim \varepsilon + 1/n,$$
since $\sqrt{\varepsilon/n}$ is dominated by $\varepsilon + 1/n$.
\item
If $\mathcal{D}=\mathcal{P}_{v,\phi}^p$ for $v\geq 4$, set
\begin{align}
R^2
&=
p +O\left(\sqrt{\max(n,p)}\right), \label{eqn:fm_R}\\
\gamma
&\le
O\left(\frac{p^{v/4}}{(R^2-p)^{v/2}}\right)
= \min\left(\frac{1}{20}, \frac{1}{20}\left(\frac{p}{n}\right)^{v/4}\right). \label{eqn:fm_gamma}
\end{align}
where the first inequality for $\gamma$ follows from Markov's inequality (using the $v$-th moment information) and the second inequality follows from the above choice of $R$. Thus, up to logarithmic factors, we have
\begin{equation}\label{eqn:u_bound}
    u\lesssim \varepsilon + \min(1,(p/n)^{v/4}).
\end{equation}
For the proofs below, we sometimes use the fact that for any given $v\geq 4$ and $\phi>0$, we have $\mathcal{P}_{v,\phi}\subseteq \mathcal{P}_{4,\psi}$ for some large enough $\psi$. In the proofs, it should be clear from context when we are specifying $u\lesssim \varepsilon + 1$ or $u\lesssim \varepsilon+(p/n)^{v/4}$.
\end{itemize}

\subsubsection*{Weights and linear-algebraic quantities}
Our algorithm will assign weights to each point, that we will monotonically decrease over time.
For any $n$, let $\Gamma_n$ denote the set of valid weights:
\[
\Gamma_n = \{w \in \R^n: w_i \in [0, 1] \mbox{ for all $i = 1, \ldots, n$}\}.
\]
Recall that for any set $\mathcal{M} \subseteq \mathcal{Y}$, $\mathbf{1}_\mathcal{M} \in \Gamma_n$ denotes the indicator vector for $\mathcal{M}$, and $\mathbf{1} = \mathbf{1}_S$.

Let $N$ be some value to be specified later.
Given a set of points $\mathcal{Y} = \{ Y_1, \ldots, Y_n \}$, we associate it weight vectors $w^{(t)} \in \Gamma_n$, for $i = 1, \ldots, n$ and $t = 1,  \ldots, N$ where initially we set $w^{(1)} = \mathbf{1}$.
For any such weight vector $w$, we let
\[
\Sum(w, \mathcal{Y}) = \sum_{i \in \mathcal{Y}} \sqrt{w_i} Y_i \; , \mbox{and} \; M (w, \mathcal{Y}) = \sum_{i \in \mathcal{Y}} w_i Y_i Y_i^\top.\]
When the context is clear, we will drop the $\mathcal{Y}$ from the notation for simplicity.
For any set $\mathcal{M}$, and for any set of weights $w$ on $\mathcal{Y}$, we let $w_\mathcal{M}$ denote the set of weights restricted to the indices in $\mathcal{M} \cap \mathcal{Y}$.
We also let $\Gram (w, \mathcal{Y})$  be the $n \times n$ matrix given by
\[
\Gram (w,\mathcal{Y})_{ij} = \sqrt{w_i w_j} \iprod{Y_i, Y_j}.
\]
Note that by design, the nontrivial eigenvalues of $\Gram (w,\mathcal{Y})$ and $M(w,\mathcal{Y})$ are identical.

We define the following important set:
\begin{equation}\label{eqn:important_set}
    \Lambda_n = \{ w \in \Gamma_n : \norm{\mathbf{1}_{G}- w_{G}}_1 \leq 5 \norm{\mathbf{1}_B - w_\mathcal{B}}_1 \},
\end{equation}
that is, $\Lambda_n$ is the set of weights where we have removed at most five times as much weight from the good samples as we have removed from the the bad samples.

\subsubsection*{Downweighting scheme} We will be using the following downweighting scheme:
\begin{proposition}[\cite{Canonne2023}, Fact 8.13]
    \label{fact:1d-filter}
    Let $w \in \Lambda_n$, and let $\tau_1, \ldots, \tau_n$ be a set of non-negative scores satisfying $\sum_{i \in \mathfrak{G}} w_i \tau_i \leq 5 \sum_{i \in \mathcal{B}} w_i \tau_i$.
    Let $w' \in \Gamma_n$ be defined by 
    \[
    w'_i = \left(1 - \frac{\tau_i}{\max_{i \in \mathcal{Y}} \tau_i} \right) w_i.
    \]
    Then $\supp{w'} \subset \supp{w}$, and moreover $w' \in \Lambda_n$.
\end{proposition}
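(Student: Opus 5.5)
The plan is to verify the two claims separately. The support claim is immediate; membership in $\Lambda_n$ comes from comparing the weight removed from $\mathfrak{G}$ and from $\mathcal{B}$ in a single filtering step.

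\emph{Support.} Write $\tau_{\max}=\max_{i\in\mathcal{Y}}\tau_i$ and assume $\tau_{\max}>0$. If $\tau_{\max}=0$, then $w'=w$ under the natural convention and there is nothing to prove. Since $0\le \tau_i/\tau_{\max}\le 1$, we have $w'_i=(1-\tau_i/\tau_{\max})w_i\in[0,w_i]$. Hence $w'\in\Gamma_n$, and $w_i=0$ forces $w'_i=0$, so $\supp{w'}\subset\supp{w}$.

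\emph{Membership in $\Lambda_n$.} Because $w'\le w$ coordinatewise and all weights lie in $[0,1]$, the $\ell_1$ distances decompose additively:
\[
\norm{\mathbf{1}_{\mathfrak{G}}-w'_{\mathfrak{G}}}_1=\norm{\mathbf{1}_{\mathfrak{G}}-w_{\mathfrak{G}}}_1+\sum_{i\in\mathfrak{G}}(w_i-w'_i),
\]
and the same identity holds with $\mathcal{B}$ in place of $\mathfrak{G}$. The removed mass is $w_i-w'_i=w_i\tau_i/\tau_{\max}$. The score hypothesis therefore gives
\[
\sum_{i\in\mathfrak{G}}(w_i-w'_i)=\frac{1}{\tau_{\max}}\sum_{i\in\mathfrak{G}}w_i\tau_i\le\frac{5}{\tau_{\max}}\sum_{i\in\mathcal{B}}w_i\tau_i=5\sum_{i\in\mathcal{B}}(w_i-w'_i).
\]
Adding this inequality to the hypothesis $w\in\Lambda_n$, namely $\norm{\mathbf{1}_{\mathfrak{G}}-w_{\mathfrak{G}}}_1\le 5\norm{\mathbf{1}_{\mathcal{B}}-w_{\mathcal{B}}}_1$, yields $\norm{\mathbf{1}_{\mathfrak{G}}-w'_{\mathfrak{G}}}_1\le 5\norm{\mathbf{1}_{\mathcal{B}}-w'_{\mathcal{B}}}_1$, i.e.\ $w'\in\Lambda_n$.

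\emph{Main point to watch.} The only step needing care is the additivity of the $\ell_1$ distance. It relies on monotonicity: $w'_i\le w_i\le 1$ gives $1-w'_i=(1-w_i)+(w_i-w'_i)$ with both summands nonnegative. The degenerate case $\tau_{\max}=0$ should be handled explicitly, as done above.
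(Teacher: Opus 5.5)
Your proof is correct and is essentially the paper's argument: the paper also writes $\norm{\mathbf{1}_{\mathfrak{G}}-w'_{\mathfrak{G}}}_1=\norm{\mathbf{1}_{\mathfrak{G}}-w_{\mathfrak{G}}}_1+\sum_{i\in\mathfrak{G}}w_i\tau_i/\max_j\tau_j$, bounds each piece by five times its $\mathcal{B}$-counterpart (using $w\in\Lambda_n$ and the score hypothesis), and recombines the two bounds. You also treat the degenerate case $\max_j\tau_j=0$ explicitly and justify the additive decomposition via $w'_i\le w_i\le 1$; the paper leaves both of these details implicit.
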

The downweighting procedure defined in Proposition \ref{fact:1d-filter} ensures that the influence of contaminated points is systematically diminished. The key feature of this procedure is that the weights before and after downweighting remain in the set $\Lambda_n$, which simplifies the analysis of the algorithm.

\subsubsection*{Concentration of second moment matrix} Our filtering algorithm relies on inspecting the operator norm of the second moment matrix. Thus, we need the following concentration inequalities.
\begin{lemma}\label{lem:cov_matrix_conc0}
Let $Y_1,\ldots, Y_n$ be independent random variables in $\R^p$ drawn from \eqref{eqn:huberhd}. 
\begin{enumerate}[label=(\textbf{\alph*})]
    \item  If $n\leq p$, then for both $F\in\mathcal{P}_{4,\phi}^p$ and $F\in\mathcal{G}_{\theta,M}^p$, with probability at least $1-\delta$, we have
    $$\norm{\Gram(\mathfrak{G})-pI}_{\op}\leq O(p\log(2p/\delta))+n\kappa^2.$$
    \item  If $n\geq p$, then for both $F\in\mathcal{P}_{4,\phi}^p$ and $F\in\mathcal{G}_{\theta,M}^p$, with probability at least $1-\delta$, we have
    $$\norm{M(\mathfrak{G})-nI}_{\op}\leq O(\sqrt{np}\log(2p/\delta))+\varepsilon n+n\kappa^2.$$
\end{enumerate}
\end{lemma}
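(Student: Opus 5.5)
We prove both parts by centring the good points, reducing to concentration of a truncated second-moment matrix, and treating the mean contribution separately. Write $Y_i=\mu+Z_i$ for $i\in\mathfrak{G}$. Here $Z_i$ is the centred inlier noise conditioned on $\|Z_i\|_2\le R$ and $d_i=0$. Conditionally on the labels $\{d_i\}$ and on the event that index $i$ survives truncation, the $Z_i$ are independent, bounded by $R$, with covariance $\Sigma_R=\E[ZZ^\top\mid \|Z\|_2\le R]$. By \Cref{lem:approx_cov}, $\Sigma_R$ is close to $I$ in operator norm, with error controlled by $\gamma$ and the tail. So the first step is to condition on $\mathfrak{G}$ (equivalently on $|\mathfrak{G}|$) and work with i.i.d.\ bounded vectors.

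For part (b), with $n\ge p$, we expand
\[
M(\mathfrak{G})=\sum_{i\in\mathfrak{G}}Z_iZ_i^\top+\Big(\sum_{i\in\mathfrak{G}}Z_i\Big)\mu^\top+\mu\Big(\sum_{i\in\mathfrak{G}}Z_i\Big)^\top+|\mathfrak{G}|\mu\mu^\top .
\]
The last term has operator norm at most $n\kappa^2$. The first term minus $|\mathfrak{G}|\Sigma_R$ is handled by the matrix Bernstein inequality (\Cref{thm:matrix_bernstein}). The summands satisfy $\|Z_iZ_i^\top\|_{\op}\le R^2\lesssim p$ up to logs, by the choice of $R$ for both classes. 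The variance proxy is $\|\E\|Z\|^2ZZ^\top\|_{\op}\lesssim p$, which follows from coordinate independence with fourth moments (\Cref{lemma:rosenthal}, \Cref{prop:quadraticweibulltail}). This gives a deviation of $O(\sqrt{np\log(2p/\delta)}+p\log(2p/\delta))\lesssim\sqrt{np}\log(2p/\delta)$ since $n\ge p$. The bias $|\mathfrak{G}|\Sigma_R-nI$ splits into $(n-|\mathfrak{G}|)I$ and $|\mathfrak{G}|(\Sigma_R-I)$. The first piece is at most $un\lesssim\varepsilon n$ plus lower-order terms on event $A$. The second is $O(n\gamma R^2)$-type, which is negligible by the choice of $\gamma$ (the $1/n$ bound for sub-Weibull, $(p/n)^{v/4}$ for finite moments); this is the source of the $\varepsilon n$ term. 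The cross terms are bounded via vector Bernstein (\Cref{thm:vector_berstein}, \Cref{lem:normbound}): $\|\sum Z_i\|_2\lesssim\sqrt{np\log(1/\delta)}$, so the cross terms are at most $2\kappa\sqrt{np\log}\le n\kappa^2+p\log$ by AM--GM. These absorb into the stated bound.

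For part (a), with $n\le p$, we use that the nonzero spectra of $\Gram(\mathfrak{G})$ and $M(\mathfrak{G})$ coincide. We split the Gram matrix into its diagonal part, with entries $\|Y_i\|_2^2$, and its off-diagonal part. On the diagonal, $\|Z_i\|^2-p$ is at most $R^2-p\lesssim\sqrt{p\log}$ for good points, and $|\mathfrak{G}|$ points have zero-weight diagonal entries accounted for by the $pI$ comparison (restricting to $\mathfrak{G}$ coordinates). The off-diagonal $\langle Z_i,Z_j\rangle$ matrix is bounded in operator norm by applying matrix Bernstein to $\sum_i Z_iZ_i^\top$ in $\R^{p\times p}$. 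This gives $\|\sum Z_iZ_i^\top\|_{\op}\lesssim n+\sqrt{np\log}+p\log\lesssim p\log(2p/\delta)$. Mean terms contribute rank-two pieces $\mathbf{1}\mu^\top Z$ and $\kappa^2\mathbf{1}\mathbf{1}^\top$ of norm at most $n\kappa^2+O(p\log)$.

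The main obstacle is the variance proxy and truncation bias for the heavy-tailed class $\mathcal{P}_{4,\phi}^p$. There we only have fourth moments, so $\E\|Z\|^2 ZZ^\top$ must be controlled through coordinate independence, and $\Sigma_R-I$ must be shown small using $\gamma$ and Hölder. I would establish these first, as in \Cref{lem:cov_matrix_conc}, and then combine them with a union bound over the bad-set event $A$.
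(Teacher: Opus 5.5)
Your proposal is correct and follows essentially the paper's route: centring plus the triangle inequality, matrix Bernstein with an $O(np)$ variance proxy from coordinate independence (the paper, via \Cref{lem:cov_matrix_conc}, applies it unconditionally to $\zeta_i\bar{X}_i\bar{X}_i^\top$ rather than conditioning on $\mathfrak{G}$), the cross term via \Cref{lem:normbound}, and, for $n\le p$, the shared nonzero spectrum of $\Gram(\mathfrak{G})$ and $M(\mathfrak{G})$, where positive semidefiniteness already gives $\Gram(\mathfrak{G})-pI\succeq -pI$, so your diagonal/off-diagonal split is unnecessary. One slip: the truncation bias $|\mathfrak{G}|\,\norm{\Sigma_R-I}_{\op}$ is not ``$O(n\gamma R^2)$-type'' (for $\mathcal{P}^p_{4,\phi}$, with $\gamma\lesssim p/n$ and $R^2\asymp p+\sqrt{n}$, that would be of order $pR^2\gtrsim\sqrt{p}\cdot\sqrt{np}$, which is too large); what closes it is the H\"older bound of \Cref{lem:approx_cov}, giving $O(n\sqrt{\gamma})\lesssim\sqrt{np}$, while the $\varepsilon n$ term comes from $n-|\mathfrak{G}|$, not from this bias.
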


If an $\varepsilon$-net argument is used, the slower tail decay of sub-Weibull distributions incurs a $p^{1/\theta}$ dependence in the rate. To bypass this challenge, we employ matrix Bernstein inequality.

\subsection{Key features of the testing algorithm}\label{subsect:detail_routine}
\subsubsection*{Choice of test statistic} The key insight is that once these weights are refined appropriately, the norm $\|\Sum(w, \mathcal{Y})\|_2^2$ becomes a reliable proxy for the presence of a mean shift. Lemma \ref{lem:regular-weights-imply-tester} formalises this by showing that for $\mathcal{D}=\mathcal{G}^p_{\theta,M}$, the weighted sum's magnitude effectively distinguishes between $\mu = \mathbf{0}$ and the alternative hypothesis, provided that $\kappa_0$ is appropriately chosen.

\begin{lemma}
\label{lem:regular-weights-imply-tester}
Consider $\mathcal{D}=\mathcal{G}^p_{\theta,M}$. Let $w \in \Gamma_n$ be a weight vector satisfying $n\geq \norm{w_\mathfrak{G}}_1\geq (1-7u)n$. 
\begin{enumerate}[label=(\textbf{\alph*})]
    \item With probability $1-\delta$, we have
    \begin{align}
        \norm{\Sum(w,\mathfrak{G})}^2&= \norm{w_\mathfrak{G}}_1p+\left(\sum_{i\in \mathfrak{G}}\sqrt{w_i}\right)^2\kappa^2\pm  O(\kappa n\sqrt{n}\log^{1/\theta}(1/\delta)+n\sqrt{p}\log^{2/\theta}(1/\delta)).  \label{proofeqn:sumwg}
    \end{align}
    \item Suppose event $A$ and that \eqref{proofeqn:sumwg} holds. In addition, suppose we can find $\beta_1$ and $\beta_2$ satisfying
$\iprod{\Sum (w, \mathcal{Y}), \Sum (w, \mathcal{B})}= p  \|w_\mathcal{B}\|_1\pm un^2\kappa^2\pm O(\beta_1+\beta_2)$ and $\norm{\Sum (w, \mathcal{B})}^2 = O(\beta_2)$. Define $T_u=(1-7u)^2-2u$.
\begin{itemize}
    \item If $\mu = \mathbf{0}$, $O(\beta_1+\beta_2) \le 0.5T_un^2\kappa_0^2$ and
    \begin{align*}
    \kappa_0^2\gtrsim \frac{\sqrt{p}\log^{2/\theta}(1/\delta)}{n},
    \end{align*}
    then $$|\norm{\Sum (w, \mathcal{Y})}^2 - p \norm{w}_1| \leq 0.5T_un ^2\kappa_0^2. $$
    \item If $\mu \neq \mathbf{0}$, $O(\beta_1+\beta_2) \le T_un^2(\kappa^2-0.5\kappa_0^2)$ and
    \begin{align*}
    \kappa^2-0.5\kappa_0^2\gtrsim \frac{\sqrt{p}\log^{2/\theta}(1/\delta)}{n}+\frac{\kappa\log^{1/\theta}(1/\delta)}{\sqrt{n}},
    \end{align*}
    then $$|\norm{\Sum (w, \mathcal{Y})}^2 - p \norm{w}_1| > 0.5T_u n^2 \kappa_0^2. $$
\end{itemize}
In other words, the norm of the sum of the set of points distinguishes between the null and alternative hypotheses.
\end{enumerate}
\end{lemma}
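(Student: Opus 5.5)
For part (a), I will write each good point as $Y_i=\mu+Z_i$, with $Z_i$ centred and having identity covariance. This gives
\[
\Sum(w,\mathfrak{G})=\Big(\sum_{i\in\mathfrak{G}}\sqrt{w_i}\Big)\mu+\sum_{i\in\mathfrak{G}}\sqrt{w_i}Z_i,
\]
and expanding the squared norm produces three terms. The first is the signal term $(\sum_{\mathfrak{G}}\sqrt{w_i})^2\kappa^2$. The second is the cross term $2(\sum\sqrt{w_i})\langle\mu,\sum\sqrt{w_i}Z_i\rangle$. The third is the noise term $\|\sum\sqrt{w_i}Z_i\|^2=\sum w_i\|Z_i\|^2+\sum_{i\neq j}\sqrt{w_iw_j}\langle Z_i,Z_j\rangle$.

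\emph{Cross term.} The scalars $\langle\mu,Z_i\rangle/\kappa$ are sums of independent sub-Weibull coordinates with unit variance, so by \Cref{thm:subWeib} they are sub-Weibull with constant-order norm. A sub-Weibull Bernstein-type bound for the weighted sum, with weights $\sqrt{w_i}\le1$, gives $|\sum\sqrt{w_i}\langle\mu,Z_i\rangle|\lesssim\kappa\sqrt n\log^{1/\theta}(1/\delta)$. Multiplying by $\sum\sqrt{w_i}\le n$ yields the $\kappa n\sqrt n\log^{1/\theta}$ term.

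\emph{Diagonal of the noise term.} I replace $\|Z_i\|^2$ by $p$. The fluctuation $\sum w_i(\|Z_i\|^2-p)$ is controlled by \Cref{prop:quadraticweibulltail} at order $\sqrt{np}\log^{2/\theta}$. This is at most $n\sqrt p\log^{2/\theta}$, so the diagonal contributes $\|w_\mathfrak{G}\|_1p$ plus admissible error.

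\emph{Off-diagonal of the noise term.} This is a decoupled quadratic form in independent vectors. Its variance is $\sum_{i\ne j}w_iw_j\,\mathrm{tr}(I)\le n^2p$, so I aim to bound it by $n\sqrt p\log^{2/\theta}(1/\delta)$ using a Hanson--Wright-type inequality for sub-Weibull vectors (the $\log^{2/\theta}$ reflects the product of two $\psi_\theta$ variables).

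The step I expect to be the main obstacle is a conditioning subtlety: $\mathfrak G$ and $w$ are themselves random. The points in $\mathfrak G$ are inliers truncated to $\|Y_i-\mu\|\le R$, so they are no longer exactly centred. To handle this I will use the conditioning reduction (\Cref{lemma:weib_concentration} and \Cref{lem:conditioning_reduction}). These show that, conditional on membership, the truncated points remain i.i.d.\ with mean shift $O(\gamma R)$ and covariance within $O(\gamma)$ of $I$. Since $\gamma\le 1/n$, these shifts are absorbed into the stated errors. I also need the bound to hold uniformly over the weights $w$, or alternatively to treat $w$ as fixed; I will prove it for a fixed weight vector, as the lemma is stated for a given $w$.

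For part (b), decompose $\Sum(w,\mathcal Y)=\Sum(w,\mathfrak G)+\Sum(w,\mathcal B)$, which gives
\[
\|\Sum(w,\mathcal Y)\|^2=\|\Sum(w,\mathfrak G)\|^2+2\langle\Sum(w,\mathcal Y),\Sum(w,\mathcal B)\rangle-\|\Sum(w,\mathcal B)\|^2.
\]
Substituting \eqref{proofeqn:sumwg} and the assumed bounds, and using $p\|w\|_1=p\|w_\mathfrak G\|_1+p\|w_\mathcal B\|_1$, yields
\[
\|\Sum(w,\mathcal Y)\|^2-p\|w\|_1=\Big(\sum_\mathfrak G\sqrt{w_i}\Big)^2\kappa^2\pm2un^2\kappa^2\pm O(\beta_1+\beta_2)\pm O(\kappa n^{3/2}\log^{1/\theta}+n\sqrt p\log^{2/\theta}).
\]

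\emph{Null case.} Under $\mu=0$ every $\kappa$ term vanishes. The remaining errors are at most $0.5T_un^2\kappa_0^2$ by the hypotheses on $\beta_1,\beta_2$ and on $\kappa_0$, after splitting constants.

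\emph{Alternative case.} Since $w_i\le1$ gives $\sqrt{w_i}\ge w_i$, we have $(\sum_\mathfrak G\sqrt{w_i})^2\ge\|w_\mathfrak G\|_1^2\ge(1-7u)^2n^2$. Hence the main term is at least $T_un^2\kappa^2$ after absorbing $2un^2\kappa^2$. Subtracting the errors, which are bounded using the assumed lower bound on $\kappa^2-0.5\kappa_0^2$ and $O(\beta_1+\beta_2)\le T_un^2(\kappa^2-0.5\kappa_0^2)$ with constants split, leaves a quantity strictly greater than $0.5T_un^2\kappa_0^2$.
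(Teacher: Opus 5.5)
\textbf{The skeleton matches the paper, but the truncation step has a genuine gap.} The parts that agree with the paper are:
\begin{itemize}
\item the signal/cross/noise expansion of $\norm{\Sum(w,\mathfrak G)}^2$;
\item a sub-Weibull sum bound for the cross term (the paper applies \Cref{lemma:weib_concentration} directly to the $mp$ independent scalars $\mu_k\sqrt{w_{i_j}}(Y_{i_j,k}-\mu_k)$; \Cref{thm:subWeib} on its own does not give your claim about sums);
\item a sub-Weibull Hanson--Wright bound for the noise term (the paper treats diagonal and off-diagonal together via \Cref{lem:normbound_subW});
\item the bookkeeping in (b).
\end{itemize}

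The gap is in the step you single out as the main obstacle. You read \Cref{lem:conditioning_reduction} as saying that the truncated good points have mean shift $O(\gamma R)$ and covariance within $O(\gamma)$ of $I$. You then plan to run the concentration bounds on these truncated vectors and absorb the biases. This fails on two counts.
\begin{itemize}
\item \emph{Independence.} Conditioning on $\norm{Y_i-\mu}_2\le R$ couples the coordinates of $Y_i$, so the truncated vectors no longer have independent sub-Weibull entries. \Cref{prop:quadraticweibulltail}, on which your diagonal and off-diagonal bounds rely, needs exactly that independence after flattening to $np$ scalars.
\item \emph{Bias size.} At the sizes you state, the biases are not absorbed. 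With $\gamma\le 1/n$ and $R\approx\sqrt p$, your shift bound controls the cross-term bias only by $2n^2\kappa\gamma R\approx 2n\kappa\sqrt p$. This is not dominated by $\kappa n^{3/2}+n\sqrt p$, even up to logarithms, when $p\gg n$ and $\kappa\gg 1$. The off-diagonal bias is controlled only by $n^2\gamma^2R^2\approx p$, which exceeds $n\sqrt p$ once $p\gg n^2$.
\end{itemize}
A bias-based route might be rescued with a sharper shift bound of order $\gamma\log^{1/\theta}(1/\gamma)$ plus a concentration inequality for bounded vectors with dependent coordinates. That would be a different and heavier argument than the one you describe.

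\textbf{The missing idea} is that \Cref{lem:conditioning_reduction} is a change of measure, not a bias bound. Conditional on $\mathfrak G=\{i_1,\ldots,i_m\}$, the good points are independent draws from $F$ conditioned on $\norm{Y-\mu}_2\le R$. Hence for any event $E$ determined by them, $\Pb(E\mid\mathfrak G)\le \Pb_{F^{\otimes m}}(E)/(1-\gamma)^m\le 4\,\Pb_{F^{\otimes m}}(E)$, since $\gamma\le 1/n$. This gives the following:
\begin{itemize}
\item You can run both concentration arguments on untruncated $F$-distributed vectors, which are exactly centred, have identity covariance and have independent coordinates.
\item The expectation of the quadratic form is then exactly $p\norm{w_\mathfrak G}_1$, and no shift terms appear.
\item The only cost is a factor $4$ in the failure probability.
\end{itemize}
Finally, enlarge the thresholds using $\norm{w_\mathfrak G}_1\le n$ so that they do not depend on $m$; this is what lets you remove the conditioning on $\mathfrak G$.

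A minor point, shared with the paper: in (b) the expansion actually leaves an extra $+p\norm{w_\mathcal B}_1\le unp$. This is absorbed into $O(\beta_2)$ only because $\beta_2=unR_f\ge unp$ in the application.
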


Similarly, Lemma \ref{lem:regular-weights-imply-tester4mom} shows that for $\mathcal{D}=\mathcal{P}^p_{v,\phi}$, the magnitude of $\norm{\Sum(w,\mathcal{Y})}^2$ allows us to distinguish between $\mu = \mathbf{0}$ and the alternative hypothesis, provided that $\kappa_0$ is appropriately chosen.
\begin{lemma}
\label{lem:regular-weights-imply-tester4mom}
Consider $\mathcal{D}=\mathcal{P}^p_{v,\phi}$. Let $w \in \Gamma_n$ be a set of weights on $w$ that satisfies $n\geq \norm{w_\mathfrak{G}}_1\geq (1-7u)n$. 
\begin{enumerate}[label=(\textbf{\alph*})]
    \item With probability $1-\delta$, we have
    \begin{align}
        \norm{\Sum(w,\mathfrak{G})}^2&=p\norm{w_\mathfrak{G}}_1+\left(\sum_{i\in \mathfrak{G}}\sqrt{w_i}\right)^2 \kappa^2 \nonumber\\
        &\pm O(\kappa n^{\frac{9-v}{4}}p^{\frac{v-1}{4}}\wedge \kappa n^2+\kappa n\sqrt{n/\delta}+p^{2-\frac{2}{v}}n^{\frac{2}{v}}\wedge n^2+n\sqrt{p/\delta}).  \label{proofeqn:sumwg2}
    \end{align}
    \item Suppose event $A$ and that \eqref{proofeqn:sumwg2} holds. In addition, suppose we can find $\beta_1$ and $\beta_2$ satisfying
$\iprod{\Sum (w, \mathcal{Y}), \Sum (w, \mathcal{B})}= p  \|w_\mathcal{B}\|_1\pm un^2\kappa^2\pm O(\beta_1+\beta_2)$ and $\norm{\Sum (w, \mathcal{B})}^2 = O(\beta_2)$. Define $T_u=(1-7u)^2-2u$.
\begin{itemize}
    \item If $\mu = \mathbf{0}$, $O(\beta_1+\beta_2)\leq 0.5T_un^2\kappa_0^2$ and
    \begin{equation*}
        \kappa_0^2\gtrsim (p/n)^{2-2/v}\wedge 1+\frac{\sqrt{p}}{n\sqrt{\delta}},
    \end{equation*}
    then $$|\norm{\Sum (w, \mathcal{Y})}^2 - p \norm{w}_1| \leq 0.5T_u n ^2 \kappa^2_0.$$
    \item If $\mu \neq \mathbf{0}$, $O(\beta_1+\beta_2) \leq T_un^2(\kappa^2-0.5\kappa_0^2)$ and
    \begin{equation*}
        \kappa^2-0.5\kappa_0^2\gtrsim \kappa \left(\frac{p}{n}\right)^{(v-1)/4}\wedge \kappa+\frac{\kappa}{\sqrt{n\delta}}+\left(\frac{p}{n}\right)^{2-2/v}\wedge 1+\frac{\sqrt{p}}{n\sqrt{\delta}},
    \end{equation*}
    then $$|\norm{\Sum (w, \mathcal{Y})}^2 - p \norm{w}_1| > 0.5T_u n^2 \kappa^2 _0.$$
\end{itemize}
In other words, the norm of the sum of the set of points distinguishes between the null and alternative hypotheses.
\end{enumerate}
\end{lemma}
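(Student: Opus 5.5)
The plan is to prove part (a) by decomposing $\Sum(w,\mathfrak{G})$ into a truncated-noise part and a mean part, controlling each piece with second-moment (Chebyshev-type) bounds and the truncation-bias lemmas, and then to obtain part (b) by algebra that splits the full sum into its $\mathfrak{G}$ and $\mathcal{B}$ contributions.

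\textbf{Part (a).} For $i\in\mathfrak{G}$ write $Y_i=\mu+Z_i$, where $\norm{Z_i}_2\le R$ by the definition of $\mathfrak{G}$. Conditionally on membership in $\mathfrak{G}$, the $Z_i$ are i.i.d.\ draws from the inlier law $F-\mu$ conditioned on $\norm{Y-\mu}_2\le R$. Expanding gives
\begin{align*}
\norm{\Sum(w,\mathfrak{G})}^2
&=\sum_{i\in\mathfrak{G}}w_i\norm{Z_i}^2+\sum_{i\neq j\in\mathfrak{G}}\sqrt{w_iw_j}\iprod{Z_i,Z_j}\\
&\quad+2\Big(\sum_{i\in\mathfrak{G}}\sqrt{w_i}\Big)\Big\langle \sum_{i\in\mathfrak{G}}\sqrt{w_i}Z_i,\mu\Big\rangle+\Big(\sum_{i\in\mathfrak{G}}\sqrt{w_i}\Big)^2\kappa^2 .
\end{align*}
I would centre each term at the truncated moments $b=\E[Z\mid \norm{Z}\le R]$ and $\Sigma_R=\E[ZZ^\top\mid\norm{Z}\le R]$.

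The bias pieces come first. \Cref{lem:approx_mean} controls $\norm{b}_2$ through $\E\norm{Z}^v/R^{v-1}$ with $R^2\asymp p+\sqrt{\max(n,p)}$ as in \eqref{eqn:fm_R}. This bias enters the linear term as roughly $n^2\kappa\norm{b}$, which should give the $\kappa n^{(9-v)/4}p^{(v-1)/4}\wedge\kappa n^2$ term; the trivial bound $\norm{b}\le 1$ supplies the $\wedge$. \Cref{lem:approx_cov} controls $|\mathrm{tr}\,\Sigma_R-p|$ and $\norm{\Sigma_R-I}_{\op}$. Together with $n^2\norm{b}^2$ inside the cross term, this should give $p^{2-2/v}n^{2/v}\wedge n^2$.

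The centred fluctuations are handled by Chebyshev, which is why only $1/\sqrt{\delta}$ appears. The centred linear term has variance of order $n\kappa^2$, and after multiplying by $\sum\sqrt{w_i}\le n$ it contributes $\kappa n\sqrt{n/\delta}$. The centred U-statistic $\sum_{i\neq j}\iprod{Z_i-b,Z_j-b}$ has variance of order $n^2\norm{\Sigma_R}_F^2\lesssim n^2p$, giving $n\sqrt{p/\delta}$. The diagonal term $\sum w_i(\norm{Z_i}^2-\mathrm{tr}\Sigma_R)$ has variance of order $n\,\E\norm{Z}^4\lesssim np^2$ when $v\ge4$. This yields $\sqrt{n}\,p/\sqrt{\delta}$, which I expect to be absorbed after matching against $p\norm{w_\mathfrak{G}}_1$; I will check this against the stated bound.

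\textbf{Main obstacle.} The weights $w$ are data dependent, so the fluctuation bounds must hold uniformly over $w$ with $n\ge\norm{w_\mathfrak{G}}_1\ge(1-7u)n$. I would try the following, in this order.
\begin{enumerate}
\item For the diagonal term, bound the deviation by $\sum_{i\in\mathfrak{G}}|\norm{Z_i}^2-\mathrm{tr}\Sigma_R|$, which does not depend on $w$. Its expectation is at most $n\sqrt{\Var\norm{Z}^2}$.
\item For the linear term, use $|\iprod{\sum\sqrt{w_i}(Z_i-b),\mu}|\le\kappa\sum|\iprod{Z_i-b,\mu/\kappa}|$. This loses at most a $\sqrt{n}$ factor, so it must be compared carefully with the target.
\item For the off-diagonal term, write it as $\sqrt{w}^\top(\mathcal{Z}\mathcal{Z}^\top-\mathrm{diag})\sqrt{w}$ with $\mathcal{Z}$ the matrix of the $Z_i$, and bound it by $n$ times the operator norm of the hollowed Gram matrix, using \Cref{lem:cov_matrix_conc0}.
\end{enumerate}
If these uniform bounds are too lossy, the fallback is the conditioning reduction used for the sub-Weibull analogue, \Cref{lem:conditioning_reduction}, applied here with Chebyshev in place of exponential tails.

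\textbf{Part (b).} This is deterministic given event $A$, \eqref{proofeqn:sumwg2} and the $\beta$ hypotheses. Since $\mathfrak{G}$ and $\mathcal{B}$ partition the sample,
\begin{align*}
\norm{\Sum(w,\mathcal{Y})}^2=\norm{\Sum(w,\mathfrak{G})}^2+2\iprod{\Sum(w,\mathcal{Y}),\Sum(w,\mathcal{B})}-\norm{\Sum(w,\mathcal{B})}^2 .
\end{align*}
Subtracting $p\norm{w}_1=p\norm{w_\mathfrak{G}}_1+p\norm{w_\mathcal{B}}_1$ and using the assumed expansions gives
\[
\norm{\Sum(w,\mathcal{Y})}^2-p\norm{w}_1=\Big(\sum_{\mathfrak{G}}\sqrt{w_i}\Big)^2\kappa^2\pm2un^2\kappa^2\pm O(\beta_1+\beta_2)\pm E,
\]
where $E$ is the error term in \eqref{proofeqn:sumwg2}.

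Under the null, $\kappa=0$, so $E$ reduces to $p^{2-2/v}n^{2/v}\wedge n^2+n\sqrt{p/\delta}$. The assumed lower bound on $\kappa_0^2$ and the bound $O(\beta_1+\beta_2)\le0.5T_un^2\kappa_0^2$ are then meant to give $|\norm{\Sum(w,\mathcal{Y})}^2-p\norm{w}_1|\le0.5T_un^2\kappa_0^2$, after splitting the threshold between the two error sources.

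Under the alternative, $\sqrt{w_i}\ge w_i$ gives $\big(\sum_{\mathfrak{G}}\sqrt{w_i}\big)^2\ge\norm{w_\mathfrak{G}}_1^2\ge(1-7u)^2n^2$. The main term therefore dominates $T_un^2\kappa^2$ after absorbing the $2un^2\kappa^2$ loss. It then remains to check that $T_un^2\kappa^2-O(\beta_1+\beta_2)-E$ exceeds $0.5T_un^2\kappa_0^2$. Dividing by $T_un^2$, this is exactly the displayed condition on $\kappa^2-0.5\kappa_0^2$, with each term of $E/n^2$ matched to the corresponding term there.
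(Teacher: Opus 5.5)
The overall plan matches the paper's proof: you expand $\norm{\Sum(w,\mathfrak{G})}^2$, centre at the truncated moments $b$ and $\Sigma_R$, use \Cref{lem:approx_mean} and \Cref{lem:approx_cov} for bias, Chebyshev for fluctuations, and the same algebra for part~(b). The step that breaks is the diagonal term. Bounding $\Var\norm{Z}_2^2$ by $\E\norm{Z}_2^4\lesssim p^2$ gives a deviation of order $\sqrt{n}\,p/\sqrt{\delta}$, and nothing absorbs it, because $p\norm{w_\mathfrak{G}}_1$ is the centring, not slack. For $p>n$ this exceeds the $n\sqrt{p/\delta}$ term in \eqref{proofeqn:sumwg2} by a factor $\sqrt{p/n}$, and for $p\gg n^{3/2}$ it also exceeds the $n^2$ term, so (a) does not follow. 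The missing ingredient is the \emph{centred} fourth moment. The coordinates of $F\in\mathcal{P}^p_{v,\phi}$ are independent with unit variance and $v\ge 4$, so $\E_F(\norm{Z}_2^2-p)^2=\sum_k(\E Z_k^4-1)=O(p)$, and truncation costs only a factor $(1-\gamma)^{-1}$. Hence $\Var_{F_R}\norm{Z}_2^2=O(p)$ and the diagonal term fluctuates by $O(\sqrt{np/\delta})\le O(n\sqrt{p/\delta})$; this is what the paper uses. Your uniform variant~1 needs the same fix, since as written it gives order $np$. A smaller slip: what you need from \Cref{lem:approx_mean} is $\norm{b}_2\lesssim\gamma^{1-1/v}$, from $O(1)$ directional $v$-th moments and $\gamma\lesssim\min\{1,(p/n)^{v/4}\}$. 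The ratio $\E\norm{Z}_2^v/R^{v-1}$ you cite is at least of order $\sqrt{p}$ when $R^2\asymp p$ (since $\E\norm{Z}_2^v\ge p^{v/2}$), so it cannot produce the $\kappa n^{(9-v)/4}p^{(v-1)/4}$ term.

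The paper does not attempt the uniformity over data-dependent $w$ that you call the main obstacle. The lemma is stated for a given $w$, and the paper treats $w$ as deterministic after conditioning on $\mathfrak{G}$, which is exactly your main line. Your uniform replacements would not reach the $u$-free error in (a) anyway:
\begin{itemize}
\item In item~2, $\kappa\sum_i|\iprod{Z_i-b,\mu/\kappa}|$ is of order $\kappa n$, so multiplying by $\sum_i\sqrt{w_i}$ gives $\kappa n^2$. For $p\ll n$ and $\delta\gg 1/n$ this exceeds the stated linear error, and it swamps the signal $T_un^2\kappa^2$ unless $\kappa\gtrsim 1$.
\item In item~3, $\norm{w}_1\norm{H}_{\op}$, with $H$ the hollowed Gram matrix, is of order $n^2$ when $n\gg p$, because the Gram matrix has $p$ eigenvalues near $n$. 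When $n\le p$ it is of order $np$ via \Cref{lem:cov_matrix_conc0}, and still $n^{3/2}\sqrt{p}$ with a sharp operator-norm bound. Both are far above $p^{2-2/v}n^{2/v}\wedge n^2+n\sqrt{p/\delta}$.
\end{itemize}
This is not just loose bookkeeping. A supremum over admissible $w$ genuinely picks up $u$-dependent terms, for instance of order $\kappa un^2$ in the linear term by zeroing $\asymp un$ of the least favourable weights, and (a) contains no such terms. Finally, the fallback through \Cref{lem:conditioning_reduction} is not available in this class. Its factor-$4$ comparison needs $\gamma\le 1/n$, whereas here $\gamma$ is only $O(\min\{1,(p/n)^{v/4}\})$, and it handles the randomness of $\mathfrak{G}$, not of $w$. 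The paper instead works directly under the truncated law $F_R$, which your centring at $b$ and $\Sigma_R$ already does.
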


\subsubsection*{The filtering algorithm for \texorpdfstring{$n \leq p$}{n <= p}}
We first describe Algorithm~\ref{alg:spectralfilter1}, which filters the data points when the sample size is fewer than the number of dimensions. Let $\delta > 0$, and
\begin{equation}
\label{eq:gamma2}
    R_{f} = C_{\gamma} \left(unp\log(1/u)+(\sqrt{np}+p)\log(2p/\delta)+ n \kappa^2_0\right),
\end{equation}
for some constant $C_{\gamma}$ sufficiently large. 

\begin{algorithm}[H]
\caption{Spectral filtering for $n \leq p$ (GramFilter$(\{Y_i\}_{i=1}^n, R_{f})$)}
\label{alg:spectralfilter1}
\begin{algorithmic}
\INPUT{Dataset $Y_1,\ldots,Y_n \in \R^p$, Filtering radius $R_{f} > 0$}
\State Let $w^{(1)} \leftarrow \mathbf{1}$, and let $t \leftarrow 1$
\While{$\norm{\Gram (w^{(t)}, \mathcal{Y}) - p\cdot \mathrm{diag}(w^{(t)})}_\op \geq 5 R_{f}$}
    \State Let $v$ be the top singular vector of $\Gram (w^{(t)}, \mathcal{Y}) - D(w^{(t)})$
    \State For all $i$, $\tau_i \gets \frac{v_i^2}{w_i^{(t)}} \mathbb{I}[w^{(t)}_i > 0]$\Comment{If $w_i^{(t)} = 0$, we set $\tau_i = 0$.}
    \State $w^{(t + 1)}_i \leftarrow \left( 1  - \frac{\tau_i}{\max_j \tau_j}\right) w^{(t)}_i$
    \State $t \gets t + 1$
\EndWhile
\State \textbf{Return} $w^{(t)}$
\end{algorithmic}
\end{algorithm}

Algorithm~\ref{alg:spectralfilter1} begins by initialising the weights as $w^{(1)}=\mathbf{1}$.
Then, for $t = 1$ until termination, we proceed as follows.
For any $w \in \Gamma_n$, let $D(w) = p \cdot \mathrm{diag} (w)$.
Let $\lambda$ denote the top singular value of $\Gram (w, \mathcal{Y}) - D(w)$, and let $v$ be its associated singular unit vector (if there are multiple, choose any).
If $\lambda < 5 R_{f}$, then terminate.
Otherwise, for all $k \in \mathcal{Y}$, let $\tau_k = \frac{v_k^2}{w_k^{(t)}} \mathbb{I}[w^{(t)}_k > 0]$ (where $\tau_k$ defaults to $0$ when $w_k^{(t)} = 0$), and proceed to sort the samples in decreasing order of $\tau_k$.
Then, define $w^{(t + 1)}$ by 
\[
w^{(t + 1)}_k = \left( 1  - \frac{\tau_k}{\max_j \tau_j}\right) w^{(t)}_k.
\]
\Cref{lem:spectralfilter1} below shows that at termination, Algorithm~\ref{alg:spectralfilter1} gives an upper bound on $\norm{\Sum(w^{(N)}, \mathcal{B})}^2$.
\begin{lemma}[\cite{Canonne2023}, Lemma 8.14]
\label{lem:spectralfilter1}
    Assume that $ C_{\gamma}\kappa_0^2/2\geq  \kappa^2$. 
    Then, with probability at least $1-\delta$, Algorithm~\ref{alg:spectralfilter1} terminates in $N \leq 6un$ iterations, each taking $O(pn^2)$ time, and outputs $w^{(N)} \in \Lambda_n$ such that for all $\mathcal{M} \subset \mathcal{Y}$ with $|\mathcal{M}| \leq un$,
\[
\norm{\Sum(w^{(N)}, \mathcal{M})}^2 
= \norm{w^{(N)}_\mathcal{M}}_1\, p \pm O(un\,R_{f}).
\]
\end{lemma}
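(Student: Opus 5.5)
The proof follows the potential-function argument behind Lemma 8.14 of \cite{Canonne2023}. The only new ingredient is the concentration of the good-set Gram matrix, which \Cref{lem:cov_matrix_conc0}(a) supplies in place of the Gaussian bound. Throughout we work on the event $A=\{|\mathcal{B}|\le un\}$ intersected with the event of \Cref{lem:cov_matrix_conc0}(a), applied to the good set $\mathfrak{G}$ and to all reweightings $w_{\mathfrak{G}}$ with $w\in\Gamma_n$; a union bound gives probability $1-\delta$. On this event,
\[
\norm{\Gram(w,\mathfrak{G})-D(w_{\mathfrak{G}})}_{\op}\le O(p\log(2p/\delta))+n\kappa^2 \le R_f/2
\quad\text{for every } w\in\Gamma_n .
\]
The second inequality uses the definition of $R_f$ and the hypothesis $C_\gamma\kappa_0^2/2\ge\kappa^2$, which gives $n\kappa^2\le C_\gamma n\kappa_0^2/2$. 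The extension from $w=\mathbf{1}$ to weighted versions holds because conjugating by $\mathrm{diag}(\sqrt{w})$, whose entries lie in $[0,1]$, does not increase the operator norm of a symmetric matrix.

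\textbf{Step 1: the score condition holds while the filter runs.} Suppose the loop has not terminated, so $\lambda=\norm{\Gram(w)-D(w)}_{\op}\ge 5R_f$ with top unit vector $v$. Then $\sum_i w_i\tau_i=\sum_i v_i^2=1$. Decompose the quadratic form $v^\top(\Gram(w)-D(w))v$ into a good--good block, a bad--bad block and two cross blocks. The good--good block is at most $R_f/2$ by the bound above, and Cauchy--Schwarz controls each cross block by the geometric mean of the other two. It follows that the bad block carries most of the spectral mass: the bad-coordinate part of $v$ has large Rayleigh quotient. Combined with $\lambda\ge 5R_f$, this yields $\sum_{i\in\mathfrak{G}}v_i^2\le 5\sum_{i\in\mathcal{B}}v_i^2$, which is exactly the hypothesis of \Cref{fact:1d-filter} on the scores. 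Since $w^{(1)}=\mathbf{1}\in\Lambda_n$, induction with \Cref{fact:1d-filter} gives $w^{(t)}\in\Lambda_n$ for every $t$.

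\textbf{Step 2: iteration count and runtime.} In each round the point attaining $\max_j\tau_j$ receives weight zero. By Step 1 the weight removed from good points is at most five times the weight removed from bad points, and the total bad weight is at most $|\mathcal{B}|\le un$. Hence the total weight removed over all rounds is at most $6un$. A finer argument in the style of Canonne et al., charging the zeroed point to the bad mass, then gives $N\le 6un$. Each round forms the $n\times n$ Gram matrix in $O(pn^2)$ time and extracts its top eigenvector, which matches the stated per-iteration cost.

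\textbf{Step 3: the guarantee for small sets.} At termination, $\norm{\Gram(w^{(N)})-D(w^{(N)})}_{\op}<5R_f$. For any $\mathcal{M}$ with $|\mathcal{M}|\le un$, take the test vector $x=\mathbf{1}_{\mathcal{M}}$. Then
\[
\norm{\Sum(w^{(N)},\mathcal{M})}^2 - p\norm{w^{(N)}_{\mathcal{M}}}_1 = x^\top\bigl(\Gram(w^{(N)})-D(w^{(N)})\bigr)x,
\]
and the right-hand side is bounded in absolute value by $5R_f\norm{x}^2\le 5unR_f$. This is the claimed estimate $\pm O(unR_f)$.

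The main obstacle is Step 1, namely making the factor $5$ work with a termination threshold of $5R_f$. Because the good block is bounded only above and not below, the bookkeeping has to handle negative eigen-directions and the cross terms carefully. I would first try to reproduce Canonne et al.'s computation verbatim with the Gaussian constant replaced by $R_f/2$, and I would absorb any lost constants into taking $C_\gamma$ large.
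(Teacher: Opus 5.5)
Your plan follows the paper's proof. The invariant $w^{(t)}\in\Lambda_n$ is maintained through \Cref{fact:1d-filter}, driven by the good-block bound of \Cref{lem:cov_matrix_conc0}(a), which you correctly transfer to weighted versions by conjugating with $\mathrm{diag}(\sqrt{w})$. The iteration count comes from $\Lambda_n$, and the test vector $\mathbf{1}_{\mathcal M}$ at termination gives the $\pm 5unR_f$ guarantee. Step 3 and the reduction of Step 1 to the score condition $\sum_{\mathfrak G}v_i^2\le 5\sum_{\mathcal B}v_i^2$ (via $w_i\tau_i=v_i^2$) are exactly right.

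What is missing is the content of Step 1 itself, and the mechanism you propose for it does not work as stated. The matrix $\Gram(w)-D(w)$ is indefinite, so you cannot bound its cross block by the geometric mean of its two diagonal blocks. For example, a bad point placed on top of a good point with $\norm{Y_i}_2^2=p$ gives zero diagonal entries but cross entry $p$. You could apply Cauchy--Schwarz to the PSD matrix $\Gram(w)$, whose cross blocks coincide with those of $\Gram(w)-D(w)$, but then the diagonal blocks carry the extra $p\,\mathrm{diag}(w)$ and the bookkeeping only gets worse. Your worry about negative directions is also unnecessary: the good-block bound is an operator-norm bound, hence two-sided.

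The paper closes Step 1 crudely. It bounds both cross blocks and the bad block by the full operator norm $\lambda$, and the good block by $R_f/2\le\lambda/10$. If $\norm{v_{\mathcal B}}^2\le 1/6$, then, since the middle expression is increasing in $\norm{v_{\mathcal B}}$ on $[0,1/\sqrt6]$,
\[
\lambda=\bigl|v^\top\bigl(\Gram(w)-D(w)\bigr)v\bigr|\le\frac{\lambda}{10}\norm{v_{\mathfrak G}}^2+2\lambda\norm{v_{\mathfrak G}}\norm{v_{\mathcal B}}+\lambda\norm{v_{\mathcal B}}^2\le\Bigl(\frac{1}{12}+\frac{\sqrt5}{3}+\frac16\Bigr)\lambda<0.996\,\lambda .
\]
This contradicts $v$ being a top singular vector, so $\norm{v_{\mathcal B}}^2>1/6$, which is precisely $\sum_{\mathfrak G}v_i^2<5\sum_{\mathcal B}v_i^2$.

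For Step 2, the ``finer argument'' is one line. Each round zeroes a coordinate that was positive: the maximiser of $\tau$ has $\tau_j>0$ because $\sum_i w_i\tau_i=1$. Each zero coordinate contributes a full unit to $\norm{\mathbf 1-w}_1\le 6\norm{\mathbf 1_{\mathcal B}-w_{\mathcal B}}_1\le 6un$ on the event $A$, so at most $6un$ rounds occur.
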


\begin{remark}\label{rem:weightsum}
    Given event $A$, defined in \eqref{eq:badsetsize}, the number of iterations $N$ being at most $6un$ ensures $\| \mathbf{1}_\mathcal{B} - w^{(N)}_\mathcal{B} \|_1 \leq un$, which implies $\| \mathbf{1}_\mathfrak{G} - w_\mathfrak{G}^{(N)} \|_1 \leq 5un$ using \eqref{eqn:important_set}. Since $|\mathfrak{G}| \geq (1 - u)n$ and $\sqrt{w_i} \geq w_i$ for $w_i \in [0, 1]$, it follows that
\begin{align*}
\sum_{i \in \mathfrak{G}} \sqrt{w_i} \geq |\mathfrak{G}| - \norm{\mathbf{1}_\mathfrak{G} - w_\mathfrak{G}}_1 \geq (1 - 6u)n.
\end{align*}
\end{remark}

\subsubsection*{The filtering algorithm for \texorpdfstring{$n > p$}{n > p}}
We now describe Algorithm~\ref{alg:spectralfilter2}, which filters the data points when the sample size is larger than the number of dimensions.
Let $R_{f}$ be as in~\eqref{eq:gamma2}. 

\begin{algorithm}[H]

\caption{Spectral filtering for $n> p$ (MomentFilter$(\{Y_i\}_{i=1}^n,R_{f},u)$)}
\label{alg:spectralfilter2}

    \begin{algorithmic}
\INPUT Dataset $Y_1,\ldots,Y_n \in \R^p$, Filtering radius $R_{f} > 0$
\State Let $w^{(1)} \leftarrow \mathbf{1}$, and let $t \leftarrow 1$
\While{$\norm{M (w^{(t)}, \mathcal{Y}) - n I}_\op \geq 5 R_{f}$}
    \State Let $v$ be the top singular vector of $M(w, \mathcal{Y}) - n I$
    \State For all $i$, $\tau_i \gets \iprod{v, Y_i}^2 \mathbb{I}[w^{(t)}_i > 0]$. Sort the indices in decreasing order by $\tau_i$. \Comment{By relabelling indices, for simplicity of notation assume that the $i$'s are initially sorted}
    \State Let $L$ be the smallest index such that $\sum_{i \leq L} w^{(i)}_t\geq 2un$.
    \State $w^{(t + 1)}\gets\begin{cases}
        \left( 1 - \frac{\tau_i}{\tau_1} \right) w^{(t)}_i & \text{if $i \leq L$};\\
        w^{(t)}_i & \text{if $i > L$}.
    \end{cases}$
    \State $t \gets t + 1$
\EndWhile
\State \textbf{Return} $w^{(t)}$
\end{algorithmic}
\end{algorithm}

Algorithm~\ref{alg:spectralfilter2} begins by initialising the weights as $w^{(1)} = \mathbf{1}$. Then, for $t =1$ until termination, we proceed as follows. 
Let $\lambda$ be the top singular value of $M(w^{(t)}) - n I$, and let $v$ be its associated singular value (if there are multiple, again choose one arbitrarily).
If $\lambda < 5 R_{f}$, then terminate.
Otherwise, for all $k \in \mathfrak{G}$, let $\tau_k = \iprod{v, Y_k}^2 \mathbb{I}[w_k > 0]$.
Proceed to sort the samples in decreasing order of $\tau_k$.
As before, by relabelling indices, assume that $\tau_1 \geq \tau_2 \geq \dots \geq \tau_n$.
Let $L$ be the smallest index so that $\sum_{k \leq L} w_k^{(t)} \geq 2 u n$, and define $w^{(t + 1)}$ by 
\begin{equation*}
w^{(t + 1)}_k = \left\{ \begin{array}{ll}
         \left( 1 - \frac{\tau_k}{\tau_1} \right) w^{(t)}_k & \mbox{if $k \leq L$},\\
        w^{(t)}_k & \mbox{if $k > L$}.\end{array} \right. 
\end{equation*}
\Cref{lem:spectralfilter2} gives an upper bound on $\norm{\Sum(w^{(N)}, \mathcal{B})}^2$ for the output $w^{(N)}$ returned by \Cref{alg:spectralfilter2}.
\begin{lemma}[\cite{Canonne2023}, Lemma 8.15]
\label{lem:spectralfilter2}
    Assume $C_\gamma\kappa_0^2/2\ge \kappa^2$. With probability $1-2\delta$, Algorithm~\ref{alg:spectralfilter2} terminates in $N$ iterations for some $N \leq 6un$, each taking $O(pn^2)$ time, and outputs $w^{(N)} \in \Lambda_n$ such that for all $\mathcal{M} \subset \mathcal{Y}$ with $|\mathcal{M}| \leq un$,
    \[
        \norm{\Sum (w^{(N)}, \mathcal{M})}_2^2 \leq 10 R_{f} u n.
    \]
\end{lemma}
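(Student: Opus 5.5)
We follow the potential-function argument behind \citet[Lemma 8.15]{Canonne2023}, replacing every Gaussian concentration input with \Cref{lem:cov_matrix_conc0}(b). The key quantity is the matrix $M(w,\mathfrak{G})$. On an event of probability at least $1-\delta$, \Cref{lem:cov_matrix_conc0}(b) and the Weyl-type monotonicity $M(w_{\mathfrak{G}})\preceq M(\mathbf{1}_{\mathfrak{G}})$ give
\[
\norm{M(w,\mathfrak{G})-\|w_{\mathfrak{G}}\|_1 I}_{\op}\lesssim \sqrt{np}\log(2p/\delta)+\varepsilon n+n\kappa^2+un .
\]
The $un$ term comes from the weight removed from $\mathfrak{G}$, which is at most $5un$ while $w\in\Lambda_n$. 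Because $C_\gamma\kappa_0^2/2\ge\kappa^2$, this deviation is at most $R_f$ once $C_\gamma$ is large. Intersecting this event with event $A$ from \eqref{eq:badsetsize} accounts for the stated probability $1-2\delta$.

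\textbf{Step 1 (invariant).} We show by induction that each iterate stays in $\Lambda_n$. While the loop runs, $\norm{M(w^{(t)})-nI}_{\op}\ge 5R_f$, and the good part contributes at most about $R_f+O(un)$ to this operator norm. Hence the bad points must carry most of the spectral excess along the top direction $v$: $\sum_{i\in\mathcal B}w_i\iprod{v,Y_i}^2\gtrsim 4R_f$.

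We then compare this with the good contribution to the scores. We only downweight the top indices $i\le L$, whose cumulative weight is about $2un$. For the good points among them, \Cref{fact:small-subset-deviations} (good samples restricted to sets of size $O(un)$) bounds $\sum_{i\in\mathfrak{G},\,i\le L}w_i\tau_i$ by $un\cdot R^2$-type quantities of order $R_f$. With $C_\gamma$ large, this yields $\sum_{\mathfrak{G}}w_i\tau_i\le 5\sum_{\mathcal B}w_i\tau_i$ on the truncated set, so Proposition~\ref{fact:1d-filter} keeps $w^{(t+1)}\in\Lambda_n$.

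\textbf{Step 2 (termination).} Each iteration sets the weight of index $1$ to zero, so it removes at least one unit of weight. Combined with $w\in\Lambda_n$ and $|\mathcal B|\le un$, at most $un$ weight can be removed from bad points and at most $5un$ from good points. This gives $N\le 6un$. Each iteration costs one top-eigenvector computation plus sorting, which is $O(pn^2)$ including forming $M$.

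\textbf{Step 3 (output bound).} At termination $\norm{M(w^{(N)})-nI}_{\op}<5R_f$. For any $\mathcal M$ with $|\mathcal M|\le un$, write $\Sum(w,\mathcal M)=\sum_{i\in\mathcal M}\sqrt{w_i}Y_i$ and apply Cauchy--Schwarz along the direction $x=\Sum/\|\Sum\|$:
\[
\|\Sum(w,\mathcal M)\|^2\le |\mathcal M|\sum_{i\in\mathcal M}w_i\iprod{x,Y_i}^2\le un\, x^\top M(w)x\le un\,(n+5R_f).
\]
This does not give $10R_fun$ directly, so we must subtract the mean-$n$ part. We follow Canonne et al.\ and use
\[
x^\top M(w,\mathcal M)x=x^\top M(w)x-x^\top M(w,\mathcal Y\setminus\mathcal M)x .
\]
The subtracted term is at least $n-O(R_f)$ because of the good-sample lower concentration on the large set $\mathfrak{G}\setminus\mathcal M$. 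Therefore $x^\top M(w,\mathcal M)x\le 10R_f$, which yields the claim. We also need $R_f\gtrsim un\log(1/u)$ to absorb the $\mathcal M$-removal deficit, and this is provided by the $unp\log(1/u)$ term in $R_f$.

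The main obstacle is Step 1: verifying the $5{:}1$ score inequality with heavy-tailed inliers. The available spectral concentration is only the matrix-Bernstein bound, and small subsets of good points could carry large projections. We will handle this via the truncation radius $R$, which bounds each $\iprod{v,Y_i}^2$ by $R^2$ on $\mathfrak{G}$. This contributes at most $2un\cdot R^2\approx unp$, which is why $R_f$ contains $unp\log(1/u)$.
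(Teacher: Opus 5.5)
Your architecture matches the paper's: the invariant $w^{(t)}\in\Lambda_n$ via \Cref{fact:1d-filter}, the iteration count from $\Lambda_n$ and $|\mathcal B|\le un$, and the output bound by Cauchy--Schwarz after subtracting the good part of $M(w^{(N)})$. The paper phrases that last step as a contradiction, but the computation is the same. The genuine gap is in Step~1. You lower-bound the bad spectral mass $\sum_{i\in\mathcal B}w_i\tau_i$ over \emph{all} bad points. However, \Cref{fact:1d-filter} is applied to the truncated scores $\tau_i\mathbb{I}[i\le L]$, so you need a lower bound on the bad mass \emph{inside} $\{i\le L\}$. Taking $C_\gamma$ large does not supply this, because bad points outside $[L]$ are never downweighted. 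The missing step is the sorting argument, which is exactly why $L$ is defined through cumulative weight $2un$. Let $S=\sum_{j\le L}w_j\tau_j$. For $i>L$ one has $\tau_i\le\min_{j\le L}\tau_j\le S/\sum_{j\le L}w_j$. Also $\sum_{i\in\mathcal B\setminus[L]}w_i\le un\le\frac{1}{2}\sum_{j\le L}w_j$. Hence $\sum_{i\in\mathcal B\setminus[L]}w_i\tau_i\le S/2$, and so $S\ge\frac{2}{3}\sum_{i\in\mathcal B}w_i\tau_i$. With your bound $\sum_{i\in\mathcal B}w_i\tau_i\gtrsim 4R_f$ and good mass on $[L]$ at most $R_f$, the bad mass on $[L]$ is at least $\frac{5}{3}R_f$. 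This yields the $5{:}1$ condition.

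Second, your set-up bound is wrong as stated. The monotonicity $M(w,\mathfrak G)\preceq M(\mathfrak G)$ controls only the upper side. On the lower side, $M(\mathfrak G)-M(w,\mathfrak G)=\sum_{i\in\mathfrak G}(1-w_i)Y_iY_i^\top$ can have operator norm of order $un(R+\kappa)^2$, not $O(un)$, because the removed good weight may concentrate along one direction. Re-centring at $\norm{w_{\mathfrak G}}_1I$ does not change this. You need this lower-side control in two places:
\begin{enumerate}
\item It shows that the top singular direction of $M(w^{(t)})-nI$ is a positive (excess) eigendirection. Otherwise the bad points need not carry any mass along $v$.
\item It is what justifies ``the subtracted term is at least $n-O(R_f)$'' in Step~3.
\end{enumerate}
Your closing remark contains the right fix once the mean shift is included. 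For $i\in\mathfrak G$, $\iprod{x,Y_i}^2\le(R+\kappa)^2$ deterministically. So every $a\in[0,1]^n$ supported on $\mathfrak G$ with $\norm{a}_1\le 10un$ satisfies $\norm{\sum_{i\in\mathfrak G}a_iY_iY_i^\top}_{\op}\le 20un(R^2+\kappa^2)$. The $R^2$ part is no larger than the $R^2un\log(1/u)$ term in \Cref{fact:small-subset-deviations}. The $\kappa^2$ part is controlled by the $C_\gamma n\kappa_0^2$ term of $R_f$, using $\kappa^2\le C_\gamma\kappa_0^2/2$ and $u\le 0.08$. So the quantity $R_f$ must absorb is $un(R+\kappa)^2$, not $un\log(1/u)$.

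Used consistently, this bound is cleaner than the paper's route. \Cref{fact:small-subset-deviations} is a union bound over a fixed finite family, yet the paper applies it to the data-dependent weights $w^{(t)}$ and $a_{\mathcal M}$. The deterministic bound holds uniformly over all such weights and costs no extra failure probability, so the stated $1-2\delta$ from $A\cap W$ becomes exactly right.

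Finally, in Step~2 an iteration need not remove a full unit of weight. Rather, each iteration zeros a new index that keeps weight zero thereafter, so $\norm{\mathbf{1}-w^{(N)}}_1\ge N$.
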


\begin{remark}\label{rmk:boundTB}
    The above proof also shows that for all $\mathcal{M}\subset \mathcal{Y}$ such that $|\mathcal{M}|\leq 2un$, we have 
    $$\norm{\Sum(w^{(N)},\mathcal{M})}^2_2\leq 20unR_{f}.$$
\end{remark}

\begin{remark}\label{rmk:symmetrize}
    The condition $C_\gamma \kappa_0^2/2 \geq \kappa^2$ can be relaxed as follows. By pairing data points and taking pairwise differences, e.g. $Z_j = Y_{2j} - Y_{2j-1}$ for $j\in\{1,\ldots,\lfloor n/2\rfloor\}$, we create a dataset of size $\lfloor n/2\rfloor$ with mean 0. When we apply the same filtering approach to this dataset, the $n\kappa_0^2$ term in \eqref{eq:gamma2}, that originally arose from the rank-1 contribution $O(n)\mu\mu^\top$ in the upper bound of $\norm{M(\mathfrak{G})-nI}_{\op}$, will not appear in the empirical second moment matrix of the pairwise-difference dataset. Hence, we can remove the condition $C_{\gamma}\kappa_0^2/10\geq \kappa^2$ using this approach, though the minimum signal strength required for type I and II error control will remain unchanged.
\end{remark}

\subsubsection*{Bounding row sums}
\Cref{lem:spectralfilter1} and \Cref{lem:spectralfilter2} show that the filtering subroutines return an output $w^{(N)}$ under which the contribution from $\norm{\Sum(w^{(N)},\mathcal{B})}_2^2$ can be controlled by a suitable upper bound. Now, we further refine the weights through Algorithm~\ref{alg:rowsumfilter}.

\begin{algorithm}[H]
\caption{Bounding row sums (RowSumFilter$(\{Y_i\}_{i=1}^n, w,u)$)}
\label{alg:rowsumfilter}

    \begin{algorithmic}
        \INPUT{Dataset $Y_1,\ldots,Y_n \in \R^p$, Weights $w\in [0,1]^n$, Maximum contamination proportion $u$}
        \State For all $i$, $\tau_i \gets \left|\iprod{\sqrt{w_i} Y_i, \Sum (w, \mathcal{Y})} - w_i p\right| \cdot \mathbb{I} [w_i > 0]$.
        \State Sort the indices in decreasing order by $\tau_i$.
        \Comment{By relabelling indices, for simplicity of notation assume that the $i$'s are initially sorted}
\State Set $w_i \leftarrow 0$ for all $i \leq u n$.
        \State \textbf{return} $w$
    \end{algorithmic}
\end{algorithm}
Algorithm~\ref{alg:rowsumfilter} simply removes the set of $O(un)$ points whose row sums have largest deviation from what we expect.
More formally, given a set of weights $w \in \Lambda_n$, we will let 
\begin{equation}
\label{eq:rowsumscore}
\tau_k = \left|\iprod{\sqrt{w_k} Y_k, \sum_{j \in \mathcal{Y}} \sqrt{w_j} Y_j} - w_k p\right| \cdot \mathbb{I} [w_k > 0].
\end{equation}
We then sort the indices in decreasing order by $\tau_k$.
Again for simplicity of notation, assume that after some suitable reindexing we have that $\tau_1 \geq \tau_2 \geq \ldots \geq \tau_n$. Then, we replace $w_k$ with $0$ for all $k \le u n$. \Cref{lem:rowsumfilter} gives an upper bound on the row sums $\iprod{\Sum(w',\mathcal{Y}),\Sum(w',\mathcal{B})}$, where $w'$ is the output of \Cref{alg:rowsumfilter}.

\begin{lemma}
\label{lem:rowsumfilter}
    Let $w'$ be the output of Algorithm~\ref{alg:rowsumfilter}. If the event with probability $1-\delta$ in Lemma~\ref{lem:rowsumfilter-1} or Lemma~\ref{lem:rowsumfilter-1b} holds, then, for all $\mathcal{M} \subset \mathcal{Y}$ with $|\mathcal{M}| \leq u n$, we have that 
\[
\sum_{i \in \mathcal{M}, j \in [n]} \sqrt{w_i' w_j'} \iprod{Y_i, Y_j} = p \|w'_\mathcal{M}\|_1\pm un^2\kappa^2 \pm O(\beta_1+\beta_2),
\]
where $\beta_1,\beta_2$ are as defined in Lemma~\ref{lem:rowsumfilter-1} (or Lemma~\ref{lem:rowsumfilter-1b}).
\end{lemma}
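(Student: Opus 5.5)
We follow the row-sum argument of \citet{Canonne2023}, combined with the guarantees of Lemma~\ref{lem:rowsumfilter-1} (resp.\ Lemma~\ref{lem:rowsumfilter-1b}). We assume those lemmas assert the following. On their probability-$(1-\delta)$ event, for the input weights $w=w^{(N)}$ produced by GramFilter or MomentFilter, the good points have controlled row-sum deviations. Concretely, write $\tau_k=|\iprod{\sqrt{w_k}Y_k,\Sum(w,\mathcal{Y})}-w_kp|$ for $k\in\mathfrak{G}$. Then all but at most $un$ of these good points satisfy $\tau_k\le O(\beta_1+\beta_2)/(un)+ n\kappa^2$, or at least the total over any $un$ good points is bounded by $un^2\kappa^2+O(\beta_1+\beta_2)$. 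In addition, $\norm{\Sum(w,\mathcal{M})}^2\le O(\beta_2)$ for $|\mathcal{M}|\le 2un$, via Lemma~\ref{lem:spectralfilter1}/\ref{lem:spectralfilter2} and Remark~\ref{rmk:boundTB}.

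\textbf{Step 1: compare scores before and after removal.} For $i$ surviving RowSumFilter, $w'_i=w_i$. Let $\mathcal{R}$ be the removed set, with $|\mathcal{R}|\le un$. Then
\[
\iprod{\sqrt{w'_i}Y_i,\Sum(w',\mathcal{Y})}=\iprod{\sqrt{w_i}Y_i,\Sum(w,\mathcal{Y})}-\iprod{\sqrt{w_i}Y_i,\Sum(w,\mathcal{R})}.
\]
Summing over $i\in\mathcal{M}$ gives
\[
\sum_{i\in\mathcal{M},j}\sqrt{w'_iw'_j}\iprod{Y_i,Y_j}=\sum_{i\in\mathcal{M}\setminus\mathcal{R}}\bigl(\iprod{\sqrt{w_i}Y_i,\Sum(w,\mathcal{Y})}\bigr)-\iprod{\Sum(w,\mathcal{M}\setminus\mathcal{R}),\Sum(w,\mathcal{R})}.
\]
By Cauchy--Schwarz and the small-set bound, the cross term is at most $\norm{\Sum(w,\mathcal{M}')}\norm{\Sum(w,\mathcal{R})}\le O(\beta_2)$.

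\textbf{Step 2: bound the surviving scores.} The first sum equals $p\norm{w'_\mathcal{M}}_1\pm\sum_{i\in\mathcal{M}\setminus\mathcal{R}}\tau_i$, so it remains to control $\sum_{i\in\mathcal{M}\setminus\mathcal{R}}\tau_i$. Every surviving index has $\tau_i\le\tau_{(un)}$, the $un$-th largest score. We need $un\cdot\tau_{(un)}\le un^2\kappa^2+O(\beta_1+\beta_2)$. The removed set $\mathcal{R}$ consists of $un$ points, each with score at least $\tau_{(un)}$. Split $\mathcal{R}$ into its good part $\mathcal{R}\cap\mathfrak{G}$ and its bad part $\mathcal{R}\cap\mathcal{B}$, using $|\mathcal{B}|\le un$ on event $A$.

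For the good part, the good-point guarantee of Lemma~\ref{lem:rowsumfilter-1}/\ref{lem:rowsumfilter-1b} bounds the sum of their scores by $un^2\kappa^2+O(\beta_1+\beta_2)$.

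For the bad part, the bound comes from
\[
\sum_{i\in\mathcal{R}\cap\mathcal{B}}\tau_i\le |\iprod{\Sum(w,\mathcal{S}_+),\Sum(w,\mathcal{Y})}-p\norm{w_{\mathcal{S}_+}}_1|+|\iprod{\Sum(w,\mathcal{S}_-),\Sum(w,\mathcal{Y})}-p\norm{w_{\mathcal{S}_-}}_1|,
\]
where $\mathcal{S}_\pm$ split $\mathcal{R}\cap\mathcal{B}$ by the sign of the deviation. Each term is then handled by writing $\Sum(w,\mathcal{Y})=\Sum(w,\mathfrak{G})+\Sum(w,\mathcal{B})$. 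The $\mathcal{B}$ piece is bounded by $O(\beta_2)$ via Cauchy--Schwarz. The $\mathfrak{G}$ piece is exactly the cross term that Lemma~\ref{lem:rowsumfilter-1}/\ref{lem:rowsumfilter-1b} controls, at $\pm un^2\kappa^2\pm O(\beta_1)$; this is where the $\kappa^2$ term arises, from $\iprod{\Sum(w,\mathcal{S}),(\sum_{\mathfrak{G}}\sqrt{w_j})\mu}$.

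Combining the good and bad parts gives $un\,\tau_{(un)}\le\sum_{i\in\mathcal{R}}\tau_i\le un^2\kappa^2+O(\beta_1+\beta_2)$. Since $|\mathcal{M}|\le un$, this yields $\sum_{i\in\mathcal{M}\setminus\mathcal{R}}\tau_i\le un\,\tau_{(un)}\le un^2\kappa^2+O(\beta_1+\beta_2)$, and the lemma follows.

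\textbf{Main obstacle.} The delicate step is the averaging step $un\,\tau_{(un)}\le\sum_{\mathcal{R}}\tau_i$ together with the sign-splitting for the bad part. The constants must be tracked so that the $\kappa^2$ contribution appears with coefficient exactly $un^2$, not a multiple of it. If needed, I would absorb excess constants into $O(\beta_1+\beta_2)$ by noting that $\beta_1\gtrsim un^2\kappa\cdot(\text{fluctuation})$, or else restate with a constant factor on the $un^2\kappa^2$ term.
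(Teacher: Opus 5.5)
Your Step~1 is sound. Cauchy--Schwarz with the small-set bounds of Lemma~\ref{lem:spectralfilter1}/\ref{lem:spectralfilter2} handles the columns in the removed set $\mathcal{R}$ just as the paper's polarization argument does.

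The gap is in Step~2, in the bound on the removed \emph{bad} scores. You claim that for $\mathcal{S}\subset\mathcal{R}\cap\mathcal{B}$ the term $\iprod{\Sum(w,\mathcal{S}),\Sum(w,\mathfrak{G})}$ is ``exactly the cross term'' controlled by Lemma~\ref{lem:rowsumfilter-1}/\ref{lem:rowsumfilter-1b}. It is not. Those lemmas only treat rows indexed by $\mathcal{M}\subset\mathfrak{G}$, and their proofs rest on concentration of the inlier samples. For bad indices the $Y_i$ come from arbitrary $H_i$ and may be aligned with $\Sum(w,\mathfrak{G})$, or with $\mu$. The only control available is Cauchy--Schwarz, $\norm{\Sum(w,\mathcal{S})}_2\norm{\Sum(w,\mathfrak{G})}_2\lesssim\sqrt{unR_{f}}\,(\sqrt{np}+n\kappa)$, and this is not $un^2\kappa^2+O(\beta_1+\beta_2)$ in general. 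Indeed, if your bound held, you could take $\mathcal{S}=\mathcal{B}$ and get the conclusion for the unfiltered weights $w$, making RowSumFilter unnecessary. The scores of removed bad points are exactly the uncontrolled quantity, so bounding $\tau_{(un)}$ through $\sum_{i\in\mathcal{R}}\tau_i$ cannot work.

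The missing idea is an exchange argument that never bounds a bad score.
\begin{itemize}
\item Take $T\subset\mathcal{Y}\setminus\mathcal{R}$ with $|T|\le un$, and split $T=T_G\cup T_B$ into good and bad parts.
\item On event $A$ we have $|\mathcal{B}|\le un$. Since $\mathcal{R}$ consists of the top $\lfloor un\rfloor\ge|\mathcal{B}|$ indices, $|T_B|\le|\mathcal{B}\setminus\mathcal{R}|\le|\mathcal{R}\cap\mathfrak{G}|$.
\item Every $\tau_i$ with $i\in\mathcal{R}$ dominates every $\tau_j$ with $j\notin\mathcal{R}$. Hence $\sum_{i\in T_B}\tau_i\le\sum_{i\in S}\tau_i$ for some $S\subset\mathcal{R}\cap\mathfrak{G}$ with $|S|=|T_B|$.
\item It follows that $\sum_{i\in T}\tau_i\le\sum_{i\in T_G\cup S}\tau_i$, a sum over at most $un$ \emph{good} indices. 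This is exactly what the sign-split bound from Lemma~\ref{lem:rowsumfilter-1}/\ref{lem:rowsumfilter-1b} controls.
\end{itemize}
Applying this with $T=\mathcal{M}\setminus\mathcal{R}$ and combining with your Step~1 gives the lemma. This is the content of the paper's step ``since we sorted the $\tau_i$'s in decreasing order''.

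The constant in front of $un^2\kappa^2$ is a side issue, not the main obstacle. The paper's own sign split incurs the same factor $2$.
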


Although the output $w^{(N)}$ of the filtering subroutines is modified in the final row-sum step, the upper bounds on
$\norm{\Sum(w',\mathcal{M})}_2^2$ for all $\mathcal{M}\subset \mathcal{Y}$ with $|\mathcal{M}|\le un$
still follow from Lemmas~\ref{lem:spectralfilter1} and~\ref{lem:spectralfilter2}. Indeed, if $\mathcal{X}$ denotes the set of indices whose weights are set to zero in this final step, then $\Sum(w',\mathcal{M})=\Sum(w^{(N)},\mathcal{M}\setminus \mathcal{X})$. Since $|\mathcal{M}\setminus\mathcal{X}|\le |\mathcal{M}|\le un$, the bounds from Lemmas~\ref{lem:spectralfilter1} and~\ref{lem:spectralfilter2} apply directly to $\mathcal{M}\setminus\mathcal{X}$.

\subsection{Proof of Proposition \ref{thm:poly-time-main}}
\begin{proof}
    The runtime of the algorithm is clearly dominated by the runtime of the spectral filters, which both run in time $O(un^2 p \min (n, p))$, and the runtime of computing $\Sum (w, \mathcal{Y})$, which is $O(n p)$. 
    
    Conditioned on event $A$, which occurs with probability $1 - \delta/4$, the following results hold:
    \begin{itemize}
        \item Weight Stability: By \Cref{rem:weightsum}, the weights $w$ returned by RowSumFilter in \Cref{alg:efficient-mean-tester} satisfy $$\sum_{i \in \mathfrak{G}} \sqrt{w_i} \geq |\mathfrak{G}| - \|\mathbf{1}_G - w_\mathfrak{G}\|_1 \geq (1 - 6u)n.$$ Accounting for the operations in Step 7, $\sum_{i \in \mathfrak{G}} \sqrt{w_i}$ is at least $(1 - 7u)n$ in the worst case.
        \item Assumptions in \eqref{eq:gram-assume} or \eqref{eq:cov-assume} are satisfied with probability $1-\delta/4$.
        \item Inner Product Bound: \Cref{lem:rowsumfilter-1} implies that with probability at least $1 - \delta/4$, the bound on $\langle \Sum(w, \mathcal{Y}), \Sum(w, \mathcal{B}) \rangle$ holds.
        \item Good set sum bound: the upper bound on $\|\Sum(w, \mathfrak{G})\|^2$ in \eqref{proofeqn:sumwg} is satisfied with  probability at least $1 - \delta/4$.
    \end{itemize}
    By a union bound over these three failure events, each occurring with probability at most $\delta/4$, all aforementioned conditions hold simultaneously with probability at least $1 - \delta$. 

    To control for type I error, \Cref{lem:regular-weights-imply-tester} gives us the conditions $0.5T_un^2\kappa_0^2\geq O(\beta_1(0)+\beta_2)$ and
    $$\kappa_0^2\gtrsim \frac{\sqrt{p}\log^{2/\theta}(1/\delta)}{n}.$$
    Similarly, to control for type II error, we require $T_un^2(\kappa^2-0.5\kappa_0^2)\geq O(\beta_1(0)+\beta_2)$ and
    $$\kappa^2-0.5\kappa_0^2\gtrsim \frac{\sqrt{p}\log^{2/\theta}(1/\delta)}{n}+\frac{\kappa\log^{1/\theta}(1/\delta)}{\sqrt{n}},$$
    by conditions in \Cref{lem:regular-weights-imply-tester}. Meanwhile, the results of \Cref{lem:spectralfilter1} or \Cref{lem:spectralfilter2} (depending on whether $n>p$) and \Cref{lem:rowsumfilter} show that the terms $\beta_1$ and $\beta_2$ are 
    \begin{align*}
        \beta_1(\kappa) &= un^{3/2} \sqrt{p} \log^{2/\theta}(n/\delta) + \kappa n\sqrt{un}\log^{1/\theta}(4/\delta). \\
        \beta_2 &= unR_{f} = u^2n^2p\log(1/u) + un(\sqrt{np}+p)\log(2p/\delta) + un^2\kappa_0^2.
    \end{align*}
    Substituting $\beta_1, \beta_2$ into the earlier conditions in gives the final result. 
    For type I error control, we require $n\gtrsim 1$ and
    \begin{align*}
        \kappa_0^2&\gtrsim_{\log} \frac{\sqrt{p}}{n}+ \frac{u\sqrt{p}}{\sqrt{n}}+\varepsilon^2p\log(1/\varepsilon)+\frac{p}{n^2}\asymp_{\log} \frac{\sqrt{p}}{n}+\varepsilon^2p\log(1/\varepsilon)+\frac{p}{n^2},
    \end{align*}
    since $\frac{u\sqrt{p}}{\sqrt{n}}\lesssim u^2p+1/n$.
    For type II error control, we require
    \begin{align}
        \kappa^2-0.5\kappa_0^2&\gtrsim_{\log} \frac{\sqrt{p}}{n}+\frac{\kappa}{\sqrt{n}}+\frac{u\sqrt{p}}{\sqrt{n}}+\frac{\kappa\sqrt{u}}{\sqrt{n}}+\varepsilon^2p\log(1/\varepsilon)+\frac{p}{n^2}+(\varepsilon+n^{-1})\kappa_0^2   \nonumber\\
        &\gtrsim_{\log} \frac{\sqrt{p}}{n}+\frac{\kappa}{\sqrt{n}}+\varepsilon^2p\log(1/\varepsilon)+\frac{p}{n^2}+(\varepsilon+n^{-1})\kappa_0^2\label{proof_eqn:t2e_condition1},
    \end{align}
    since $\frac{u\sqrt{p}}{\sqrt{n}}$ and $\frac{\kappa\sqrt{u}}{\sqrt{n}}$ are dominated by other terms. For any $\kappa_0$ satisfying $2/C_\gamma\leq \kappa_0^2/\kappa^2\leq \bar{c}$, where $1/\bar{c}> 0.5+O(\varepsilon+n^{-1})$, \eqref{proof_eqn:t2e_condition1} is satisfied as long as
    $$\kappa^2\gtrsim_{\log} \varepsilon^2p\log(1/\varepsilon)+\frac{\sqrt{p}}{n}+\frac{p}{n^2}.$$
\end{proof}

\subsection{Theoretical guarantees of Algorithm \ref{alg:efficient-mean-tester} under finite-moment assumption}
Similar to \Cref{thm:poly-time-main}, \Cref{thm:poly-time-main_fm} provides theoretical guarantees of \Cref{alg:efficient-mean-tester} when inlier distributions are drawn from the finite-moment class, yielding control of both type I and type II errors.
\begin{proposition}[Testing for $\mathcal{P}^{p}_{v,\phi}$ class]
\label{thm:poly-time-main_fm}
   Let $n,p \in \mathbb{N}$, $\delta > 0$ and $\varepsilon \in[0,0.08)$. Let $\{Y_i\}_{i\in[n]}$ be independently generated according to \eqref{eqn:huberhd}, with $\mathcal{D}=\mathcal{P}^{p}_{v,\phi}$ for some $v\geq 4$ and $\phi>0$. Assume 
   \begin{equation}\label{eqn:u_val2}
       u=\varepsilon+\frac{1}{20}\min\left(1,\left(\frac{p}{n}\right)^{v/4}\right)+\sqrt{\frac{2(\varepsilon+0.05\min(1, (p/n)^{v/4})\log(4/\delta)}{n}}+\frac{2\log(4/\delta)}{3n}\leq 0.08.
   \end{equation}
Then \Cref{alg:efficient-mean-tester} with detection sensitivity factor input $T_u=(1-7u)^2-2u$ has the following guarantees.
  \begin{enumerate}[label=(\textbf{\alph*})]
      \item The algorithm has runtime $O([\varepsilon+\min\{1,(p/n)^{v/4}\}]n^2 p \min(n, p) + np)$.
      \item If $\mu=\mathbf{0}$ and the input $\kappa_0$ satisfies
              \begin{equation*}
            \kappa_0^2\gtrsim\begin{cases}
                \left[\frac{\sqrt{p}}{n\sqrt{\delta}}+\varepsilon^2p\log(1/\varepsilon)+\left(\frac{p}{n}\right)^{2-2/v}+\frac{p^{1+v/2}}{n^{v/2}}\right]\cdot\mathrm{polylog}(v, n, p),&\text{if $n>p$,}\\
                p \cdot\mathrm{polylog}(v, n, p),  &\text{if $n\leq p$,}
            \end{cases} 
        \end{equation*}
      then the algorithm outputs 0 with probability at least $1-\delta$.
      \item If $\kappa = \|\mu\|_2$ satisfies $0<2/C_\gamma\leq \kappa_0^2/\kappa^2 \leq \bar{c}$ for some absolute constant $\bar{c}>0$, and 
        \begin{equation*}
            \kappa^2\gtrsim\begin{cases}
                \left[\frac{\sqrt{p}}{n\sqrt{\delta}}+\varepsilon^2p\log(1/\varepsilon)+\left(\frac{p}{n}\right)^{2-2/v}+\frac{p^{1+v/2}}{n^{v/2}}\right]\cdot\mathrm{polylog}(v, n, p),&\text{if $n>p$,}\\
                p \cdot\mathrm{polylog}(v, n, p),  &\text{if $n\leq p$,}
            \end{cases} 
        \end{equation*}
      then the algorithm outputs 1 with probability at least $1-\delta$.
  \end{enumerate}
\end{proposition}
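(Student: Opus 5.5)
The plan is to follow the proof of \Cref{thm:poly-time-main} step by step, replacing each sub-Weibull concentration input by its finite-moment counterpart.

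\textbf{Outlier event and runtime.} Choose the truncation radius $R$ as in \eqref{eqn:fm_R}, so that $\gamma\le \min\{1/20,(p/n)^{v/4}/20\}$ by \eqref{eqn:fm_gamma}. Bernstein's inequality \eqref{eq:badsetsize} then gives event $A=\{|\mathcal{B}|\le un\}$ with probability $1-\delta/4$, with $u$ as in \eqref{eqn:u_val2}. On $A$, \Cref{lem:spectralfilter1} or \Cref{lem:spectralfilter2} stops in $N\le 6un$ iterations of cost $O(pn\min(n,p))$ each. Substituting $u\lesssim \varepsilon+\min\{1,(p/n)^{v/4}\}$ gives the runtime in (a).

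\textbf{Assembling the event.} Intersect four events, each of probability at least $1-\delta/4$:
\begin{itemize}
\item the event $A$;
\item the operator-norm bound of \Cref{lem:cov_matrix_conc0}, which is already stated for $\mathcal{P}^p_{4,\phi}\supseteq\mathcal{P}^p_{v,\phi}$;
\item the row-sum bound of \Cref{lem:rowsumfilter-1b}, the finite-moment version, which yields $\beta_1,\beta_2$;
\item the good-set expansion \eqref{proofeqn:sumwg2} from \Cref{lem:regular-weights-imply-tester4mom}(a).
\end{itemize}
As in \Cref{rem:weightsum}, the weights after RowSumFilter satisfy $\sum_{i\in\mathfrak{G}}\sqrt{w_i}\ge(1-7u)n$. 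The same bound gives $\|w_{\mathfrak{G}}\|_1\ge(1-7u)n$, because $\|\mathbf 1_\mathfrak{G}-w_\mathfrak{G}\|_1\le 5un$ and RowSumFilter zeroes at most $un$ further weights. This is what \Cref{lem:regular-weights-imply-tester4mom}(b) requires. Applying that lemma gives type I control whenever $O(\beta_1+\beta_2)\le 0.5T_un^2\kappa_0^2$ and $\kappa_0^2\gtrsim (p/n)^{2-2/v}\wedge 1+\sqrt p/(n\sqrt\delta)$. It gives type II control under the analogous conditions with $\kappa^2-0.5\kappa_0^2$ in place of $0.5\kappa_0^2$.

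\textbf{Main obstacle: bookkeeping of $\beta_1,\beta_2$.} Write $\beta_2=unR_f=u^2n^2p\log(1/u)+un(\sqrt{np}+p)\log(2p/\delta)+un^2\kappa_0^2$. Dividing by $n^2$, the contributions are $u^2p\log(1/u)$, $u\sqrt{p/n}+up/n$ (up to logs), and $u\kappa_0^2$. Split $u^2\lesssim\varepsilon^2+\min\{1,(p/n)^{v/2}\}$ and treat the two regimes separately.

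\emph{Case $n>p$.} Here $(p/n)^{v/2}p=p^{1+v/2}/n^{v/2}$, which is the extra term in the statement, and $\varepsilon^2p\log(1/\varepsilon)$ comes from the contamination part. The cross terms are absorbed by AM--GM. For example, $u\sqrt{p/n}\lesssim u^2p+1/n$, with the $1/n$ piece below $\sqrt p/n$. The terms $(p/n)^{v/4}\sqrt{p/n}$ and $(p/n)^{v/4}p/n$ are dominated by $(p/n)^{2-2/v}+p^{1+v/2}/n^{v/2}$ when $n>p$ and $v\ge4$. The $u\kappa_0^2$ term is absorbed because $u\le0.08$ keeps $T_u$ bounded below, so the factor $T_u-O(u)$ stays positive.

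\emph{Case $n\le p$.} Here $u$ can be of constant order and $\beta_2/n^2\asymp p$. Moreover $(p/n)^{2-2/v}\wedge1=1\le p$, so every term collapses to $p\cdot\mathrm{polylog}$.

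I expect the $\beta_1$ terms from \Cref{lem:rowsumfilter-1b} to be the delicate part. They carry polynomial $\delta$-dependence, scaling like $\sqrt{un}\,\kappa n/\sqrt\delta$ and $un^{3/2}\sqrt{p/\delta}$. I would check that they are dominated by $n^2\sqrt p/(n\sqrt\delta)$ and by $\kappa n\sqrt{n/\delta}$; the latter is then absorbed by the requirement on $\kappa^2$, since $\kappa/\sqrt{n\delta}\le \kappa^2/4+1/(n\delta)$.

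\textbf{Choice of $\kappa_0$ for type II.} Finally, take $2/C_\gamma\le\kappa_0^2/\kappa^2\le\bar c$ with $1/\bar c>0.5+O(u)$. Then $\kappa^2-0.5\kappa_0^2\gtrsim\kappa^2$, and the type II condition reduces to the stated lower bound on $\kappa^2$. The lower bound $\kappa_0^2/\kappa^2\ge 2/C_\gamma$ is exactly what the filtering lemmas need, since it guarantees $C_\gamma\kappa_0^2/2\ge\kappa^2$. A union bound over the four events finishes the proof.
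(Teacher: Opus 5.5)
Your outline is the paper's proof: the same four events ($A$, \Cref{lem:cov_matrix_conc0}, \Cref{lem:rowsumfilter-1b}, \eqref{proofeqn:sumwg2}), then \Cref{lem:regular-weights-imply-tester4mom}(b), then the $\beta_1,\beta_2$ bookkeeping split into $n>p$ and $n\le p$, and $1/\bar c>0.5+O(u)$ for type II. You are slightly more careful than the paper in citing \eqref{proofeqn:sumwg2} and in checking $\|w_{\mathfrak{G}}\|_1\ge(1-7u)n$.

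The step that fails is the absorption of the polynomial-in-$\delta$ terms. Your bound $\kappa/\sqrt{n\delta}\le\kappa^2/4+1/(n\delta)$ leaves a residual $1/(n\delta)$ that the hypothesis on $\kappa^2$ does not control. Indeed $1/(n\delta)\le\sqrt{p}/(n\sqrt{\delta})$ only when $p\delta\ge1$, and $\mathrm{polylog}(v,n,p)$ factors cannot close a polynomial gap in $\delta$. For instance, take $p=1$, $\varepsilon=0$, $v=4$, $\delta=1/n$ and $n$ large. Then \eqref{eqn:u_val2} holds and the hypothesis admits $\kappa^2\asymp n^{-1/2}\,\mathrm{polylog}(n)$, but the $\kappa/\sqrt{n\delta}$ condition of \Cref{lem:regular-weights-imply-tester4mom}(b) forces $\kappa\gtrsim1$. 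Your $n\le p$ branch has the same problem: $\sqrt{p}/(n\sqrt{\delta})$ and $1/(n\delta)$ fall below $p$ only for $\delta\gtrsim1/(pn^2)$ and $\delta\gtrsim1/(pn)$ respectively.

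The paper's proof has exactly this hole. It drops $\kappa/\sqrt{n\delta}$ when passing from \eqref{proof_eqn:t2e_condition_fm} to the final condition, and keeps only $p$ when $n\le p$. So you inherit the gap rather than introduce it, and it is harmless in \Cref{thm:poly-time-main-mom}, where each block runs at constant confidence. For general $\delta$, you can repair it in one of three ways:
\begin{itemize}
\item restrict $\delta$, namely $\delta\gtrsim1/p$, and $\delta\gtrsim1/(pn^2)$ when $n\le p$;
\item add $1/(n\delta)$ to the requirement on $\kappa^2$;
\item control the terms linear in $\mu$ by \Cref{lemma:rosenthal} plus Markov at order $v$ instead of Chebyshev. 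This replaces $\delta^{-1/2}$ by $\delta^{-1/v}$ and yields the condition $\kappa^2\gtrsim\delta^{-2/v}/n$, which is at most $\sqrt{p}/(n\sqrt{\delta})$ for $v\ge4$.
\end{itemize}

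Two smaller slips. First, the $p$-dependent $\delta$-term of $\beta_1$ in \Cref{lem:rowsumfilter-1b} is $n\sqrt{up/\delta}$, which is at most $n\sqrt{p/\delta}$ because $u\le1$. The sub-Weibull-shaped term $un^{3/2}\sqrt{p/\delta}$ you wrote is not dominated by $n\sqrt{p/\delta}$ once $u\gtrsim n^{-1/2}$ (e.g.\ $\varepsilon\gg n^{-1/2}$), so the check fails as you set it up.

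Second, the $u\kappa_0^2$ term cannot be absorbed merely because $u\le0.08$. It is the $C_\gamma n\kappa_0^2$ piece of $R_{f}$ inside $\beta_2=unR_{f}$, so its constant grows with $C_\gamma$, which must be large for the filtering lemmas. Meanwhile $T_u$ can be as small as about $0.034$. Type I control therefore needs $C_\gamma u$ small relative to $T_u$, an extra smallness condition on $u$ that the paper's proof also leaves implicit.
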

The condition \eqref{eqn:u_val2} plays a similar role as \eqref{eqn:u_val}, but is adapted to the setting where the inlier distribution possesses only finite $v$-th moments. In particular, $u$ is defined as an upper bound on the fraction of outliers (i.e. contaminated samples and inlier samples far from the mean) within the dataset, and the bound holds with probability at least $1-\delta/4$.

Comparing with \Cref{thm:poly-time-main}, we observe two key differences. Firstly, the requirements on $\kappa_0$ and $\kappa$ have a polynomial dependence on $\delta$ in \Cref{thm:poly-time-main_fm}, while that in \Cref{thm:poly-time-main} have a logarithmic dependence on $\delta$. Having polynomial dependence is not an issue if we consider $\delta$ as fixed and large, but it is not ideal when it is used as a subroutine to the change point problem. We address this issue in \Cref{subsect:testalgo2}. Secondly, we now have a new regime for $n\leq p$, where we observe that the minimum signal strength is of order $p$ and does not improve as $n$ increases. This could be an artefact of the proof and detailed reasoning is provided in \Cref{rem:contam_rate_fm}.

The proof of \Cref{thm:poly-time-main_fm} will follow closely to the proof of \Cref{thm:poly-time-main}. We will consider the same four conditions that hold with probability $1-\delta$. Only the choice of constants $\beta_1$ and $\beta_2$ differ.
\begin{proof}[Proof of \Cref{thm:poly-time-main_fm}]
    By the condition that $u\leq 0.08$, we need $1/n\leq 0.08$, implying $n\gtrsim 1$.

    Conditioned on event $A$, which occurs with probability $1 - \delta/4$, the following results hold:
    \begin{itemize}
        \item Weight Stability: By \Cref{rem:weightsum}, the weights $w$ returned by RowSumFilter in \Cref{alg:efficient-mean-tester} satisfy $$\sum_{i \in \mathfrak{G}} \sqrt{w_i} \geq |\mathfrak{G}| - \|\mathbf{1}_G - w_\mathfrak{G}\|_1 \geq (1 - 6u)n.$$ Accounting for the operations in Step 7, $\sum_{i \in \mathfrak{G}} \sqrt{w_i}$ is at least $(1 - 7u)n$ in the worst case.
        \item Assumptions in \eqref{eq:gram-assume} or \eqref{eq:cov-assume} are satisfied with probability $1-\delta/4$.
        \item Inner Product Bound: \Cref{lem:rowsumfilter-1b} implies that with probability at least $1 - \delta/4$, the bound on $\langle \Sum(w, \mathcal{Y}), \Sum(w, \mathcal{B}) \rangle$ holds.
        \item Good set sum bound: the upper bound on $\|\Sum(w, \mathfrak{G})\|^2$ in \eqref{proofeqn:sumwg} is satisfied with  probability at least $1 - \delta/4$.
    \end{itemize}
    By a union bound over these three failure events, each occurring with probability at most $\delta/4$, all aforementioned conditions hold simultaneously with probability at least $1 - \delta$. 

    By \Cref{lem:regular-weights-imply-tester4mom}, the conditions on $\kappa_0^2$ are:
    \begin{itemize}
    \item (type I error control condition) if $\mu = \mathbf{0}$, $O(\beta_1+\beta_2)\leq 0.5T_un^2\kappa_0^2$ and
    $$\kappa_0^2\gtrsim (p/n)^{2-2/v}\wedge 1+\frac{\sqrt{p}}{n\sqrt{\delta}},$$
    then $|\norm{\Sum (w, \mathcal{Y})}^2 - p \norm{w}_1| \leq 0.5T_u n ^2 \kappa^2_0$, and
    \item (type II error control condition) if $\mu \neq \mathbf{0}$, $O(\beta_1+\beta_2) < n^2T_u(\kappa^2-0.5\kappa_0^2)$ and
    $$\kappa^2-0.5\kappa_0^2\gtrsim \kappa \left(\frac{p}{n}\right)^{(v-1)/4}\wedge \kappa+\frac{\kappa}{\sqrt{n\delta}}+\left(\frac{p}{n}\right)^{2-2/v}\wedge 1+\frac{\sqrt{p}}{n\sqrt{\delta}},$$
    then $|\norm{\Sum (w, \mathcal{Y})}^2 - p \norm{w}_1| > 0.5T_u n^2 \kappa^2 _0$.
    \end{itemize}
    Meanwhile, the results of \Cref{lem:spectralfilter1} or \Cref{lem:spectralfilter2} (depending on whether $n>p$) and \Cref{lem:rowsumfilter} show that the terms $\beta_1$ and $\beta_2$ used above
    \begin{align*}
        \beta_1(\kappa) &= 
up^{2-2/v}n^{2/v}\wedge un^2+n\sqrt{up/\delta}+\kappa un^{1+1/v}p^{1-1/v}\wedge \kappa un^2+\kappa n\sqrt{u/\delta}. \\
        \beta_2 &= unR_{f} = u^2n^2p\log(1/u) + un(\sqrt{np}+p)\log(2p/\delta) + un^2\kappa_0^2.
    \end{align*}
By a union bound, all required events in \Cref{thm:poly-time-main} hold simultaneously with probability at least $1-\delta$. Therefore, we can obtain the result by just substituting in $\beta_1$ and $\beta_2$ and solving for $\kappa_0^2$.

From $\beta_1$, we have that under $\mu=\mathbf{0}$, for type I error control, we require
$$\kappa_0^2\gtrsim u\left(\frac{p}{n}\right)^{2-2/v}\wedge u+\frac{\sqrt{up}}{n\sqrt{\delta}}.$$
Under $\mu\neq\mathbf{0}$, for type II error control, we require
$$\kappa^2-0.5\kappa_0^2\gtrsim_{\log} u\left(\frac{p}{n}\right)^{2-2/v}\wedge u+\frac{\sqrt{up}}{n\sqrt{\delta}}+\kappa u \left(\frac{p}{n}\right)^{1-1/v}\wedge \kappa u + \frac{\kappa\sqrt{u}}{n\sqrt{\delta}}.$$
Note that most terms are dominated by the previous terms in \Cref{lem:regular-weights-imply-tester4mom}. 

From $\beta_2$, we have that under $\mu=\mathbf{0}$, for type I error control, we require
$$\kappa_0^2\gtrsim_{\log} u^2p\log(1/u)+u\sqrt{\frac{p}{n}}+\frac{up}{n}+u\kappa_0^2.$$
Under $\mu\neq\mathbf{0}$, for type II error control, we require
$$\kappa^2-0.5\kappa_0^2\gtrsim_{\log} u^2p\log(1/u)+u\sqrt{\frac{p}{n}}+\frac{up}{n}+u\kappa_0^2.$$

Combining all the results above, to control type I error, we require
\begin{equation}\label{proof_eqn:minkappa0}
    \kappa_0^2\gtrsim_{\log} \left(\frac{p}{n}\right)^{2-2/v}\wedge 1+\frac{\sqrt{p}}{n\sqrt{\delta}}+u^2p\log(1/u)+u\sqrt{\frac{p}{n}}+\frac{up}{n}+u\kappa_0^2.
\end{equation}
This simplifies to 
\begin{equation*}
    \kappa_0^2\gtrsim_{\log}\begin{cases}
        \left(\frac{p}{n}\right)^{2-2/v}+\frac{\sqrt{p}}{n\sqrt{\delta}}+\varepsilon^2p+\frac{p^{1+v/2}}{n^{v/2}},&\text{if $n>p$,}\\
        p,  &\text{if $n\leq p$.}
    \end{cases} 
\end{equation*}

To control type II error, we require
\begin{align}
    \kappa^2-0.5\kappa_0^2\gtrsim_{\log}  \kappa \left(\frac{p}{n}\right)^{(v-1)/4}\wedge \kappa+\frac{\kappa}{\sqrt{n\delta}}+\left(\frac{p}{n}\right)^{2-2/v}\wedge 1+\frac{\sqrt{p}}{n\sqrt{\delta}}+ \kappa u \left(\frac{p}{n}\right)^{1-1/v}\wedge \kappa u \nonumber\\
+u^2p\log(1/u)+u\sqrt{\frac{p}{n}}+\frac{up}{n}+u\kappa_0^2. \label{proof_eqn:minkappa1}
\end{align}
Defining $u'=\varepsilon+\left(\frac{p}{n}\right)^{v/4}$, this simplifies to
\begin{equation}\label{proof_eqn:t2e_condition_fm}
    \kappa^2-0.5\kappa_0^2\gtrsim_{\log}\begin{cases}
        \kappa \left(\frac{p}{n}\right)^{\frac{v-1}{4}}+\frac{\kappa}{\sqrt{n\delta}}+\kappa u' \left(\frac{p}{n}\right)^{1-1/v}+\left(\frac{p}{n}\right)^{2-\frac{2}{v}}+\frac{\sqrt{p}}{n\sqrt{\delta}}+\varepsilon^2p+\frac{p^{1+v/2}}{n^{v/2}}+u'\kappa_0^2,&\text{if $n>p$,}\\
        \kappa+p,  &\text{if $n\leq p$.}
    \end{cases} 
\end{equation}
Now, if $\kappa_0$ satisfies $2/C_\gamma\leq \kappa_0^2/\kappa^2\leq \bar{c}$, where $1/\bar{c}> 0.5+O(\varepsilon+n^{-1})$, then \eqref{proof_eqn:t2e_condition_fm} will be satisfied as long as 
\begin{equation*}
    \kappa^2\gtrsim_{\log}\begin{cases}
        \left(\frac{p}{n}\right)^{2-\frac{2}{v}}+\frac{\sqrt{p}}{n\sqrt{\delta}}+\varepsilon^2p+\frac{p^{1+v/2}}{n^{v/2}},&\text{if $n>p$,}\\
        p,  &\text{if $n\leq p$.}
    \end{cases} 
\end{equation*}
\end{proof} 

\begin{remark}\label{rem:contam_rate_fm}
    \Cref{thm:poly-time-main_fm} shows that, for the class $\mathcal{P}_{v,\phi}^p$, the minimum values of $\kappa$ and $\kappa_0$ required for type I and II error control do not improve
    with $n$ in the regime $n\le p$. This feature can be traced to its proof. The analysis of \Cref{alg:efficient-mean-tester} requires controlling the number of inliers which lie outside a $\ell_2$-ball centred at the mean with radius~$R$, which happens with probability $\gamma$ with an upper bound given in \eqref{eqn:fm_gamma}. Hence the issue is
    tied to the use of Markov's inequality and the choice of $R$ in \eqref{eqn:fm_R}. When $n>p$, one can choose $R^2-p\asymp \sqrt{n}$ so that the bound is of order $(p/n)^{v/4}$. However, when $n\leq p$, this choice of $R$ is not useful since $(p/n)^{v/4}\geq O(1)$. To keep the proportion of truncated inliers under control, we instead take $R^2-p\asymp \sqrt{p}$, which yields a constant-order bound on $\gamma$. Consequently, in the regime $n\le p$, the outlier proportion $u$ given in \eqref{eqn:u_bound} is of constant order, so the term $u^2p\log(1/u)$ is of order $p$. This term is then dominant in \eqref{proof_eqn:minkappa0} and
    \eqref{proof_eqn:minkappa1}. The same reasoning also applies to \Cref{thm:poly-time-main-mom}, since the proof relies on \Cref{alg:efficient-mean-tester} as a subroutine within \Cref{alg:efficient-mean-tester-mom}.
\end{remark}

\subsection{Proof of Proposition \ref{thm:poly-time-main-mom}}
\begin{proof}
\noindent\textbf{Step 1: Reduction to block-wise guarantees.} Suppose there exist thresholds $$t_4 < 0.5 T_u \kappa_0^2 < t_5$$ such that for every block $i \in \{1,\dots,k\}$:
\begin{itemize}
    \item (\textbf{Null}) if $\mu = 0$, then $\Pb(U_i > t_4) \leq 1/4$,
    \item (\textbf{Alternative}) if $\|\mu\|_2 \geq \kappa_0$, then $\Pb(U_i < t_5) \leq 1/4$.
\end{itemize}
Then, by the property of medians in \Cref{prop:med},
\[
\Pb\!\left(\med(U_{1:k}) < 0.5 T_u \kappa_0^2 \right)
\leq
\begin{cases}
\omega & \text{if } \mu = 0,\\
1-\omega & \text{if } \|\mu\|_2 \geq \kappa_0,
\end{cases}
\]
which establishes the theorem. It remains to verify the existence of such $t_4$ and $t_5$.

\medskip

\noindent\textbf{Step 2: Good event and contamination control.}
For a fixed block $\mathcal{Y}_i$, decompose it as $\mathcal{Y}_i = \mathfrak{G}_i \cup \mathcal{B}_i$ into inliers and outliers. We have $|\mathcal{Y}_i|=\lfloor n/K \rfloor=n_0$ for all $i\in\{1,\ldots,K\}$. By Bernstein’s inequality and the definition of $u$, the event
\[
A_i = \{|\mathcal{B}_i| \leq u n_0\}
\]
holds with probability at least $1 - 1/(16K)$.
Under event $A_{i}$, the following results hold
    \begin{itemize}
        \item Weight Stability: By \Cref{rem:weightsum}, the weights $w$ returned by RowSumFilter in \Cref{alg:efficient-mean-tester-mom} satisfy $$\sum_{j \in \mathfrak{G}_i} \sqrt{w_j} \geq |\mathfrak{G}_i| - \|\mathbf{1}_{\mathfrak{G}_i} - w_{\mathfrak{G}_i}\|_1 \geq (1 - 6u)n_0.$$ Accounting for the operations in Step 9, $\sum_{j \in \mathfrak{G}_i} \sqrt{w_j}$ is at least $(1 - 7u)n_0$ in the worst case.
        \item Assumptions in \eqref{eq:gram-assume} or \eqref{eq:cov-assume} are satisfied with probability $1-\delta/(4K)$.
        \item Inner Product Bound: \Cref{lem:rowsumfilter-1b} implies that with probability at least $1 - \delta/(4K)$, the bound on $\langle \Sum(w, \mathcal{Y}_i), \Sum(w, \mathcal{B}_i) \rangle$ holds.
        \item Good set sum bound: the upper bound on $\|\Sum(w, \mathfrak{G}_i)\|^2$ in \eqref{proofeqn:sumwg2} is satisfied with  probability at least $1 - 1/(16K)$.
    \end{itemize}
    By a union bound over four failure events and over all $i\in\{1,\ldots, K\}$, each occurring with probability at most $1/(16K)$, all aforementioned conditions hold simultaneously with probability at least $3/4$.

\medskip

\noindent\textbf{Step 3: Expansion of the statistic.} On this event,
\begin{align*}
\norm{\Sum(w,\mathcal{Y}_i)}_2^2
= \norm{\Sum(w,\mathfrak{G}_i)}_2^2
+ 2 \iprod{\Sum(w,\mathcal{Y}_i), \Sum(w,\mathcal{B}_i)}
- \norm{\Sum(w,\mathcal{B}_i)}_2^2.
\end{align*}
By \Cref{lem:regular-weights-imply-tester4mom}, we substitute $\delta=1/(4K)$ to get
\begin{align*}
    \norm{\Sum(w,\mathfrak{G}_i)}_2^2= p\norm{w_{\mathfrak{G}_i}}_1&+\left(\sum_{i\in \mathfrak{G}_i}\sqrt{w_i}\right)^2 \kappa^2\\
&\pm O(\kappa n_0^{(9-v)/4}p^{(v-1)/4}\wedge \kappa n_0^2+\kappa n_0\sqrt{n_0K}+p^{2-2/v}n_0^{2/v}\wedge n^2_0+n_0\sqrt{pK}).
\end{align*}
Secondly, by \Cref{lem:spectralfilter2}, we substitute $\delta=1/(4K)$ to get
$$\norm{\Sum(w,\mathcal{B}_i)}_2^2\leq O(un_0R_{f}),$$
where $R_{f}$ is defined as
\begin{equation*}
    R_{f} = C_{\gamma} \left(un_0p\log(1/u)+(\sqrt{n_0p}+p)\log(8pK)+ n_0 \kappa^2_0\right).
\end{equation*}
Finally, by \Cref{lem:rowsumfilter}, we substitute $\delta=1/(4K)$ to get
\begin{align*}
    \iprod{\Sum(w,\mathcal{Y}_i), \Sum(w,\mathcal{B}_i)}=p\norm{w_\mathcal{B}}_1\pm un_0^2\kappa^2\pm 
    O(up^{2-2/v}n_0^{2/v}\wedge un_0^2+n_0\sqrt{upK})\\
    \pm O(\kappa un_0^{1+1/v}p^{1-1/v}\wedge \kappa un_0^2+\kappa n_0\sqrt{uK}+un_0R_{f}).
\end{align*}

\medskip 

\noindent\textbf{Step 4: Null case ($\mu = 0$).}
By concentration bounds above, there exists
\[
t_4 = O\!\Big(p^{2-2/v}n_0^{2/v} \wedge n_0^2 + n_0\sqrt{pK} + n_0 u R_{f} \Big)
\]
such that $\mathbb{P}(U_i \ge t_4) \le 1/4$ for all $i$. For successful type I error control we require $$t_4<0.5T_un_0^2\kappa_0^2.$$ 
Solving for $\kappa_0$ in the exact same way as the proof of \Cref{thm:poly-time-main_fm} gives the condition
\begin{equation*}
    \kappa_0^2\gtrsim_{\log}\begin{cases}
        \left(\frac{p}{n_0}\right)^{2-2/v}+\frac{\sqrt{p}}{n_0}+\varepsilon^2p\log(1/\varepsilon)+\frac{p^{1+v/2}}{n_0^{v/2}},&\text{if $n_0>p$,}\\
        p,  &\text{if $n_0\leq p$.}
    \end{cases} 
\end{equation*}
\vspace{2mm}
\medskip

\noindent\textbf{Step 5: Alternative case ($\norm{\mu}_2 = \kappa$).}
Note that $\sum_{j\in \mathfrak{G}_i} \sqrt{w_j}\geq (1-7u)^2$.
By concentration bounds above, there exists
\[
t_5 = [(1-7u)^2-2u]n_0^2\kappa^2- O\!\Big(\kappa n_0^{(9-v)/4}p^{(v-1)/4}\wedge \kappa n_0^2+\kappa n_0^{3/2} \sqrt{K}+p^{2-2/v}n_0^{2/v}\wedge n^2_0 + n_0\sqrt{pK} + n_0 uR_{f}\Big)
\]
such that $\mathbb{P}(U_i \ge t_5) \le 1/4$ for all $i$. For successful type II error control we require 
\begin{equation}\label{proof_eqn:t2e_mom}
    0.5T_un_0^2\kappa_0^2<t_5.
\end{equation}
Assuming $\kappa_0$ satisfies $2/C_\gamma\leq \kappa_0^2/\kappa^2\leq \bar{c}$, where $1/\bar{c}> 0.5+O(\varepsilon+n^{-1})$, then
\begin{equation*}
    \kappa^2\gtrsim_{\log}\begin{cases}
        \left(\frac{p}{n_0}\right)^{2-2/v}+\frac{\sqrt{p}}{n_0}+\varepsilon^2p\log(1/\varepsilon)+\frac{p^{1+v/2}}{n_0^{v/2}},&\text{if $n_0>p$,}\\
        p,  &\text{if $n_0\leq p$.}
    \end{cases} 
\end{equation*}
is a sufficient condition for \eqref{proof_eqn:t2e_mom} to hold, by the proof of \Cref{thm:poly-time-main_fm}.
\end{proof}

\subsection{Proof of Proposition \ref{fact:1d-filter}}
\begin{proof}
$\supp{w'} \subset \supp{w}$ is immediate as $w_i'$ sets one of the non-zero weights in $w$ to 0 and the zero weights stay 0. For the second part,
\begin{align*}
    \norm{\mathbf{1}_{\mathfrak{G}}- w'_{\mathfrak{G}}}_1&=\sum_{i\in \mathfrak{G}} |1- (1-\frac{\tau_i}{\max_{i\in \mathcal{Y}} \tau_i})w_i|= \norm{\mathbf{1}_{\mathfrak{G}}- w_{\mathfrak{G}}}_1+\sum_{i\in \mathfrak{G}} \frac{w_i\tau_i}{\max_{j\in \mathcal{Y}} \tau_j}\\
    &\leq5\norm{\mathbf{1}_{\mathcal{B}}- w_{\mathcal{B}}}_1+5\sum_{i\in \mathcal{B}} \frac{w_i\tau_i}{\max_{j\in \mathcal{Y}} \tau_j}=5 \norm{\mathbf{1}_B - w_\mathcal{B}'}_1.
\end{align*}
\end{proof}

\subsection{Proof of Lemma \ref{lem:cov_matrix_conc0}}
\begin{proof} First we consider $n\geq p$. By expanding $M(\mathfrak{G})$ and using triangle inequality, we have
    \begin{align*}
        &\norm{M(\mathfrak{G})-nI}_{\op}\\
        &=\norm{\sum_{i=1}^n\zeta_i(Y_i-\mu)(Y_i-\mu)^\top+\sum_{i=1}^n\zeta_i\mu (Y_i-\mu)^\top+\sum_{i=1}^n\zeta_i (Y_i-\mu)\mu^\top+\sum_{i=1}^n\zeta_i\mu\mu^\top-nI}_{\op}\\
        &\leq n \norm{\mu}_2^2+\left(\norm{\sum_{i=1}^n\zeta_i(Y_i-\mu)(Y_i-\mu)^\top-nI}_{\op}+2\norm{\sum_{i=1}^n\zeta_i (Y_i-\mu)}_2\norm{\mu}_2\right).
    \end{align*} 
   For $F\in\mathcal{P}_{v,\phi}^p$, by \Cref{lem:cov_matrix_conc}, with probability at least $1-\delta$, we have
    \begin{equation}\label{proofeqn:covnorm}
        \norm{\sum_{i\in \mathfrak{G}}(Y_i-\mu)(Y_i-\mu)^\top-nI}_{\op}\leq O((\sqrt{np}+p)\log(2p/\delta))+\varepsilon n.
    \end{equation}
    \Cref{lem:normbound} gives us a bound for the cross term. With probability $1-\delta$, we have
    \begin{align}\label{proofeqn:normX22}
        \norm{\sum_{i=1}^n \zeta_i (Y_i-\mu)}_2
        &\leq O(\sqrt{np}\log(1/\delta)).
    \end{align}
    Adding up all the terms from \eqref{proofeqn:covnorm} and \eqref{proofeqn:normX22} gives
    \begin{align*}
        \norm{M(\mathfrak{G})-nI}_{\op}
        &\leq n\kappa^2+ O(\sqrt{np}\log(4p/\delta))+\varepsilon n+O(\kappa\sqrt{np}\log(2/\delta)+\kappa\sqrt{np})\\
        &=n\kappa^2+O(\sqrt{np}\log(p/\delta))+\varepsilon n,
    \end{align*}
    with probability $1-\delta$. The same argument holds for $F\in\mathcal{\mathfrak{G}}_{\theta,M}^p$ as $\mathcal{G}_{\theta,M}
    \subset \mathcal{P}_{4,\phi}$ for some $\phi>0$.

    Secondly, for $n \leq p$, suppose $\mathfrak{G}=\{i_1,\ldots,i_m\}$. Define the data matrix
\[
Y_G = [\,Y_{i_1}\; Y_{i_2}\;\cdots\; Y_{i_m}\,] \in \mathbb{R}^{p\times m}.
\]
The Gram matrix is
\[
\Gram(\mathfrak{G}) = (Y_{i_a}^\top Y_{i_b})_{a,b=1}^m = Y_G^\top Y_G .
\]
Consider
\[
Y_\mathfrak{G} Y_\mathfrak{G}^\top = \sum_{i\in \mathfrak{G}} Y_i Y_i^\top=M(\mathfrak{G}).
\]
The matrices $M(\mathfrak{G})$ and $\Gram(\mathfrak{G})$ have the same nonzero eigenvalues. By results above, we have that the maximum eigenvalue of $M(\mathfrak{G})$ satisfy $$\lambda_{\max}(\Gram(\mathfrak{G}))=\lambda_{\max}(M(\mathfrak{G}))\leq n+O(p\log(2p/\delta))+n\kappa^2,$$
and zero eigenvalues of algebraic multiplicity $n-p$. This means that for all $i\in[n]$, 
$$-p\leq \lambda_i(\Gram(\mathfrak{G})-pI)\leq n-p+O(p\log(2p/\delta))+n\kappa^2\leq O(p\log(2p/\delta))+n\kappa^2,$$
i.e. $\norm{\Gram(\mathfrak{G})-pI}_\op\leq O(p\log(2p/\delta))+n\kappa^2.$
\end{proof}

\subsection{Proof of Lemma \ref{lem:regular-weights-imply-tester}}\label{subsect:prf_regular-weights-imply-tester}
\begin{proof}
We can expand $\norm{\Sum (w, \mathcal{Y})}^2$ as
    \begin{align}
        \norm{\Sum (w, \mathcal{Y})}^2 &= \norm{\Sum (w, \mathfrak{G})}^2 + 2 \iprod{\Sum (w, \mathfrak{G}), \Sum (w, \mathcal{B})} + \norm{\Sum (w, \mathcal{B})}^2 \nonumber \\
        &= \norm{\Sum (w, \mathfrak{G})}^2 + 2 \iprod{\Sum (w, \mathcal{Y}), \Sum (w, \mathcal{B})} - \norm{\Sum(w, \mathcal{B})}^2. \label{proof_eqn:expansion}
    \end{align}
The first term can be found by expanding $\norm{\Sum(w,\mathfrak{G})}^2$ as
    \begin{align}
        \norm{\Sum(w,\mathfrak{G})}^2&=\norm{\mu\sum_{i\in \mathfrak{G}}\sqrt{w_i}+\sum_{i\in \mathfrak{G}}\sqrt{w_i}(Y_i-\mu)}^2 \nonumber\\
        &=\left(\sum_{i\in \mathfrak{G}}\sqrt{w_i}\right)^2 \norm{\mu}^2+2\left(\sum_{i\in \mathfrak{G}}\sqrt{w_i}\right)\mu^\top\sum_{i\in \mathfrak{G}}\sqrt{w_i}(Y_i-\mu)+\norm{\sum_{i\in \mathfrak{G}} \sqrt{w_i}(Y_i-\mu)}^2. \label{proof_eqn:sumwGexp}
    \end{align}
    We bound the second term in \eqref{proof_eqn:sumwGexp}.
Condition on the index set $\mathfrak{G}=\{i_1,\ldots,i_m\}$. By \Cref{lem:conditioning_reduction}, we have for any $t>0$ that
\begin{align*}
&\Pb_{Y_{1:n}\sim D}\left(
\left|\mu^\top\sum_{i\in \mathfrak{G}}\sqrt{w_i}(Y_i-\mu)\right|
\ge t
\;\middle|\;
\mathfrak{G}=\{i_1,\ldots,i_m\}
\right)\\
&\le4\Pb_{Y_{i_{1:m}}\sim F}
\left(\left|
\sum_{j=1}^m \sum_{k=1}^p
\mu_k \sqrt{w_{i_j}}(Y_{i_j,k}-\mu_k)
\right|\ge t\right).
\end{align*}
By Lemma~\ref{lemma:weib_concentration}, 
\[
t=O\left(\norm{\mu}_2\sqrt{\norm{w_{i_{1:m}}}_1 \log(4/\delta)}+\norm{\mu}_\infty\log^{1/\theta}(4/\delta)\right)
\]
satisfies
\[
\Pb_{Y_{i_{1:m}}\sim F}\!\left(\left|\sum_{j=1}^m \sum_{k=1}^p\mu_k \sqrt{w_{i_j}}(Y_{i_j,k}-\mu_k)\right|\ge t\right)\le \delta/4.
\]
Since $\norm{w_{i_{1:m}}}_1\le n$, we may choose the larger threshold
\[
t=O(\,\norm{\mu}_2\,\sqrt{n}\log^{1/\theta}(4/\delta)),
\]
which does not depend on $m$.
With this choice, the bound holds uniformly over all realizations of $\mathfrak{G}$,
and thus the conditioning on $\mathfrak{G}$ can be removed. Consequently, with probability $1-\delta$,
$$\left|\mu^\top\sum_{i\in \mathfrak{G}}\sqrt{w_i}(Y_i-\mu)\right|\le
O(\,\norm{\mu}_2\,\sqrt{n}\log^{1/\theta}(4/\delta)).$$
We bound the third term in \eqref{proof_eqn:sumwGexp} similarly. Condition on the index set $\mathfrak{G}=\{i_1,\ldots,i_m\}$. By \Cref{lem:conditioning_reduction}, we have for any $t>0$ that
    \begin{align*}
        &\Pb_{Y_{1:n}\sim D}\left(\left|\norm{\sum_{i\in \mathfrak{G}} \sqrt{w_i}(Y_i-\mu)}_2^2-p\norm{w_\mathfrak{G}}\right|\geq t\bigg|\mathfrak{G}=\{i_1,\ldots, i_m\}\right)\\
        &\leq 4\Pb_{Y_{i_{1:m}}\sim F}\left(\left|\norm{\sum_{j=1}^m \sqrt{w_{i_j}}(Y_{i_j}-\mu)}_2^2-p\norm{w_{i_{1:m}}}\right|\geq t\right).
    \end{align*} 
    By \Cref{lem:normbound_subW}, 
\[
t=O\left(\norm{w_{i_{1:m}}}_1\sqrt{ p\log(4/\delta)}+\norm{w_{i_{1:m}}}_1\log(4/\delta)+\log^{2/\theta}(4/\delta)\right)
\]
satisfies
\[
\Pb_{Y_{i_{1:m}}\sim F}\left(\left|\norm{\sum_{j=1}^m \sqrt{w_{i_j}}(Y_{i_j}-\mu)}_2^2-p\norm{w_{i_{1:m}}}\right|\geq t\right)\le \delta/4.
\]
Since $\norm{w_{i_{1:m}}}_1\le n$, we may choose the larger threshold
\[
t=O\left(n\sqrt{p}\log^{2/\theta}(4/\delta)\right),
\]
which does not depend on $m$.
With this choice, the bound holds uniformly over all realizations of $\mathfrak{G}$,
and thus the conditioning on $\mathfrak{G}$ can be removed. Consequently, with probability $1-\delta$,
\[
\norm{\sum_{i\in \mathfrak{G}}\sqrt{w_i}(Y_i-\mu)}_2^2
=\norm{w_\mathfrak{G}}_1 p\pm O\!\left(n\sqrt{p}\log^{2/\theta}(4/\delta)
\right).
\]
Thus, we have with probability $1-2\delta$ that
    \begin{align*}
        \norm{\Sum(w,\mathfrak{G})}^2&= \norm{w_\mathfrak{G}}_1p+\left(\sum_{i\in \mathfrak{G}}\sqrt{w_i}\right)^2\norm{\mu}^2\\
        &\quad\pm  O(\norm{\mu}_2\sqrt{n}\log^{1/\theta}(1/\delta))\left(\sum_{i\in \mathfrak{G}}\sqrt{w_i}\right)\pm O(n\sqrt{p}\log^{2/\theta}(1/\delta)).
    \end{align*}
If $\mu=\mathbf{0}$, we have
\begin{align*}
    |\norm{\Sum (w, \mathcal{Y})}^2_2-p\norm{w}_1| \leq O\left(np^{1/2}\log^{2/\theta}(1/\delta)+\beta_1+\beta_2\right).
\end{align*}
Our procedure declares that $\mu=\mathbf{0}$ if the RHS is less than $0.5T_u\kappa_0^2n^2$. Thus we require
$$0.5T_u\kappa_0^2n^2\geq O\left(np^{1/2}\log^{2/\theta}(1/\delta)+\beta_1+\beta_2\right),$$
implying that we need $T_u \kappa_0^2n^2\geq O(\beta_1+\beta_2)$, and
$$\kappa_0^2\gtrsim \frac{\sqrt{p}\log^{2/\theta}(1/\delta)}{0.5T_un}\asymp \frac{\sqrt{p}\log^{2/\theta}(1/\delta)}{n}.$$
If $\norm{\mu}_2\neq 0$, since $n\geq\sum_{i\in \mathfrak{G}} \sqrt{w_i}\geq \sum_{i\in \mathfrak{G}} w_i\geq (1-7u)n$,
\begin{align*}
    |\norm{\Sum (w, \mathcal{Y})}^2_2-p\norm{w}_1| &\geq T_u n^2\kappa^2-O(\kappa n^{3/2}\log^{1/\theta}(1/\delta)+n\sqrt{p}\log^{2/\theta}(1/\delta)+\beta_1+\beta_2).
\end{align*}
Our procedure declares that $\norm{\mu}_2\geq \kappa_0$ if the RHS is more than $0.5T_u\kappa_0^2n^2$, thus we require
\begin{align*}
    T_un^2\kappa^2-O(\kappa n^{3/2}\log^{1/\theta}(1/\delta)+n\sqrt{p}\log^{2/\theta}(1/\delta)+\beta_1+\beta_2)\geq 0.5T_un^2\kappa_0^2,
\end{align*}
which simplifies to the conditions $\kappa^2> 0.5\kappa_0^2$, $T_un^2(\kappa^2-0.5\kappa_0^2)\geq O(\beta_1+\beta_2)$ and
\begin{align*}
    \kappa^2-0.5\kappa_0^2\gtrsim \kappa n^{-1/2}\log^{1/\theta}(1/\delta)+n^{-1}\sqrt{p}\log^{2/\theta}(1/\delta).
\end{align*}
\end{proof}

\subsection{Proof of Lemma \ref{lem:regular-weights-imply-tester4mom}}\label{subsect:prf_regular-weights-imply-tester4mom}
\begin{proof}
    We can expand $\norm{\Sum (w, \mathcal{Y})}^2$ as
    \begin{align*}
        \norm{\Sum (w, \mathcal{Y})}^2 &= \norm{\Sum (w, \mathfrak{G})}^2 + 2 \iprod{\Sum (w, \mathfrak{G}), \Sum (w, \mathcal{B})} + \norm{\Sum (w, \mathcal{B})}^2 \\
        &= \norm{\Sum (w, \mathfrak{G})}^2 + 2 \iprod{\Sum (w, \mathcal{Y}), \Sum (w, \mathcal{B})} - \norm{\Sum(w, \mathcal{B})}^2.
    \end{align*}
    
The first term can be found by expanding $\norm{\Sum(w,\mathfrak{G})}^2$ as
    \begin{align*}
        \norm{\Sum(w,\mathfrak{G})}^2&=\norm{\mu\sum_{i\in \mathfrak{G}}\sqrt{w_i}+\sum_{i\in \mathfrak{G}}\sqrt{w_i}(Y_i-\mu)}^2\\
        &=\left(\sum_{i\in \mathfrak{G}}\sqrt{w_i}\right)^2 \norm{\mu}^2+2\left(\sum_{i\in \mathfrak{G}}\sqrt{w_i}\right)\sum_{i\in \mathfrak{G}}\sqrt{w_i}\mu^\top(Y_i-\mu)+\norm{\sum_{i\in \mathfrak{G}} \sqrt{w_i}(Y_i-\mu)}^2.
    \end{align*}
    
    To bound the second term in the expansion, we consider its mean and variance. Firstly, for the mean, we have
    \begin{align*}
        \left|\E_{Y_{i_{1:m}}\sim F_R}\left[\sum_{k=1}^{m} \sqrt{w_{i_k}}\mu^\top (Y_{i_k}-\mu)\right]\right|&=\left(\sum_{k=1}^{m} \sqrt{w_{i_k}}\right)|\mu^\top\E_{Y\sim F_R}[Y_{i_k}-\mu]|\\
        &\lesssim  m\norm{\mu}_2\gamma^{1-1/v}\lesssim n^{(5-v)/4}p^{(v-1)/4}\norm{\mu}_2 \wedge n\norm{\mu}_2,
    \end{align*}
    where the last inequality follows from \Cref{lem:approx_mean}, since $F\in \mathcal{P}^p_{v,\phi}$. Meanwhile, for the variance, we have
    \begin{align*}
        \Var_{Y\sim F_R}\left[\mu^\top (Y-\mu)\right]&=\mu^\top \Var_{Y\sim F}(Y-\mu|\norm{Y-\mu}_2\leq R) \mu \\
        &\leq\norm{\mu}_2^2 \norm{I+\Var_{Y\sim F}(Y-\mu|\norm{Y-\mu}_2\leq R)-I}_\op\\
        &\leq(1+2\psi^2\sqrt{\gamma}) \norm{\mu}_2^2=O(\norm{\mu}_2^2),
    \end{align*}
     where the last inequality follows from \Cref{lem:approx_cov}. Thus, we have 
     \begin{align*}
        \Var_{Y_{i_{1:m}}\sim F_R}\left[\sum_{k=1}^{m} \sqrt{w_{i_k}}\mu^\top (Y_{i_k}-\mu)\right]&=\sum_{k=1}^{m} w_{i_k}\Var_{Y\sim F_R}\left[\mu^\top (Y-\mu)\right]\leq O(n\norm{\mu}_2^2),
    \end{align*}
     Thus by Chebyshev's inequality, we have
    \begin{equation}\label{proof_eqn:k0_fm}
        \left|\sum_{i\in \mathfrak{G}}\sqrt{w_i}\mu^\top(Y_i-\mu)\right|\leq \norm{\mu}_2\cdot O(n^{(5-v)/4}p^{(v-1)/4}\wedge n+\sqrt{n/\delta}).
    \end{equation}
    
To bound the third term,
\begin{align*}
&\Pb_{Y_{1:n}\sim D}
\!\left(
\left|\norm{\sum_{i\in \mathfrak{G}} \sqrt{w_i}(Y_i-\mu)}^2_2-p\norm{w_\mathfrak{G}}_1\right|\ge t
\;\middle|\; \mathfrak{G}=\{i_1,\ldots,i_m\} \right)\\
&=\,\Pb_{Y_{i_{1:m}}\sim F_R}\!\left(
\left|\norm{\sum_{k=1}^m \sqrt{w_{i_k}}(Y_{i_k}-\mu)}^2_2
-p\sum_{k=1}^{m} w_{i_k}\right|\ge t\right),
\end{align*}
where $F_R$ is the distribution of $Y \sim F$ conditioned on the event $\|Y - \mu\|_2 \leq R$. Therefore, it remains to bound 
\begin{align*}
    &\norm{\sum_{k=1}^m \sqrt{w_{i_k}}(Y_{i_k}-\mu)}^2_2-p\sum_{k=1}^{m} w_{i_k}\\
    &=\underbrace{\sum_{j=1}^m w_{i_j}(\norm{Y_{i_j}-\mu}_2^2-p)}_{K_1}+ \underbrace{\sum_{j=1}^m \sum_{k=1, k\neq j}^m \sqrt{w_{i_j}w_{i_k}}(Y_{i_j}-\mu)^\top (Y_{i_k}-\mu)}_{K_2}.\\
\end{align*}
    
    We first analyse $K_1$. Since $F\in\mathcal{P}^p_{4,\psi}$, then by \Cref{lem:approx_mean},  we have
    \begin{align*}
        \left|\E_{Y_{i_{1:m}}\sim F_R}\left[\sum_{k=1}^{m} w_{i_k}(\norm{Y_{i_k}-\mu}_2^2-p)\right]\right|&= \left|\norm{w_{i_{1:m}}}_1(\E_{Y_1\sim F}[\norm{Y_1-\mu}^2_2|\norm{Y_1-\mu}\leq R]-p)\right|\\
        &\leq 2\norm{w_{i_{1:m}}}_1 \psi^2\gamma^{1/2}\leq O(m(p/n)^{1/2} \wedge m)\leq O(\sqrt{pn}\wedge n).
    \end{align*}
    Meanwhile, since $\gamma\leq 1/20$,
    \begin{align*}
        \Var_{Y\sim F_R}\left[\norm{Y-\mu}_2^2-p\right]&\le \E_{Y\sim F} [(\norm{Y-\mu}_2^2-p)^2 |\norm{Y-\mu}_2<R]\\
        &\le (1-\gamma)^{-1}\E_{Y\sim F} [\norm{Y-\mu}_2^4-p^2]\\
        &\le \frac{20}{19}p(\psi^4-1).
    \end{align*}
    Therefore, we have
    \begin{align*}
        \Var_{Y_{i_{1:m}}\sim F_R}\left[\sum_{k=1}^{m} w_{i_k}(\norm{Y_{i_k}-\mu}_2^2-p)\right]&= \norm{w_{i_{1:m}}}_2^2 \Var_{Y\sim F_R}\left[\norm{Y-\mu}_2^2-p\right]\leq O(n^2p).
    \end{align*}
    Finally, by Chebyshev's inequality,
    \begin{align*}
        \Pb_{Y_{i_{1:m}}\sim F_R}\left(|\sum_{k=1}^{m} w_{i_k}(\norm{Y_{i_k}-\mu}^2-p)-\E\left[\sum_{k=1}^{m} w_{i_k}(\norm{Y_{i_k}-\mu}^2-p)\right]|>t\right)\leq O(n^2p)/t^2,
    \end{align*}
    i.e.~with probability at least $1-\delta$, we have
    \begin{align*}
        \left|\sum_{k=1}^{m} w_{i_k}(\norm{Y_{i_k}-\mu}^2-p)-p\norm{w_{i_{1:m}}}_1\right|\leq O(n\sqrt{p/\delta}).
    \end{align*}

    Next, we bound $K_2$. Since $F\in\mathcal{P}^p_{4,\psi}$, then by \Cref{lem:approx_mean},  we have
    \begin{align*}
        &\left|\E_{Y_{i_{1:m}}\sim F_R}\left[\sum_{j=1}^m \sum_{k=1, k\neq j}^m \sqrt{w_{i_j}w_{i_k}}(Y_{i_j}-\mu)^\top (Y_{i_k}-\mu)\right]\right|\\
        &= \left|(\sum_{k=1}^m \sqrt{w_{i_k}})^2\E_{Y_1\sim F}[(Y_{1}-\mu)|\norm{Y_1-\mu}_2\leq R]^\top\E_{Y_2\sim F}[(Y_{2}-\mu)|\norm{Y_2-\mu}_2\leq R]\right|\\
        &\lesssim m^2\gamma^{2(1-1/v)}\lesssim n^2(p/n)^{2(1-1/v)}\wedge n^2\asymp p^{2-2/v}n^{2/v}\wedge n^2.
    \end{align*}
    Meanwhile, since $\gamma\leq 1/20$,
    \begin{align*}
        &\Var_{Y_1,Y_2\sim F_R}\left[(Y_{1}-\mu)^\top (Y_{2}-\mu)\right]\\
        &\le \E_{Y_1,Y_2\sim F} [(Y_{1}-\mu)^\top (Y_{2}-\mu)(Y_{2}-\mu)^\top (Y_{1}-\mu) |\max_{i\in\{1,2\}}\norm{Y_i-\mu}_2<R]\\
        &\le (1-\gamma)^{-2}\mathrm{tr}\left\{\E_{Y_1,Y_2\sim F} \left[(Y_{1}-\mu)(Y_{1}-\mu)^\top (Y_{2}-\mu)(Y_{2}-\mu)^\top\right]\right\}\\
        &\le (1-\gamma)^{-2}\mathrm{tr}(I_p)\\
        &\le 2p.
    \end{align*}
    Therefore, by Chebyshev's inequality,
    \begin{align*}
        \Var_{Y_{i_{1:m}}\sim F_R}\left[\sum_{j=1}^m \sum_{k=1, k\neq j}^m \sqrt{w_{i_j}w_{i_k}}(Y_{i_j}-\mu)^\top (Y_{i_k}-\mu)\right]&= \norm{w_{i_{1:m}}}_1^2 \Var_{Y,Y'\sim F_R}\left[(Y-\mu)^\top (Y'-\mu)\right]\\
        &\leq 2n^2p,
    \end{align*}
    Concluding the above, with probability at least $1-\delta$, we have
\begin{equation}\label{proof_eqn:k2_fm}
    \norm{\sum_{i\in \mathfrak{G}} \sqrt{w_{i}}(Y_{i}-\mu)}^2_2=p\norm{w_\mathfrak{G}}_1\pm O\left(p^{2-2/v}n^{2/v}\wedge n^2+n\sqrt{p/\delta}\right).
\end{equation}

Combining \eqref{proof_eqn:k0_fm} and \eqref{proof_eqn:k2_fm}, with probability at least $1-\delta$, we have
\begin{align*}
        \norm{\Sum(w,\mathfrak{G})}^2&=p\norm{w_\mathfrak{G}}_1+\left(\sum_{i\in \mathfrak{G}}\sqrt{w_i}\right)^2 \norm{\mu}^2\pm \left(\sum_{i\in \mathfrak{G}}\sqrt{w_i}\right)\norm{\mu}_2O(n^{(5-v)/4}p^{(v-1)/4}\wedge n+\sqrt{n/\delta})\\
        &\quad\pm O(p^{2-2/v}n^{2/v}\wedge n^2+n\sqrt{p/\delta})\\
        &=p\norm{w_\mathfrak{G}}_1+\left(\sum_{i\in \mathfrak{G}}\sqrt{w_i}\right)^2 \kappa^2\pm O(\kappa n^{\frac{9-v}{4}}p^{\frac{v-1}{4}}\wedge \kappa n^2+\kappa n\sqrt{n/\delta})\\
        &\quad\pm O(p^{2-\frac{2}{v}}n^{\frac{2}{v}}\wedge n^2+n\sqrt{p/\delta}).
    \end{align*}
If $\mu= \mathbf{0}$, then we have
\begin{align*}
    |\norm{\Sum (w, \mathcal{Y})}^2_2-p\norm{w}_1| \le O(p^{2-\frac{2}{v}}n^{\frac{2}{v}}\wedge n^2+n\sqrt{p}\delta^{-1/2}+\beta_1+\beta_2).
\end{align*}
Our procedure declares that $\mu =\mathbf{0}$ if the RHS is less than $0.5T_u\kappa_0^2n^2$, thus we require
$$O(p^{2-\frac{2}{v}}n^{\frac{2}{v}}\wedge n^2+n\sqrt{p}\delta^{-1/2}+\beta_1+\beta_2)\leq 0.5T_un^2\kappa_0^2,$$
which simplifies to the conditions $O(\beta_1+\beta_2)\leq 0.5T_un^2\kappa_0^2$ and
$$\kappa_0^2\gtrsim \begin{cases}
    (p/n)^{2-2/v}+n^{-1}\sqrt{p/\delta}  &\text{if $n\gtrsim p$,}\\
   1+n^{-1}\sqrt{p/\delta} &\text{if $n\lesssim p$.}
\end{cases} $$
If $\norm{\mu}_2\neq 0$, since $n\geq\sum_{i\in \mathfrak{G}} \sqrt{w_i}\geq \sum_{i\in \mathfrak{G}} w_i\geq (1-7u)n$, we have
\begin{align*}
    |\norm{\Sum (w, \mathcal{Y})}^2_2-p\norm{w}_1| &\geq [(1-7u)^2-2u]n^2\kappa^2-O(\kappa n^{\frac{9-v}{4}}p^{\frac{v-1}{4}}\wedge \kappa n^2+\kappa n^{3/2}\delta^{-1/2})\\
    &\quad-O(p^{2-\frac{2}{v}}n^{\frac{2}{v}}\wedge n^2+n\sqrt{p}\delta^{-1/2}+\beta_1+\beta_2).
\end{align*}
Our procedure declares that $\norm{\mu}_2\neq 0$ if the RHS is more than $0.5T_u\kappa_0^2n^2$, thus we require
\begin{align*}
    T_un^2\kappa^2-O(\kappa n^{\frac{9-v}{4}}p^{\frac{v-1}{4}}\wedge \kappa n^2+\kappa n^{3/2}\delta^{-1/2}+p^{2-\frac{2}{v}}n^{\frac{2}{v}}\wedge n^2+n\sqrt{p}\delta^{-1/2}+\beta_1+\beta_2)\geq  0.5T_un^2\kappa_0^2,
\end{align*}
which simplifies to the conditions $\kappa^2> 0.5\kappa_0^2$, $T_u(\kappa^2-0.5\kappa_0^2)n^2\geq O(\beta_1+\beta_2)$ and 
\begin{align*}
        \kappa^2-0.5\kappa_0^2 \gtrsim O(\kappa (p/n)^{\frac{v-1}{4}}\wedge \kappa +\kappa/\sqrt{n\delta} +(p/n)^{2-\frac{2}{v}}\wedge 1+n^{-1}\sqrt{p/\delta}).
    \end{align*}
\end{proof}

\subsection{Proof of Lemma \ref{lem:spectralfilter1}}
\begin{proof}
    The runtime per iteration is clearly dominated by the time it takes to find the top singular vector of the centred gram matrix, which can be done in time $O(p n^2)$.
    
    We will show that for all $t = 1, \ldots, N$, we have that $w^{(t)} \in \Lambda_n$.
    First, we demonstrate how this proves the overall lemma.
    First, note that after each iteration, some new $w_i$ (with the maximum $\tau_i$) becomes $0$, so after $6 u n$ iterations, we have removed at least $6 u n$ mass from $w$. By definition of $\Lambda_n$ in \eqref{eqn:important_set}, this means we have removed at least $un$ mass from the bad coordinates $w_\mathcal{B}$, at which point no further updates can maintain the invariant that $w^{(i)} \in \Lambda_n$.
    
    Next, we observe that if $w^{(N)} \in \Lambda_n$, then since we terminated, we must have that
    \[\norm{\Gram (w^{(N)}) - D(w^{(N)})} \leq 5 R_{f}.\]
    But then, for all $\mathcal{M}$ with $|\mathcal{M}| \leq u n$, let $\mathbf{1}_\mathcal{M} \in \R^n$ be the indicator vector on the set $\mathcal{M}$.
    Then, we have that 
    \begin{equation*}
        \norm{\Sum(w^{(N)}, \mathcal{M})}^2 = \mathbf{1}_\mathcal{M}^\top \Gram(w^{(N)}) \mathbf{1}_\mathcal{M}= \norm{w^{(N)}_\mathcal{M}}_1 p \pm O(un \cdot R_{f}),
    \end{equation*}
    as claimed.

    Thus, it suffices to prove the invariant that $w^{(t)} \in \Lambda_n$ for all $t = 1, \ldots, N$.
    We proceed by induction.
    Clearly $w^{(1)} \in \Lambda_n$.
    Now, suppose $w^{(t)} \in \Lambda_n$ for some $t < N$.
    Since we have not yet terminated, this implies that 
    \[
    \lambda = \norm{\Gram(w^{(t)}) - D(w^{(t)})} \ge 5 R_{f}.
    \]

    By \Cref{lem:cov_matrix_conc0}, with probability $1-\delta$,
    \begin{equation}
    \label{eq:gram-assume}
    \norm{\Gram(\mathfrak{G}) - p I}_\op \leq O(p\log(2p/\delta))+n\kappa^2.
    \end{equation}
    By assumption that $C_\gamma\kappa_0^2/2\geq\kappa^2$, \eqref{eq:gram-assume} can be upper bounded by
    \begin{align*}
        \norm{\Gram (w^{(t)}, \mathfrak{G}) - D (w^{(t)}_{\mathfrak{G}})} \leq \frac{R_{f}}{2} \le \frac{\lambda}{10},
    \end{align*}
    since we are multiplying $\Gram(\mathfrak{G})-pI$ by a diagonal matrix $\mathrm{diag}(\sqrt{w_1},\ldots, \sqrt{w_n})$ with eigenvalues between 0 and 1. We claim that this implies that $5 \sum_{i \in \mathcal{B}} v_i^2 > \sum_{i \in \mathfrak{G}} v_i^2$.
    Indeed, suppose not, and let $v_G$ denote the restriction of $v$ onto the coordinates in $\mathfrak{G}$, and let $v_B$ denote the restriction of $v$ onto the coordinates in $\mathcal{B}$.
    This means that
    \begin{align*}
        \left| v^\top \left(\Gram (w^{(t)}) - D(w) \right) v \right| &= \left| v_G^\top \left( \Gram (w^{(t)}, \mathfrak{G}) - D(w^{(t)}_{\mathfrak{G}}) \right) v_G \right. \\
        &\left. +2 v_G^\top \left(\Gram (w^{(t)}) - D(w^{(t)})\right) v_B + v_B^\top \left( \Gram(w^{(t)}, \mathcal{B}) - D(w^{(t)}_B) \right) v_B \right| \\
        &\leq \frac{\lambda}{10}\norm{v_G}^2 + 2 \lambda \norm{v_G} \norm{v_B}  + \norm{v_B}^2 \lambda \leq 0.996 \lambda,
    \end{align*}
    where the last inequality holds because $\|v_B\|^2+\|v_G\|^2 = \|v\|^2 = 1$ and $\|v_B\|^2 \le \frac{1}{6}.$
    But this is a contradiction since $v$ is the top singular vector of the centered Gram matrix.
    Therefore, by \Cref{fact:1d-filter} and the definition of $\tau_i$, we obtain that $w^{(t + 1)} \in \Lambda_n$, as claimed.
    (Note that if $w_i^{(t)} = 0$, the $i$th row and column of both $\Gram(w^{(t)}, \mathcal{Y})$ and $D(w^{(t)})$ are $0$ so $v_i = 0$, which means $w_i^{(t)} \cdot \tau_i = v_i^2$ even if $w_i^{(t)} = 0$.)
    This completes the proof.
\end{proof}

\subsection{Proof of Lemma \ref{lem:spectralfilter2}}
\begin{proof}
    As before, the per-iteration runtime is dominated by the runtime of PCA, which is $O(n p^2)$. 

    Throughout the rest of this proof, we will be working under event $A$ and the event 
    \begin{equation}
        \label{eq:cov-assume}
        W=\left\{\norm{M (\mathfrak{G}) - n I} \leq \frac{R_{f}}{2}\right\}.
    \end{equation} 
    The events $A$ and $W$ each hold with probability at least $1-\delta$. Thus, the event $A\cap W$ holds with probability at least $1-2\delta$. To see why $\Pb(W)\geq 1-\delta$, we notice that with probability $1-\delta$,
    $$\norm{M (\mathfrak{G}) - n I}_\op \leq O(\sqrt{np}\log(2p/\delta))+\varepsilon n+n\kappa^2\leq \frac{R_{f}}{2},$$
    where the first inequality follows from \Cref{lem:cov_matrix_conc0}, and the second inequality follows from the assumption $C_\gamma \kappa_0^2/2\geq \kappa^2$, for a large enough $C_\gamma$.
    
    We will again inductively show that for all iterations $t$, we have that $w^{(t)} \in \Lambda_n$. We first show how to prove the lemma, assuming this claim. In this case, we can bound the number of iterations $N$ in the same way as \Cref{lem:spectralfilter1}. Moreover, by construction, at termination we have that $\norm{M(w^{(N)},\mathcal{Y}) - n I}_\op \le 5 R_{f}$.
    Now suppose that there was some subset $\mathcal{M}$ with $|\mathcal{M}| \leq un$ that had
    \[
    \norm{\Sum (w^{(N)}, \mathcal{M})}^2_2 > 10 R_{f} un.
    \]
    Then, there is a unit vector $v \in \R^p$ so that
\begin{equation} \label{eq:unit-v-big}
    \sum_{i \in \mathcal{M}} (w^{(N)}_i)^{1/2} \iprod{v, Y_i} > \sqrt{10 R_{f} u n} \; .
\end{equation}
    Thus, we have that
\begin{equation} \label{eq:v-bound}
    \sum_{i \in \mathcal{M} \cup \mathcal{B}} w_i^{(N)} \langle v, Y_i \rangle^2 \ge \sum_{i \in \mathcal{M}} w^{(N)}_i \iprod{v, Y_i}^2 \geq 10 R_{f} \; .
\end{equation}
    Above, the first inequality holds because every $w_i^{(N)}$ is nonnegative, and the second inequality follows from \eqref{eq:unit-v-big} and the Cauchy-Schwarz inequality.
    
    Thus, we have
    \begin{align*}
        \norm{ M \left(w^{(N)}, \mathcal{Y}\right) - n I}_\op &= \norm{ M \left(w^{(N)}, \mathfrak{G}\setminus (\mathfrak{G}\cap \mathcal{M})\right)+M \left(w^{(N)}, \mathcal{M}\cup \mathcal{B} \right) - n I}_\op\\
        &\geq \norm{M \left(w^{(N)}, \mathcal{M}\cup \mathcal{B} \right)}_\op-\norm{ M \left(w^{(N)}, \mathfrak{G}\setminus (\mathfrak{G}\cap \mathcal{M})\right) - n I}_\op\\
        &\geq \norm{M \left(w^{(N)}, \mathcal{M}\cup \mathcal{B} \right)}_\op-\norm{M(\mathfrak{G})-M \left(w^{(N)}, \mathfrak{G}\setminus (\mathfrak{G}\cap \mathcal{M})\right)}_\op\\&\quad-\norm{M(\mathfrak{G})-nI}_\op\\
        &\geq 10R_{f}-R_{f}-R_{f}/2>5R_{f},
    \end{align*}
    where the first term comes from \eqref{eq:v-bound} and the third term comes from \eqref{eq:cov-assume}. To justify the bound on the second term, we first define
    $$(a_{\mathcal{M}})_i=\begin{cases}
            1 &\text{if $i\in \mathfrak{G}\cap \mathcal{M}$,}\\
            1-w_i^{(N)} &\text{if $i\in \mathfrak{G}\setminus (\mathfrak{G}\cap \mathcal{M})$.}
        \end{cases}$$
    Note that the matrix $M(\mathfrak{G})-M \left(w^{(N)}, \mathfrak{G}\setminus (\mathfrak{G}\cap \mathcal{M})\right)$ has weight vector $a_T$, with $$\norm{a_T}_1=\sum_{i\in \mathfrak{G}\cap \mathcal{M}} 1+ \sum_{i\in \mathfrak{G}\setminus \mathfrak{G}\cap \mathcal{M}} (1-w_i^{(N)})\leq \sum_{i\in \mathfrak{G}\cap \mathcal{M}} 1+ \sum_{i\in \mathfrak{G}} (1-w_i^{(N)})\leq un + 6un=7un.$$ Thus we can bound the second term by \Cref{fact:small-subset-deviations}, defining the set $J=\{a_T: \mathcal{M}\subset \mathcal{Y}, |\mathcal{M}|\le un \}$. This gives us a contradiction.

    Thus, as before, it suffices to prove that $w^{(t)} \in \Lambda_n$ for all iterations until termination.
    We will do so inductively.
    As before, the base case $t = 1$ is trivial.
    Now suppose that $w^{(t)} \in \Lambda_n$ for some $t < N$.
    Since we have not yet terminated, this means that $\norm{M (w^{(t)}, \mathcal{Y}) - n I} > 5 R_{f}$.
    Then,~\eqref{eq:cov-assume} and \Cref{fact:small-subset-deviations} together immediately imply that
\begin{align} \label{eq:B_bound}
    \sum_{i \in \mathcal{B}} w_i^{(t)}\iprod{v, Y_i}^2&=\norm{M(w^{(t)}, \mathcal{Y})-M(w^{(t)}, \mathfrak{G})}_\op \nonumber\\
    &> \norm{M(w^{(t)}, \mathcal{Y})-nI}_\op-\norm{M(w^{(t)}, \mathfrak{G})-M(\mathfrak{G})}_\op-\norm{M(\mathfrak{G})-nI}_\op \nonumber \\
    &\geq 5 R_{f} -R_{f} -R_{f}/2\geq 3R_{f},
\end{align}
    for $v$ the top eigenvector of $M(w^{(t)}, \mathcal{Y})-n I$. On the other hand, since $\sum_{i \le L} w_i^{(t)} \le 2 un + 1$ and since $w^{(t)} \in \Lambda_n$ means we have removed at most $6 un$ mass from all samples, this means $L \le 8 un + 1 \le 10 un$. So, \Cref{fact:small-subset-deviations} implies that 
\begin{equation} \label{eq:G_bound}
    \sum_{i \leq L, i \in \mathfrak{G}} w_i^{(t)} \iprod{v, Y_i}^2 \leq R_{f}.
\end{equation}
    By definition of $L$, every $\langle v, Y_i \rangle^2$ for $i \le L$ is larger than every $\langle v, Y_i \rangle^2$ for $i \in \mathcal{B} \backslash [L]$. Therefore, since $\sum_{i \in \mathcal{B} \backslash [L]} w_i^{(t)} \le |\mathcal{B} \backslash [L]| \le un$ but $\sum_{i \le L} w_i^{(t)} \ge 2 un$, we have
\begin{equation} \label{eq:BI_bound}
    2  \left(\sum_{i \in \mathcal{B}} w_i^{(t)} \langle v, Y_i \rangle^2 - \sum_{i \le L} w_i^{(t)} \langle v, Y_i \rangle^2\right) \le 2  \sum_{i \in \mathcal{B} \backslash [L]} w_i^{(t)} \langle v, Y_i \rangle^2 \le \sum_{i \le L} w_i^{(t)} \langle v, Y_i \rangle^2.
\end{equation}
    Along with \eqref{eq:B_bound}, \eqref{eq:BI_bound} implies that
\begin{equation} \label{eq:I_bound}
    \sum_{i \le L} w_i^{(t)} \langle v, Y_i \rangle^2 \ge 2 R_{f}.
\end{equation}
    Hence, by combining \eqref{eq:I_bound} with \eqref{eq:G_bound}, we have
    \[
    \sum_{i \leq L, i \in \mathcal{B}} w_i^{(t)} \iprod{v, Y_i}^2 \geq R_{f} \geq \sum_{i \leq L, i \in \mathfrak{G}} w_i^{(t)} \iprod{v, Y_i}^2,
    \]
    and so the result for $w^{(t+1)}$ immediately follows from \Cref{fact:1d-filter}.
\end{proof}

\subsection{Proof of Remark \ref{rmk:boundTB}}
\begin{proof}
    We have already shown that $w^{(t)} \in \Lambda_n$ for all iterations until termination.
    Now suppose that there was some subset $\mathcal{M}\subset \mathcal{Y}$ with $|\mathcal{M}| \leq 2un$ that had
    \[
    \norm{\Sum (w^{(N)}, \mathcal{M})}^2_2 > 20 R_{f} un.
    \]
    Then, there is a unit vector $v \in \R^p$ so that
\begin{equation} \label{eq:unit-v-big2}
    \sum_{i \in \mathcal{M}} (w^{(N)}_i)^{1/2} \iprod{v, Y_i} > \sqrt{20 R_{f} un}.
\end{equation}
        Thus, we have that
\begin{equation*} 
    \sum_{i \in \mathcal{M} \cup \mathcal{B}} w_i^{(N)} \langle v, Y_i \rangle^2 \ge \sum_{i \in \mathcal{M}} w^{(N)}_i \iprod{v, Y_i}^2 \geq 10 R_{f}.
\end{equation*}
    Above, the first inequality holds because every $w_i^{(N)}$ is nonnegative, and the second inequality follows from \eqref{eq:unit-v-big2} and the Cauchy-Schwarz inequality since $|\mathcal{M}|\leq an$.
    Thus, we have
    \begin{align*}
        \norm{ M \left(w^{(N)}, \mathcal{Y}\right) - n I} &= \norm{ M \left(w^{(N)}, \mathfrak{G}\setminus (\mathfrak{G}\cap \mathcal{M})\right)+M \left(w^{(N)}, \mathcal{M}\cup \mathcal{B} \right) - n I}_\op\\
        &\geq \norm{M \left(w^{(N)}, \mathcal{M}\cup \mathcal{B} \right)}_\op-\norm{M(\mathfrak{G})-M \left(w^{(N)}, \mathfrak{G}\setminus (\mathfrak{G}\cap \mathcal{M})\right)}_\op\\
        &\quad-\norm{M(\mathfrak{G})-nI}_\op\\
        &\geq 10R_{f}-R_{f}-R_{f}/2>5R_{f},
    \end{align*}
    using the same argument as \Cref{lem:spectralfilter2}. This gives us a contradiction.
\end{proof}

\subsection{Proof of Lemma \ref{lem:rowsumfilter}}
The high probability event required for Lemma \ref{lem:rowsumfilter} to hold is the result of the following two lemmas corresponding to concentration properties of different heavy-tailed classes. The lemmas below are modified from Lemma 8.16 of \cite{Canonne2023}.
\begin{lemma}
\label{lem:rowsumfilter-1}
Let $w$ be the output of Algorithm~\ref{alg:spectralfilter2}.
If $F\in \mathcal{\mathfrak{G}}_{\theta,M}^p$, then for any $\mathcal{M} \subset \mathfrak{G}$ with $|\mathcal{M}| \leq u n$, with probability at least $1-\delta$, we have 
\[
\sum_{i \in \mathcal{M}, j \in \mathcal{Y}} \sqrt{w_i w_j} \iprod{Y_i, Y_j} = p \|w_\mathcal{M}\|_1+\sum_{i \in \mathcal{M}, j \in \mathcal{Y}} \sqrt{w_i w_j} \kappa^2 \pm O(\beta_1+\beta_2),
\] 
where $\beta_1(\kappa)=O\left(
un\sqrt{np}\log^{2/\theta}({8n}/{\delta})+\kappa n\sqrt{un}\log^{1/\theta}(4/\delta)\right), \beta_2= unR_{f}$. 
\end{lemma}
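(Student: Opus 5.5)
We plan to decompose the row-sum over $\mathcal{M}\subset\mathfrak{G}$ by splitting the column index set $\mathcal{Y}=\mathfrak{G}\cup\mathcal{B}$:
\[
\sum_{i\in\mathcal{M},j\in\mathcal{Y}}\sqrt{w_iw_j}\iprod{Y_i,Y_j}=\iprod{\Sum(w,\mathcal{M}),\Sum(w,\mathfrak{G})}+\iprod{\Sum(w,\mathcal{M}),\Sum(w,\mathcal{B})}.
\]
The second (bad) term is handled deterministically. By Cauchy--Schwarz it is at most $\norm{\Sum(w,\mathcal{M})}\,\norm{\Sum(w,\mathcal{B})}$. Under event $A$ we have $|\mathcal{B}|\le un$ and $|\mathcal{M}|\le un$, so \Cref{lem:spectralfilter2} (or \Cref{rmk:boundTB}) bounds each factor by $O(\sqrt{unR_f})$. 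Their product is therefore $O(unR_f)=O(\beta_2)$.

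For the good term, we write $Y_i=\mu+(Y_i-\mu)$ for $i\in\mathfrak{G}$ and expand. This produces four pieces. The first is the mean–mean piece $\sum_{i\in\mathcal{M},j\in\mathfrak{G}}\sqrt{w_iw_j}\kappa^2$; it differs from the target sum over $j\in\mathcal{Y}$ by at most $un\cdot un\,\kappa^2$, which is absorbed into the $\pm$ terms. The second is a diagonal piece $\sum_{i\in\mathcal{M}}w_i\norm{Y_i-\mu}^2$, which should equal $p\norm{w_\mathcal{M}}_1$ up to error. The remaining pieces are two linear cross terms $\iprod{\mu,\cdot}$ and an off-diagonal quadratic form $\sum_{i\in\mathcal{M},j\in\mathfrak{G},j\ne i}\sqrt{w_iw_j}\iprod{Y_i-\mu,Y_j-\mu}$.

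The linear terms will be bounded as in the proof of \Cref{lem:regular-weights-imply-tester}. We first use \Cref{lem:conditioning_reduction} to remove the conditioning on $\mathfrak{G}$, and then apply \Cref{lemma:weib_concentration}. Taking a supremum over weightings supported on at most $un$ indices, the term $\iprod{\mu,\sum_{i\in\mathcal{M}}\sqrt{w_i}(Y_i-\mu)}\cdot\sum_{j}\sqrt{w_j}$ contributes $n\cdot\kappa\sqrt{un}\log^{1/\theta}(4/\delta)$. The other linear term, $|\mathcal{M}|\kappa\sqrt{n}\log^{1/\theta}$, is smaller. This gives the $\kappa$-part of $\beta_1$.

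The main obstacle is the off-diagonal quadratic form together with the diagonal term, which must hold \emph{uniformly} over all $\mathcal{M}$ with $|\mathcal{M}|\le un$ and over the data-dependent weights $w$. My first attempt would rewrite the off-diagonal part as $\sum_{i\in\mathcal{M}}\sqrt{w_i}\iprod{Y_i-\mu,S_{-i}}$, where $S_{-i}=\sum_{j\in\mathfrak{G}\setminus\{i\}}\sqrt{w_j}(Y_j-\mu)$. For each fixed $i$, and conditionally on $S_{-i}$, the coordinate independence in $\mathcal{G}^p_{\theta,M}$ together with \Cref{lemma:weib_concentration} gives $|\iprod{Y_i-\mu,S_{-i}}|\lesssim\norm{S_{-i}}\log^{1/\theta}(n/\delta)$. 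Meanwhile \Cref{lem:normbound_subW} gives $\norm{S_{-i}}\lesssim\sqrt{np}\log^{1/\theta}(n/\delta)$. A union bound over the $n$ indices $i$, costing a $\log(8n/\delta)$ factor, then yields $\max_i|\iprod{Y_i-\mu,S_{-i}}|\lesssim\sqrt{np}\log^{2/\theta}(8n/\delta)$.

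The dependence of $w$ on the data is the delicate point. I would deal with it by noting that the bound needs only the inner products $\iprod{Y_i-\mu,Y_j-\mu}$ themselves, and by applying the row bound to the unweighted or worst-case sums, using $0\le w\le1$. If this is too lossy, the fallback is the spectral bound $\norm{M(\mathfrak{G})-nI}_\op$ from \Cref{lem:cov_matrix_conc0}. Summing over at most $un$ rows then gives $un\sqrt{np}\log^{2/\theta}(8n/\delta)$, the first part of $\beta_1$. The diagonal term is handled the same way: each $\norm{Y_i-\mu}^2-p$ is at most $O(\sqrt{p\log}+\log^{2/\theta})$ by \Cref{prop:quadraticweibulltail}, uniformly over $i$, and this is again dominated by $\beta_1$.

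Finally, a union bound over these events, allocating $\delta/4$ each, gives the stated result with probability $1-\delta$.
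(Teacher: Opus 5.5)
Your proposal follows the paper's proof. You split the columns into $\mathfrak{G}$ and $\mathcal{B}$ and expand the good block around $\mu$ into the centred quadratic ($B_3$), the two linear terms ($B_4,B_5$) and the mean--mean term ($B_6$). You then treat the linear terms via \Cref{lem:conditioning_reduction} and \Cref{lemma:weib_concentration}, the quadratic by a per-row bound and a union over $i$ (as \Cref{lem:crossterm} does), and the bad block by $O(unR_f)$ from \Cref{lem:spectralfilter2}.

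Your two local variations are sound.
\begin{itemize}
\item Cauchy--Schwarz on $\iprod{\Sum(w,\mathcal{M}),\Sum(w,\mathcal{B})}$ needs the filter bound only for sets of size at most $un$. The paper's polarization identity also invokes \Cref{rmk:boundTB} for $\mathcal{M}\cup\mathcal{B}$.
\item Conditioning on $S_{-i}$ is cleaner than \Cref{lem:crossterm}. That lemma applies \Cref{lemma:weib_concentration} to the products $X_{ij}X_{kj}$, even though for fixed $i$ they share the factor $X_{ij}$ across $k$ and so are not independent.
\end{itemize}
The leftover $\sum_{i\in\mathcal{M},j\in\mathcal{B}}\sqrt{w_iw_j}\kappa^2\le u^2n^2\kappa^2$ is absorbed specifically into $\beta_2$. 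This is because $R_f\ge C_\gamma n\kappa_0^2\ge 2n\kappa^2$ under the standing assumption of \Cref{lem:spectralfilter2}.

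On the data-dependence of $w$: the paper does not resolve it either. It applies \Cref{lemma:weib_concentration} and \Cref{lem:crossterm} with $w$ treated as a fixed vector, conditioning only on $\mathfrak{G}$. Under that convention your argument is complete at the paper's level of rigour.

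The remedies you sketch, however, do not give the stated $\beta_1$. Write $c_{ij}=\iprod{Y_i-\mu,Y_j-\mu}$.
\begin{itemize}
\item The worst case $\sup_{w\in[0,1]^n}|\sum_j\sqrt{w_j}c_{ij}|=\max\{\sum_jc_{ij}^+,\sum_jc_{ij}^-\}$ is of order $n\sqrt{p}$, not $\sqrt{np}$.
\item An operator-norm bound controls the centred row sum only by $\norm{\Gram(\mathfrak{G})-pI}_{\op}\sqrt{un}\sqrt{n}$. This exceeds $un\sqrt{np}$ by a factor of order at least $u^{-1/2}$.
\item Likewise, your $\kappa\sqrt{un}$ for the linear term is a fixed-weight rate. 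A genuine supremum over weightings on $un$ indices is of order $un\kappa$.
\end{itemize}

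A device that does work for the quadratic term is to write $\sqrt{w_j}=1-(1-\sqrt{w_j})$. Since $w\in\Lambda_n$, you have $\sum_{j\in\mathfrak{G}}(1-\sqrt{w_j})\le\norm{\mathbf{1}_{\mathfrak{G}}-w_{\mathfrak{G}}}_1\le 5un$.
\begin{itemize}
\item The unit-weight part is covered by your $S_{-i}$ argument.
\item The remainder is at most $5un\max_{j\neq i}|c_{ij}|=O(un\sqrt{p}\,\mathrm{polylog})$, uniformly in $w$.
\end{itemize}
This adds $O(u^2n^2\sqrt{p}\,\mathrm{polylog})$ in total, which is within $\beta_2\ge C_\gamma u^2n^2p\log(1/u)$ up to polylogarithmic factors. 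The linear terms then become of order $un^2\kappa\,\mathrm{polylog}$, outside the stated $\beta_1$. This only forces $\kappa\gtrsim u\,\mathrm{polylog}$, which the signal condition of \Cref{thm:poly-time-main} already implies.
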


\begin{lemma}
\label{lem:rowsumfilter-1b}
Let $w$ be the output of Algorithm~\ref{alg:spectralfilter2}.
If $F\in \mathcal{P}_{4,\phi}^p$, then for any $\mathcal{M} \subset \mathfrak{G}$ with $|\mathcal{M}| \leq u n$, with probability at least $1-\delta$, we have 
\[
\sum_{i \in \mathcal{M}, j \in \mathcal{Y}} \sqrt{w_i w_j} \iprod{Y_i, Y_j} = p \|w_\mathcal{M}\|_1 +\sum_{i \in \mathcal{M}, j \in \mathcal{Y}} \sqrt{w_i w_j} \kappa^2+ O(\beta_1+\beta_2),
\] 
where $\beta_1(\kappa)=O\left(
up^{2-2/v}n^{2/v}\wedge un^2+n\sqrt{up/\delta}+\kappa un^{1+1/v}p^{1-1/v}\wedge \kappa un^2+\kappa n\sqrt{u/\delta}\right), \beta_2= unR_{f}$. 
\end{lemma}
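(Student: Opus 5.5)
The statement to prove is \Cref{lem:rowsumfilter-1b}, the finite-moment analogue of \Cref{lem:rowsumfilter-1}. I would follow the template of Lemma~8.16 of \cite{Canonne2023} and split
\[
\sum_{i\in\mathcal{M},j\in\mathcal{Y}}\sqrt{w_iw_j}\iprod{Y_i,Y_j}
=\iprod{\Sum(w,\mathcal{M}),\Sum(w,\mathfrak{G})}+\iprod{\Sum(w,\mathcal{M}),\Sum(w,\mathcal{B})}.
\]
The bad-part term is handled by Cauchy--Schwarz together with \Cref{lem:spectralfilter1} or \Cref{lem:spectralfilter2} (and \Cref{rmk:boundTB}). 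Since both $|\mathcal{M}|\le un$ and $|\mathcal{B}|\le un$ on event $A$, we get $\|\Sum(w,\mathcal{M})\|\,\|\Sum(w,\mathcal{B})\|\lesssim unR_f=\beta_2$. In the Gram case we subtract $p\|w_\mathcal{M}\|_1$ first and absorb it into $\beta_2$ as well.

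For the good-part term, write $Y_j=\mu+(Y_j-\mu)$ for $j\in\mathfrak{G}$, and similarly for $i\in\mathcal{M}\subset\mathfrak{G}$. This produces four pieces.
\begin{itemize}
\item The mean--mean piece is exactly $\sum\sqrt{w_iw_j}\kappa^2$.
\item The diagonal piece is $\sum_{i\in\mathcal{M}}w_i\|Y_i-\mu\|^2$. Using \Cref{lem:approx_mean} for the truncated mean of $\|Y-\mu\|^2$ and a variance bound of order $p$, Chebyshev gives $p\|w_\mathcal{M}\|_1\pm O(un\sqrt{p}\wedge un+\sqrt{unp/\delta})$.
\item The off-diagonal noise--noise piece is $\sum_{i\in\mathcal{M},j\ne i}\sqrt{w_iw_j}\iprod{Y_i-\mu,Y_j-\mu}$. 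Its mean is controlled by $|\mathcal{M}|\,n\,\|\E_{F_R}[Y-\mu]\|^2\lesssim un^2\gamma^{2(1-1/v)}\asymp up^{2-2/v}n^{2/v}\wedge un^2$. Its variance is at most $|\mathcal{M}|\,n\cdot 2p$, following the variance computation in the proof of \Cref{lem:regular-weights-imply-tester4mom}. Chebyshev then gives the fluctuation $n\sqrt{up/\delta}$.
\item The cross mean--noise pieces $\mu^\top\sum\sqrt{w_j}(Y_j-\mu)$ have mean of order $\kappa\,un\cdot n\gamma^{1-1/v}$, giving the term $\kappa un^{1+1/v}p^{1-1/v}\wedge\kappa un^2$, and standard deviation $\kappa n\sqrt{u/\delta}$, by \Cref{lem:approx_mean} and \Cref{lem:approx_cov} as in \eqref{proof_eqn:k0_fm}.
\end{itemize}
Summing these matches the stated $\beta_1(\kappa)$.

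The main obstacle is that $w$ and $\mathcal{M}$ are data-dependent, so Chebyshev cannot be applied to a fixed weighted sum. I would first condition on $\mathfrak{G}$ via \Cref{lem:conditioning_reduction}, so the inliers are i.i.d.\ from $F_R$. I would then avoid a union bound over $\mathcal{M}$, which is too expensive at polynomial tails. Instead, each data-dependent sum would be bounded by a weight-free quantity through a spectral or duality bound. For example,
\[
\Big|\sum_{i\in\mathcal{M}}\sqrt{w_i}a_i\Big|\le\sqrt{|\mathcal{M}|}\,\Big(\sum_{i\in\mathfrak{G}}a_i^2\Big)^{1/2},
\]
and the bilinear form is controlled by $\|\Sum(w,\mathfrak{G})\|$ and an operator norm of the centred Gram/moment matrix on $\mathfrak{G}$ (from \Cref{lem:cov_matrix_conc0}, with excess absorbed into $\beta_2$). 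This is the step I would check most carefully, and it is the one that decides whether the $\sqrt{u}$ factors in $\beta_1$ survive.
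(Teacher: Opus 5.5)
Your decomposition into the bad part plus four good pieces, and the bias/variance estimates in your bullet list, are the paper's proof. The paper conditions on $\mathfrak{G}$ so that the retained inliers are i.i.d.\ from $F_R$. This is by direct conditioning: \Cref{lem:conditioning_reduction} is stated for the sub-Weibull class and needs $\gamma\le 1/n$, which fails here since $\gamma$ may be of order $1/20$. It then fixes $\mathcal{M}$, applies Chebyshev to $B_3,B_4,B_5$ with $w$ treated as fixed, and bounds the bad part by polarization via \Cref{lem:spectralfilter1}, \Cref{lem:spectralfilter2} and \Cref{rmk:boundTB}; your Cauchy--Schwarz version is equivalent since $p\lesssim R_f$. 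The paper never addresses the data-dependence of $w$. Also, since the lemma is stated for a fixed $\mathcal{M}$, uniformity over $\mathcal{M}$ only matters downstream in \Cref{lem:rowsumfilter}.

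The gap is the remedy in your last paragraph: it is not carried out, and carried out as you indicate it does not give the stated $\beta_1$. For the cross term $B_4=(\sum_{j\in\mathfrak{G}}\sqrt{w_j})\sum_{i\in\mathcal{M}}\sqrt{w_i}\langle Y_i-\mu,\mu\rangle$, use your displayed inequality together with Markov's bound $\sum_{i\in\mathfrak{G}}\langle Y_i-\mu,\mu\rangle^2\lesssim n\kappa^2/\delta$. This gives only $|B_4|\lesssim n\cdot\sqrt{un}\cdot\kappa\sqrt{n/\delta}=\kappa n^2\sqrt{u/\delta}$, which exceeds even the crude term $\kappa un^2$ by a factor $(u\delta)^{-1/2}$. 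For the noise--noise piece, when $n\ge p$ you have $\|\sum_{i\in\mathcal{M}}\sqrt{w_i}(Y_i-\mu)\|\le\sqrt{un}\,\|\sum_{i\in\mathfrak{G}}(Y_i-\mu)(Y_i-\mu)^\top\|_{\op}^{1/2}\approx n\sqrt{u}$. Even granting $\|\sum_{j\in\mathfrak{G}}\sqrt{w_j}(Y_j-\mu)\|\approx\sqrt{np}$, this gives $n^{3/2}\sqrt{up}$, a factor $\sqrt{n\delta}$ above $n\sqrt{up/\delta}$. These losses are not absorbed by $\beta_2$ in general and would change the rates of \Cref{thm:poly-time-main_fm}; the cross-term bound alone forces $\kappa^2\gtrsim u/\delta$. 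The $\sqrt{u}$-type fluctuation terms in $\beta_1$ are intrinsically fixed-weight Chebyshev quantities. So you must either accept the fixed-$w$ step, in which case your argument is the paper's, or settle for a weaker lemma.

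Separately, even the fixed-weight computation does not give the $\kappa n\sqrt{u/\delta}$ you state. We have $\Var(B_4)\le(\sum_{j\in\mathfrak{G}}\sqrt{w_j})^2\sum_{i\in\mathcal{M}}w_i\cdot O(\kappa^2)=O(un^3\kappa^2)$, so Chebyshev yields $\kappa n^{3/2}\sqrt{u/\delta}$. This matches the $\kappa n\sqrt{un}$ term of the sub-Weibull analogue \Cref{lem:rowsumfilter-1}. The paper's variance bound $O(m\tau\kappa^2)$ drops the same factor $m$. This is harmless downstream: after division by $n^2$ the term is dominated by the $\kappa/\sqrt{n\delta}$ term of \Cref{lem:regular-weights-imply-tester4mom}. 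But your claim that the pieces sum to exactly the stated $\beta_1$ rests on this slip.
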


Assuming the above results, we prove \Cref{lem:rowsumfilter} below.
\begin{proof}[Proof of \Cref{lem:rowsumfilter}]
    Recall the definition of $\tau_i$ from \eqref{eq:rowsumscore}.
    First, we note that for any $\mathcal{M} \subset \mathfrak{G}$ with $|\mathcal{M}| \le u n$, $\sum_{i \in \mathcal{M}} \tau_i \le un^2 \kappa^2 +O(\beta_1+\beta_2)$. To see why, we can split $\mathcal{M}$ into $\mathcal{M}^+$ and $\mathcal{M}^-$, where $i \in \mathcal{M}^+$ if $\langle \sqrt{w_i} Y_i, \sum_{j \in \mathcal{Y}} \sqrt{w_j} Y_j \rangle \ge w_i p$ and $i \in \mathcal{M}^-$ otherwise. Then, since $|\mathcal{M}^+|, |\mathcal{M}^-| \le u n$, Lemma~\ref{lem:rowsumfilter-1} implies that both $\sum_{i \in \mathcal{M}^+} \tau_i$ and $\sum_{i \in \mathcal{M}^-} \tau_i$ are at most $un^2 \kappa^2 +O(\beta_1+\beta_2)$.

    Since $\sum_{i \in \mathcal{M}} \tau_i \le un^2 \kappa^2 +O(\beta_1+\beta_2)$ for any subset $\mathcal{M}$ of $\mathfrak{G}$ of size at most $u n$, and since we sorted the $\tau_i$'s in decreasing order, this implies $\sum_{i \in \mathcal{M}} \tau_i \le un^2 \kappa^2 +O(\beta_1+\beta_2)$ for any subset $\mathcal{M}$ of $\mathcal{Y} \backslash [un]$ of size at most $u n$. If $w_i$ represents the values of $w$ before setting the top $u n$ indices to $0$, and $w_i'$ represents the values of $w$ afterwards (i.e., $w_i' = 0$ for $i \le \varepsilon n$ and $w_i' = w_i$ for $i > \varepsilon n$), then 
\begin{align}
    \left|\sum_{i \in \mathcal{M}, j \in \mathcal{Y}} \sqrt{w_i' w_j} \langle Y_i, Y_j \rangle - p \cdot \|w'_T\|_1\right| 
    &\le \sum_{i \in \mathcal{M}} \left|\left\langle \sqrt{w_i'} Y_i, \sum_{j \in \mathcal{Y}} \sqrt{w_j} Y_j \right\rangle - w_i' p\right| \nonumber \\
    &= \sum_{i \in \mathcal{M} \backslash [u n]} \tau_i \le un^2\kappa^2+O(\beta_1+\beta_2). \label{eq:w'-bound-1}
\end{align}
    Next, we have
\begin{align}
    \sum_{i \in \mathcal{M}, j \in [un]} \sqrt{w_i' w_j} \langle Y_i, Y_j \rangle &= \sum_{i \in \mathcal{M} \backslash [un], j \in [un]} \sqrt{w_i w_j} \langle Y_i, Y_j \rangle = \pm O(unR_{f}), \label{eq:w'-bound-2}
\end{align}
    by the same argument as in \eqref{eq:a3-bound2}, since $\mathcal{M} \backslash [un]$ and $[un]$ are disjoint sets in $\mathcal{Y}$ and have size at most $un$. By subtracting \eqref{eq:w'-bound-2} from \eqref{eq:w'-bound-1}, we obtain the desired bound
\[\left|\sum_{i \in \mathcal{M}, j \in \mathcal{Y}} \sqrt{w_i' w_j'} \langle Y_i, Y_j \rangle - p \|w'_T\|_1\right| = \left|\sum_{i \in \mathcal{M}, j \in \mathcal{Y} \backslash [un]} \sqrt{w_i' w_j} \langle Y_i, Y_j \rangle - p \|w'_T\|_1\right| \le un\kappa^2+ O(\beta_1+\beta_2). \]
\end{proof}

We now provide the proof of Lemmas \ref{lem:rowsumfilter-1} and Lemma \ref{lem:rowsumfilter-1b}.
\begin{proof}[Proof of Lemma \ref{lem:rowsumfilter-1}]
We condition on the index set $\mathfrak{G}=\{i_1,\ldots,i_m\}$ and on the event $A$, i.e. $m \geq (1-u)n$. 
Fix an arbitrary subset $\mathcal{M}=\{i_1,\ldots,i_\tau\}\subset \mathfrak{G}$ with $\tau\le un$. 
Recalling that $\mathcal{Y}=\mathfrak{G}\cup \mathcal{B}$, we decompose
\begin{align*}
\sum_{i\in \mathcal{M},\,j\in \mathcal{Y}}\sqrt{w_iw_j}\iprod{Y_i,Y_j}- p \|w_\mathcal{M}\|_1
&=
\underbrace{\sum_{i\in \mathcal{M},\,j\in \mathfrak{G}}\sqrt{w_iw_j}\iprod{Y_i,Y_j}- p \|w_\mathcal{M}\|_1}_{B_1}
+
\underbrace{\sum_{i\in \mathcal{M},\,j\in \mathcal{B}}\sqrt{w_iw_j}\iprod{Y_i,Y_j}}_{B_2}.
\end{align*}

We first control the term $B_1$. Conditioning on $\mathfrak{G}=\{i_1,\ldots,i_m\}$, we may write
\begin{align*}
B_1=\underbrace{\sum_{k=1}^{\tau}\sum_{j=1}^{m} \sqrt{w_{i_k}w_{i_j}}
\iprod{Y_{i_k}-\mu,Y_{i_j}-\mu}-p\sum_{k=1}^{\tau} w_{i_k}}_{B_3}+ \underbrace{\sum_{k=1}^{\tau}\sum_{j=1}^{m}\sqrt{w_{i_k}w_{i_j}}
\iprod{Y_{i_k}-\mu,\mu}}_{B_4}\\
+\underbrace{\sum_{k=1}^{\tau}\sum_{j=1}^{m}\sqrt{w_{i_k}w_{i_j}}
\iprod{\mu,Y_{i_j}-\mu}}_{B_5}+\underbrace{\sum_{k=1}^{\tau}\sum_{j=1}^{m}\sqrt{w_{i_k}w_{i_j}} \norm{\mu}_2^2}_{B_6}.
\end{align*}
By \Cref{lem:conditioning_reduction}, for any $t>0$ we have that 
\begin{align*}
&\Pb_{Y_{1:n}\sim D}
\!\left(
\big|\iprod{\sum_{k\in \mathcal{M}}\sum_{j\in \mathfrak{G}}\sqrt{w_{k}w_{j}}
\iprod{Y_{k}-\mu,Y_{j}-\mu}}-p\norm{w_\mathcal{M}}_1\big|\ge t
\;\middle|\;
\mathfrak{G}=\{i_1,\ldots,i_m\}
\right)\\
&\le
4\,\Pb_{Y_{i_{1:m}}\sim F_R}\!\left(
\left|\sum_{k=1}^{\tau}\sum_{j=1}^{m}\sqrt{w_{i_k}w_{i_j}}\iprod{Y_{i_k}-\mu,Y_{i_j}-\mu}
-p\sum_{k=1}^{\tau} w_{i_k}\right|\ge t\right).
\end{align*}
By Lemma~\ref{lem:crossterm}, 
\[
t=O\left(\tau\sqrt{mp}\log^{2/\theta}\!\left({8m}/{\delta}\right)\right)
\]
satisfies
\[
\Pb_{Y_{i_{1:m}}\sim F}\!\left(
\left|\sum_{k=1}^{\tau}\sum_{j=1}^{m}\sqrt{w_{i_k}w_{i_j}}\iprod{Y_{i_k}-\mu,Y_{i_j}-\mu}
-p\sum_{k=1}^{\tau} w_{i_k}\right|\ge t\right)
\le \delta.
\]

Finally, since $\tau\le un$ and $m\le n$, we may choose the larger threshold
\[
t=O\left(un\sqrt{np}\log^{2/\theta}\!\left({8n}/{\delta}\right)\right),
\]
which depends only on $n$ and not on the particular choice of $\mathfrak{G}$ or $\mathcal{M}$. Hence, for any fixed subset $\mathcal{M}\subset[n]$ with
$|\mathcal{M}|\le un$, we may remove the conditioning on $\mathfrak{G}$. Consequently, we have
\begin{equation}\label{eq:a1-bound_subW} 
    |B_3|=p\norm{w_\mathcal{M}}_1+O\!\left(un\sqrt{np}\log^{2/\theta}\!\left({8n}/{\delta}\right)\right)
\end{equation}
with probability $1-\delta$.

To bound $B_4$, by \Cref{lem:conditioning_reduction}, we have for any $t>0$ that
\begin{align*}
&\Pb_{Y_{1:n}\sim D}\left(
\left|\mu^\top\sum_{k=1}^\tau \sqrt{w_{i_k}}(Y_{i_k}-\mu)\right|\ge t \;\middle|\; \mathfrak{G}=\{i_1,\ldots,i_m\} \right)\\
&\quad\le4\Pb_{Y_{i_{1:m}}\sim F}
\!\left(
\left|
\sum_{r=1}^p \sum_{k=1}^\tau
\mu_k \sqrt{w_{i_k}}(Y_{i_k,r}-\mu_r)
\right|
\ge t
\right).
\end{align*}
By Lemma~\ref{lemma:weib_concentration}, 
\[
t=O\left(\norm{\mu}_2\sqrt{\norm{w_{i_{1:\tau}}}_1 \log(4/\delta)}+\norm{\mu}_\infty\log^{1/\theta}(4/\delta)\right)
\]
satisfies
\[
\Pb_{Y_{i_{1:m}}\sim F}\!\left(\left|\sum_{r=1}^p \sum_{k=1}^\tau
\mu_k \sqrt{w_{i_k}}(Y_{i_k,r}-\mu_r)\right|\ge t\right)\le \delta/4.
\]
Since $\norm{w_{i_{1:\tau}}}_1\le un$, we may choose the larger threshold
\[
t=O(\,\norm{\mu}_2\,\sqrt{un}\log^{1/\theta}(4/\delta)),
\]
which does not depend on $m$.
With this choice, the bound holds uniformly over all realizations of $\mathfrak{G}$,
and thus the conditioning on $\mathfrak{G}$ can be removed. Consequently, with probability $1-\delta$,
$$\left|B_4\right|\le
O(\,\norm{\mu}_2\,n\sqrt{un}\log^{1/\theta}(4/\delta)).$$

Similarly, by swapping $m,\tau$ in the above calculation, we have
$$\left|B_5\right|\le
O(\,\norm{\mu}_2\,un\sqrt{n}\log^{1/\theta}(4/\delta))$$
with probability $1-\delta$.

Finally, we bound $B_2$. Since $\mathcal{M}, \mathcal{B}$ are disjoint sets in $\mathcal{Y}$ of size at most $u n$, we can use \Cref{lem:spectralfilter1} or \Cref{lem:spectralfilter2} and \Cref{rmk:boundTB} to obtain
\begin{align}
    A_2 &= \frac{1}{2} \left(\left\|\Sum(w, \mathcal{M} \cup \mathcal{B})\right\|^2 - \left\|\Sum(w, \mathcal{M})\right\|^2 - \left\|\Sum(w, \mathcal{B})\right\|^2\right)= \pm O(unR_{f}). \label{eq:a2-bound_subW}
\end{align}
Combining results from \eqref{eq:a1-bound_subW} and \eqref{eq:a2-bound_subW} gives the final result.
\end{proof}

\begin{proof}[Proof of Lemma \ref{lem:rowsumfilter-1b}]
We condition on the index set $\mathfrak{G}=\{i_1,\ldots,i_m\}$ and on the event $A$, i.e. $m \geq (1-u)n$. 
Fix an arbitrary subset $\mathcal{M}=\{i_1,\ldots,i_\tau\}\subset \mathfrak{G}$ with $\tau\le un$. 
Recalling that $\mathcal{Y}=\mathfrak{G}\cup \mathcal{B}$, we decompose
\begin{align*}
\sum_{i\in \mathcal{M},\,j\in \mathcal{Y}}\sqrt{w_iw_j}\iprod{Y_i,Y_j}- p \|w_\mathcal{M}\|_1
&=
\underbrace{\sum_{i\in \mathcal{M},\,j\in \mathfrak{G}}\sqrt{w_iw_j}\iprod{Y_i,Y_j}- p \|w_\mathcal{M}\|_1}_{B_1}
+
\underbrace{\sum_{i\in \mathcal{M},\,j\in \mathcal{B}}\sqrt{w_iw_j}\iprod{Y_i,Y_j}}_{B_2}.
\end{align*}

We first control the term $B_1$. Conditioning on $\mathfrak{G}=\{i_1,\ldots,i_m\}$, we may write
\begin{align*}
B_1=\underbrace{\sum_{k=1}^{\tau}\sum_{j=1}^{m} \sqrt{w_{i_k}w_{i_j}}
\iprod{Y_{i_k}-\mu,Y_{i_j}-\mu}-p\sum_{k=1}^{\tau} w_{i_k}}_{B_3}+ \underbrace{\sum_{k=1}^{\tau}\sum_{j=1}^{m}\sqrt{w_{i_k}w_{i_j}}
\iprod{Y_{i_k}-\mu,\mu}}_{B_4}\\
+\underbrace{\sum_{k=1}^{\tau}\sum_{j=1}^{m}\sqrt{w_{i_k}w_{i_j}}
\iprod{\mu,Y_{i_j}-\mu}}_{B_5}+\underbrace{\sum_{k=1}^{\tau}\sum_{j=1}^{m}\sqrt{w_{i_k}w_{i_j}} \norm{\mu}_2^2}_{B_6}.
\end{align*}

We first analyse the diagonal terms of $B_3$. Since $F\in\mathcal{P}^p_{4,\psi}$, then by \Cref{lem:approx_mean},  we have
    \begin{align*}
        \left|\E_{Y_{i_{1:\tau}}\sim F_R}\left[\sum_{k=1}^{\tau} w_{i_k}(\norm{Y_{i_k}-\mu}_2^2-p)\right]\right|&= \left|\norm{w_{i_{1:\tau}}}_1(\E_{Y_1\sim F}[\norm{Y_1-\mu}^2_2|\norm{Y_1-\mu}\leq R]-p)\right|\\
        &\leq 2\tau \psi^2\gamma^{1/2}\leq O(un(p/n)^{1/2}\wedge un)\leq O(u\sqrt{pn} \wedge un).
    \end{align*}
    Meanwhile, since $\gamma\leq 1/20$,
    \begin{align*}
        \Var_{Y\sim F_R}\left[\norm{Y-\mu}_2^2-p\right]&\le \E_{Y\sim F} [(\norm{Y-\mu}_2^2-p)^2 |\norm{Y-\mu}_2<R]\\
        &\le (1-\gamma)^{-1}\E_{Y\sim F} [\norm{Y-\mu}_2^4-p^2]\\
        &\le 2p(\psi^4-1).
    \end{align*}
    Therefore, we have
    \begin{align*}
        \Var_{Y_{i_{1:\tau}}\sim F_R}\left[\sum_{k=1}^{\tau} w_{i_k}(\norm{Y_{i_k}-\mu}_2^2-p)\right]&= \norm{w_{i_{1:\tau}}}_2^2 \Var_{Y\sim F_R}\left[\norm{Y-\mu}_2^2-p\right]\leq 2\psi^4u^2n^2p.
    \end{align*}

    Finally, by Chebyshev's inequality, with probability at least $1-\delta$, we have
    \begin{align*}
        |\sum_{k=1}^{\tau} w_{i_k}(\norm{Y_{i_k}-\mu}^2-p)-p\norm{w_{i_{1:\tau}}}_1|\leq O(un\sqrt{p/\delta}).
    \end{align*}

The off-diagonal terms in $B_3$ can be bounded as follows. By \Cref{lem:approx_mean},  we have
    \begin{align*}
        &\left|\E_{Y_{i_{1:m}}\sim F_R}\left[\sum_{j=1}^\tau \sum_{k=1, k\neq j}^m \sqrt{w_{i_j}w_{i_k}}(Y_{i_j}-\mu)^\top (Y_{i_k}-\mu)\right]\right|\\
        &=\left|\sum_{j=1}^\tau \sum_{k=1, k\neq j}^m \sqrt{w_{i_j}w_{i_k}} \E_{Y_1\sim F}[(Y_{1}-\mu)|\norm{Y_1-\mu}_2\leq R]^\top\E_{Y_2\sim F}[(Y_{2}-\mu)|\norm{Y_2-\mu}_2\leq R]\right|\\
        &\leq\sum_{j=1}^\tau \sum_{k=1, k\neq j}^m \sqrt{w_{i_j}w_{i_k}} \norm{\E_{Y_1\sim F}[(Y_{1}-\mu)|\norm{Y_1-\mu}_2\leq R]}_2^2\\
        &\lesssim un^2\gamma^{2-2/v}\lesssim up^{2-2/v}n^{2/v} \wedge un^2.
    \end{align*}
    Meanwhile, since $\gamma\leq 1/20$,
    \begin{align*}
        &\Var_{Y_1,Y_2\sim F_R}\left[(Y_{1}-\mu)^\top (Y_{2}-\mu)\right]\\&\le \E_{Y_1,Y_2\sim F} [(Y_{1}-\mu)^\top (Y_{2}-\mu)(Y_{2}-\mu)^\top (Y_{1}-\mu) |\max_{i\in\{1,2\}}\norm{Y_i-\mu}_2<R]\\
        &\le (1-\gamma)^{-2}\mathrm{tr}(\E_{Y_1,Y_2\sim F} [(Y_{1}-\mu)(Y_{1}-\mu)^\top (Y_{2}-\mu)(Y_{2}-\mu)^\top])\\
        &\le (1-\gamma)^{-2}\mathrm{tr}(I_p)\le 2p.
    \end{align*}
    Therefore,
    \begin{align*}
        \Var_{Y_{i_{1:m}}\sim F_R}\left[\sum_{j=1}^\tau \sum_{k=1, k\neq j}^m \sqrt{w_{i_j}w_{i_k}}(Y_{i_j}-\mu)^\top (Y_{i_k}-\mu)\right]&= un^2\Var_{Y,Y'\sim F_R}\left[(Y-\mu)^\top (Y'-\mu)\right]\\
        &\leq 2un^2p,
    \end{align*}
    Concluding the above, with probability at least $1-\delta$, we have
\begin{equation}\label{proof_eqn:b3_fm}
    B_3=p\norm{w_\mathcal{M}}_1\pm O\left(up^{2-2/v}n^{2/v}\wedge un^2+n\sqrt{up/\delta}\right).
\end{equation}

To bound $B_4$, by \Cref{lem:approx_mean},
\begin{align*}
    \left|\E\left[\sum_{k=1}^{\tau}\sum_{j=1}^{m}\sqrt{w_{i_k}w_{i_j}}
\iprod{Y_{i_k}-\mu,\mu}\right]\right|\leq \kappa m\tau\norm{\E_{Y\sim F_R}[Y]-\mu}_2&\leq O( \kappa un^2 \gamma^{1-1/v})\\
&=O(\kappa un^{1+1/v}p^{1-1/v}\wedge \kappa un^2).
\end{align*}
Meanwhile, by \Cref{lem:approx_cov},
\begin{align*}
    \Var_{Y\sim F_R}[\mu^\top (Y-\mu)]&=\mu^\top(I+\E_{Y\sim F}[(Y-\mu)(Y-\mu)^\top-I\mid \norm{Y-\mu}_2<R])\mu\\
    &\leq\kappa^2(1+2\psi^2\sqrt{\gamma})=O(\kappa^2).
\end{align*}
Thus we have
\begin{align*}
    \Var_{Y\sim F_R}\left[\sum_{k=1}^{\tau}\sum_{j=1}^{m}\sqrt{w_{i_k}w_{i_j}}\iprod{Y_{i_k}-\mu,\mu}\right]\leq O(m\tau\kappa^2)\leq O(un^2\kappa^2).
\end{align*}

Combining the above, with probability $1-\delta$, we have 
\[
\left|B_4\right|=\left|\sum_{k=1}^{\tau}\sum_{j=1}^{m}\sqrt{w_{i_k}w_{i_j}}\iprod{Y_{i_k}-\mu,\mu}\right|
\leq O(\kappa un^{1+1/v}p^{1-1/v}\wedge \kappa un^2+\kappa n\sqrt{u/\delta}).
\]
With this choice, the bound holds uniformly over all realizations of $\mathfrak{G}$,
and thus the conditioning on $\mathfrak{G}$ can be removed. 

Similarly, by swapping $m,\tau$ in the above calculation, we have
$$\left|B_5\right|\leq O(\kappa un^{1+1/v}p^{1-1/v}\wedge \kappa un^2+\kappa n\sqrt{u/\delta})$$
with probability $1-\delta$.

    Finally, we bound $A_2$. Since $\mathcal{M}, \mathcal{B}$ are disjoint sets in $\mathcal{Y}$ of size at most $u n$, we can use \Cref{lem:spectralfilter1} or \Cref{lem:spectralfilter2} (depending on whether $n>p$) and \Cref{rmk:boundTB} to obtain
\begin{align}
    A_2 &= \frac{1}{2} \left(\left\|\Sum(w, \mathcal{M} \cup \mathcal{B})\right\|^2 - \left\|\Sum(w, \mathcal{M})\right\|^2 - \left\|\Sum(w, \mathcal{B})\right\|^2\right)= \pm O(unR_{f}). \label{eq:a3-bound2}
\end{align}
Adding \eqref{proof_eqn:b3_fm} and \eqref{eq:a3-bound2} gives the final result.
\end{proof}

\section{Technical details and proofs of results in Section \ref{sect:high_dim_cpt}}\label{sect:hdcptproof}
\subsection{Properties of the pairwise difference dataset}
We first briefly recall the data generation process in online change point detection. For each $i\in[n]$, $X_i$ is independently generated from $Q_i$, where $Q_i$ is defined in \eqref{huber} by
\[
Q_i = (1-\varepsilon_i)F_i + \varepsilon_i H_i, \qquad i \in \{1,\ldots,n\},
\]
where $\varepsilon_i \le \varepsilon$ for all $i$. Suppose we observe $\{X_1,\ldots, X_t\}$, where $t\in\mathbb{N}$. For $1 \le s \le \lfloor t/2 \rfloor$, define the pairwise difference
\[
Y_{s,t} = \frac{X_{t-s+1} - X_s}{\sqrt{2}\sigma}.
\]
\Cref{prop:properties_Ds} shows that $\{Y_{s,t}\}_{s=1}^{\lfloor t/2 \rfloor}$ is equal in distribution to a Huber contamination model. In particular, in the pre-change regime ($t \le \Delta$), the inlier distribution will have zero mean, while in the post-change regime ($t>\Delta\geq s$), the inlier distribution will share a common mean with norm $\kappa/\sqrt{2}\neq 0$, allowing us to apply our robust mean testing procedure.

\begin{proposition}[Huber structure of pairwise differences]
\label{prop:properties_Ds}
Fix $t \in \mathbb{N}$. For each $s \in \{1,\dots,\lfloor t/2 \rfloor\}$, the random variable $Y_{s,t}$ follows a dynamic Huber $2\varepsilon$-contamination model
\[
Y_{s,t} \sim Q_{s,t} = (1-2\varepsilon) D_{s,t} + 2\varepsilon H_{s,t},
\]
where $D_{s,t}$ is an inlier distribution with the following properties:
\begin{enumerate}[label=(\textbf{\alph*})]
        \item $\E_{Y\sim D_{s,t}}[Y]=(f_{t-s+1}-f_{s})/(\sqrt{2}\sigma)$. In particular, 
        $$\norm{\E_{Y\sim D_{s,t}}[Y]}_2=\begin{cases}
            0 &\text{ if $t\leq \Delta$, }\\
            \kappa/\sqrt{2} &\text{ if $t> \Delta\ge s$. }
        \end{cases}$$
        \item $\Var_{Y\sim D_{s,t}}[Y]=I_p$, thus $D_{s,t}\in \mathcal{P}^{p}_{2,1}$ for all $s\in \{1,\dots,\lfloor t/2 \rfloor\}$.
        \item Assuming further that $F_i\in \mathcal{G}^{p}_{\theta,M}$ for all $i\in\mathbb{N}$, then $D_{s,t}\in\mathcal{G}^{p}_{\theta,\sqrt{2}M/\sigma}$ for all $s\in \{1,\dots,\lfloor t/2 \rfloor\}$.
    \end{enumerate}
\end{proposition}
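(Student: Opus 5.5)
The plan is to decompose the law of $Y_{s,t}$ according to the contamination labels of the two observations, and then read off the inlier moments directly. Write $X_{t-s+1}\sim (1-\varepsilon_{t-s+1})F_{t-s+1}+\varepsilon_{t-s+1}H_{t-s+1}$ and $X_s\sim(1-\varepsilon_s)F_s+\varepsilon_sH_s$. These are independent, since $t-s+1>s$ whenever $s\le\lfloor t/2\rfloor$. Hence the law of $X_{t-s+1}-X_s$ is a mixture over the four label combinations. The component where both labels are inliers is $F_{t-s+1}*\widetilde F_s$, where $\widetilde F_s$ denotes the law of $-W$ for $W\sim F_s$. This component has weight $(1-\varepsilon_{t-s+1})(1-\varepsilon_s)\ge (1-\varepsilon)^2\ge 1-2\varepsilon$.

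The delicate point is that the inlier weight is $(1-\varepsilon_{t-s+1})(1-\varepsilon_s)$, not exactly $1-2\varepsilon$. To produce exactly the form $(1-2\varepsilon)D_{s,t}+2\varepsilon H_{s,t}$, I move part of the inlier mass into the contamination. Let $\eta=(1-\varepsilon_{t-s+1})(1-\varepsilon_s)-(1-2\varepsilon)\ge 0$, and let $H_{s,t}$ be the mixture, with total weight $2\varepsilon$, of the three contaminated components together with an $\eta$-fraction of the inlier convolution, all rescaled by $\sqrt2\sigma$. This is legitimate because the contamination distribution is arbitrary. With this choice, $D_{s,t}$ is the law of $(Z-Z')/(\sqrt2\sigma)$ with $Z\sim F_{t-s+1}$ and $Z'\sim F_s$ independent. (If $\varepsilon\ge 1/4$ one would need $2\varepsilon<1/2$, but that falls outside the paper's range $\varepsilon<0.08$.)

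Part (a) then follows by linearity: the mean is $(f_{t-s+1}-f_s)/(\sqrt2\sigma)$. For $t\le\Delta$ both indices are at most $\Delta$, so $f_{t-s+1}=f_s$ and the mean is zero. For $t>\Delta\ge s$ we have $f_{t-s+1}=f_{\Delta+1}$ and $f_s=f_\Delta$, so the norm is $\kappa/(\sqrt2\sigma)$; this reads $\kappa/\sqrt2$ under the paper's normalisation.

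For part (b), independence gives covariance $(\Sigma_{t-s+1}+\Sigma_s)/(2\sigma^2)$. Under the model the coordinates are independent, each with variance $\sigma^2$, so this equals $I_p$. Consequently each coordinate has second central moment 1, which is exactly the defining condition for $\mathcal P^p_{2,1}$ with $v=2$, $\phi=1$; the coordinates are independent because they are built from independent coordinates.

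Part (c) is the only step needing an inequality; I would not use $\sqrt2$ sub-additivity of the $\psi_\theta$ quasi-norm, since it is not generally available for $\theta<1$. Instead, fix a coordinate $k$ and centred variables $A=Z_k-f_{t-s+1,k}$ and $B=Z'_k-f_{s,k}$, with $\E\exp\{(|A|/M)^\theta\}\le 2$ and likewise for $B$. For $\theta\le 1$, $|A-B|^\theta\le|A|^\theta+|B|^\theta$. For $1<\theta\le2$, convexity gives $|(A-B)/2|^\theta\le(|A|^\theta+|B|^\theta)/2$. In either case, scaling by the appropriate constant $c_\theta M$, applying Jensen or Cauchy–Schwarz, and using independence, $\E\exp\{(|A-B|/(cM))^\theta\}\le(\E e^{(|A|/M)^\theta})^{1/2}(\E e^{(|B|/M)^\theta})^{1/2}\le 2$. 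Dividing by $\sqrt2\sigma$ then gives a $\psi_\theta$ norm of order $M/\sigma$. I expect this step to be the main obstacle, because getting exactly the constant $\sqrt2M/\sigma$ may fail for small $\theta$. If it does, I would state the result with a $\theta$-dependent constant $C_\theta M/\sigma$, which suffices for every downstream use since constants are absorbed.
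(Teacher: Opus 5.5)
Your overall route matches the paper's: condition on both labels being inliers, which has probability $(1-\varepsilon_{t-s+1})(1-\varepsilon_s)\ge(1-\varepsilon)^2\ge1-2\varepsilon$, and then read off the moments of $(Z-Z')/(\sqrt2\sigma)$. The paper does not reallocate inlier mass as you do. It relies on \Cref{modelasmmp} allowing a contamination proportion of at most $2\varepsilon$; either way works.

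The paper's proof stops at ``direct evaluation of the mean and variance'' and never treats (c). Your Jensen-plus-independence argument fills this in. It recovers the stated $\sqrt2M/\sigma$ for $\theta\in[1,2]$, and gives $2^{1/\theta}M/(\sqrt2\sigma)$ for $\theta<1$. Your suspicion that the stated constant can fail is correct. Take $M=1$ and let $A,B$ be i.i.d., equal to $\pm a$ with probability $q/2$ each and $0$ otherwise, where $e^{a^\theta}=1+1/q$; then $A,B\in\mathcal{G}_{\theta,1}$. For $\theta=0.1$ and $q=0.01$, one gets $\E\exp\{(|A-B|/2)^\theta\}\approx2.45>2$. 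So for small $\theta$, (c) holds only with a $\theta$-dependent constant, as you propose.

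The genuine error is in (a). You assert that $t>\Delta\ge s$ gives $f_{t-s+1}=f_{\Delta+1}$. That requires $t-s+1\ge\Delta+1$, i.e.\ $s\le t-\Delta$, which does not follow. For instance, $\Delta=10$, $t=11$, $s=5$ gives $t-s+1=7\le\Delta$, so $f_{t-s+1}=f_s$ and the inlier mean of $Y_{s,t}$ is $0$.

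The printed statement has the same flaw; the correct nonzero case is $s\le\min(\Delta,t-\Delta)$. The only downstream use is the detection step in the proof of \Cref{thm:cpt_hd_subW}, at $t=\Delta+d$ with $s\le d\le\Delta$, and it satisfies this condition. Your proof should carry $s\le t-\Delta$ explicitly rather than inherit the misstatement. Separately, as you noted, the norm there is $\kappa/(\sqrt2\sigma)$, which equals the stated $\kappa/\sqrt2$ only when $\sigma=1$.
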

\begin{proof}
    Let $B_i$ denote the event that $X_i$ is not contaminated, i.e. $X_i$ is drawn from the inlier distribution $F_i$. 
Then $\mathbb{P}(B_i) = 1-\varepsilon_i$, and for any fixed $s,t$ we have
\[
\mathbb{P}(B_{t-s+1} \cap B_s)
= (1-\varepsilon_{t-s+1})(1-\varepsilon_s)\ge (1-\varepsilon)^2
\ge 1 - 2\varepsilon,
\]
since $\varepsilon_i \le \varepsilon$ for all $i$. Conditional on the event $B_{t-s+1} \cap B_s$, the random variable $Y_{s,t}$ has the same distribution as
\[
(Z_{t-s+1} - Z_s)/(\sqrt{2}\sigma),
\]
where $Z_{t-s+1} \sim F_{t-s+1}$ and $Z_s \sim F_s$ are independent. Direct evaluation of the mean and variance gives the result.
\end{proof} 

\subsection{Proof of Theorem \ref{thm:cpt_hd_subW}}
We first state a corollary of \Cref{thm:poly-time-main}, which is needed in the proof of \Cref{thm:cpt_hd_subW}.
\begin{corollary}\label{rmk:poly-time-main}
    In the setting of \Cref{thm:poly-time-main}, provided that $\kappa \gtrsim \varepsilon \sqrt{p \log(1/\varepsilon)}$, and that the signal size input $\kappa_0$ satisfies $2/C_\gamma\leq \kappa_0^2/\kappa^2\leq \bar{c}$, we can use the same proof to show that the minimum sample size required to keep both type I error and type II error below $\delta$ is given by 
    \begin{equation*}
    G(p,\kappa,\varepsilon,\delta,\mathcal{G}^{p}_{\theta,M}) \asymp_{\log} \begin{cases}
        \frac{\sqrt{p}}{\kappa^2}, &\text{if $\varepsilon\sqrt{p\log(1/\varepsilon)}\lesssim \kappa \lesssim 1$,}\\
        \frac{\sqrt{p}}{\kappa}, &\text{if $1\lesssim\kappa \lesssim \sqrt{p}$,}\\
        1, &\text{if $\kappa \gtrsim \sqrt{p}$,}
    \end{cases}
    \end{equation*}
    up to poly-logarithmic factors in $p$, $1/\kappa$, and 
$1/\delta$, and constant factors depending only on $\theta$ and $M$. 
\end{corollary}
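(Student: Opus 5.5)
The plan is to invert the signal-strength condition of \Cref{thm:poly-time-main}(c), treating it as a lower bound on $n$ for given $(p,\kappa,\varepsilon,\delta)$. From the proof of \Cref{thm:poly-time-main}, type II error control (and, since $\kappa_0\asymp\kappa$, type I control via part (b)) holds as soon as \eqref{proof_eqn:t2e_condition1} holds, namely
\[
\kappa^2 \gtrsim_{\log} \frac{\sqrt{p}}{n}+\frac{\kappa}{\sqrt{n}}+\varepsilon^2p\log(1/\varepsilon)+\frac{p}{n^2},
\]
together with the sample-size requirement $u\le 0.08$ in \eqref{eqn:u_val}, which only needs $n\gtrsim \log(1/\delta)$.

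The condition $\kappa\gtrsim\varepsilon\sqrt{p\log(1/\varepsilon)}$ (with a large enough constant) makes the contamination term at most $\kappa^2/4$, so it can be absorbed; this is the only place the lower bound on $\kappa$ enters. The inequality then reduces, up to polylogarithmic factors, to three separate requirements on $n$:
\begin{itemize}
\item $n\gtrsim \sqrt{p}/\kappa^2$, from $\sqrt{p}/n$;
\item $n\gtrsim 1/\kappa^2$, from $\kappa/\sqrt{n}$;
\item $n\gtrsim \sqrt{p}/\kappa$, from $p/n^2$.
\end{itemize}
Adding the requirement $n\gtrsim 1$ (more precisely $\mathrm{polylog}(1/\delta)$) from $u\le 0.08$, we obtain
\[
G\asymp_{\log}\max\Big(\frac{\sqrt{p}}{\kappa^2},\frac{\sqrt{p}}{\kappa},1\Big),
\]
since $1/\kappa^2\le\sqrt{p}/\kappa^2$.

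Next I compare the three terms by cases.
\begin{itemize}
\item If $\kappa\lesssim1$, then $\sqrt{p}/\kappa^2\ge\sqrt{p}/\kappa\ge\sqrt{p}\ge 1$, so $G\asymp\sqrt{p}/\kappa^2$.
\item If $1\lesssim\kappa\lesssim\sqrt{p}$, then $\sqrt{p}/\kappa$ dominates both $\sqrt{p}/\kappa^2$ and $1$, so $G\asymp\sqrt{p}/\kappa$.
\item If $\kappa\gtrsim\sqrt{p}$, all $p$-dependent terms are $O(1)$, so $G\asymp 1$.
\end{itemize}
The logarithmic factors, such as $\log^{2/\theta}(1/\delta)$ and $\log(2p/\delta)$, are then absorbed into the polylog in $p$, $1/\kappa$, $1/\delta$.

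The step needing the most care is showing that the $\beta_1,\beta_2$ terms from \Cref{lem:rowsumfilter} and \Cref{lem:spectralfilter1}/\Cref{lem:spectralfilter2}, for example $u^2n^2p\log(1/u)$, $un^2\kappa_0^2$ and $u\sqrt{p/n}$, do not add new constraints. For this I would use $u\lesssim\varepsilon+1/n$:
\begin{itemize}
\item $un^2\kappa_0^2$, after dividing by $n^2$, is $u\kappa_0^2$, which is absorbed by taking $\bar c$ small and $u$ small;
\item $u^2p\log(1/u)\lesssim \varepsilon^2p\log(1/\varepsilon)+p\log n/n^2$, which falls under the terms already treated;
\item $u\sqrt{p/n}\lesssim u^2p+1/n$, which is also absorbed.
\end{itemize}
The lower-order requirements are therefore unchanged, which finishes the argument.
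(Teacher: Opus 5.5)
Your proposal is correct and is essentially the argument the paper has in mind when it says ``we can use the same proof''. You read off the sufficient condition \eqref{proof_eqn:t2e_condition1} from the proof of \Cref{thm:poly-time-main}, absorb the contamination term via the lower bound on $\kappa$ and the $u$-dependent $\beta_1,\beta_2$ terms via $u\lesssim\varepsilon+1/n$, invert term by term to get $n\gtrsim_{\log}\max(\sqrt{p}/\kappa^2,\sqrt{p}/\kappa,1)$, and split into the three cases. The one step you leave implicit, as the paper does, is turning the $\log n$ factors (e.g.\ $\log^{2/\theta}(n/\delta)$, $\log(1/u)$) into polylogs of $p$, $1/\kappa$, $1/\delta$. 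This is routine: $n\ge CA\log^k(A)$ with $C$ large implies $n\ge A\log^k(n)$.
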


\begin{proof}[Proof of Theorem \ref{thm:cpt_hd_subW}] 

    Throughout the proof, we choose $C>0$ such that $$h_t= G(p,\kappa,\varepsilon,\delta_t,\mathcal{G}^p_{\theta,M}),$$
    where $\delta_t=(4\alpha)/[t(r+1)(r+2)]$. This is possible by \Cref{rmk:poly-time-main}. 

    (a) We can write the event $\{\hat{t}=\infty\}$ as
    \begin{equation*}
        \{\hat{t}=\infty\}=\bigcap_{t\geq 2} \bigcap_{\substack{h_{t} \leq s \leq \lfloor t/2\rfloor, \\ u(2\varepsilon, s, \delta_{t}) \leq 0.08}} \left\{ \mathrm{RobustMeanTest}\left(\{Y_{i,t}\}_{i=1}^s; \frac{\kappa_0}{\sqrt{2}}, \delta_{t}, 2\varepsilon, C_\gamma, T_u\right) = 0 \right\}.
    \end{equation*}
    By a union bound argument, it holds that 
    \begin{align}
        \mathbb{P}_\infty(\hat{t}<\infty)&\leq \sum_{t=2}^{\infty} \sum_{\substack{h_{t} \leq s \leq \lfloor t/2\rfloor, \\ u(2\varepsilon, s, \delta_{t}) \leq 0.08}} \mathbb{P}_\infty\left[\mathrm{RobustMeanTest}\left(\{Y_{i,t}\}_{i=1}^s; \frac{\kappa_0}{\sqrt{2}}, \delta_{t}, 2\varepsilon, C_\gamma, T_u\right)\right] \nonumber\\
        &\leq \sum_{t=2}^{\infty}\sum_{s=1}^{\lfloor t/2\rfloor} \frac{4\alpha}{t^2(t+1)}\leq \sum_{t=2}^{\infty} \frac{2\alpha}{t(t+1)}=\alpha, \label{proof_eqn:hdcpt_t1e}
    \end{align}
    where the second inequality follows from \Cref{thm:poly-time-main}.

(b) Note that $(X_1,\ldots, X_\Delta)$ have the same law under $\Pb_\infty$ and $\Pb_\Delta$. Thus,
    \begin{equation}\label{proof_eqn:hdcpt_t1e2}
    \mathbb{P}_\Delta(\hat{t}\leq\Delta)=\mathbb{P}_\infty(\hat{t}<\Delta)\leq \sum_{t=2}^{\Delta}\sum_{s=1}^{\lfloor t/2\rfloor} \frac{4\alpha}{t^2(t+1)}<\alpha,
    \end{equation}
    where the first inequality follows from the fact that $\{\hat{t}\leq \Delta\}\subseteq\{\hat{t}<\infty\}$ and the last inequality follows from \eqref{proof_eqn:hdcpt_t1e}.

(c) Denote  $$d=h_{2\Delta}=G\left(p,\kappa,\varepsilon, \delta_{2\Delta}, \mathcal{G}^p_{\theta,M}\right).$$
We note that \Cref{assum: hd_cpt} implies that
$\Delta\geq d$. Therefore, we have that $\E[Y_{s,t}]=\kappa$ for $t>\Delta$ and $s\leq d$, i.e. all samples in $\{Y_{s,t}\}_{s=1}^d$ has the same mean. In addition, we have that $d\geq G(p,\kappa,\varepsilon, \delta_{\Delta+d})$ since we are working with the same testing problem but with different type I error probability, specifically $\delta_{2\Delta}\leq \delta_{\Delta+d}$. Therefore, we can upper bound failure probability by
    \begin{align*}
     \Pb(\hat{t}>\Delta+d)
&\leq\Pb(\mathrm{RobustMeanTest}(\{Y_{i,\Delta+d}\}_{i=1}^d;\kappa_0, \delta_{\Delta+d}, 2\varepsilon, T_u)=0) \\
&\leq \frac{4\alpha}{(\Delta+d)^{2}(\Delta+d+1)}.
\end{align*}
where the second inequality follows from \Cref{thm:poly-time-main}, since the required assumptions are satisfied for our choice of $d$. Combining with \eqref{proof_eqn:hdcpt_t1e2}, we have 
\begin{align*}
     \Pb(\Delta<\hat{t}\leq\Delta+d)
&=\Pb(\hat{t}>\Delta)-\Pb(\hat{t}>\Delta+d)\\
&\geq 1-\sum_{t=2}^{\Delta}\sum_{s=1}^{\lfloor t/2\rfloor} \frac{4\alpha}{t^2(t+1)}- \frac{4\alpha}{(\Delta+d)^{2}(\Delta+d+1)}\\
&\geq 1-\alpha.
\end{align*}
\end{proof}

\subsection{Proof of Theorem \ref{thm:cpt_hd_fm}}
We first state a corollary of \Cref{thm:poly-time-main-mom}, which is needed in the proof of \Cref{thm:cpt_hd_subW}.

\begin{corollary}\label{rmk:poly-time-main-mom}
    In the setting of \Cref{thm:poly-time-main-mom}, provided that $\kappa \gtrsim \varepsilon \sqrt{p \log(1/\varepsilon)}$, and that the signal size input satisfies $2/C_\gamma\leq \kappa_0^2/\kappa^2 \leq \bar{c}$, we can use the same proof to show that the minimum sample size required to keep both type I error and type II error below $\delta$ is given by 
\begin{equation*}
   \tilde{G}(p,\kappa,\varepsilon,\delta,\mathcal{P}^p_{v,\phi})\asymp_{\log} \begin{cases}
            {\sqrt{p}}/{\kappa^2}, &\text{if $\varepsilon\sqrt{p}\log(1/\varepsilon)\lesssim \kappa \lesssim p^{-[(2v-2)\vee (v+4)]/(4v-8)}$.}\\
            {p^{1+2/v}}/{\kappa^{4/v}}+{p}/{\kappa^{v/(v-1)}}, &\text{if $p^{-[(2v-2)\vee (v+4)]/(4v-8)}\lesssim \kappa \lesssim \sqrt{p}$.}\\
            1, &\text{if $ \kappa \gtrsim \sqrt{p}$.}
        \end{cases}
\end{equation*}
up to poly-logarithmic factors in $p$, $1/\kappa$, and $1/\omega$ and constant factors depending only on $\phi$. 
\end{corollary}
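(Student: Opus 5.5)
The plan is to invert the signal-strength condition \eqref{eqn:test_t2e_mom} of \Cref{thm:poly-time-main-mom}. We fix $\omega=\delta$ and $K=\lceil 8\log(1/\delta)\rceil$, and treat all polylogarithmic factors as absorbed. For every regime of $\kappa$ we then find the smallest $n$ for which the right-hand side of \eqref{eqn:test_t2e_mom} is at most $c\kappa^2$. Since $\kappa_0\asymp\kappa$ is assumed, both the type I condition (b) and the type II condition (c) of \Cref{thm:poly-time-main-mom} reduce to the same inequality. We must also check that the outlier level \eqref{eqn:u_val-mom} satisfies $u\le 0.08$. This holds once $n_0\gtrsim \log K$ and $n_0\gtrsim p$ (so that $(p/n_0)^{v/4}$ is small), using $\varepsilon<0.08$. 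The assumption $\kappa\gtrsim\varepsilon\sqrt{p}\log(1/\varepsilon)$ handles the contamination term: $\varepsilon^2p\log(1/\varepsilon)\lesssim\kappa^2$, so that term never dictates $n$.

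The main step is the case $n>pK$, where the condition reads
\[
\kappa^2\gtrsim_{\log}\frac{\sqrt p}{n}+\Big(\frac{p}{n}\Big)^{2-2/v}+\frac{p^{1+v/2}}{n^{v/2}}.
\]
Each term is decreasing in $n$, so the required sample size is, up to constants and logs, the maximum of three individual thresholds:
\[
n_1=\frac{\sqrt p}{\kappa^2},\qquad n_2=p\,\kappa^{-2/(2-2/v)}=\frac{p}{\kappa^{v/(v-1)}},\qquad n_3=\frac{p^{1+2/v}}{\kappa^{4/v}},
\]
together with the side constraint $n\gtrsim pK$. Next we compare them. $n_1\ge n_2$ is equivalent to $\kappa^{v/(v-1)-2}\ge\sqrt p$, i.e.\ $\kappa\lesssim p^{-(v-1)/(2v-4)}$. $n_1\ge n_3$ is equivalent to $\kappa^{2-4/v}\lesssim p^{-1/2-2/v}$, i.e.\ $\kappa\lesssim p^{-(v+4)/(4v-8)}$. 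Combining these gives the first breakpoint $p^{-[(2v-2)\vee(v+4)]/(4v-8)}$, below which $\sqrt p/\kappa^2$ dominates. Above it the sample size is $n_2+n_3$. This continues until both $n_2$ and $n_3$ fall to order $p$, which happens at $\kappa\asymp\sqrt p$ for $n_3$ and at $\kappa\asymp1$ for $n_2$. In this range we must still check that $n_2+n_3\gtrsim p$ on the whole middle region, so that the case $n>pK$ is consistent there.

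For $\kappa\gtrsim\sqrt p$ we use the other branch of \eqref{eqn:test_t2e_mom}, the case $n\le pK$. There the condition $\kappa^2\gtrsim p\cdot\mathrm{polylog}$ holds regardless of $n$. The only remaining requirement on $n$ is $u\le0.08$, which forces $n_0$ to be at least a polylogarithmic constant, so $\tilde G\asymp_{\log}1$.

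The obstacle I expect is bookkeeping at the regime boundaries. In the middle regime the $n>pK$ formula must be matched against the $u\le0.08$ requirement that $n_0\gtrsim p$. Near $\kappa\asymp\sqrt p$ the cleanest way out may be to switch to the $n\le pK$ branch, and there the $u$ constraint allows constant $(p/n_0)^{v/4}$ only through the $\min(1,\cdot)$ in \eqref{eqn:u_val-mom}. I would handle this by taking the maximum of the branch-wise thresholds and absorbing the $pK$ versus $p$ and $\log K$ discrepancies into the poly-logarithmic factors, which the statement allows.
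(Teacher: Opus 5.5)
Your proposal is correct and takes the same route as the paper, which simply inverts the signal-strength condition \eqref{eqn:test_t2e_mom} of \Cref{thm:poly-time-main-mom} without writing out the algebra; your thresholds $\sqrt p/\kappa^2$, $p/\kappa^{v/(v-1)}$, $p^{1+2/v}/\kappa^{4/v}$ and the breakpoint $p^{-[(2v-2)\vee(v+4)]/(4v-8)}$ all check out. One caveat, which the paper also leaves implicit: for $\kappa\gtrsim\sqrt p$ with $n_0\le p$, the $\min(1,\cdot)$ term forces $u>\varepsilon+0.05$, so $u\le 0.08$ is a smallness condition on $\varepsilon$ (roughly $\varepsilon<0.03$), and it cannot be absorbed into poly-logarithmic factors as you suggest.
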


The proof of \Cref{thm:cpt_hd_fm} is exactly the same as that of \Cref{thm:cpt_hd_subW}, up to replacing $G(p,\kappa,\varepsilon,\delta_t,\mathcal{G}_{\theta,M}^p)$ by $\tilde{G}(p,\kappa,\varepsilon,\delta_t,\mathcal{P}_{v,\phi}^p)$ and $\mathrm{RobustMeanTest}$ by $\mathrm{RobustMeanTest}_\mathrm{MoM}$.  

\section{Supplementary information for Section \ref{sect:sim_study}}\label{sect:hdcptalgo}
\subsection{Modified multivariate change point detection algorithm}\label{append:sim_study}
For clarity, we provide the full pseudocode for \Cref{alg:cpd_highd_sim} below. This algorithm is a minor modification of \Cref{alg:cpd_highd} that allocates the type I error budget $\alpha$ more efficiently.
\begin{algorithm}[H]
\caption{Online change point detection via mean testing (modified)}\label{alg:cpd_highd_sim}
\begin{algorithmic}
\INPUT Dataset $\{X_u\}_{u\in\mathbb{N}}$, Class of inlier distributions $\mathcal{D}=\{\mathcal{G}_{\theta,M}^p,\mathcal{P}_{v,\phi}^p\}$, Signal size input $\kappa_0>0$, Standard deviation $\sigma>0$, False alarm probability $\alpha\in(0,1)$, Minimum sample size $h_t$, Outlier control threshold $\Omega$, Contamination level $\varepsilon\in [0,\Omega)$, Filtering strength $C_\gamma$, Detection sensitivity factor $T_u$, Group number constant $K_c$ (for $\mathcal{D}=\mathcal{P}_{v,\phi}^p$ only) 
\State $t \gets 2$
\State $r \gets 1$
\State $\mathrm{FLAG} \gets 0$
\While{$\mathrm{FLAG} = 0$}
    \State $t \gets t + 1$
    \State $\text{tested} \gets \text{false}$
    \State $\delta_t\gets \frac{4\alpha}{t(r+1)(r+2)}$
    \State $Y_{s,t} \gets (X_{t-s+1} - X_s)/(\sqrt{2}\sigma), \forall 1\leq s\leq \lfloor t/2 \rfloor$
    \For{$s= h_t$ \textbf{to}  $\lfloor t/2 \rfloor$}
        \If{$\mathcal{D}=\mathcal{G}_{\theta,M}^p$}
    \State $u \gets 2\varepsilon + \frac{1}{n}
+ \sqrt{\frac{2(2\varepsilon + 1/n)\log(4/\delta)}{n}}
+ \frac{2\log(4/\delta)}{3n}$
        \If{$u\leq \Omega$}
        \State $\mathrm{FLAG}=\mathrm{RobustMeanTest}(\{Y_{i,t}\}_{i=1}^s;\kappa_0/\sqrt{2}, \delta_t, 2\varepsilon, C_\gamma, T_u)$
        \EndIf
        \Else
        \State $K\leftarrow \lceil K_c \log(1/\delta_t) \rceil$
        \State $n\gets \lfloor n/K \rfloor$
        \State $u \gets 2\varepsilon + \frac{1}{20}\min\left(1,\left(\frac{p}{n_0}\right)^{v/4}\right)
+ \sqrt{\frac{2(2\varepsilon + 0.05\min((p/n_0)^{v/4},1))\log(16K)}{n_0}}
+ \frac{2\log(16K)}{3n_0}$
        \If{$u\leq \Omega$}
        \State $\mathrm{FLAG}=\mathrm{RobustMeanTest}_{\text{MoM}}(\{Y_{i,t}\}_{i=1}^s;\kappa_0/\sqrt{2}, K, 2\varepsilon, C_\gamma, T_u)$
        \EndIf
        \EndIf
        \If{$\mathrm{FLAG} = 1$} \textbf{break} \EndIf
    \EndFor
    \If{$\text{tested} = \text{true}$} \State $r \gets r + 1$ \EndIf
\EndWhile
\State \Return $t$
\end{algorithmic}
\end{algorithm}

\subsection{Faceted plots for sensitivity analysis}\label{append:facet_plot}
\Cref{fig:individual-cgamma} provides a faceted version of \Cref{fig:hd_misspec}, with the empirical probabilities shown separately for each value of $C_\gamma$.
\begin{figure}[htbp]
     \centering
     \begin{subfigure}[b]{0.48\textwidth}
         \centering
         \includegraphics[width=\textwidth]{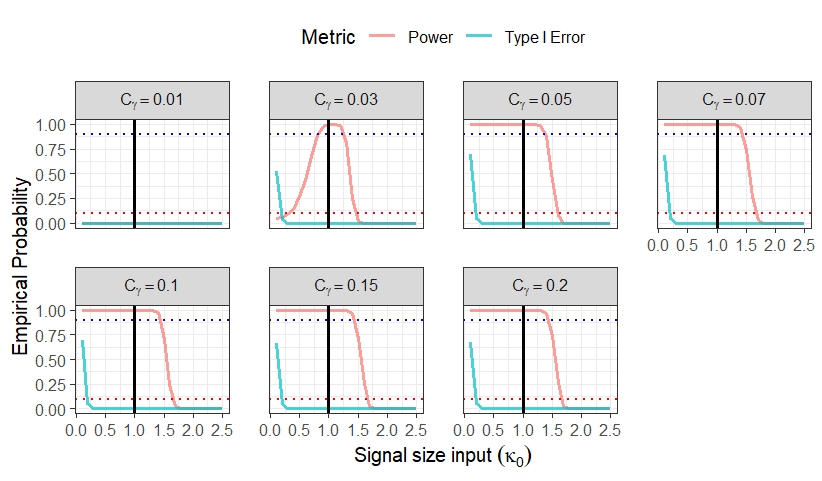}
         \caption{Laplace distribution, $p=10, \varepsilon=0.01$.}
     \end{subfigure}
     \hfill
     \begin{subfigure}[b]{0.48\textwidth}
         \centering
         \includegraphics[width=\textwidth]{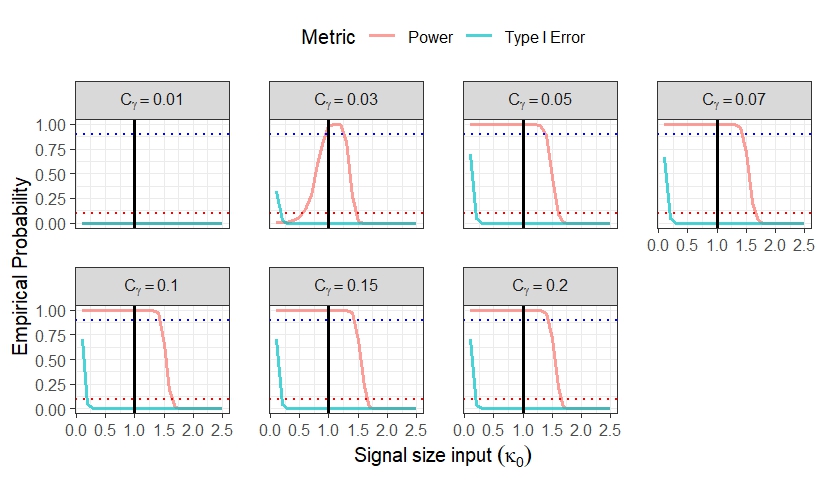}
         \caption{$t$-distribution, $p=10, \varepsilon=0.01$.}
     \end{subfigure}
     \begin{subfigure}[b]{0.48\textwidth}
         \centering
         \includegraphics[width=\textwidth]{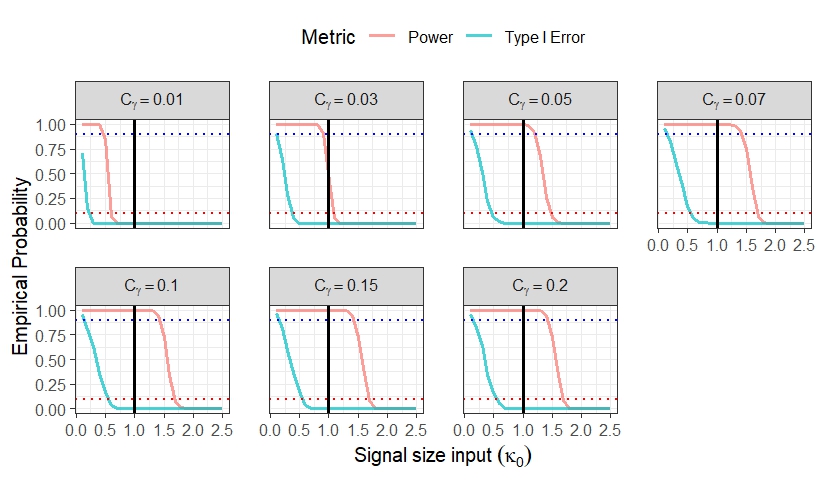}
         \caption{Laplace distribution, $p=600, \varepsilon=0.01$.}
     \end{subfigure}
     \hfill
     \begin{subfigure}[b]{0.48\textwidth}
         \centering
         \includegraphics[width=\textwidth]{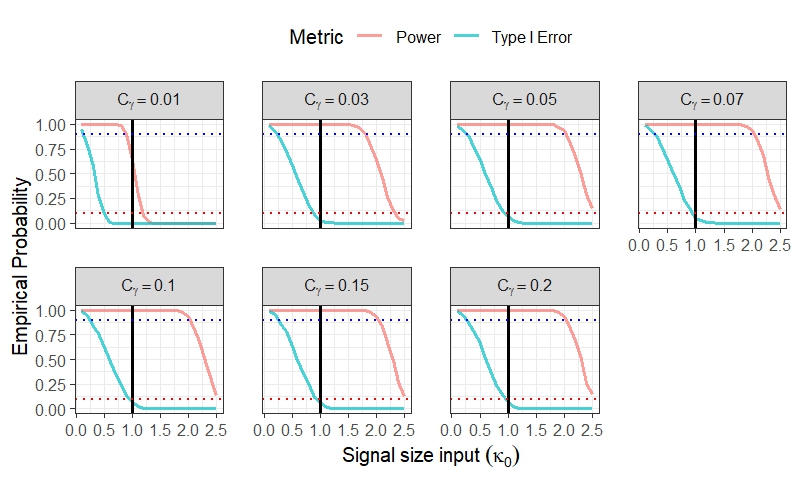}
         \caption{$t$-distribution, $p=600, \varepsilon=0.01$.}
     \end{subfigure}
     \caption{Empirical type I error and power in testing between hypotheses in \eqref{eqn:hypotheses} using \Cref{alg:efficient-mean-tester} under different specifications of $\kappa_0$, shown separately for each value of the filtering parameter $C_\gamma$. True signal size is $\kappa=1$, indicated by a vertical black line. For each value of $\kappa_0$ and $C_\gamma$, we report the proportion of simulation runs which resulted in false rejection (type I error) and correct rejections (Power). The horizontal red line indicates the 10\% threshold for type I error control and the horizontal blue line indicates the 90\% threshold for power.}
     \label{fig:individual-cgamma}
\end{figure}

\newpage
\section{Auxiliary lemmas}\label{sect:auxillarylem}
\subsection{Basic properties of sub-Weibull random variables}\label{subsect:subw_prop}
\begin{definition}[Orlicz norms] \label{defn:orlicz}
    Let $f: [0, \infty) \rightarrow [0, \infty)$ be a non-decreasing function with $f(0) = 0$. The f-Orlicz norm of a real-valued random variable $X$ is
    $$\norm{X}_f = \inf\{t > 0 : \E f(|X|/t) \leq 1\}.$$ 
\end{definition}

\begin{definition} [Sub-Weibull random variables] \label{defn:subWeib}
    A random variable $X$ is sub-Weibull with parameter $\theta > 0$,
denoted sub-Weibull$(\theta)$, if
$$\norm{X}_{\psi_\theta} < \infty,$$
with the function $\psi_\theta$ defined by $\psi_\theta(x) = \exp(x^\theta)- 1$ for $x \geq 0$. 
\end{definition}

\begin{lemma} [\citealt{Vladimirova_Girard_Nguyen_Arbel_2020}, Theorem 2.1] \label{thm:subWeib}
    Let X be a sub-Weibull$(\theta)$ random variable with $ \theta>0$ and $\norm{X}_{\psi_\theta} = M < \infty$. Then, we have the following properties.
    \begin{enumerate}[label=(\textbf{\alph*})]
        \item For any $x \geq 0$, it holds that
    \[
        \mathbb{P}(|X| \geq x) \leq 2 \exp\{-(x/M)^\theta\}.
    \]
        \item There exists an absolute constant $K_{\theta} > 0$ such that
        \[
            (\E|X|^k)^{1/k} \leq K_\theta Mk^{1/\theta}, \quad\forall k\geq 1.
        \]
    \end{enumerate}
\end{lemma}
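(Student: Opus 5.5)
Lemma~\ref{thm:subWeib} is a standard property of sub-Weibull random variables. Both parts follow directly from the definition of the $\psi_\theta$ Orlicz norm. By homogeneity I would normalise to $M=1$, meaning $\E\exp(|X|^\theta)\le 2$. This holds because the infimum in Definition~\ref{defn:orlicz} is attained by monotone convergence, so $\E\psi_\theta(|X|/M)\le 1$. If $M=0$, then $X=0$ almost surely and both claims are trivial.

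\textbf{Part (a).} Apply Markov's inequality to the nonnegative, increasing transform $x\mapsto \exp(x^\theta)$:
\[
\Pb(|X|\ge x)=\Pb\bigl(\exp(|X|^\theta)\ge \exp(x^\theta)\bigr)\le \frac{\E\exp(|X|^\theta)}{\exp(x^\theta)}\le 2\exp(-x^\theta).
\]
Rescaling by $M$ gives the stated bound.

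\textbf{Part (b).} I would integrate the tail. Writing
\[
\E|X|^k=\int_0^\infty k x^{k-1}\Pb(|X|\ge x)\,dx
\]
and inserting the bound from (a) gives
\[
\E|X|^k\le 2k\int_0^\infty x^{k-1}e^{-x^\theta}\,dx .
\]
The substitution $y=x^\theta$ turns the integral into $\theta^{-1}\Gamma(k/\theta)$, so
\[
\E|X|^k\le \frac{2k}{\theta}\,\Gamma\!\left(\frac{k}{\theta}\right)=2\,\Gamma\!\left(\frac{k}{\theta}+1\right).
\]
Stirling's bound $\Gamma(z+1)\le C z^{z}$ for $z\ge z_0$, together with boundedness of $\Gamma$ on compact sets for small $z$, yields
\[
\bigl(\Gamma(k/\theta+1)\bigr)^{1/k}\le C_\theta\,(k/\theta)^{1/\theta}.
\]
Also $2^{1/k}\le 2$. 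Absorbing $\theta^{-1/\theta}$ and these constants into $K_\theta$ gives $(\E|X|^k)^{1/k}\le K_\theta M k^{1/\theta}$.

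The only mildly delicate step is making the Gamma-function bound uniform over all $k\ge 1$ rather than only asymptotically. I would handle it by using Stirling for $k/\theta\ge 1$ and noting that $\Gamma(z+1)\le 1$ for $z\in[0,1]$.
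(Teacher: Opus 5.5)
Your proof is correct: normalising via monotone convergence, applying Markov's inequality to $\exp(|X|^\theta)$ for (a), and integrating that tail to get $\E|X|^k \le 2\,\Gamma(k/\theta+1)$ followed by a Stirling bound made uniform in $k\ge 1$ for (b) is the standard argument. The paper gives no proof of its own and imports the result from Vladimirova et al.\ (2020), whose proof follows this same Markov-then-tail-integration route. (You could also skip the Gamma function in (b) by bounding $\E|X|^k$ directly from $\E e^{|X|^\theta}\le 2$ using $\sup_{x\ge 0} x^k e^{-x^\theta} = (k/(e\theta))^{k/\theta}$.)
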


We now provide two tail bound results from literature for sums and quadratic forms of independent sub-Weibull random variables respectively. \Cref{prop:quadraticweibulltail} below can be viewed as an extension of the Hanson--Wright inequality \citep{Hanson1971bound}.
\begin{lemma}[\cite{Hao_2019}, Theorem 3.1]\label{lemma:weib_concentration}
    Suppose $\{X_i\}_{i=1}^n$ are independent sub-Weibull random variables with $\norm{X_i}_{\psi_\theta}\leq M$. Then there exists an absolute constant $C_\theta$ only depending on $\theta$ such that for any $\mathbf{a}\in\mathbb{R}^n$ and $0<\delta<e^{-2}$,
    \begin{equation*}
        \left|\sum_{i=1}^na_iX_i-\E\left[\sum_{i=1}^na_iX_i\right]\right|\leq C_\theta M \left(\norm{\mathbf{a}}_2\sqrt{\log(1/\delta)}+\norm{\mathbf{a}}_\infty\log^{1/\theta}(1/\delta)\right)
    \end{equation*}
    with probability $1-\delta$.
\end{lemma}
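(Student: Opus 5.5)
The plan is the moment method: bound the $L^k$ norms of the centred sum $S=\sum_{i=1}^n a_i(X_i-\E X_i)$ for every $k\geq 2$, then convert the moment bound into a tail bound with Markov's inequality at $k\asymp\log(1/\delta)$. Concretely, I aim to show that there is a constant $C'_\theta$ with
\[
\|S\|_{L^k}\;\leq\; C'_\theta M\bigl(\sqrt{k}\,\norm{\mathbf{a}}_2+k^{1/\theta}\norm{\mathbf{a}}_\infty\bigr)\qquad\text{for all }k\geq 2 .
\]
Taking $k=\log(1/\delta)$, which is at least $2$ because $\delta<e^{-2}$, Markov's inequality gives $\Pb(|S|\geq e\|S\|_{L^k})\leq e^{-k}=\delta$. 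This is exactly the claimed bound with $C_\theta=eC'_\theta$.

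\textbf{Step 1: reduction to symmetric Weibull variables.} First I symmetrise: $\|S\|_{L^k}\leq 2\|\sum_i a_i\epsilon_i X_i\|_{L^k}$ with independent Rademacher signs $\epsilon_i$. Next I use the tail bound of \Cref{thm:subWeib}(a), $\Pb(|X_i|\geq x)\leq 2e^{-(x/M)^\theta}$. This lets me dominate $|X_i|$ stochastically by $M(W_i+c_\theta)$, where the $W_i$ are i.i.d.\ standard symmetric Weibull$(\theta)$ variables, that is, $\Pb(|W_i|\geq x)=e^{-x^\theta}$. The shift $c_\theta$ absorbs the factor $2$ in the tail. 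By the contraction principle for symmetric sums, the shift contributes at most $c_\theta\|\sum_i a_i\epsilon_i\|_{L^k}\lesssim\sqrt{k}\norm{\mathbf{a}}_2$ via Khintchine's inequality. It therefore suffices to bound $\|\sum_i a_iW_i\|_{L^k}$.

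\textbf{Step 2, case $\theta\leq 1$: log-concave tails.} Here $-\log\Pb(|W_i|>x)=x^\theta$ is concave, and the moment formula of Hitczenko--Montgomery-Smith--Oleszkiewicz applies:
\[
\Bigl\|\sum_i a_iW_i\Bigr\|_{L^k}\asymp_\theta \Bigl(\sum_i|a_i|^k\,\E|W_i|^k\Bigr)^{1/k}+\sqrt{k}\,\norm{\mathbf{a}}_2 .
\]
For the first term, $\E|W|^k=\Gamma(1+k/\theta)\leq (Ck/\theta)^{k/\theta}$, and $(\sum_i|a_i|^k)^{1/k}\leq\norm{\mathbf{a}}_\infty^{1-2/k}\norm{\mathbf{a}}_2^{2/k}$. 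Then Young's inequality splits $k^{1/\theta}\norm{\mathbf{a}}_\infty^{1-2/k}\norm{\mathbf{a}}_2^{2/k}$ into $k^{1/\theta}\norm{\mathbf{a}}_\infty$ plus a multiple of $\sqrt{k}\norm{\mathbf{a}}_2$; the cross factor $k^{(1/\theta-1/2)\cdot 2/k}$ is bounded by a constant depending on $\theta$. If this citation is unavailable, the fallback is to sum the bound $\E|W|^k\le(Ck)^{k/\theta}$ term by term in a Rosenthal-type moment expansion.

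\textbf{Step 3, case $1<\theta\leq 2$: the main obstacle.} Here the variables are sub-exponential with log-convex tails. The Gluskin--Kwapień formula gives
\[
\Bigl\|\sum_i a_iW_i\Bigr\|_{L^k}\asymp \sqrt{k}\,\norm{\mathbf{a}}_2+k^{1/\theta}\norm{\mathbf{a}}_{\theta^*},\qquad \tfrac1\theta+\tfrac1{\theta^*}=1 .
\]
The difficulty is that $\norm{\mathbf{a}}_{\theta^*}$ may exceed $\norm{\mathbf{a}}_\infty$. My first attempt is to fix the threshold $\lambda=k^{1/\theta-1/2}\norm{\mathbf{a}}_\infty$, split the coordinates by whether $|a_i|\geq\lambda$, and estimate $\sum_i|a_i|^{\theta^*}$ separately on the two sets, bounding the small coordinates by $\norm{\mathbf{a}}_2^2\lambda^{\theta^*-2}$. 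If this interpolation turns out to lose a factor, I would instead prove the bound with $\norm{\mathbf{a}}_{\theta^*}$ (or the cruder Bernstein bound $k\norm{\mathbf{a}}_\infty$) in place of $k^{1/\theta}\norm{\mathbf{a}}_\infty$ in this range, as in \citet{Hao_2019}. That weaker form is already sufficient for every application of the lemma in the paper, since there either $\mathbf{a}$ is nearly flat or the $\log^{1/\theta}$ term is dominated.
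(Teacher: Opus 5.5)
\textbf{The gap: for $1<\theta<2$ the lemma as stated is false, so Step~3 cannot succeed.} No interpolation can bound $k^{1/\theta}\norm{\mathbf{a}}_{\theta^*}$ by $C(\sqrt{k}\norm{\mathbf{a}}_2+k^{1/\theta}\norm{\mathbf{a}}_\infty)$, because the inequality you are asked to prove fails in this range. The paper gives no proof and simply imports the statement from \citet{Hao_2019}, so nothing in the paper rescues this range.

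For a counterexample, take $X_i$ i.i.d.\ symmetric with $\Pb(|X_i|\ge x)=e^{-x^\theta}$, so $\norm{X_i}_{\psi_\theta}=2^{1/\theta}$. Put $t=\log(1/\delta)$ and $\mathbf{a}=(1,\dots,1)\in\R^m$ with $m=\lceil t^{\beta}\rceil$, for some $\beta\in[2/\theta-1,1)$. Then $t^{1/\theta}\le\sqrt{mt}$, so the claimed threshold satisfies $x\lesssim\sqrt{mt}$, whereas
\[
\Pb\Bigl(\sum_{i\le m}X_i\ge x\Bigr)\ge\prod_{i\le m}\Pb(X_i\ge x/m)=2^{-m}\exp\bigl(-x^\theta m^{1-\theta}\bigr),\qquad x^\theta m^{1-\theta}\lesssim t^{\theta/2}m^{1-\theta/2}\lesssim t^{1-(1-\theta/2)(1-\beta)}.
\]
Both $m$ and this exponent are $o(t)$, so the probability exceeds $\delta=e^{-t}$ once $t$ is large.

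In moment terms, the same product event gives $\norm{\sum_{i\le m}X_i}_{L^k}\gtrsim_\theta k^{1/\theta}m^{1/\theta^*}$ for $m\le k$. This exceeds $\sqrt{k}\norm{\mathbf{a}}_2+k^{1/\theta}\norm{\mathbf{a}}_\infty=\sqrt{km}+k^{1/\theta}$ by a factor of order $\min\{m^{1/\theta^*},(k/m)^{1/\theta-1/2}\}$, which is unbounded for $m=k^{\beta}$ with $0<\beta<1$. Your threshold $\lambda=k^{1/\theta-1/2}\norm{\mathbf{a}}_\infty$ shows the same loss. It exceeds $\norm{\mathbf{a}}_\infty$, so the large-coordinate set is empty, and the bound you obtain is $k^{1/\theta-1/2}(\sqrt{k}\norm{\mathbf{a}}_2)^{2/\theta^*}(k^{1/\theta}\norm{\mathbf{a}}_\infty)^{1-2/\theta^*}$, with a factor $k^{1/\theta-1/2}$ that cannot be removed. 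Your fallback is therefore a different statement, not a weaker route to this one, and it is not what the paper attributes to \citet{Hao_2019}.

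\textbf{What your argument does prove, and how to use it.} Steps~1--2 establish the lemma for $0<\theta\le1$ (and at $\theta=2$ Step~3 is just the sub-Gaussian bound), after two repairs.
\begin{enumerate}
\item Your tail labels are swapped. Hitczenko--Montgomery-Smith--Oleszkiewicz covers concave $N(x)=x^\theta$, i.e.\ log-convex tails ($\theta\le1$); Gluskin--Kwapie\'n covers convex $N$, i.e.\ log-concave tails ($\theta\ge1$). You attach the right formula to each range, so this is only naming.
\item For $\theta<1$ the additive shift $M(|W_i|+c_\theta)$ does not stochastically dominate $|X_i|$, since $y^\theta-(y-c_\theta)^\theta\to0$. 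Use $M(2^{1/\theta}|W_i|+(2\log 2)^{1/\theta})$ instead, which works for every $\theta>0$.
\end{enumerate}
For $1<\theta<2$, the correct replacement is the $\norm{\mathbf{a}}_{\theta^*}$ bound from your Gluskin--Kwapie\'n step. It does suffice for the paper, but for a cleaner reason than the one you give. Since $\theta^*\ge2$, we have $\norm{\mathbf{a}}_{\theta^*}\le\norm{\mathbf{a}}_2$, which yields $C_\theta M\norm{\mathbf{a}}_2\log^{1/\theta}(1/\delta)$. That is exactly what the paper uses wherever the order is $\theta$: the proof of \Cref{lem:regular-weights-imply-tester} and the bounds on $B_4,B_5$ in \Cref{lem:rowsumfilter-1} immediately coarsen to $\norm{\mu}_2\sqrt{n}\log^{1/\theta}(4/\delta)$ (resp.\ with $\sqrt{un}$). \Cref{lem:crossterm} applies the lemma only to squares and products of coordinates, which have order $\theta/2\le1$, where the statement as written holds. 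So the lemma should be restated for $1<\theta<2$, not proved as written.
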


\begin{proposition}[\citealp{Gotze2019}, Proposition 1.5] \label{prop:quadraticweibulltail}
Let $\theta \in (0,1] \cup \{2\}$, $A = (a_{ij}) \in \mathbb{R}^{n \times n}$ be a symmetric matrix and $X_1, \dotsc, X_n$ be independent mean zero sub-Weibull random variables of order $\theta$, with $\mathbb{E}X_i^2 = \sigma_i^2$ and $\|X_i\|_{\psi_\theta} \leq M$ for all $i \in \mathbb{Z}^+$ and for some $M > 0$. Then, there exists a constant $C_\theta > 0$, depending only on $\theta$, such that for any $x \geq 0$, we have
\begin{equation*}
    \mathbb{P}\biggl(\Bigl|\sum_{1 \leq i, j \leq n} a_{ij}X_i X_j - \sum_{i=1}^n a_{ii} \sigma_i^2\Bigr| \geq x\biggr) \leq 2\exp\biggl( - \frac{1}{C_{\theta}} \eta_\theta(x/M^2; A) \biggr),
\end{equation*}
where
\[
\eta_\theta(x; A) = \min\Biggl\{ \biggl(\frac{x}{\|A\|_{\mathrm{F}}}\biggr)^2, \frac{x}{\|A\|_2}, \biggl(\frac{x}{\|A\|_{2 \rightarrow \infty}}\biggr)^{\frac{2\theta}{2+\theta}}, \biggl(\frac{x}{\|A\|_{\max}}\biggr)^{\frac{\theta}{2}}  \Biggr\}.
\]
\end{proposition}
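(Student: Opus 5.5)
The statement immediately above is Proposition~\ref{prop:quadraticweibulltail}, the Hanson--Wright-type tail bound for quadratic forms in independent sub-Weibull variables. The plan follows the standard route of \citet{Gotze2019}: bound the $L^k$ norms of the centred quadratic form for every $k\geq 2$, then turn these moment bounds into a tail bound via Markov's inequality.

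\textbf{Step 1 (split into diagonal and off-diagonal parts).} I write
\[
\sum_{i,j} a_{ij}X_iX_j-\sum_i a_{ii}\sigma_i^2=\sum_i a_{ii}(X_i^2-\sigma_i^2)+\sum_{i\neq j}a_{ij}X_iX_j .
\]
For the diagonal part, each $X_i^2-\sigma_i^2$ is centred and sub-Weibull of order $\theta/2$ with $\psi_{\theta/2}$-norm $\lesssim M^2$. A Rosenthal-type moment bound (as used in Lemma~\ref{lemma:weib_concentration}) then gives
\[
\Bigl\|\sum_i a_{ii}(X_i^2-\sigma_i^2)\Bigr\|_{L^k}\lesssim M^2\bigl(\sqrt{k}\,\|\mathrm{diag}A\|_2+k^{2/\theta}\max_i|a_{ii}|\bigr).
\]
Both terms are dominated by the Frobenius and max-norm terms appearing in $\eta_\theta$.

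\textbf{Step 2 (off-diagonal chaos).} I decouple, replacing $\sum_{i\ne j}a_{ij}X_iX_j$ by $\sum a_{ij}X_iX'_j$ with $X'$ an independent copy; this costs only a constant factor in $L^k$. Conditioning on $X'$, the form is linear in $X$ with coefficient vector $b=AX'$. Applying the linear moment bound twice, once in $X$ and then in $X'$, together with a comparison to symmetric Weibull variables with density proportional to $e^{-|t|^\theta}$ (for which the Latała/Adamczak--Latała two-sided moment estimates of Gaussian-type chaos are available), I aim for
\[
\|Q\|_{L^k}\lesssim M^2\bigl(\sqrt{k}\,\|A\|_F+k\|A\|_2+k^{1/2+1/\theta}\|A\|_{2\to\infty}+k^{2/\theta}\|A\|_{\max}\bigr).
\]
The restriction $\theta\in(0,1]\cup\{2\}$ enters here. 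For $\theta\le1$ the comparison variables have log-convex tails, and for $\theta=2$ the variables are Gaussian-comparable, so known chaos moment bounds apply in both cases. The intermediate range is exactly where those bounds fail. I expect this step to be the main obstacle, in particular obtaining the mixed $\|A\|_{2\to\infty}$ term with the correct power of $k$.

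\textbf{Step 3 (from moments to tails).} Markov's inequality gives $\Pb(|Q|\ge e\|Q\|_{L^k})\le e^{-k}$. Given $x$, I choose $k$ as the largest value for which each of the four moment terms is at most $x/(eM^2)$. Inverting the four terms gives $k\asymp (x/M^2\|A\|_F)^2$, $x/(M^2\|A\|_2)$, $(x/M^2\|A\|_{2\to\infty})^{2\theta/(2+\theta)}$ and $(x/M^2\|A\|_{\max})^{\theta/2}$, so $k\asymp\eta_\theta(x/M^2;A)/C_\theta$. For small $k<2$ the bound is trivial after adjusting the prefactor to $2$. This yields the claimed inequality.
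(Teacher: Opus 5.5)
The paper gives no proof of this proposition. It quotes it from \citet{Gotze2019} and uses it as a black box. Your outline is essentially the route taken there: split off the diagonal, decouple the off-diagonal part, symmetrise and compare with symmetric Weibull variables, apply two-sided moment estimates for the order-two chaos, and pass to tails by Markov's inequality at $k\asymp\eta_\theta(x/M^2;A)$. Steps~1 and~3 are correct. The moment bound you aim for in Step~2 is the right one, since it inverts exactly to $\eta_\theta$.

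Step~2 carries all the content, and as written it points to the wrong tools in places.
\begin{itemize}
\item \textbf{Which chaos estimates.} For $\theta\le 1$ you need Kolesko and Lata{\l}a's estimates for symmetric variables with logarithmically \emph{convex} tails. Adamczak and Lata{\l}a treat logarithmically concave tails, i.e.\ $\theta\in[1,2]$. At $\theta=2$ the statement is just the classical Hanson--Wright inequality.
\item \textbf{Reducing to the four norms.} Two-sided estimates of this kind come with $\ell_k$-type sums, such as $k^{2/\theta}(\sum_{i,j}|a_{ij}|^k)^{1/k}$ and $k^{1/2+1/\theta}(\sum_i\|a_{i\cdot}\|_2^k)^{1/k}$. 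You still have to interpolate via $(\sum_i|b_i|^k)^{1/k}\le\|b\|_2^{2/k}\|b\|_\infty^{1-2/k}$. Then split on whether $\|b\|_2/\|b\|_\infty\le e^{k/2}$: if so, you lose only a factor $e$; if not, the term is absorbed by $\sqrt{k}\,\|A\|_F$.
\item \textbf{Iterating the linear bound is not a substitute.} Conditioning on $X'$ leaves $\sqrt{k}\,\|\,\|AX'\|_2\|_{L^k}+k^{1/\theta}\|\,\|AX'\|_\infty\|_{L^k}$, which are moments of suprema of linear forms. Crude bounds there lose something. Using $\mathbb{E}\|AX'\|_\infty\le CM\|A\|_F$ costs a factor $k^{1/\theta-1/2}$ on the Frobenius term. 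A plain union bound over rows costs a dimension-dependent factor.
\item \textbf{The comparison is not a direct domination.} Take $W_i$ symmetric Weibull with $\mathbb{P}(|W_i|>t)=e^{-t^\theta}$. The tail bound $2e^{-(t/M)^\theta}$ does not make $|X_i|$ stochastically smaller than $CM|W_i|$ near zero, but it does make it smaller than $C_\theta M(1+|W_i|)$. After symmetrising, the contraction principle for decoupled Rademacher chaos lets you pass to $C_\theta M\varepsilon_i(1+|W_i|)$. The terms with bare signs are then dominated by the Weibull chaos via conditional Jensen, since $\mathbb{E}|W_i|>0$.
\end{itemize}

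Your remark on the excluded range also needs correcting. For $\theta\in(1,2)$ the Adamczak--Lata{\l}a estimates do exist; what fails is the four-norm form itself. Already for linear forms, $\|b\|_\infty$ must be replaced by $\|b\|_{\theta/(\theta-1)}$. Taking $A=\mathbf{1}\mathbf{1}^\top/n$ with $k=n^2$ and $n$ large violates the corresponding moment bound. So the statement is false there, not merely unproved.
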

\begin{remark}
    \Cref{prop:quadraticweibulltail} can be restated as follows. Given the assumptions of \Cref{prop:quadraticweibulltail}, we have
\begin{align*}
    \left|\sum_{1 \leq i, j \leq n} a_{ij}X_i X_j - \sum_{i=1}^n a_{ii} \sigma_i^2\right| \leq C_\theta' M^2\bigg(&\norm{A}_F\sqrt{\log(1/\delta)}+\norm{A}_2\log(1/\delta)\\
    &+\norm{A}_{2\rightarrow \infty}\log^{\frac{2\theta}{2+\theta}}(1/\delta)+\norm{A}_{\max}\log^{2/\theta}(1/\delta)\bigg)
\end{align*}
for some $C_\theta'>0$ with probability at least $1-\delta$.
\end{remark}

The following lemmas in \cite{Zhang_Wei_2022} relate the $\psi_\theta$-Orlicz norm of random variable $X$ and $\psi_{\theta/2}$-Orlicz norm of $X^2$.
\begin{lemma}[\citealp{Zhang_Wei_2022}, Proposition 2]\label{cor:multiplied_norm}
    Let $X$, $Y$ be random variables such that\\ $\max(\norm{X}_{\psi_\theta}, \norm{Y}_{\psi_\theta})<\infty$ for some $\theta\in(0,\infty)$. Then,
    $$\norm{XY}_{\psi_{\theta/2}}=\norm{X}_{\psi_\theta}\norm{Y}_{\psi_\theta}.$$
\end{lemma}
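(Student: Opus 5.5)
The plan is to prove the product bound
\[
\norm{XY}_{\psi_{\theta/2}}\le \norm{X}_{\psi_\theta}\norm{Y}_{\psi_\theta},
\]
and then to treat the equality case separately, because the equality as worded cannot hold in general. Throughout I use \Cref{defn:orlicz} with $\psi_\theta(x)=e^{x^\theta}-1$, so that $\norm{Z}_{\psi_\theta}\le t$ iff $\E\exp\{(|Z|/t)^\theta\}\le 2$.

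\textbf{Step 1: reduction.} Write $a=\norm{X}_{\psi_\theta}$ and $b=\norm{Y}_{\psi_\theta}$. If $a=0$, then $X=0$ a.s., so $XY=0$ a.s. and both sides vanish; the same holds if $b=0$. Assume $a,b>0$. By monotone convergence in $t$, the infimum defining each norm is attained, so
\[
\E e^{|X/a|^\theta}\le 2 \quad\text{and}\quad \E e^{|Y/b|^\theta}\le 2.
\]

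\textbf{Step 2: the inequality $\le$.} Pointwise AM--GM gives
\[
\Bigl(\tfrac{|XY|}{ab}\Bigr)^{\theta/2}=\sqrt{|X/a|^\theta\,|Y/b|^\theta}\le \tfrac12\bigl(|X/a|^\theta+|Y/b|^\theta\bigr).
\]
Convexity of $\exp$ then gives
\[
\exp\Bigl\{\Bigl(\tfrac{|XY|}{ab}\Bigr)^{\theta/2}\Bigr\}\le \tfrac12\bigl(e^{|X/a|^\theta}+e^{|Y/b|^\theta}\bigr).
\]
Taking expectations, the right-hand side is at most $\tfrac12(2+2)=2$. Hence $t=ab$ is admissible for $XY$ in the $\psi_{\theta/2}$-norm, which proves the upper bound. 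No independence between $X$ and $Y$ is needed.

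\textbf{Step 3: when equality holds.} If $|Y|=c|X|$ a.s. for some constant $c\ge0$, then $b=ca$ and
\[
\E\exp\{(|XY|/t)^{\theta/2}\}=\E\exp\{(|X|/\sqrt{t/c})^\theta\}.
\]
So the admissible $t$ for $XY$ are exactly $t\ge ca^2$, giving $\norm{XY}_{\psi_{\theta/2}}=ca^2=ab$. In particular $\norm{X^2}_{\psi_{\theta/2}}=\norm{X}_{\psi_\theta}^2$.

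\textbf{Main obstacle.} The reverse inequality $\ge$ is false in general, so the stated ``$=$'' cannot be proved as worded. Take $X=\mathbbm{1}_E$ and $Y=\mathbbm{1}_{E^c}$ with $0<\Pb(E)<1$. Both Orlicz norms are positive, yet $XY\equiv0$, so the left-hand side is $0$. I would therefore state the result as the inequality of Step 2 plus the equality case of Step 3. That is exactly what the later arguments need: they only bound Orlicz norms of products of sub-Weibull coordinates, such as $(Y_{i,k}-\mu_k)(Y_{j,k}-\mu_k)$, from above.
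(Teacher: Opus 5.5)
Your proof is correct, and there is no competing argument in the paper to compare it with: the paper states this lemma without proof and only cites \citet{Zhang_Wei_2022}. Your route is pointwise AM--GM followed by convexity of $\exp$. This is the standard argument for product bounds of this kind; it is the case $\alpha=\beta=\theta$ of the Young's-inequality proof of $\norm{XY}_{\psi_\gamma}\le\norm{X}_{\psi_\alpha}\norm{Y}_{\psi_\beta}$ with $1/\gamma=1/\alpha+1/\beta$. You are right that it needs no independence between $X$ and $Y$.

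Your counterexample $X=\mathbbm{1}_E$, $Y=\mathbbm{1}_{E^c}$ is valid, so the ``$=$'' in the statement is a misstatement. What holds in general is ``$\le$'', with equality when $|Y|=c|X|$; the case $Y=X$ is recorded separately as \Cref{cor:squared_norm}. These are exactly the two facts the paper invokes in the proof of \Cref{lem:crossterm}:
\begin{itemize}
\item the bound $\norm{X_{ij}X_{kj}}_{\psi_{\theta/2}}\le\norm{X_{ij}}_{\psi_\theta}^2$, using that the coordinates are identically distributed;
\item the identity $\norm{X_{ij}^2}_{\psi_{\theta/2}}=\norm{X_{ij}}_{\psi_\theta}^2$.
\end{itemize}
Stating the lemma with ``$\le$'', as you propose, therefore costs the paper nothing.
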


\begin{lemma}[\citealp{Zhang_Wei_2022}, Corollary 4]\label{cor:squared_norm}
    Let $X$ be a random variable such that $\norm{X}_\theta<\infty$ for some $\theta\in(0,\infty)$. Then 
    $$\norm{X^2}_{\psi_{\theta/2}}=\norm{X}_{\psi_\theta}^2.$$
\end{lemma}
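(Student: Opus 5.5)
The statement is \Cref{cor:squared_norm}, $\norm{X^2}_{\psi_{\theta/2}}=\norm{X}_{\psi_\theta}^2$. It follows from the definition of the Orlicz norm (\Cref{defn:orlicz} with $f=\psi_\theta$) by a change of variables in the defining infimum. No appeal to \Cref{cor:multiplied_norm} is needed, although one could also set $Y=X$ there.

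First, take any $t>0$ and put $s=t^2$. Then
\[
\psi_{\theta/2}\bigl(|X^2|/s\bigr)=\exp\bigl\{(X^2/t^2)^{\theta/2}\bigr\}-1=\exp\bigl\{(|X|/t)^{\theta}\bigr\}-1=\psi_\theta(|X|/t).
\]
This is a pointwise identity, so taking expectations gives
\[
\E\,\psi_{\theta/2}(|X^2|/s)=\E\,\psi_\theta(|X|/t).
\]

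Second, the map $t\mapsto t^2$ is an increasing bijection of $(0,\infty)$ onto itself. It therefore carries the admissible set $\{t>0:\E\psi_\theta(|X|/t)\le 1\}$ exactly onto the admissible set $\{s>0:\E\psi_{\theta/2}(|X^2|/s)\le 1\}$. An increasing bijection preserves infima, so the infimum of the second set is the square of the infimum of the first. That is precisely $\norm{X^2}_{\psi_{\theta/2}}=\norm{X}_{\psi_\theta}^2$.

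The only point needing care is the edge case. The hypothesis $\norm{X}_{\psi_\theta}<\infty$ makes both admissible sets nonempty, so the infima are finite. If $\norm{X}_{\psi_\theta}=0$, both sides equal $0$ by the same bijection argument, since $\inf t^2=(\inf t)^2$ for $t>0$. Apart from this, I expect no real obstacle: the argument is a direct computation from the definition.
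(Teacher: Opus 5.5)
Your proof is correct. The pointwise identity $\psi_{\theta/2}(X^2/t^2)=\psi_\theta(|X|/t)$ shows that $t\mapsto t^2$ carries the admissible set for $\norm{X}_{\psi_\theta}$ exactly onto the admissible set for $\norm{X^2}_{\psi_{\theta/2}}$. Squaring commutes with infima over $(0,\infty)$, so the identity follows. In fact it holds even without the finiteness hypothesis: when both admissible sets are empty, both sides equal $+\infty$.

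The paper gives no proof of its own and simply imports the result from Zhang and Wei (2022, Corollary~4). Your computation from \Cref{defn:orlicz} is therefore a self-contained replacement, not a variant of an argument in the text.

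One caution about your side remark. \Cref{cor:multiplied_norm}, as stated in the paper with equality, is stronger than what holds in general; the correct statement is $\norm{XY}_{\psi_{\theta/2}}\le\norm{X}_{\psi_\theta}\norm{Y}_{\psi_\theta}$. For a counterexample to equality, take $|X|\equiv 1$ and $|Y|\in\{0,a\}$, each value with probability $1/2$. Then $\norm{XY}_{\psi_{\theta/2}}=a(\log 3)^{-2/\theta}$, while $\norm{X}_{\psi_\theta}\norm{Y}_{\psi_\theta}=a(\log 2\,\log 3)^{-1/\theta}$, which is strictly larger. Setting $Y=X$ in the correct version would therefore give only $\norm{X^2}_{\psi_{\theta/2}}\le\norm{X}_{\psi_\theta}^2$. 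The reverse inequality needs exactly the bijection argument you give, so you were right not to rely on that lemma.
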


The following proposition shows that the Orlicz norm of non-centered and centered random variables are of the same order. 
\begin{proposition}[\citealp{Gotze2019}, Corollary A.5] \label{prop:centering_norm}
    For any $\theta>0$ and real random variable $X$, we have
    \begin{equation*}
        \norm{X-\E X}_{\psi_\theta}\leq K_\theta\left(1+\left(\frac{\log2}{2}(\theta e)^{1/\theta}\right)^{-1/\theta}\right)\norm{X}_{\psi_\theta},
    \end{equation*}
    where $K_\theta=\begin{cases}
        2^{1/\theta} &\text{ if $\theta\in(0,1)$}\\
        1 &\text{ if $\theta\geq 1$}
    \end{cases}$.
\end{proposition}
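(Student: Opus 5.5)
The statement is \Cref{prop:centering_norm}, the Orlicz-norm centering bound. It is cited from \citet{Gotze2019}, but I would prove it directly from \Cref{defn:orlicz}. The plan is to compare the two pieces of $X-\E X$ separately and then recombine them with a (quasi-)triangle inequality for $\norm{\cdot}_{\psi_\theta}$. Normalise so that $\norm{X}_{\psi_\theta}=1$, i.e.\ $\E\exp(|X|^\theta)\le 2$.

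\textbf{Step 1: the quasi-triangle inequality.} For $\theta\ge1$, $\psi_\theta$ is convex, so $\norm{\cdot}_{\psi_\theta}$ is a genuine norm and the constant is $K_\theta=1$. For $\theta\in(0,1)$ convexity fails. Instead I use the pointwise inequality $|a+b|^\theta\le|a|^\theta+|b|^\theta$, followed by Cauchy--Schwarz or convexity of the exponential, $e^{x+y}\le \tfrac12(e^{2x}+e^{2y})$. This gives
\[
\norm{Y+Z}_{\psi_\theta}\le 2^{1/\theta}\bigl(\norm{Y}_{\psi_\theta}+\norm{Z}_{\psi_\theta}\bigr),
\]
which is where $K_\theta=2^{1/\theta}$ comes from. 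Applying this with $Y=X$ and $Z=-\E X$ reduces the claim to bounding $\norm{\E X}_{\psi_\theta}$ by a constant multiple of $\norm{X}_{\psi_\theta}$.

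\textbf{Step 2: the constant term.} For a constant $c$, the definition gives directly $\norm{c}_{\psi_\theta}=|c|/(\log 2)^{1/\theta}$. By Jensen and monotonicity, $|\E X|\le \E|X|$. To bound $\E|X|$, use $\E\exp(|X|^\theta)\le2$ together with the elementary estimate $x^{k}\le (k/(\theta e))^{k/\theta}e^{x^\theta}$, obtained by maximising $x^k e^{-x^\theta}$; take $k=1$. This yields
\[
\E|X|\le (\theta e)^{-1/\theta}\cdot 2 .
\]
Alternatively, use the moment bound in \Cref{thm:subWeib}(b).

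Combining these, $\norm{\E X}_{\psi_\theta}\le C(\theta)$ with $C(\theta)$ built from $(\log 2)^{-1/\theta}$ and $(\theta e)^{-1/\theta}$. Rearranging these constants should reproduce the stated factor $\bigl(\tfrac{\log 2}{2}(\theta e)^{1/\theta}\bigr)^{-1/\theta}$.

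\textbf{Main obstacle.} The only delicate step is the bookkeeping of constants needed to match the precise expression in \citet{Gotze2019}. That requires the sharp optimisation $\sup_x x e^{-x^\theta}=(\theta e)^{-1/\theta}$, and the factor $2$ and the powers must be tracked carefully. The qualitative bound $\norm{X-\E X}_{\psi_\theta}\lesssim_\theta\norm{X}_{\psi_\theta}$, which is all that is used downstream, follows immediately from Steps 1--2.
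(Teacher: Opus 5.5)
The paper gives no proof of this statement; it is quoted from \citet{Gotze2019}. So the only question is whether your argument delivers the displayed constant, and as written it does not.

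Your ingredients are each correct. For $\theta<1$, the bound $e^{x+y}\le\frac12(e^{2x}+e^{2y})$ gives the $2^{1/\theta}$ quasi-triangle inequality. You also have $\norm{c}_{\psi_\theta}=|c|(\log 2)^{-1/\theta}$ and $\E|X|\le 2(\theta e)^{-1/\theta}\norm{X}_{\psi_\theta}$. Combining them yields
\[
\norm{X-\E X}_{\psi_\theta}\le K_\theta\bigl(1+2(\theta e\log 2)^{-1/\theta}\bigr)\norm{X}_{\psi_\theta}.
\]
The term $2(\theta e\log 2)^{-1/\theta}$ is not a rearrangement of the stated term $\bigl(\tfrac{\log 2}{2}(\theta e)^{1/\theta}\bigr)^{-1/\theta}=(2/\log 2)^{1/\theta}(\theta e)^{-1/\theta^2}$. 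The logarithm of their ratio is $\frac{\theta-1}{\theta}\bigl[\log 2-\frac{1+\log\theta}{\theta}\bigr]$. This vanishes only at $\theta=1$, and it is positive for $\theta$ roughly in $(0.53,1)$ and for $\theta\gtrsim 3.05$. For example, at $\theta=0.8$ your term is about $1.20$ against $1.12$ in the statement. In those ranges your chain proves a strictly weaker inequality, so the closing claim that rearranging ``should reproduce the stated factor'' is false. Falling back on \Cref{thm:subWeib}(b) does not help, since its constant is unspecified.

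The repair uses only what you already have.

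\textbf{Case $\theta\ge1$.} Here $\psi_\theta$ is convex, so Jensen gives $\psi_\theta(|\E X|/t)\le\E\psi_\theta(|X|/t)$, i.e.\ $\norm{\E X}_{\psi_\theta}\le\norm{X}_{\psi_\theta}$. The total factor is then $2$. This suffices because the stated term is $\ge1$ for every $\theta>0$. That condition is equivalent to $\theta\log(2/\log 2)\ge 1+\log\theta$, and the minimum of the difference over $\theta$ is $\log\log(2/\log 2)>0$.

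\textbf{Case $\theta<1$.} Your convexity argument actually gives the stronger bound $\norm{Y+Z}_{\psi_\theta}\le 2^{1/\theta}\max\{\norm{Y}_{\psi_\theta},\norm{Z}_{\psi_\theta}\}$. So it suffices to show $2(\theta e\log 2)^{-1/\theta}\le 1+(2/\log 2)^{1/\theta}(\theta e)^{-1/\theta^2}$. Split on the size of $\theta e\log 2$:
\begin{itemize}
\item If $\theta e\log 2\ge1$, the left side is at most $2$ and the right side is at least $2$.
\item If $\theta e\log 2<1$, then $1+\log\theta<\theta\log 2$. This holds because $\theta\log 2-1-\log\theta$ is decreasing on $(0,1)$ and equals $e^{-1}+\log\log 2>0$ at $\theta=1/(e\log 2)$. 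By the ratio formula above, the left side is then below the second term on the right.
\end{itemize}
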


\subsection{Properties of univariate robust mean estimators}
A key part in our proposed univariate change point procedure is the use of robust estimators, namely the Optimal Robust Univariate Mean Estimator \citep{pmlr-v108-prasad20a} and the median. We will outline their statistical properties in the following section, which are crucial in proving results on our change point procedure in Section \ref{sect:univariate}.

\subsubsection*{Medians}
We study the concentration properties of medians under contamination and different heavy-tailed distributions. We first prove a concentration inequality for the median where the inlier distributions in the class $\mathcal{G}_{\theta,M}$.

\begin{lemma}[Median concentration with inlier class $\mathcal{G}_{\theta,M}$]  \label{thm:median_subWeib} 
    Let $\delta\in(0,1)$ and $0\leq\varepsilon<\delta^{2/n}(2e)^{-1}$. Suppose $\{X_i\}_{i=1}^n$ is a sequence of independent random variables, drawn from a dynamic $\varepsilon$-Huber contamination model with inlier class $\mathcal{D} = \mathcal{G}_{\theta,M}$. Assume further that all inlier distributions have zero mean. Then,
    \begin{equation*}
        \Pb\Biggl(\bigl|\med(X_{1:n})\bigr|\leq M\biggl\{\log(4e)+\log\frac{1-\varepsilon}{\delta^{2/n}-2e\varepsilon}\biggr\}^{1/\theta}\Biggr)\geq 1-\delta.
    \end{equation*}
\end{lemma}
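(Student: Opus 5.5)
We bound the two tails of the median separately and reduce each to a binomial-type tail for the number of observations that land beyond a threshold. Fix $x>0$ and consider the event $\{\med(X_{1:n})>x\}$. On this event at least $\lceil n/2\rceil$ of the $X_i$ exceed $x$. For each $i$, the probability that $X_i$ exceeds $x$ is at most
\[
p_x:=\varepsilon+(1-\varepsilon)\,\Pb_{W\sim F_i}(W>x)\le \varepsilon+(1-\varepsilon)\,2e^{-(x/M)^\theta},
\]
using Lemma~\ref{thm:subWeib}(a) together with the zero-mean assumption. The contaminated component contributes at most $\varepsilon$ regardless of $H_i$. Since the $X_i$ are independent, the number of exceedances is stochastically dominated by $\mathrm{Bin}(n,p_x)$. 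The same bound holds for the lower tail $\{\med(X_{1:n})<-x\}$ by symmetry of the sub-Weibull tail bound.

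Next we bound the binomial upper tail crudely by a union bound over subsets:
\[
\Pb(\mathrm{Bin}(n,p)\ge n/2)\le \binom{n}{\lceil n/2\rceil}p^{n/2}\le 2^n p^{n/2}=(4p)^{n/2}.
\]
A sharper route is the Chernoff/KL bound $\exp\{-n\,\mathrm{kl}(1/2\,\|\,p)\}\le (4p)^{n/2}$, but either form suffices. The constant $e$ in the target suggests the authors used $\binom{n}{k}\le (en/k)^k$, giving $(2ep)^{n/2}$. We adopt that form so that the constants match.

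With the $(2ep)^{n/2}$ form, requiring each one-sided tail to be at most $\delta/2$ amounts to
\[
2e\,p_x\le (\delta/2)^{2/n},
\]
and it is cleaner to aim for $p_x\le \delta^{2/n}/(2e)$ up to the factor $2^{2/n}\le 4$. Substituting $p_x$ and solving for $x$ gives
\[
2e(1-\varepsilon)\,2e^{-(x/M)^\theta}\le \delta^{2/n}-2e\varepsilon,
\]
which requires $\varepsilon<\delta^{2/n}(2e)^{-1}$; this is exactly the hypothesis. The condition is then equivalent to
\[
(x/M)^\theta\ge \log(4e)+\log\frac{1-\varepsilon}{\delta^{2/n}-2e\varepsilon}.
\]
Taking $x$ equal to $M$ times the $1/\theta$ power of the right-hand side makes each one-sided tail at most $\delta/2$, and a union bound over the two sides gives probability at least $1-\delta$.

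The main obstacle is bookkeeping of constants: matching the factor $2$ inside $(\delta/2)^{2/n}$ against the stated $\delta^{2/n}$. If the stated constants are not recovered exactly, we will absorb the $2^{2/n}$ factor by using the sharper KL bound, or by noting $\binom{n}{\lceil n/2\rceil}\le 2^n/\sqrt{n}$ for the tail split. The probabilistic content is otherwise routine.
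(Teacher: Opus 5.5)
Your reduction is the same as the paper's: a median deviation forces at least half the sample past the threshold, which becomes a binomial tail with success probability $p_x=\varepsilon+2(1-\varepsilon)e^{-(x/M)^\theta}$. The problem is that splitting into two one-sided tails and union-bounding creates a factor of $2$ that your argument never absorbs.

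At the stated threshold $x=M\{\log(4e)+\log\frac{1-\varepsilon}{\delta^{2/n}-2e\varepsilon}\}^{1/\theta}$ you get exactly $p_x=\delta^{2/n}/(2e)$, so $(2ep_x)^{n/2}=\delta$. Each one-sided tail is therefore bounded by $\delta$, not $\delta/2$, and the union bound gives only $1-2\delta$. Your sentence ``makes each one-sided tail at most $\delta/2$'' is false. The remark ``up to the factor $2^{2/n}$'' does not repair it, because a factor $2^{2/n}$ in $p_x$ is a factor $2$ in probability.

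Your fallbacks do not close the gap uniformly in $n$. Replacing $2e$ by $4$ (crude or KL bound) saves essentially $(2/e)^{n/2}$, which is below $1/2$ only for $n\ge 5$; at $n=2$ both still give more than $\delta/2$. The bound $\binom{n}{\lceil n/2\rceil}\le 2^n/\sqrt n$ gives $\sqrt2\,\delta/e>\delta/2$ at $n=2$.

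The missing observation is that you never need to split. You already bound each one-sided exceedance by the two-sided tail $2e^{-(x/M)^\theta}$, so work with $|X_i|$ directly. The event $|\med(X_{1:n})|>s$ forces at least $n/2$ indices that are contaminated or satisfy $|X_i|\ge s$, each with probability at most $p_s=\delta^{2/n}/(2e)$. A single multiplicative Chernoff bound then gives failure probability $(2ep_s)^{n/2}=\delta$ with no union over the two sides; so does your subset bound $\binom{n}{\lceil n/2\rceil}p_s^{\lceil n/2\rceil}\le (2e)^{\lceil n/2\rceil}p_s^{\lceil n/2\rceil}\le(2ep_s)^{n/2}$. This is exactly the paper's proof.

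If you prefer to keep the split, use the exact count instead. The inequality $\binom{n}{\lceil n/2\rceil}\le\tfrac12(2e)^{n/2}$ holds for every $n\ge 1$ and does yield $\delta/2$ per side, but it has to be verified (by direct check for $n\le 4$), not asserted.
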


\begin{proof}
Let $s = M\Bigl\{\log(4e)+\log\frac{1-\varepsilon}{\delta^{2/n}-2e\varepsilon}\Bigr\}^{1/\theta}$. Denote $\mathcal{I}=\{i \in [n]:X_i\sim F\}$ and define the random variable $$N_n=\sum_{i=1}^n\mathbbm{1}(\{i\in\mathcal{I},|X_i|\geq s\}\cup \{i\notin \mathcal{I}\}).$$ 
Then, by a union bound and Lemma \ref{thm:subWeib}, we have
\begin{equation}\label{eqn:proof_p_s}
        \Pb(\{i\in\mathcal{I},|X_i|\geq s\}\cup \{i\notin \mathcal{I}\})\leq\varepsilon+2(1-\varepsilon)\exp\left(-\left(\frac{s}{M}\right)^\theta\right)=:p_s,
    \end{equation}
Consequently,
    \begin{align*}
        \Pb(|\med(X_{1:n})|> s)&\leq\Pb\left(\sum_{i=1}^n\mathbbm{1}(|X_i|\geq s)>\frac{n}{2}\right)\leq\Pb\left(N_n>\frac{n}{2}\right)\\
        &=\Pb\left(N_n>np_s\left(1+\left(\frac{1}{2p_s}-1\right)\right)\right)\\
        &\leq\exp\left(np_s\left(\frac{1}{2p_s}-1-\frac{1}{2p_s}\log\left(\frac{1}{2p_s}\right)\right)\right)\\
        &\leq\exp\left(\frac{n}{2}\left(1-\log\left(\frac{1}{2p_s}\right)\right)\right)\\
        &=\left({2ep_s}\right)^{n/2}=\delta,
    \end{align*}
where the first inequality follows from the fact that $|\med(X_{1:n})|> s$ implies at least $n/2$ data points $\{X_j:1\leq j\leq n\}$ satisfy $|X_j|>s$, the second inequality is by the fact that
$N_n\geq\sum_{i=1}^n1(|X_i|\geq s)$ almost surely,
the third inequality is due to the multiplicative Chernoff bound  (\Cref{lem:multiCher}), the fourth inequality is obtained by discarding -1 inside the bracket, and the final equality is achieved by substituting $s=M\left[\log(4e)+\log\frac{1-\varepsilon}{\delta^{2/n}-2e\varepsilon}\right]^{1/\theta}$.
\end{proof}

Using a similar proof technique, we prove a concentration inequality for the median where the inlier distributions in the class $\mathcal{P}_{v,\phi}$.
\begin{lemma}[Median concentration with inlier distribution in $\mathcal{P}_{v,\phi}$]  \label{thm:median_moment}
    Let $\delta\in(0,1)$ and $0\leq\varepsilon<\delta^{2/n}(2e)^{-1}$. Suppose $\{X_i\}_{i=1}^n$ are independent random variables, and are distributed according to a dynamic $\varepsilon$-Huber contamination model \eqref{huber} with inlier class $\mathcal{D}=\mathcal{P}_{v,\phi}$. Assume further that all inlier distributions have zero mean. Then the median $\med(X_{1:n})$ satisfies
    \begin{equation*}
        \Pb\left(|\med(X_{1:n})|\leq \phi\left(2e\frac{1-\varepsilon}{\delta^{2/n}-2e\varepsilon}\right)^{1/v}\right)\geq 1-\delta.
    \end{equation*}
\end{lemma}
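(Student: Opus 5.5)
The plan is to mirror the proof of \Cref{thm:median_subWeib} exactly, replacing the sub-Weibull tail bound by the polynomial tail coming from Markov's inequality. Set
\[
s=\phi\left(2e\frac{1-\varepsilon}{\delta^{2/n}-2e\varepsilon}\right)^{1/v},
\]
which is well defined and positive because $\varepsilon<\delta^{2/n}(2e)^{-1}$. Let $\mathcal{I}$ be the set of uncontaminated indices. Define the count
\[
N_n=\sum_{i=1}^n\mathbbm{1}\bigl(\{i\in\mathcal{I},|X_i|\geq s\}\cup\{i\notin\mathcal{I}\}\bigr),
\]
which dominates $\sum_i\mathbbm{1}(|X_i|\geq s)$ almost surely.

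Each summand is an independent Bernoulli variable. Since the inliers have zero mean and lie in $\mathcal{P}_{v,\phi}$, Markov's inequality gives $\Pb_{F_i}(|X_i|\geq s)\leq(\phi/s)^v$. A union bound over the two events then yields
\[
\Pb\bigl(\{i\in\mathcal{I},|X_i|\geq s\}\cup\{i\notin\mathcal{I}\}\bigr)\leq \varepsilon_i+(1-\varepsilon_i)(\phi/s)^v\leq \varepsilon+(1-\varepsilon)(\phi/s)^v=:p_s.
\]
The second inequality holds because the middle expression is increasing in $\varepsilon_i$, given that $(\phi/s)^v\le 1$. Substituting $s$ gives
\[
(\phi/s)^v=\frac{\delta^{2/n}-2e\varepsilon}{2e(1-\varepsilon)}.
\]
Hence $2ep_s=2e\varepsilon+\delta^{2/n}-2e\varepsilon=\delta^{2/n}$. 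This is the only real computation, and it is where the specific form of $s$ is chosen.

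The event $|\med(X_{1:n})|>s$ forces at least $n/2$ points to satisfy $|X_i|\geq s$, so it implies $N_n\geq n/2$. The multiplicative Chernoff bound (\Cref{lem:multiCher}) applies to a sum of independent Bernoullis with means at most $p_s$, using stochastic domination to reduce to the case of equal means $p_s$. Note that $p_s<1/2$ since $2ep_s=\delta^{2/n}<1$. With deviation factor $1/(2p_s)$, the bound reads
\[
\Pb(N_n\geq n/2)\leq \exp\Bigl(\tfrac n2\bigl(1-\log\tfrac{1}{2p_s}\bigr)\Bigr)=(2ep_s)^{n/2}=\delta.
\]
This gives the claim.

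The only care needed is the boundary issue of $>$ versus $\geq$ in the median event, which is handled as in \Cref{thm:median_subWeib}, and making sure that the heterogeneous $\varepsilon_i\le\varepsilon$ are dominated by $p_s$. Neither is a genuine obstacle.
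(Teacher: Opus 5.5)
Your proposal is correct and follows the paper's proof essentially verbatim. It uses the same choice of $s$, the Markov bound $p_s=\varepsilon+(1-\varepsilon)(\phi/s)^v$ giving $2ep_s=\delta^{2/n}$, and the multiplicative Chernoff bound with deviation factor $1/(2p_s)$, which yields $(2ep_s)^{n/2}=\delta$. Your explicit handling of the heterogeneous $\varepsilon_i\le\varepsilon$, and of the event $N_n\ge n/2$ rather than $N_n>n/2$, is slightly more careful than the paper, which passes over both points.
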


\begin{proof}
    Let $s=\phi\left(2e\frac{1-\varepsilon}{\delta^{2/n}-2e\varepsilon}\right)^{1/v}$. Following the proof of lemma \ref{thm:median_subWeib}, under this setting, $p_s$ in equation \eqref{eqn:proof_p_s} will become 
    \begin{equation*}
        p_s=\varepsilon+(1-\varepsilon)\frac{\phi^v}{s^v},
    \end{equation*}
    by Markov's inequality and the fact that $\E[|X_1|^{v}]=\phi^v<\infty$.
    Thus, we have
    \begin{align*}
        \Pb(|\med(X_{1:n})|> s)\leq
        \left(2ep_s\right)^{n/2}=\delta,
    \end{align*}
where we substituted $s=\phi\left(2e\frac{1-\varepsilon}{\delta^{2/n}-2e\varepsilon}\right)^{1/v}$ in the final equality.
\end{proof}

Apart from the concentration properties, medians can also converts constant-probability guarantees into exponentially small failure probabilities.
\begin{proposition}\label{prop:med}
    Suppose $\{X_i\}_{i=1}^n$ are independent and identically distributed. If there exists $c\in\mathbb{R}$ such that $\Pb(X_i>c)\le 1/4$ for all $i$, then 
\[
\mathbb{P}\!\left( \med(X_{1:n}) > c \right)
\le \exp(-n/8).
\]
In particular, if $n \ge 8 \log(1/\omega)$, where $\omega\in (0,1)$, then
\[
\mathbb{P}\!\left( \med(X_{1:n}) > c \right)
\le \omega.
\]
\end{proposition}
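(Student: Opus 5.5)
The plan is to reduce the event $\{\med(X_{1:n})>c\}$ to a binomial tail and apply a Chernoff/Hoeffding bound. Define the indicators $B_i=\mathbbm{1}\{X_i>c\}$, which are independent Bernoulli variables with success probability $q_i=\Pb(X_i>c)\le 1/4$. If the median exceeds $c$, then at least half of the sample exceeds $c$; with the convention that the median is the $\lceil n/2\rceil$-th order statistic (or any value between the two middle order statistics), this gives
\[
\{\med(X_{1:n})>c\}\subseteq\Bigl\{\textstyle\sum_{i=1}^n B_i\ge n/2\Bigr\}.
\]

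Since each $q_i\le 1/4$, the sum $\sum_i B_i$ is stochastically dominated by a $\mathrm{Binom}(n,1/4)$ variable. Its deviation from $1/4$ up to $1/2$ is at least $n/4$. Hoeffding's inequality for bounded independent variables then gives
\[
\Pb\Bigl(\textstyle\sum_i B_i-\sum_i q_i\ge n/4\Bigr)\le \exp\bigl(-2n(1/4)^2\bigr)=\exp(-n/8).
\]
This is exactly the claimed bound. Hoeffding is the natural choice over the multiplicative Chernoff bound (\Cref{lem:multiCher}) because it yields the constant $1/8$ directly without having to optimise a KL exponent; the KL form would give the sharper exponent $n\,\mathrm{KL}(1/2\|1/4)\approx 0.13n$, which also suffices.

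The second claim follows immediately: if $n\ge 8\log(1/\omega)$, then $\exp(-n/8)\le\omega$.

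The only point needing care is the median convention when $n$ is even. I will handle it by noting that under any standard definition, $\med>c$ forces at least $\lceil n/2\rceil\ge n/2$ points to be strictly above $c$. There is no real obstacle beyond this. The i.i.d.\ assumption is not actually needed; independence suffices.
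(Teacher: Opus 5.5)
Your proof is correct and is essentially the paper's argument: the paper also bounds $\Pb(\med(X_{1:n})>c)$ by $\Pb\bigl(\sum_i \mathbbm{1}\{X_i>c\}\ge n/2\bigr)$, then applies a sub-Gaussian (Hoeffding) tail with variance proxy $1/(4n)$ to the centred indicator average at deviation $1/4$, which gives $e^{-n/8}$. One correction to your aside, which your proof does not rely on: the multiplicative Chernoff bound of \Cref{lem:multiCher} with $p=1/4$ and $\delta=1$ gives only the exponent $\tfrac{n}{4}(2\log 2-1)\approx 0.097n<n/8$, so it would not suffice; the exact relative-entropy exponent, $\tfrac{n}{2}\log(4/3)\approx 0.144n$ rather than $0.13n$, is a different bound from that lemma.
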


\begin{proof} We have
    \begin{align*}
        \Pb\left(\med(X_{1:n})> c\right)
        &\leq\Pb\left(\sum_{i=1}^n\mathbbm{1}\left(T_i> c\right)\geq\frac{n}{2}\right)\\
        &=\Pb\left(\frac{1}{n}\sum_{i=1}^n[\mathbbm{1}(T_i> c)-\Pb(T_i> c)]\geq\frac{1}{2}-\Pb(T_i> c)\right)\\
        &=\Pb\left(W\geq\frac{1}{2}-\Pb(T_i> c)\right)\\
        &\leq\Pb\left(W\geq\frac{1}{4}\right)\leq e^{-n/8},
    \end{align*}
    where $W$ is a sub-Gaussian random variable with variance parameter $1/(4n)$. 
\end{proof}

\begin{remark}
    By a very similar argument, if there exists $c\in\mathbb{R}$ such that $\Pb(X_i<c)\le 1/4$ for all $i$, then 
\[
\mathbb{P}\!\left( \med(X_{1:n}) < c \right)
\le \exp(-n/8).
\]
\end{remark}

\subsubsection*{Optimal Robust Univariate Mean Estimation}
The Optimal Robust Univariate Mean Estimator \citep{pmlr-v108-prasad20a} enables robust estimation of the inlier mean under the dynamic Huber contamination model \eqref{huber}, even when $F$ is heavy-tailed. A slightly-modified version of their approach is described in \Cref{algo:RUME}. \Cref{algo:RUME} is similar to that of trimmed means \citep{Lugosi_Mendelson_2021}. Both approaches split the dataset into two subsamples of equal size. One subsample is used to learn a truncation interval, and the other is used to compute the resulting truncated empirical mean. The difference lies in how they pick the truncation intervals. While trimmed means estimates the $q$-th sample quantile and $(1-q)$-th sample quantile for some $q\in(0,1)$, \Cref{algo:RUME} picks a shortest interval $\hat{I}$ containing $1-2q$ fraction of observations.\footnote{While it is possible that there may be more than one possible shortest interval, we can pick any shortest interval at random in this case as the proof still works. Thus, for the following proof we will assume there is only one such shortest interval $\hat{I}$.} 

\begin{algorithm}[H]
\centering
\caption{Robust Univariate Mean Estimation (RUME)}
\begin{algorithmic}
\INPUT{Dataset $\{z_i\}_{i=1}^{2n}$, Corruption Level $\varepsilon$, Confidence Level $\delta$}
\State Split the data into two subsets: $\mathcal{Z}_1 = \{z_i\}_{i=1}^{n}$ and $\mathcal{Z}_2 = \{z_i\}_{i=n+1}^{2n}$. 
\State Let $\varepsilon' = \max \left(\varepsilon,\frac{\log (1/\delta)}{n}\right)$.
\State Let $\hat{I} = [a,b]$ be the shortest interval containing $\left\lfloor n \left(1-2\varepsilon'-2\sqrt{\varepsilon'\frac{\log(1/\delta)}{n}}-\frac{\log(1/\delta)}{n}\right)\right\rfloor$ points in $\mathcal{Z}_1$.
\OUTPUT $\frac{1}{\sum_{i=n+1}^{2n}\mathbb{I}\{z_i \in \hat{I}\}} \sum_{i=n+1}^{2n} z_i \mathbb{I}\{z_i \in \hat{I}\}$
\end{algorithmic}
\label{algo:RUME}
\end{algorithm}

Lemma 3 of \cite{pmlr-v108-prasad20a} shows that RUME is information theoretically optimal in the case where the inlier distribution $F$ has finite variance $\sigma^2$. The estimation error of RUME is of the order $O(\sigma\sqrt{\varepsilon'})$. \cite{Li_2021} has generalised the proof to study the dynamic Huber contamination model setting stated in Definition \ref{modelasmmp} under the finite variance assumption of $F$. Here, we further generalise the proof for $F\in\mathcal{G}_{\theta,M}$ and $F\in\mathcal{P}_{v,\phi}$.

\begin{lemma}[Estimation error of RUME]\label{lem:rume_moment}
Suppose $\{Z_i\}_{i=1}^{2h}$ are independent and distributed according to a dynamic Huber $\varepsilon$-contamination model \eqref{huber} with means $f_i=\mu$ and $F_i=F_0$ for all $i\in\{1,\ldots,2h\}$. Suppose 
\[
\varepsilon'=\max\left\{\varepsilon,\frac{\log(1/\delta)}{h}
\right\},\]
satisfies 
\begin{equation}\label{A1_moment}
    2\varepsilon'+2\sqrt{\varepsilon'\frac{\log(1/\delta)}{h}}+\frac{\log(1/\delta)}{h} < \frac{1}{2}, \;\; \mathrm{and} \;\; \delta \leq \min(1/h,1/4),
\end{equation}
then 
\begin{enumerate}[label=(\textbf{\alph*})]
    \item assuming further that $F_0\in\mathcal{P}_{v,\phi}$, it holds that with probability at least $1-\delta/2$, 
\begin{equation*}
    |\mathrm{RUME}(\{Z_i\}_{i=1}^{2h})-\mu| \leq \phi\left( C_1(\varepsilon')^{1-1/v}+C_2\sqrt{\frac{\log(1/\delta)}{h}}\right);
\end{equation*}
    \item assuming further that $F_0\in\mathcal{G}_{\theta,M}$, it holds that with probability at least $1-\delta/2$, 
    \begin{equation*}
        |\mathrm{RUME}(\{Z_i\}_{i=1}^{2h})-\mu|
    \leq C_3 \varepsilon'\log^{1+1/\theta} \left(\frac{1}{\varepsilon'}\right)+C_4\sqrt{\frac{\log(1/\delta)}{h}},
    \end{equation*}
\end{enumerate}
where $C_1,C_2,C_3,C_4 > 0$ are absolute constants. 
\end{lemma}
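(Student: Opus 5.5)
We need to prove the RUME error bound (Lemma \ref{lem:rume_moment}) for the $\mathcal{P}_{v,\phi}$ and $\mathcal{G}_{\theta,M}$ classes. We follow the structure of the proof of Lemma 3 in \cite{pmlr-v108-prasad20a}, as adapted to dynamic Huber contamination by \cite{Li_2021}. The finite-variance tail input is replaced by the sharper tail control available in each class, and the two halves $\mathcal{Z}_1,\mathcal{Z}_2$ are treated separately.

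\textbf{Step 1: interval learned on $\mathcal{Z}_1$.} Let $\rho=2\varepsilon'+2\sqrt{\varepsilon'\log(1/\delta)/h}+\log(1/\delta)/h<1/2$ by \eqref{A1_moment}. By Bernstein's inequality, with probability at least $1-\delta/8$ the number of contaminated points in $\mathcal{Z}_1$ is at most $h(\varepsilon+\sqrt{2\varepsilon\log(8/\delta)/h}+\log(8/\delta)/h)$. Applying the DKW or Bernstein inequality to the inlier empirical CDF at two quantiles, the interval $I^\ast=[\mu-r,\mu+r]$ contains at least $h(1-\rho)$ sample points. Here $r$ is chosen so that $\Pb_{F_0}(|X-\mu|>r)\le \varepsilon'$, namely
\begin{itemize}
\item $r\asymp \phi\,\varepsilon'^{-1/v}$ by Markov's inequality for the $\mathcal{P}_{v,\phi}$ class;
\item $r\asymp M\log^{1/\theta}(1/\varepsilon')$ by Lemma \ref{thm:subWeib} for the $\mathcal{G}_{\theta,M}$ class.
\end{itemize}
Hence the shortest interval $\hat I$ has length at most $2r$. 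It also captures at least $1-2\rho$ mass of inliers, which is more than half since $\rho<1/2$. So $\hat I$ intersects $[\mu-r,\mu+r]$, and therefore $\hat I\subseteq[\mu-3r,\mu+3r]$. The same quantile argument also shows $F_0(\hat I^c)\lesssim \varepsilon'$: an interval missing more than $C\varepsilon'$ inlier mass could not contain the required fraction of sample points, by uniform concentration of the empirical measure over intervals (VC dimension 2), with an $O(\sqrt{\varepsilon'\log(1/\delta)/h})$ relative-deviation term.

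\textbf{Step 2: truncated mean on $\mathcal{Z}_2$, conditional on $\hat I$.} Write the output minus $\mu$ as a weighted average with three contributions.
\begin{enumerate}
\item \emph{Contamination.} Contaminated points in $\hat I$ number at most $O(\varepsilon' h)$ with high probability, and each deviates from $\mu$ by at most $3r$. Their contribution is $O(\varepsilon' r)$, which is $\phi\varepsilon'^{1-1/v}$ and $M\varepsilon'\log^{1/\theta}(1/\varepsilon')$ respectively.
\item \emph{Truncation bias of the inliers.} We need $|\E_{F_0}[(X-\mu)\mathbbm 1\{X\in\hat I\}]|/F_0(\hat I)$. This equals $|\E[(X-\mu)\mathbbm 1\{X\notin \hat I\}]|$ up to constants. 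By H\"older it is at most $\phi\,F_0(\hat I^c)^{1-1/v}\lesssim\phi\varepsilon'^{1-1/v}$ in the moment case. In the sub-Weibull case it is at most $\E[|X-\mu|\mathbbm 1\{|X-\mu|>r'\}]$ with $r'$ of order $M\log^{1/\theta}(1/\varepsilon')$, which is $\lesssim M\varepsilon'\log^{1/\theta}(1/\varepsilon')$ after integrating the tail in Lemma \ref{thm:subWeib}.
\item \emph{Fluctuation.} The inlier points in $\hat I$ are i.i.d.\ given $\hat I$ and bounded by $3r$, with variance at most $\phi^2$ (resp.\ $O(M^2)$). Bernstein's inequality gives a deviation of $\sqrt{\log(1/\delta)/h}+r\log(1/\delta)/h$. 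Since $\log(1/\delta)/h\le\varepsilon'$, the second term is $\le r\varepsilon'$ and is absorbed into the first item.
\end{enumerate}
The denominator is at least $h/2$ with high probability, which normalises everything.

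\textbf{Main obstacle.} The delicate point is the sub-Weibull case. The stated rate carries $\log^{1+1/\theta}(1/\varepsilon')$, one extra log factor beyond the natural $\varepsilon'\log^{1/\theta}$. We expect it to arise from controlling $F_0(\hat I^c)$ only up to a $\log(1/\varepsilon')$ inflation, since $\hat I$ need not contain $I^\ast$. We therefore plan to bound the truncation bias crudely via Lemma \ref{thm:subWeib}(b) with moment order $k\asymp\log(1/\varepsilon')$, where H\"older gives $\|X\|_k F_0(\hat I^c)^{1-1/k}$. This yields exactly the extra logarithm, and it is sufficient for the statement. A union bound over the $O(1)$ failure events, each of probability at most $\delta/8$, gives total probability at least $1-\delta/2$.
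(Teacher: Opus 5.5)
Your proposal is correct and is essentially the paper's argument. It uses the same reference interval $I^\ast$ (from Markov's inequality, resp.\ the sub-Weibull tail) and the same length-plus-intersection step placing $\hat I$ within $O(r)$ of $\mu$. It also uses the relative-deviation VC bound to get $F_0(\hat I^c)\lesssim\varepsilon'$, and the same three-way split into contamination, conditional Bernstein fluctuation and H\"older-controlled truncation bias. Three points need correcting.

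\emph{The intersection step.} Argue that $\hat I$ meets $I^\ast$ by sample counts, as the paper does: each interval contains at least $h(1-\rho)>h/2$ of the $h$ points of $\mathcal{Z}_1$. Your stated reason does not work, because an inlier fraction $1-2\rho$ exceeds $1/2$ only when $\rho<1/4$, and condition \eqref{A1_moment} gives only $\rho<1/2$.

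\emph{The source of the extra logarithm.} Your account is mistaken. The bound $F_0(\hat I^c)\lesssim\varepsilon'$ carries no logarithmic inflation. Moreover, with $k\asymp\log(1/\varepsilon')$ you have $\varepsilon'^{-1/k}=O(1)$, so your H\"older step gives a truncation bias of order $M\varepsilon'\log^{1/\theta}(1/\varepsilon')$. That is sharper than the stated bound and implies it, since $\log(1/\varepsilon')>1$. The paper's $\log^{1+1/\theta}$ comes from its choice $v=\log(1/\varepsilon')/\log\log(1/\varepsilon')$, for which $\varepsilon'^{-1/v}=\log(1/\varepsilon')$. Separately, your alternative tail-integration bound needs an extra $r'F_0(\hat I^c)$ term, because $\hat I^c$ is not a tail set; that term is of the same order, so the route still works.

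\emph{Normalising the denominator.} Bounding the denominator below by a constant multiple of $h$ via $F_0(\hat I)\ge 1/2$ requires $\varepsilon'$ below a small constant. The paper makes the same restriction explicitly ($\varepsilon'\le 1/104$), but the lemma allows any $\varepsilon'<1/4$. For the remaining range the claim follows directly: the output is an average of points in $\hat I\subseteq[\mu-3r,\mu+3r]$ (provided $\hat I$ contains a point of $\mathcal{Z}_2$), and $r$ is then of the order of the claimed bound.
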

\Cref{lem:rume_moment} shows that there are two regimes for the RUME estimation error, depending on whether the contamination term ($[\varepsilon']^{1-1/v}$ in finite moment setting or $\varepsilon'\log^{1+1/\theta} \left(\frac{1}{\varepsilon'}\right)$ in sub-Weibull setting) or the sampling term $\sqrt{\log(1/\delta)/h}$ dominates. The existence of these two regimes imply the optimality for mean estimation under heavy-tailed data and Huber contamination. In the absence of contamination ($\varepsilon=0$), we recover the sub-Gaussian rate of estimation $O(\sqrt{\log(1/\delta)/h})$ for both classes of inlier distributions. When contamination level is large, the RUME estimation error matches the information theoretic lower bound for mean estimation under Huber contamination when $F_0\in\mathcal{P}_{v,\phi}$ \citep{Steinhardt2018}. 

\begin{proof}
(a) Without loss of generality, we can take $\mu = 0$. Let $I^*$ be the interval $(-(\phi^{v}/\varepsilon')^{1/v},(\phi^{v}/\varepsilon')^{1/v})$ and $F_0(I^*)$ denotes the probability that one sample drawn from \eqref{huber} is distributed according to $F_0$ and lies in $I^*$. 
Then we have 
$$F_0(I^*) = \mathbb{P}(Z_i \in I^*\, \text{and}\, Z_i \sim F_0) = \mathbb{P}(Z_i \in I^* |Z_i \sim F_0) \mathbb{P}(Z_i \sim F_0) \geq (1-\varepsilon')(1-\varepsilon)\geq 1-2\varepsilon',$$ where the first inequality follows from Chebyshev's inequality \citep[e.g.][Corollary 1.6.3]{Vershynin_2025}.

Now let $X_i = \mathbbm{1}\{Z_i \sim F_0 \,\,\text{and}\,\,Z_i \in  I^* \}$ and $F_0^h(I^*) = \sum_{i=1}^h X_i/h$. Note that $X_i$ is a Bernoulli random variable with success probability $F_0(I^*)$. Therefore, using the Bernstein inequality for bounded random variables \citep[e.g.~Theorem 2.9.5 in][]{Vershynin_2025}, we have with probability at least $1-\delta$  
\begin{align}
    F_0^h(I^*) - F_0(I^*) &\geq -\sqrt{F_0(I^*)(1-F_0(I^*))}\sqrt{\frac{2\log(1/\delta)}{h}}-\frac{2\log(1/\delta)}{3h}, \nonumber \\
    F_0^h(I^*) &\geq 1-2\varepsilon'-\sqrt{2\varepsilon'(1-2\varepsilon')\frac{2\log(1/\delta)}{h}}-\frac{2\log(1/\delta)}{3h}\label{I*numa},
\end{align}
since $F_0(I^*)(1-F_0(I^*))$ is a decreasing function of $F_0(I^*)$ when $F_0(I^*)>1/2$. 

Let $g_h(\varepsilon,\delta) = \varepsilon+\sqrt{\varepsilon\frac{2\log(1/\delta)}{h}}+\frac{2\log(1/\delta)}{3h}$ and $\hat{I} = [a,b]$ be a shortest interval containing $h(1-g_h(2\varepsilon',\delta))$ points in $\mathcal{Z}_1$. Since $I^*$ also contains at least $h(1-g_h(2\varepsilon',\delta))$ points due to (\ref{I*numa}) with probability at least $1-\delta$, and $\hat{I}$ is the shortest interval containing at least $h(1-g_h(2\varepsilon',\delta))$ points, we must have
\[
\mathrm{length}(\hat{I})\leq \mathrm{length}(I^*) = 2\left(\frac{\phi^{v}}{\varepsilon'}\right)^{1/v}.
\]
with probability at least $1-\delta$. Further, by given assumption that $g_h(2\varepsilon',\delta) < 1/2$, then both $\hat{I}$ and $I^*$ contain more than half of the data in $\mathcal{Z}_1$. As a result, these two intervals must intersect and we have  
\begin{equation}\label{eq1a}
    |z - \mu |\leq 4\left(\frac{\phi^{v}}{\varepsilon'}\right)^{1/v} \quad \forall z \in \hat{I}.
\end{equation}

Next, we control the error of the final estimator. Let $|\hat{I}| = \sum_{Z_i\in \mathcal{Z}_2}\mathbbm{1}\{Z_i \in \hat{I}\}$ be the number of points from the second sample and lie in $\hat{I}$. Similarly, let $|\hat{I}_{H}|$ and $|\hat{I}_{F_0}|$ denote the number of points that lie in $\hat{I}$ and are not distributed according to $F_0$ (i.e.\ adversarial point) and according to $F_0$ respectively. Note that 
\begin{equation*}
    \left|\frac{1}{|\hat{I}|} \sum_{Z_i \in \hat{I}\cap \mathcal{Z}_2}Z_i  \right| \leq T_1+T_2,
\end{equation*}
where 
\begin{equation*}
    T_1 = \left|\frac{1}{|\hat{I}|} \sum_{\substack{Z_i \in \hat{I}\cap\mathcal{Z}_2 \\ Z_i \not\sim F_0}}Z_i \right| \quad \text{and} \quad T_2 = \left|\frac{1}{|\hat{I}|} \sum_{\substack{Z_i \in \hat{I}\cap\mathcal{Z}_2 \\ Z_i \sim F_0}}Z_i  \right|.
\end{equation*}

\textbf{Control of $T_1$}: Using \eqref{eq1a}, we have
\[
T_1 \leq \frac{|\hat{I}_{H}|}{|\hat{I}|}\max_{\substack{Z_i \in \hat{I}\cap\mathcal{Z}_2\\Z_i\not\sim F_0 }}|Z_i| \leq  4\frac{|\hat{I}_H|}{h} \frac{h}{|\hat{I}_{F_0}|} \left(\frac{\phi^{v}}{\varepsilon'}\right)^{1/v}.
\]
Again, by Bernstein's inequality, we have, with probability at least $1-\delta$, that
\begin{equation}\label{H0bound}
\frac{|\hat{I}_H|}{h} \leq \frac{\sum_{i: Z_i \in \mathcal{Z}_2} \mathbbm{1}\{Z_i \not \sim F_0\}}{h}
    \leq  \varepsilon+\sqrt{\varepsilon(1-\varepsilon)}\sqrt{\frac{2\log(1/\delta)}{h}}+\frac{2\log(1/\delta)}{3h} \leq g_h(\varepsilon,\delta) \leq 3.1 \varepsilon'.
\end{equation} 
Thus, we bound
\begin{equation}\label{t1a}
    T_1 \leq 12.4\phi\varepsilon'^{1-1/v}\frac{h}{|\hat{I}_{F_0}|},
\end{equation}
with probability at least $1-\delta$. 

\textbf{Control of $T_2$}: To control $T_2$, we write it as 

\[
T_2 = \left|\frac{|\hat{I}_{F_0}|}{|\hat{I}|}\left[\frac{1}{|\hat{I}_{F_0}|}\sum_{\substack{Z_i\in \hat{I}\cap\mathcal{Z}_2\\Z_i \sim F_0}}Z_i \right]\right| \leq T_{2a} +T_{2b},
\]
where 
\[
T_{2a}=\frac{|\hat{I}_{F_0}|}{|\hat{I}|}\left[\frac{1}{|\hat{I}_{F_0}|}\sum_{\substack{Z_i\in \hat{I}\cap\mathcal{Z}_2\\Z_i \sim F_0}}Z_i-\mathbb{E}[Z|Z \in \hat{I}, Z \sim F_0]\right] \quad \text{and} \quad T_{2b} = \frac{|\hat{I}_{F_0}|}{|\hat{I}|}\left|\mathbb{E}[Z|Z \in \hat{I}, Z \sim F_0]\right|.
\]
In $T_{2a}$, since conditional on $Z_i \in \hat{I}$, each $Z_i$ is a bounded random variable with $|Z_i-\mathbb{E}(Z_i)|\leq \text{length}(\hat{I}) = 2(\phi^{v}/\varepsilon')^{1/v}$ and they are independent of each other, we can again use Bernstein inequality. By Lemma \ref{lem:variance_shift}, for any event $E$ that occurs with probability at least $P(E)$, we can upper bound the conditional variance of $Z_i$ by
\[
\mathbb{E}_{Z\sim F_0}\left[\left(Z-\mathbb{E}[Z|Z\in E]\right)^2|Z\in E\right]\leq \left(\frac{\phi^{v}}{P(E)}\right)^{2/v}.
\]
Denote $F_0(\hat{I})$ to be the probability that $Z_i$ is distributed according to $F_0$ and lies in $\hat{I}$. Then, we have with probability at least $1-\delta$

\begin{equation}\label{t2aa}
    T_{2a} \leq \sqrt{\frac{2\log(1/\delta)}{|\hat{I}_{F_0}|}\left(\frac{\phi^{v}}{F_0(\hat{I})}\right)^{2/v}} + 4\left(\frac{\phi^{v}}{\varepsilon'}\right)^{1/v}\frac{\log(1/\delta)}{3|\hat{I}_{F_0}|},
\end{equation}
by the Bernstein inequality. 

For $T_{2b}$, we first notice 
\begin{align*}
   \mathbb{E}_{Z\sim F_0}[Z|Z \not\in \hat{I}] &= \frac{\mathbb{E}_{Z\sim F_0}[Z \mathbbm{1}_{Z \not\in \hat{I}}]}{F_0(\hat{I}^c)} \\
    & \leq \left(\frac{\mathbb{E}_{Z\sim F_0}[Z^v]}{F_0(\hat{I}^c)}\right)^{1/v} \\
    &= \left(\frac{\phi^{v}}{F_0(\hat{I}^c)}\right)^{1/v},
\end{align*}
where $F_0(\hat{I}^c)$ is the probability that $Z$ is distributed according to $F_0$ but does not lie in $\hat{I}$ and we use Hölder's inequality in the second line. Combining with fact that (as $\mu=0$)
\[
\left|\mathbb{E}\left[Z|Z\in \hat{I}\right]\right|F_0(\hat{I}) = F_0(\hat{I}^c)\left|\mathbb{E}\left[Z|Z\not\in \hat{I}\right]\right|,
\]
we have 
\begin{equation}\label{t2ba}
    T_{2b} \leq \phi \frac{F_0(\hat{I}^c)^{1-1/v}}{F_0(\hat{I})}.
\end{equation}

Combining \eqref{t1a}, \eqref{t2aa}, and \eqref{t2ba}, with probability at least $1-3\delta$, we have

\begin{equation}\label{t1t2a}
    |\mathrm{RUME}| \leq 12.4 \phi\varepsilon'^{1-1/v}\frac{h}{|\hat{I}_{F_0}|}+\left(\frac{\phi^{v}}{F_0(\hat{I})}\right)^{1/v}\sqrt{\frac{2\log(1/\delta)}{|\hat{I}_{F_0}|}} +\left(\frac{\phi^{v}}{\varepsilon'}\right)^{1/v}\frac{4\log(1/\delta)}{3|\hat{I}_{F_0}|}+\phi \frac{F_0(\hat{I}^c)^{1-1/v}}{F_0(\hat{I})}.
\end{equation}

Let $|\hat{h}_{H}|$ denote the number of points in $\mathcal{Z}_1$ which are not drawn from $F_0$ and lie in $\hat{I}$, and $|\hat{h}_{F_0}|$ denote the number of points in $\mathcal{Z}_1$ which are drawn from $F_0$ and lie in $\hat{I}$. To get the claimed bound, we need to study $F_0(\hat{I})$ and $F_0^h(\hat{I}) = \frac{|\hat{h}_{F_0}|}{\sum_{Z_i \in \mathcal{Z}_1}\mathbbm{1}_{Z_i \sim F_0}}$. Note that $F_0^h(\hat{I})$ is a sample version of $F_0(\hat{I})$.  Similar to \eqref{H0bound}, with probability at least $1-\delta$, we can upper bound $\frac{|\hat{h}_{H}|}{h}$ again by 
\[
\frac{|\hat{h}_{H}|}{h} \leq \frac{\sum_{i: Z_i \in \mathcal{Z}_1} \mathbbm{1}\{Z_i \not \sim F_0\}}{h}
    \leq  g_h(\varepsilon,\delta)
\]
using the Bernstein inequality.
Since $|\hat{h}_{H}|+|\hat{h}_{F_0}| = h(1-g_h(2\varepsilon',\delta))$, we have with probability at least $1-\delta$,
\begin{equation}\label{pn}
    |\hat{h}_{F_0}| \geq h\left(1-g_h(2\varepsilon',\delta)-g_h(\varepsilon,\delta)\right).
\end{equation}
Note that by \eqref{pn}, we have
$$F_0^h(\hat{I}) = \frac{|\hat{h}_{F_0}|}{\sum_{Z_i \in \mathcal{Z}_1}\mathbbm{1}_{Z_i \sim F_0}}\geq\frac{|\hat{h}_{F_0}|}{h}\geq1-g_h(2\varepsilon',\delta)-g_h(\varepsilon,\delta).$$

Consequently, we have 
\begin{align*}
    F_0^h(\hat{I^c})&\leq g_h(2\varepsilon',\delta)+g_h(\varepsilon,\delta) \nonumber \\
    &\leq 3\varepsilon' + (\sqrt{\varepsilon'(1-\varepsilon')}+\sqrt{2\varepsilon'(1-2\varepsilon')})\sqrt{\frac{2\log(1/\delta)}{h}} + \frac{4\log(1/\delta)}{3h} \nonumber\\
    &\leq 8\varepsilon'.
\end{align*}
Using the relative deviation lemma from empirical process theory \citep[e.g. Theorem 7 in][]{Bousquet2004}, we can finally bound $F_0(\hat{I}^c)$ as  
\begin{equation}\label{vc}
    F_0(\hat{I}^c) \leq F_0^h(\hat{I}^c)+2\sqrt{F_0^h(\hat{I}^c)\frac{\log(S_{\mathcal{F}}(2h))+\log(4/\delta)}{h}}+4 \frac{\log(S_{\mathcal{F}}(2h))+\log(4/\delta)}{h},
\end{equation}
with probability at least $1-\delta$. Since the VC dimension for intervals in $\mathbb{R}$ is 2, we have $S_{\mathcal{F}}(2h) \leq (2h+1)^2$ by the Sauer-Shelah Lemma (e.g.~Theorem 8.3.9 in \cite{Vershynin_2025}). Substituting the upper bound on $S_{\mathcal{F}}(2h)$ and $F_0^h(\hat{I^c})$ into equation (\ref{vc}), we get with probability at least $1-2\delta$
\begin{align}
     F_0(\hat{I}^c) &\leq 8\varepsilon'+2\sqrt{8\varepsilon'}\left(\sqrt{\frac{2\log(2h+1)}{h}}+\sqrt{\frac{\log(4/\delta)}{h}}\right)+4 \left(\frac{2\log(2h+1)}{h}+\frac{\log(4/\delta)}{h}\right) \nonumber\\
     &\leq 8\varepsilon'+2\sqrt{8\varepsilon'}\left(\sqrt{\frac{4\log(h)}{h}}+\sqrt{\frac{2\log(1/\delta)}{h}}\right)+4 \left(\frac{4\log(h)}{h}+\frac{2\log(1/delta)}{h}\right) \nonumber \\
     &\leq 52\varepsilon', \label{empia}
\end{align}
where the first inequality uses the fact that $\sqrt{a+b}\leq \sqrt{a}+\sqrt{b}$ for any $a,b\geq 0$; the second inequality relies on $\log(4/\delta)\leq 2\log(1/\delta) \;\forall\delta\leq1/4$ and $\log(2h+1)\leq2\log(h) \; \forall h\geq3$; and the third inequality uses $\delta\leq1/h$. Under the event $\{F_0(\hat{I}^c)\leq52\varepsilon'\}$, we also have $F_0(\hat{I})\geq 1/2$ provided that $\varepsilon'\leq 1/104$ and $|\hat{I}_{F_0}|\geq h/8$ happens with probability at least $1-\delta$ in this event, since $e^{-9h/64}\leq\delta$.

Combining $|\hat{I}_{F_0}|\geq h/8$, \eqref{t1t2a} and \eqref{empia} and using the fact that $\log(2/\delta)\leq 2\log(1/\delta)\; \forall \delta\leq 1/4$, we have that with probability at least $1-6\delta$,
\begin{align*}
    |\mathrm{RUME}-\mu|&\leq 100 \phi\varepsilon'^{1-1/v}+(2\phi^{v})^{1/v}\sqrt{\frac{2\log(1/\delta)}{h/8}} +\left(\frac{\phi^{v}}{\varepsilon'}\right)^{1/v}\frac{4\log(1/\delta)}{3(h/8)}+10\phi (52\varepsilon')^{1-1/v}\\
    &\leq C_1 \varepsilon'^{1-1/v}+C_2\sqrt{\frac{\log(1/\delta)}{h}}
\end{align*}
for some absolute constants $C_1,C_2>0$.

(b) We follow the same proof as (i) with minor modifications. We let $I^*$ be the interval 
$$I^*=(-M(\log(2/\varepsilon'))^{1/\theta},\,M(\log(2/\varepsilon'))^{1/\theta}).$$ 
This satisfies $F_0(I^*)\geq 1-2\varepsilon'$. Thus we have with probability at least $1-\delta$,
\[
\mathrm{length}(\hat{I})\leq \mathrm{length}(I^*) = 2M(\log(2/\varepsilon'))^{1/\theta}.
\]
and since $\hat{I}$ and $I^*$ contains more than half of the data in $\mathcal{Z}_1$, the two intervals must intersect and we have
$$|z - \mu |\leq 4M(\log(2/\varepsilon'))^{1/\theta}, \quad \forall z \in \hat{I}.$$
Thus, we have with probability at least $1-2\delta$,
\begin{equation}\label{t1b}
    T_1 \leq \frac{4g_h(\varepsilon,\delta)}{1-g_h(2\varepsilon',\delta)}M(\log(2/\varepsilon'))^{1/\theta} \leq 25M\varepsilon'(\log(2/\varepsilon'))^{1/\theta},
\end{equation}
where the last inequality follows from the fact that $g_h(\varepsilon,\delta)\leq 3.1\varepsilon'$ and $g_h(2\varepsilon',\delta)<1/2$.

In $T_{2a}$, since conditional on $Z_i \in \hat{I}$, each $Z_i$ is a bounded random variable with $|Z_i-\mathbb{E}(Z_i)|\leq \text{length}(\hat{I}) = 2M(\log(2/\varepsilon'))^{1/\theta}$ and they are independent of each other, we can again use the Bernstein inequality. Let $\sigma^2$ be the variance of $F_0$. By Lemma \ref{lem:variance_shift}, for any event $E$ that occurs with probability at least $P(E)$, we have
\[
\mathbb{E}_{Z\sim F_0}\left[\left(Z-\mathbb{E}[Z|Z\in E]\right)^2|Z\in E\right]\leq \frac{\sigma^2}{P(E)},
\]
thus we can obtain an upper bound for the conditional variance of $Z_i$. Denote $F_0(\hat{I})$ to be the probability that $Z_i$ is distributed according to $F_0$ and lies in $\hat{I}$. Then, we have with probability at least $1-\delta$

\begin{equation}\label{t2ab}
    T_{2a} \leq \sqrt{\frac{2\log(1/\delta)}{|\hat{I}_{F_0}|}\left(\frac{\sigma^2}{F_0(\hat{I})}\right)} + 4M\left(\log \frac{2}{\varepsilon'}\right)^{1/\theta}\frac{\log(1/\delta)}{3|\hat{I}_{F_0}|},
\end{equation}
by the Bernstein inequality. 

For $T_{2b}$, since $F_0$ is sub-Weibull, \Cref{thm:subWeib} implies that there exists an absolute constant $K$ such that $[\mathbb{E}_{Z\sim F_0}(Z^v)]^{1/v}\leq K v^{1/\theta}$ for any $v\geq 1$. Thus, 
\begin{align*}
   \mathbb{E}_{Z\sim F_0}[Z|Z \not\in \hat{I}] &= \frac{\mathbb{E}_{Z\sim F_0}[Z \mathbbm{1}_{Z \not\in \hat{I}}]}{F_0(\hat{I}^c)} \leq \left(\frac{\mathbb{E}_{Z\sim F_0}[Z^v]}{F_0(\hat{I}^c)}\right)^{1/v}= Kv^{1/\theta} F_0(\hat{I}^c)^{-1/v}.
\end{align*}
where $F_0(\hat{I}^c)$ is the probability that $Z$ is distributed according to $F_0$ but does not lie in $\hat{I}$ and we use Hölder's inequality in the second line. Combining with fact that (as $\mu=0$)
\[
\left|\mathbb{E}\left[Z|Z\in \hat{I}\right]\right|F_0(\hat{I}) = F_0(\hat{I}^c)\left|\mathbb{E}\left[Z|Z\not\in \hat{I}\right]\right|,
\]
and assuming $F_0(\hat{I}) \geq 1/2$, we have 
\begin{equation*}
    T_{2b} \leq 2Kv^{1/\theta} F_0(\hat{I}^c)^{1-1/v}.
\end{equation*}
We choose $v=\log(1/\varepsilon')/\log\log(1/\varepsilon')$. Since $\varepsilon'\leq 1/4$ by assumption \eqref{A1_moment}, we have $\frac{\log(1/\varepsilon')}{\log \log 4}\geq v\geq 1$. Combining this with \eqref{empia}, the upper bound on $T_{2b}$ then becomes
\begin{equation}\label{t2bb}
    T_{2b} \leq 2Kv^{1/\theta} (52\varepsilon')^{1-1/v}\leq \frac{104K}{\log \log 4}\varepsilon' \left(\log \frac{1}{\varepsilon'}\right)^{1+1/\theta},
\end{equation}
and holds with probability at least $1-\delta$.

Combining \eqref{t1b}, \eqref{t2ab} and \eqref{t2bb}, we get with probability at least $1-3\delta$
\begin{align}
    & |\mathrm{RUME-\mu}| \leq 25M\varepsilon'\log^{1/\theta}\left(\frac{2}{\varepsilon'}\right)+\sqrt{\frac{2\log(1/\delta)}{|\hat{I}_{F_0}|}\frac{\sigma^2}{F_0(\hat{I})}} \nonumber \\
    & \hspace{2cm} + 4M\log^{1/\theta}\left(\frac{2}{\varepsilon'}\right)\frac{\log(1/\delta)}{3|\hat{I}_{F_0}|}+\frac{104K}{\log \log 4}\varepsilon' \log^{1+1/\theta} \left(\frac{1}{\varepsilon'}\right). \label{t1t2b}
\end{align}

Equations $|\hat{I}_{F_0}|\geq h/8$ and \eqref{empia} still hold as they do not depend on $F_0$. Combining $|\hat{I}_{F_0}|\geq h/8$,  (\ref{empia}) and (\ref{t1t2b}), and using the fact that $\log(2/\delta)\leq 2\log(1/\delta)\; \forall \delta\leq 1/4$, we have that with probability at least $1-5\delta$,
\begin{align*}
    |\mathrm{RUME-\mu}|
    \leq C_3 \varepsilon'\left(\log \frac{1}{\varepsilon'}\right)^{1+1/\theta}+C_4\sqrt{\frac{\log(1/\delta)}{h}}.
\end{align*}
for some absolute constants $C_3,C_4>0$. Re-parametrise $5\delta$ as $\delta/2$ and update constants $C_3,C_4$ to get desired bound.
\end{proof}

The minimum sample size required for RUME estimation is presented in \Cref{prop2}.
\begin{lemma}\label{prop2} 
Consider the setting of Theorem \ref{thm:rume_cpt}. Let $\alpha>0$, $\delta_t=8\alpha/(3t^3-3t)$ and $\varepsilon'_{s,t}=\max\left(\varepsilon,\frac{1}{s}\log(1/\delta_t)\right)$ for all $t\in\mathbb{N}$ and $t\geq 2$. If we set the following detection thresholds for the RUME estimator
\begin{align*}
    h_t=\begin{cases}
        \left\lceil20\log(1/\delta_t)\right\rceil  &\text{if $\varepsilon < 0.1$,}\\
        \left\lceil\frac{2}{0.5-\sqrt{2\varepsilon(1-2\varepsilon)}}\log(1/\delta_t)\right\rceil  &\text{if $0.1\leq\varepsilon<0.25$,} 
    \end{cases}
\end{align*}
then $\forall t\geq2 \;\forall s\in[h_t, \lfloor t/2\rfloor]$, we have $\delta_t \leq \min(1/s,1/4)$ and
\begin{equation} \label{A1a}
        2\varepsilon'_{s,t}+2\sqrt{\varepsilon'_{s,t}\frac{2\log(1/\delta_t)}{s}}+\frac{2\log(1/\delta_t)}{s} < \frac{1}{2}. 
    \end{equation}
\end{lemma}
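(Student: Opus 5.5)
The statement to prove is \Cref{prop2}. The plan is to verify the two requirements of \eqref{A1_moment} directly, at every pair $(s,t)$ with $s\ge h_t$. Write $L_t=\log(1/\delta_t)$ and $q=L_t/s$. Then $q\le L_t/h_t$, so in the case $\varepsilon<0.1$ we get $q\le 1/20$.

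The condition $\delta_t\le\min(1/s,1/4)$ comes first. Since $\delta_t=8\alpha/(3t^3-3t)$ and $\alpha<1$, at $t=2$ we have $\delta_2=8\alpha/18<4/9$. This exceeds $1/4$, so I would either tighten the count (for $t=2,3$ the range $[h_t,\lfloor t/2\rfloor]$ is empty, because $h_t\ge 20\log(9/4)>1$) or check small $t$ separately. For $t\ge 3$ we have $\delta_t\le 8/72<1/4$. Moreover $s\le t/2$ gives $1/s\ge 2/t\ge 8/(3t^3-3t)\ge\delta_t$ for $t\ge2$. The key observation is that the range of $s$ is empty whenever $h_t>\lfloor t/2\rfloor$, so only large $t$ matter, and there both inequalities are immediate.

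Next comes \eqref{A1a}, split by which term attains $\varepsilon'_{s,t}$. If $\varepsilon'=q$, the left side is $2q+2\sqrt{2}\,q+2q=(4+2\sqrt2)q\le 6.83/20<1/2$. If $\varepsilon'=\varepsilon\ge q$ with $\varepsilon<0.1$, the left side is $2\varepsilon+2\sqrt{2\varepsilon q}+2q$. This is increasing in both $\varepsilon$ and $q$, so bounding it by $0.2+2\sqrt{2\cdot0.1\cdot0.05}+0.1=0.5$ needs a strict margin. Here I would use the hypothesis $\varepsilon<0.09$ from \Cref{thm:rume_cpt}, or the strictness $\varepsilon<0.1$ together with $q\le1/20$, to get strict inequality. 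The constant is tight, and this is the main place where care is needed; possibly one must rescale the $20$ slightly.

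The second regime, $0.1\le\varepsilon<0.25$, is handled the same way. The choice $h_t=\lceil 2L_t/(0.5-\sqrt{2\varepsilon(1-2\varepsilon)})\rceil$ gives $q\le(0.5-\sqrt{2\varepsilon(1-2\varepsilon)})/2$. One then checks algebraically that $2\varepsilon+2\sqrt{2\varepsilon q}+2q<1/2$, absorbing the cross term with $\sqrt{2\varepsilon q}\le$ the stated quantity. I expect this algebra to be the main obstacle, since the form of $h_t$ suggests a Bernstein-type expression $\sqrt{2\varepsilon(1-2\varepsilon)}$ rather than the one literally in \eqref{A1a}. I would first try bounding $\sqrt{2\varepsilon q}\le \sqrt{2\varepsilon(1-2\varepsilon)}\cdot\sqrt{q/(1-2\varepsilon)}$ and then use the smallness of $q$.
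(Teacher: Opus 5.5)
Your handling of $\delta_t\le\min(1/s,1/4)$ is correct. It is more careful than the paper, which only shows $\delta_t\le\min(1/s,1/2)$. At $t=2$ one can have $\delta_2>1/4$, and what rescues the claim is exactly your observation that $[h_t,\lfloor t/2\rfloor]$ is then empty.

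Your case $\varepsilon<0.1$ is also complete. If $\varepsilon'_{s,t}=q$, the left side of \eqref{A1a} is $(4+2\sqrt2)q<1/2$. Otherwise it is at most $2\varepsilon+2\sqrt{\varepsilon/10}+1/10$, which is strictly increasing in $\varepsilon$ and equals $1/2$ at $\varepsilon=0.1$. So strictness comes for free and no rescaling of the constant $20$ is needed. This is the only case used in \Cref{thm:rume_cpt}, where $\varepsilon<0.09$.

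The genuine gap is the case $0.1\le\varepsilon<0.25$, which you leave open, and your sketched route cannot close it. There the inequality has no slack at all, so any lossy estimate exploiting ``smallness of $q$'' will fail. Also, your proposed bound $\sqrt{2\varepsilon q}\le\sqrt{2\varepsilon(1-2\varepsilon)}\sqrt{q/(1-2\varepsilon)}$ is an identity, not an estimate.

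The missing idea, which is how the paper argues, is to complete the square. With $q=\log(1/\delta_t)/s$ and $e=\varepsilon'_{s,t}<1/4$,
\[
2e+2\sqrt{2eq}+2q=\bigl(\sqrt{2q}+\sqrt e\bigr)^2+e .
\]
Hence \eqref{A1a} is equivalent to $\sqrt{2q}<\sqrt{1/2-e}-\sqrt e$. Squaring, this is $2q<\bigl(\sqrt{1/2-e}-\sqrt e\bigr)^2=\tfrac12-\sqrt{2e(1-2e)}$, i.e.\ $s>h(e)\log(1/\delta_t)$ with $h(x)=2/\bigl(0.5-\sqrt{2x(1-2x)}\bigr)$. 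So $h_t$ is not Bernstein-inspired: it is exactly the threshold of \eqref{A1a}.

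Both cases then follow at once. The function $h$ is increasing on $[0,1/4)$ with $h(0.1)=20$, which gives the first case again. In the second case $q\le 1/h(\varepsilon)\le 0.05<\varepsilon$, so $\varepsilon'_{s,t}=\varepsilon$ and $s\ge h_t\ge h(\varepsilon)\log(1/\delta_t)$.

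The same equivalence shows that in the second case you only get $\le 1/2$ in general. If $h(\varepsilon)\log(1/\delta_t)$ is an integer and $s=h_t$, then \eqref{A1a} holds with equality. The paper's assertion $h_t>h(\varepsilon)\log(1/\delta_t)$ overlooks this. Strictness requires, for example, replacing the ceiling by $\lfloor h(\varepsilon)\log(1/\delta_t)\rfloor+1$.
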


\begin{proof}
(1) Firstly, to show $\delta_t\leq \min(1/s,1/2)$, we note that
\begin{equation*}
    \delta_t=\frac{8\alpha}{3t(t^2-1)}\leq\frac{8}{9t}\leq \min\left(\frac{2}{t},\frac{1}{2}\right)\leq\min\left(\frac{1}{s},\frac{1}{2}\right),
\end{equation*}
where we used the fact that $t\geq 2$ and $\alpha<1$ in the first and second inequality and the fact that $s\leq \lfloor t/2 \rfloor$ in the third inequality.

Define 
$$h(\varepsilon)=\frac{2}{0.5-\sqrt{2\varepsilon(1-2\varepsilon)}}.$$
We first note that \eqref{A1a} can be simplified by completing the square to give
    \begin{gather*}
        \quad s> 2\left(\frac{\sqrt{\log(1/\delta_t)}}{\sqrt{1/2-\varepsilon'_{s,t}}-\sqrt{\varepsilon'_{s,t}}}\right)^2 = h(\varepsilon'_{s,t})\log(1/\delta_t).
    \end{gather*}

(2) \textbf{Case 1: $\varepsilon<0.1$.} Since $s\geq h_t$, we have
\begin{equation*}
    \frac{2}{s}\log(1/\delta_t)\leq\frac{2}{h_t}\log(1/\delta_t)\leq \min(0.1,(\kappa/C_\lambda)^{2})\leq 0.1.
\end{equation*}
Thus, we have by definition of $\varepsilon'_{s,t}$ that $ \varepsilon'_{s,t}\leq\max(\varepsilon, 0.1)=0.1$.
Since $h(\varepsilon'_{s,t})\leq20,\; \forall\varepsilon'_{s,t}\leq0.1$ by the fact that $h$ is an increasing function in this range, the choice of $h_t$ satisfies \eqref{A1a}.

\textbf{Case 2: $\varepsilon>0.1$.} Since $s\geq h_t$, we have
\begin{equation*}
    \frac{2}{s}\log(1/\delta_t)\leq\frac{2}{h_t}\log(1/\delta_t)\leq 0.5-\sqrt{2\varepsilon(1-\varepsilon)}< 0.1.
\end{equation*}
Thus by definition of $\varepsilon'_{s,t}$, we have that $ \varepsilon'_{s,t}=\max(\varepsilon, \frac{1}{s}\log(1/\delta_t))=\varepsilon$. Meanwhile, by definition of $h_t$, 
\begin{equation*}
    s\geq h_t>h(\varepsilon)\log(1/\delta_t)=h(\varepsilon'_{s,t})\log(1/\delta_t).
\end{equation*}
Thus the choice of $h_t$ satisfies \eqref{A1a}. 
\end{proof}

\subsection{Concentration inequalities for the univariate setting}
\begin{lemma}[Multiplicative Chernoff bound, e.g. Corollary 4.9 in \cite{Mitzenmacher_Upfal_2005}]\label{lem:multiCher}
Let $p\in(0,1)$ and $n\in\mathbb{N}$. Given $\{W_i\}_{i=1}^n\overset{iid}{\sim} \mathrm{Bern}(p)$, we have 
\begin{equation*}
    \Pb\left[\sum_{i=1}^n W_i\geq (1+\delta)np\right]\leq \exp(np(\delta-(1+\delta)\log(1+\delta))).
\end{equation*}
\end{lemma}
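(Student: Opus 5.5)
The multiplicative Chernoff bound of \Cref{lem:multiCher} is a classical statement, and I would derive it by the exponential-moment (Chernoff) method applied to the i.i.d.\ Bernoulli sum $S_n=\sum_{i=1}^n W_i$.

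\emph{Moment generating function.} Fix $\lambda>0$. By Markov's inequality applied to $e^{\lambda S_n}$,
\[
\Pb\bigl[S_n\geq (1+\delta)np\bigr]\leq e^{-\lambda(1+\delta)np}\,\E\bigl[e^{\lambda S_n}\bigr].
\]
Independence factorises the expectation into the product of $\E[e^{\lambda W_i}]=1+p(e^\lambda-1)$ over $i$. The elementary inequality $1+x\leq e^x$ then gives
\[
\E\bigl[e^{\lambda S_n}\bigr]\leq \exp\bigl\{np(e^\lambda-1)\bigr\}.
\]

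\emph{Optimising $\lambda$.} Combining the two displays yields
\[
\Pb\bigl[S_n\geq (1+\delta)np\bigr]\leq \exp\bigl\{np\,[\,e^\lambda-1-\lambda(1+\delta)\,]\bigr\}.
\]
The exponent is minimised at $\lambda=\log(1+\delta)$, which requires $\delta>0$ so that $\lambda>0$. Substituting this choice gives exactly $\exp\{np(\delta-(1+\delta)\log(1+\delta))\}$.

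\emph{Degenerate cases.} For $\delta\leq 0$, the right-hand side is at least $1$, since $f(\delta)=\delta-(1+\delta)\log(1+\delta)\leq 0$ with equality only at $0$. I need to double-check the sign for negative $\delta$, because $f\leq 0$ would make the right-hand side at most $1$, not at least $1$. The paper only uses the bound with $\delta=1/(2p_s)-1>0$, so I would simply restrict the statement to $\delta>0$ and note this restriction. I do not expect any real obstacle here beyond that sign check.
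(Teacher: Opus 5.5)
Your proof is correct and follows the paper's argument: you bound the Bernoulli moment generating function by $\exp\{p(e^t-1)\}$ using $1+x\le e^x$, apply Markov's inequality to $e^{tS_n}$, and optimise at $t=\log(1+\delta)$. On your sign check, $\delta-(1+\delta)\log(1+\delta)\le 0$ for every $\delta>-1$, so for $\delta<0$ the right-hand side is below $1$ and the bound can fail (the left-hand side tends to $1$ by the law of large numbers). Restricting to $\delta\ge 0$ is therefore necessary, not merely convenient; the paper's proof assumes the same implicitly through its choice $t=\log(1+\delta)>0$.
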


\begin{proof}
For any $t>0$, the moment generating function of $W_i$ satisfies
\[
\E[e^{tW_i}]
= (1-p) + p e^t.
\]
Using the inequality $1+x \le e^x$, we obtain
\[
(1-p) + p e^t
= 1 + p(e^t-1)
\le \exp\!\big(p(e^t-1)\big).
\]
By independence,
\[
\E\!\left[\exp\!\left(t S_n\right)\right]
= \prod_{i=1}^n \E[e^{tW_i}]
\le
\exp\!\big(np(e^t-1)\big).
\]

Thus for any $t>0$,
\begin{align*}
    \Pb\!\left(S_n \ge (1+\delta)np\right)
&\le e^{-t(1+\delta)np}
\E\!\left[e^{t S_n}\right]\\
&\leq \exp\!\Big(np(e^t-1 - t(1+\delta))\Big)\\
&\leq\exp\!\Big(-np\big[(1+\delta)\log(1+\delta)-\delta\big]\Big),
\end{align*}
where the first inequality follows from Chernoff bound, the second inequality follows from the moment generating function above, and the last inequality follows from an optimization over $t>0$, giving the choice $t=\log(1+\delta)$.
\end{proof}

\begin{lemma}[Theorem 3 in \cite{Rosenthal_1970}]\label{lemma:rosenthal}
    Let $v\in(2,\infty)$ and $X_1,\ldots,X_n$ be mean zero independent random variables in $L^v$. Then,
    \begin{equation*}
        \left(\E\left|\sum_{i=1}^n X_i\right|^v\right)^{1/v}\leq2^{v/4+1/2}\sqrt{v}\max\left[\left(\sum_{i=1}^n\E|X_i|^v\right)^{1/v},\left(\sum_{i=1}^n\E|X_i|^2\right)^{1/2}\right].
    \end{equation*}
\end{lemma}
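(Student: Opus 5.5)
This is Rosenthal's inequality with the explicit constant $2^{v/4+1/2}\sqrt{v}$. I would prove it by symmetrisation followed by Khintchine's inequality, treating the two regimes of the maximum separately only at the very end.

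\textbf{Step 1: symmetrise.} Let $X_i'$ be an independent copy of $X_i$ and $\varepsilon_i$ i.i.d.\ Rademacher signs independent of everything else. Since $\E X_i'=0$, conditional Jensen gives
\[
\E\bigl|\sum_i X_i\bigr|^v\le \E\bigl|\sum_i (X_i-X_i')\bigr|^v .
\]
Because $X_i-X_i'$ is symmetric, the right-hand side equals $\E|\sum_i\varepsilon_i(X_i-X_i')|^v$. The triangle inequality in $L^v$ then bounds this by $2^v\E|\sum_i\varepsilon_iX_i|^v$.

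\textbf{Step 2: condition on the $X_i$ and apply Khintchine.} Khintchine's inequality with constant of order $\sqrt v$ gives
\[
\E_\varepsilon\bigl|\sum_i\varepsilon_iX_i\bigr|^v\le (C\sqrt v)^v\Bigl(\sum_i X_i^2\Bigr)^{v/2}.
\]
It remains to control $\E(\sum_iX_i^2)^{v/2}$.

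\textbf{Step 3: bound $\E(\sum_i X_i^2)^{v/2}$.} Write $\sum_i X_i^2=\sum_i\E X_i^2+\sum_i(X_i^2-\E X_i^2)$. The first piece contributes $(\sum_i\E X_i^2)^{v/2}$.

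The centred piece is a sum of mean-zero independent variables raised to the power $v/2$, which is smaller than $v$. I would handle it in two cases.
\begin{itemize}
\item If $v/2\le 2$, use the elementary inequality $(\sum_i a_i)^{q}\le \sum_i a_i^q$ for $q\le1$ together with Jensen. This gives $\E(\sum_i X_i^2)^{v/2}\le 2^{v/2}\max\bigl(\sum_i\E|X_i|^v,(\sum_i\E X_i^2)^{v/2}\bigr)$.
\item If $v/2>2$, apply the inequality recursively at exponent $v/2$, or interpolate.
\end{itemize}

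A cleaner route for the recursion, which avoids tracking constants through dyadic levels, is the following. With $A=\sum_i\E|X_i|^v$ and $B=\sum_i\E X_i^2$, use
\[
\E\Bigl(\sum_i X_i^2\Bigr)^{v/2}\le \Bigl(\max_i|X_i|^{v-2}\Bigr)\text{-type Hölder bounds},
\]
namely
\[
\E\Bigl(\sum_i X_i^2\Bigr)^{v/2}\le 2^{v/2-1}\Bigl(B^{v/2}+\E\bigl|\sum_i (X_i^2-\E X_i^2)\bigr|^{v/2}\Bigr),
\]
and then bound the last term by $A$ via Hölder.

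\textbf{Step 4: take $v$-th roots.} Collecting factors gives $2\cdot C\sqrt v\cdot 2^{1/2-1/v}\max(A^{1/v},B^{1/2})$. Matching this to $2^{v/4+1/2}\sqrt v$ requires the $2^{v/4}$ factor to absorb the recursion constants, since $2^{(v/2-1)/v}\le 2^{v/4}$ for $v\ge2$.

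\textbf{Main obstacle.} The hard part is controlling $\E|\sum_i(X_i^2-\E X_i^2)|^{v/2}$ by $\max(A,B^{v/2})$ with a constant that is only exponential in $v$. I would try induction on $\lceil\log_2 v\rceil$, or simply invoke the original argument of \cite{Rosenthal_1970}, since the lemma is quoted from there. Given that, the proof could reasonably consist only of verifying that the stated constant matches that reference.
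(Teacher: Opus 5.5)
Your Steps 1 and 2 are fine. Step 3, however, carries the whole content of Rosenthal's inequality and has a genuine gap, and Step 4 does not establish the stated constant.

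\textbf{Where Step 3 fails.} The failing move is the last one: ``bound $\E|\sum_i(X_i^2-\E X_i^2)|^{v/2}$ by $A$ via H\"older''. H\"older or convexity only gives $|\sum_{i=1}^n Y_i|^{v/2}\le n^{v/2-1}\sum_i|Y_i|^{v/2}$, with a factor depending on $n$. For $v>4$ no bound of the form $C_vA$ can hold at all. If the $X_i$ are i.i.d.\ with $\Var(X_1^2)>0$, Lyapunov's inequality gives $\E|\sum_i(X_i^2-\E X_i^2)|^{v/2}\ge (n\Var(X_1^2))^{v/4}$, whereas $A=n\E|X_1|^v$.

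What is true is a bound by a multiple of $\max(A,B^{v/2})$. Proving it needs a moment inequality for the centred sum at exponent $v/2$ (that is, Rosenthal again), together with the interpolation $\sum_i\E X_i^4\le B^{(v-4)/(v-2)}A^{2/(v-2)}$. You mention this recursion but do not carry it out. If you do, the constants compound over the dyadic levels, and nothing in your sketch shows they stay below $2^{v/4+1/2}\sqrt v$. So the ``absorption'' in Step 4 is unjustified.

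The case $v\le 4$ is also only loosely argued. You do not say where an exponent $q\le1$ enters, since the exponent in play is $v/2>1$. Moreover, the constant $2^{v/2}$ you claim yields $2^{3/2}\sqrt v$ overall, which exceeds $2^{v/4+1/2}\sqrt v$ for $v<4$.

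\textbf{The missing idea.} Keep the positivity of $X_i^2$ instead of centring. Let $Z_i=X_i^2$, $S=\sum_iZ_i$, $S_{-i}=S-Z_i$, $q=v/2$ and $x=(\E S^q)^{1/q}$.

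For $q\ge2$, write $\E S^q=\sum_i\E[Z_i(Z_i+S_{-i})^{q-1}]$. Then use $(a+b)^{q-1}\le2^{q-2}(a^{q-1}+b^{q-1})$, the independence of $Z_i$ and $S_{-i}$, and $\E S_{-i}^{q-1}\le x^{q-1}$. This gives $x^q\le2^{q-2}(A+Bx^{q-1})$, hence $x\le\max(2^{1-1/q}A^{1/q},2^{q-1}B)$. Therefore $(\E(\sum_iX_i^2)^{v/2})^{1/v}=x^{1/2}\le2^{v/4-1/2}\max(A^{1/v},B^{1/2})$. Combined with your symmetrisation factor $2$ and the Khintchine constant $\sqrt v$, this gives exactly $2^{v/4+1/2}\sqrt v$ for $v\ge4$.

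For $2<v<4$, subadditivity of $t\mapsto t^{q-1}$ and Jensen give $\E S^q\le A+B^q$. The resulting constant is $2^{1+1/v}B_v$, where $B_v$ is the Khintchine constant. This is at most $2^{v/4+1/2}\sqrt v$ if you use Bonami's bound $B_v\le\sqrt{v-1}$. It is not, with the crude $B_v\le\sqrt v$, when $v<1+\sqrt5$.

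\textbf{Comparison with the paper.} The paper gives no proof and simply quotes the lemma from Rosenthal (1970), so your fallback of citing the reference is what the paper does. The argument you sketch does not on its own establish the stated bound.
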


\begin{lemma}[Conditional pth moment bound, \citealp{prasad_2019_unified}]\label{lem:variance_shift} Suppose that $Y$ is sampled from a distribution $P^*$ with mean $\mu$ and bounded absolute $p^{th}$ central moment $\phi^{p}$. Then, for any event $A$ which occurs with probability at least $1 - \delta$, the variance of the conditional distribution satisfies
\[ \E[(y - \E[y|A])^2 | A] \leq \left(\frac{\phi^p}{1-\delta}\right)^{2/p} \].
\end{lemma}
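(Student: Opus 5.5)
The statement to prove is the conditional $p$th moment bound (\Cref{lem:variance_shift}). The plan is to bound the conditional variance by a conditional second moment about the unconditional mean, and then use H\"older's inequality to pass from the $p$th central moment to the second.

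First, the conditional variance minimises mean squared deviation over all constants. Hence
\[
\E[(y-\E[y\mid A])^2\mid A] \le \E[(y-\mu)^2\mid A].
\]
This is the usual bias--variance identity applied under the conditional law $P^*(\cdot\mid A)$. It lets us replace the awkward conditional mean by $\mu$.

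Second, we apply Jensen's (or H\"older's) inequality under the conditional measure, with exponent $p/2\ge 1$ (assuming $p\ge 2$, as in all the paper's uses):
\[
\E[(y-\mu)^2\mid A]\le \bigl(\E[|y-\mu|^p\mid A]\bigr)^{2/p}.
\]

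Third, we remove the conditioning crudely:
\[
\E[|y-\mu|^p\mid A]=\frac{\E[|y-\mu|^p\mathbbm{1}_A]}{\Pb(A)}\le \frac{\E|y-\mu|^p}{1-\delta}\le \frac{\phi^p}{1-\delta}.
\]
Raising this to the power $2/p$ and chaining the three displays gives the claim.

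There is no real obstacle here. The only points to take care over are the requirement $p\ge 2$ for the Jensen step and the fact that $\Pb(A)\ge 1-\delta>0$, so that conditioning is well defined. For the sub-Weibull application in \Cref{lem:rume_moment}(b), the variance version with $p=2$ gives $\sigma^2/P(E)$, which is exactly the form used there.
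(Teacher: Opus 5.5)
Your proof is correct and follows the paper's argument: the same bias--variance step replaces $\E[y\mid A]$ by $\mu$, and a moment comparison then passes from the second to the $p$th central moment. The only cosmetic difference is that you apply Jensen under the conditional law $P^*(\cdot\mid A)$, whereas the paper applies H\"older to $\E[(y-\mu)^2\mathbbm{1}(A)]$ with exponents $p/2$ and $q$ satisfying $2/p+1/q=1$; both yield exactly $\phi^2 P(A)^{-2/p}\le (\phi^p/(1-\delta))^{2/p}$.
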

\begin{proof}
Let $\mu_A = E[y|A]$, $d = \mu_A - \mu$. Observe the following,
\begin{align*} 
\E[(y - \mu_A)^2 | A ] = \E[(y - \mu - d)^2 | A] &= \E[ ({(y - \mu)^2 - 2d(y - \mu) + d^2) }| A ] \\
& = \E[(y - \mu)^2 | A] - d^2 \\
& \leq \E[(y - \mu)^2 | A]  \\
& = \frac{| \E((y - \mu)^2 \mathbbm{1}(A)) |}{P(A)}\\ 
&\leq \frac{ \E[|y- \mu |^{p}]^{\frac{2}{p}} (\E[\mathbbm{1}(A)^{q}]^{1/q})}{{P(A)}}\\
&\leq \left(\frac{\phi^p}{1-\delta}\right)^{2/p},
\end{align*}
where $p,q > 1$ are such that $2/p + 1/q = 1$.
\end{proof}

\subsection{Concentration inequalities for the multivariate setting}
Throughout this subsection, we work under the following setup, which is also adopted in the proofs in \Cref{sect:hdtestproof}. Let $X_1,\ldots,X_n$ be independent random variables in $\mathbb{R}^p$ drawn from the Huber contamination model
\[
    X_i \sim D_i := (1-\varepsilon)F + \varepsilon H_i,
    \qquad i=1,\ldots,n,
\]
where $F\in\mathcal{D}$ is the inlier distribution and $H_1,\ldots,H_n$ are arbitrary contamination distributions. Equivalently, we may represent the Huber contamination model by introducing independent contamination indicators
$$d_i \overset{\mathrm{i.i.d.}}{\sim} \operatorname{Bern}(\varepsilon),
\qquad i=1,\ldots,n.$$
Then $d_i=0$ indicates that the observation is drawn from the inlier distribution $F$, while $d_i=1$ indicates that it is drawn from the contamination distribution $H_i$. Thus,
$$(X_i\mid d_i=0)\sim F,
\qquad
(X_i\mid d_i=1)\sim H_i.$$ 
Recall that we define the set of inliers that are close to the mean as 
$$\mathfrak{G}=\{i\in[n] : d_i=0, \norm{X_i-\mu}_2\le R \}.$$ 
We therefore introduce the following random variables to aid the exposition. For each $i\in[n]$, let
$$\zeta_i=\mathbbm{1}\left\{ d_i=0,\ \norm{X_i-\mu}_2 \leq R \right\}.$$
We also define
$$\gamma=\mathbb{P}_{X\sim F}\left( \norm{X-\mu}_2 > R \right),\qquad \Sigma=\mathbb{E}_{X\sim F}\left[(X-\mu)(X-\mu)^\top\mid\norm{X-\mu}_2 \leq R\right].$$

\begin{lemma}[Reduction to the inlier distribution]
\label{lem:conditioning_reduction}
Assume that $\mathcal{D}=\mathcal{G}^p_{\theta,M}$, where we have $\gamma\leq 1/n$. For any measurable function
$\Phi:\mathbb{R}^{m\times p}\to\mathbb{R}$ and any $t>0$,
\[
\Pb_{Y_{1:n}\sim D}
\!\left(
|\Phi(Y_i\,:\,i\in \mathfrak{G})|\ge t
\;\middle|\;
\mathfrak{G}
\right)
\le
4\,
\Pb_{Y_{i_{1:m}}\sim F}
\!\left(
|\Phi(Y_{i_1},\ldots,Y_{i_m})|\ge t
\right),
\]
where $\mathfrak{G}=\{i_1,\ldots,i_m\}$.
\end{lemma}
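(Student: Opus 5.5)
The idea is to identify the conditional law of the good observations given $\mathfrak{G}$, and then compare it with the unconditional inlier law $F$. Fix $\mathfrak{G}=\{i_1,\ldots,i_m\}$. The pairs $(d_i,Y_i)$ are independent across $i$, and $\mathfrak{G}$ is determined by the indicators $\zeta_i=\mathbbm{1}\{d_i=0,\ \norm{Y_i-\mu}_2\le R\}$. It follows that, conditionally on $\mathfrak{G}$, the vectors $(Y_{i_1},\ldots,Y_{i_m})$ are independent. Each one has the law of $Y_i$ given $\zeta_i=1$, which is exactly $F_R$, the law of $Y\sim F$ conditioned on $\norm{Y-\mu}_2\le R$; the contamination distributions $H_i$ drop out entirely. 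So the left-hand side equals $\Pb_{F_R^{\otimes m}}(|\Phi|\ge t)$.

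It therefore suffices to bound $F_R^{\otimes m}(E)\le 4F^{\otimes m}(E)$ for any event $E$. For this I would use the density-ratio bound
\[
\frac{\mathrm{d}F_R}{\mathrm{d}F}=\frac{\mathbbm{1}\{\norm{y-\mu}_2\le R\}}{1-\gamma}\le \frac{1}{1-\gamma}.
\]
Taking products gives
\[
F_R^{\otimes m}(E)=\int_E \prod_{j}\frac{\mathrm{d}F_R}{\mathrm{d}F}\,\mathrm{d}F^{\otimes m}\le (1-\gamma)^{-m}F^{\otimes m}(E).
\]

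The only quantitative step is showing $(1-\gamma)^{-m}\le 4$, and this is where the hypothesis $\gamma\le 1/n$ enters, together with $m\le n$:
\[
(1-\gamma)^{-m}\le (1-1/n)^{-n}.
\]
For $n\ge 2$ this is at most $4$, since $(1-1/n)^{-n}$ is decreasing in $n$ and equals $4$ at $n=2$. The case $n=1$ needs separate handling: either $\gamma\le1/2$ holds by the choice of $R$ (the paper picks $R$ so that $\gamma$ is much smaller than $1/n$), or $m=0$ and the claim is trivial. I expect no real obstacle here. The main point needing care is justifying that conditioning on the random set $\mathfrak{G}$ yields i.i.d. $F_R$ draws, which holds because the events $\{\zeta_i=1\}$ and $\{\zeta_i=0\}$ factorize across independent indices. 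A secondary point is noting that $\Phi$ depends only on the coordinates in $\mathfrak{G}$, so no information from $\mathcal{B}$ enters.
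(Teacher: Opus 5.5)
Your proposal is correct and is essentially the paper's argument: conditionally on $\mathfrak{G}$ the good points are independent draws from $F$ conditioned on $\norm{Y-\mu}_2\le R$, and the bound $\Pb(A\mid B)\le \Pb(A)/\Pb(B)$ (your density-ratio step) with $\Pb(B)=(1-\gamma)^m\ge(1-1/n)^n\ge 1/4$ gives the factor $4$. You are slightly more careful than the paper, which asserts $(1-1/n)^n\ge 1/4$ without noting that it fails at $n=1$; that case is most cleanly excluded by the paper's standing assumption $u\le 0.08$, which forces $n^{-1}\le 0.08$, rather than by appealing to the choice of $R$.
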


\begin{proof}
Conditioning on the realization $\mathfrak{G}=\{i_1,\ldots,i_m\}$, by the definition
of $\mathfrak{G}$ we have
\[
d_{i_j}=0
\quad\text{and}\quad
\norm{Y_{i_j}-\mu}_2\le R
\qquad\forall j\in[m].
\]
Therefore,
\begin{align*}
\Pb_{Y_{1:n}\sim D}\left(|\Phi(Y_i\,:\,i\in \mathfrak{G})|\ge t\mid\mathfrak{G}\right)&=
\Pb_{Y_{i_{1:m}}\sim F}\left(
|\Phi(Y_{i_1},\ldots,Y_{i_m})|\ge t\mid \norm{Y_{i_j}-\mu}_2\le R,\ \forall j\in[m] \right)\\
&\le\frac{\Pb_{F}(|\Phi(Y_{i_1},\ldots,Y_{i_m})|\ge t)}{(1-\gamma)^m}\leq 4\Pb_{F}(|\Phi(Y_{i_1},\ldots,Y_{i_m})|\ge t),
\end{align*}
where the first inequality follows from the definition of conditional probability, and the second inequality follows from 
$$(1-\gamma)^m\ge(1-1/n)^n \ge \frac{1}{4},$$
since $\gamma\le 1/n$.
\end{proof}

\begin{lemma}\label{lem:normbound}
With probability $1-\delta$,
    \begin{align*}
        \norm{\sum_{i=1}^n \zeta_i(X_i-\mu)}_2\leq O(\sqrt{np}\log(1/\delta)).
    \end{align*}
\end{lemma}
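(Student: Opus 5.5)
The plan is to bound the vector sum $S=\sum_{i=1}^n \zeta_i(X_i-\mu)$ by splitting it into a centred fluctuation and a bias, controlling the first with the vector Bernstein inequality (\Cref{thm:vector_berstein}) and the second with the approximate-mean estimate (\Cref{lem:approx_mean}). This is the route indicated in the proof-dependency diagram. Write
\[
S=\sum_{i=1}^n \bigl(\zeta_i(X_i-\mu)-\E[\zeta_i(X_i-\mu)]\bigr)+n\,\E[\zeta_1(X_1-\mu)].
\]
The summands $V_i=\zeta_i(X_i-\mu)-\E[\zeta_i(X_i-\mu)]$ are independent and mean zero. They are also bounded, because $\zeta_i\neq 0$ forces $\norm{X_i-\mu}_2\le R$.

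\textbf{Step 1: the centred part.} Each $V_i$ satisfies $\norm{V_i}_2\le 2R$. Its second moment satisfies
\[
\E\norm{V_i}_2^2\le \E[\zeta_i\norm{X_i-\mu}_2^2]\le (1-\varepsilon)\,\E_F\norm{X-\mu}_2^2=(1-\varepsilon)p,
\]
since $F$ has identity covariance. So the variance proxy is $\sum_i\E\norm{V_i}_2^2\le np$. Vector Bernstein then gives, with probability $1-\delta$,
\[
\Bigl\|\sum_i V_i\Bigr\|_2\lesssim \sqrt{np\log(1/\delta)}+R\log(1/\delta).
\]
With the paper's choices, $R^2=p+O(\sqrt{\max(n,p)})$ in the finite-moment case, and $R^2=p+\tilde O(\sqrt p)$ in the sub-Weibull case. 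In both cases $R\lesssim \sqrt p+\max(n,p)^{1/4}$, so $R\log(1/\delta)\lesssim \sqrt{np}\log(1/\delta)$ whenever $n\ge1$. The centred part is therefore $O(\sqrt{np}\log(1/\delta))$.

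\textbf{Step 2: the bias.} Because the inlier mean is $\mu$,
\[
\E[\zeta_1(X_1-\mu)]=-(1-\varepsilon)\,\E_F\bigl[(X-\mu)\mathbbm{1}\{\norm{X-\mu}_2>R\}\bigr].
\]
This is exactly the truncation bias that \Cref{lem:approx_mean} controls. I would bound it crudely by Cauchy--Schwarz along a unit direction $v$:
\[
\bigl|\E[\iprod{v,X-\mu}\mathbbm{1}\{\norm{X-\mu}_2>R\}]\bigr|\le \sqrt{\gamma}.
\]
Hence the bias term is at most $n\sqrt\gamma$.

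\textbf{The main obstacle} is this bias term, because the target bound $\sqrt{np}\log(1/\delta)$ requires $n\sqrt\gamma\lesssim\sqrt{np}$, i.e. $\gamma\lesssim p/n$.
\begin{itemize}
\item In the sub-Weibull case $\gamma\le 1/n$, so the bias is at most $\sqrt n\le\sqrt{np}$ and causes no difficulty.
\item In the finite-moment case $\gamma\lesssim\min\{1,(p/n)^{v/4}\}$ with $v\ge4$. When $n\ge p$ this gives $\gamma\le p/n$. When $n<p$, the trivial bound $n\le\sqrt{np}$ suffices.
\end{itemize}
I expect to handle these regimes by case analysis, invoking the sharper $\gamma^{1-1/v}$ rate from \Cref{lem:approx_mean} if the crude bound turns out to be insufficient. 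A union of the two steps then yields the stated $O(\sqrt{np}\log(1/\delta))$ with probability $1-\delta$.
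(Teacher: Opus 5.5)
Your proposal is correct and takes essentially the same route as the paper. You centre the truncated summands, apply the vector Bernstein inequality with $\norm{V_i}_2\le 2R$ and second moment at most $p$, and bound the deterministic truncation bias $n\norm{\E[\zeta_1(X_1-\mu)]}_2$ separately. The only difference is the bias step: you use Cauchy--Schwarz to get $n\sqrt{\gamma}$, whereas the paper uses the fourth-moment rate $\gamma^{3/4}$ from \Cref{lem:approx_mean} to get $n^{1/4}p^{3/4}$. Your case analysis already shows the crude bound suffices, so the fallback to the sharper rate is unnecessary. Your trivial bound $n\le\sqrt{np}$ for $n<p$ also covers the regime where the paper's final absorption $n^{1/4}p^{3/4}\lesssim\sqrt{np}$ implicitly requires $n\ge p$.
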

\begin{proof}
We first define $\mu^*=\E_{Y\sim F}[Y|\norm{Y-\mu}_2<R]$. Then, we have
    \begin{equation}\label{proofeqn:normX2} 
    \norm{\sum_{i=1}^n \zeta_i (Y_i-\mu)}_2\leq\underbrace{\norm{\sum_{i=1}^n [\zeta_i (Y_i-\mu)-(1-\varepsilon)(1-\gamma)(\mu^*-\mu)]}_2}_{B_1}+\underbrace{n\norm{\mu^*-\mu}_2}_{B_2}.
    \end{equation}
    We will bound $B_1$ in \eqref{proofeqn:normX2} by \Cref{thm:vector_berstein}. Define the random variable $$V_i=\zeta_i (Y_i-\mu)-(1-\varepsilon)(1-\gamma)(\mu^*-\mu).$$ Then $\E[V_i]=0$,  $\norm{V_i}_2\leq 2R$ and
    \begin{align*}
        \E\norm{V}_2^2&=\E[\zeta_i\norm{Y_i-\mu}_2^2]-(1-\varepsilon)^2(1-\gamma)^2\norm{\mu^*-\mu}^2_2\leq p,
    \end{align*}
    and 
    \begin{align*}
  \sigma^2&=\norm{\E[VV^\top]}_\op=\norm{\E[\zeta(Y-\mu)(Y-\mu)^\top]-(1-\varepsilon)^2(1-\gamma)^2 \mu^*\mu^{*\top}}_\op\\
        &=\norm{(1-\varepsilon)\E_{Y\sim F}[(Y-\mu)(Y-\mu)^\top \mathbbm{1}(\norm{Y-\mu}_2<R)]-(1-\varepsilon)^2(1-\gamma)^2 (\mu^*-\mu)(\mu^{*}-\mu)^\top}_\op\\
        &\leq \norm{\E_{Y\sim F}[(Y-\mu)(Y-\mu)^\top]}_\op=1.
    \end{align*}
    Therefore, by \Cref{thm:vector_berstein}, we have 
    \begin{align*}
    \mathbb{P}(B_1 \ge \sqrt{np} + t)\leq \mathbb{P}(B_1 \ge \sqrt{n\E\norm{V}_2^2} + t) \le \exp \left( - \frac{t^2/2}{n + 4R\sqrt{pn} + 2tR/3} \right).
    \end{align*}
    In other words, with probability at least $1-\delta$, we have
    \begin{align*}
        \norm{\sum_{i=1}^n [\zeta_i (Y_i-\mu)-(1-\varepsilon)(1-\gamma)\mu^*]}_2&\leq \sqrt{np}+\sqrt{(2n+8R\sqrt{np})\log(1/\delta)}+\frac{4R}{3}\log(1/\delta)\\
        &\leq O(\sqrt{np}\log(1/\delta)).
    \end{align*}
    
    Meanwhile, an upper bound for $B_2$ follows directly from \Cref{lem:approx_mean}. 
    $$ n\norm{\mu^*-\mu}_2\leq 2n\phi\gamma^{3/4}.$$
    Under the assumption $F\in \mathcal{P}^p_{4,\phi}$, we can bound this further by
    $$n\norm{\mu^*-\mu}_2\leq 2\phi n\left(\frac{p\psi^4}{(R^2-p)^2}\right)^{3/4}\leq 2n^{1/4}p^{3/4}.$$
    
    Adding up all the terms in \eqref{proofeqn:normX2} gives
    \begin{equation} 
    \norm{\sum_{i=1}^n \zeta_i (Y_i-\mu)}_2\leq  O(\sqrt{np}\log(1/\delta)+n^{1/4}p^{3/4})\leq O(\sqrt{np}\log(1/\delta))
    \end{equation}
    with probability $1-\delta$. The same argument holds for $F\in\mathcal{G}_{\theta,M}^p$ as $\mathcal{G}_{\theta,M}
    \subset \mathcal{P}_{4,\phi}$ for some $\phi>0$.
\end{proof}

\begin{lemma}\label{lem:normbound_subW}
Let $a\in\R^n$ be a fixed vector with $a_i\in[0,1]$ for all $i\in[n]$. Suppose $\{Z_i\}_{i=1}^n$ are i.i.d. random vectors in $\R^p$ where each coordinate of $Z_1$ is independent with  $\norm{Z_{11}}_{\psi_\theta}\leq M$, zero mean and unit variance. Then with probability $1-\delta$,
    $$\norm{\sum_{i \leq n} a_iZ_i}^2_2\leq \norm{a}_2^2p+2C_\theta M^2(\norm{a}_2^2\sqrt{p\log(1/\delta)}+\norm{a}_2^2\log(1/\delta)+\norm{a}_\infty\log^{2/\theta}(1/\delta)),$$
    where $C_\theta>0$ is a constant depending only on $\theta$. Furthermore, treating $\theta$ as a constant, with probability $1-\delta$, we have
    \begin{align*}
        \norm{\sum_{i \leq n} a_iZ_i}_2\leq O(\norm{a}_2\sqrt{p}+\sqrt{\norm{a}_2\log(1/\delta)}+\norm{a}_\infty^{1/2}\log^{1/\theta}(1/\delta)).
    \end{align*}
\end{lemma}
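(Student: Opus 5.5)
The statement is \Cref{lem:normbound_subW}. I would write $\norm{\sum_i a_iZ_i}_2^2$ as a quadratic form in the $np$ independent scalar entries $Z_{ik}$ and then apply the Hanson--Wright-type bound of \Cref{prop:quadraticweibulltail}, in the restated form given in the remark following it. Let $\mathbf{z}\in\R^{np}$ stack all coordinates $Z_{ik}$. These are independent, mean zero, have unit variance, and satisfy $\norm{Z_{ik}}_{\psi_\theta}\le M$. Then
\[
\norm{\textstyle\sum_i a_iZ_i}_2^2=\sum_{k=1}^p\Big(\sum_i a_iZ_{ik}\Big)^2=\mathbf{z}^\top A\mathbf{z},\qquad A=(aa^\top)\otimes I_p,
\]
after suitably ordering indices. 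The diagonal correction is $\sum_{(i,k)}A_{(i,k),(i,k)}\sigma^2=\sum_i a_i^2\,p=\norm{a}_2^2p$, which is exactly the centring term in the claim.

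The next step is to compute the four norms of $A$ that appear in $\eta_\theta$. Since $aa^\top$ has rank one, $\norm{aa^\top}_F=\norm{aa^\top}_2=\norm{a}_2^2$. Tensoring with $I_p$ gives $\norm{A}_F=\norm{a}_2^2\sqrt p$ and $\norm{A}_2=\norm{a}_2^2$. Each row of $A$ has the form $a_i a$ on a single coordinate block, so $\norm{A}_{2\to\infty}=\norm{a}_\infty\norm{a}_2$ and $\norm{A}_{\max}=\norm{a}_\infty^2\le\norm{a}_\infty$, using $a_i\in[0,1]$. Substituting these into the remark gives, with probability $1-\delta$,
\[
\mathbf{z}^\top A\mathbf{z}\le \norm{a}_2^2p+C_\theta'M^2\Big(\norm{a}_2^2\sqrt{p\log(1/\delta)}+\norm{a}_2^2\log(1/\delta)+\norm{a}_\infty\norm{a}_2\log^{\frac{2\theta}{2+\theta}}(1/\delta)+\norm{a}_\infty\log^{2/\theta}(1/\delta)\Big).
\]

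The step I expect to be the main obstacle is absorbing the $2\to\infty$ term into the three terms of the stated bound. By AM--GM, $\norm{a}_\infty\norm{a}_2\log^{2\theta/(2+\theta)}\le\tfrac12\big(\norm{a}_2^2\log(1/\delta)+\norm{a}_\infty^2\log^{\frac{4\theta}{2+\theta}-1}(1/\delta)\big)$. Comparing exponents, $\tfrac{4\theta}{2+\theta}-1\le 2/\theta$ holds whenever $\log(1/\delta)\ge1$, and $\norm{a}_\infty^2\le\norm{a}_\infty$. So the cross term is dominated, up to doubling the constant; this is where the factor $2C_\theta$ comes from. A second technicality is that \Cref{prop:quadraticweibulltail} is stated only for $\theta\in(0,1]\cup\{2\}$. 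For $\theta\in(1,2)$ I would use the inclusion $\mathcal{G}_{\theta,M}\subset\mathcal{G}_{1,cM}$, which costs only constants, and note that $\log^{2/\theta}\le\log^2$ is then consistent up to $\theta$-dependent constants. I would flag this reduction rather than grind through it.

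For the second claim, I take square roots and use $\sqrt{x+y}\le\sqrt x+\sqrt y$. This gives $\norm{a}_2\sqrt p$, plus $\norm{a}_2(p\log)^{1/4}\le \norm{a}_2\sqrt p+\norm{a}_2\sqrt{\log}$, plus $\norm{a}_2\sqrt{\log}$, plus $\norm{a}_\infty^{1/2}\log^{1/\theta}$. Treating $M,\theta$ as constants, the $\norm{a}_2\sqrt{\log(1/\delta)}$ pieces are merged with the displayed $\sqrt{\norm{a}_2\log(1/\delta)}$-type term (when $\norm{a}_2\le1$) or with $\norm{a}_2\sqrt p$ and the leading terms otherwise. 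I would check this last bookkeeping carefully, since the displayed form looks slightly loose.
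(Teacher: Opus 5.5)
Your reduction to the quadratic form with $A=(aa^\top)\otimes I_p$, the four norm computations and the appeal to \Cref{prop:quadraticweibulltail} follow the paper's proof exactly. Your AM--GM absorption of the $\norm{A}_{2\to\infty}$ term supplies a step the paper only asserts.

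Write $L=\log(1/\delta)$. Inverting $\eta_\theta$ actually puts the exponent $\tfrac12+\tfrac1\theta$ on that term, not the $\tfrac{2\theta}{2+\theta}$ printed in the remark. Your trick absorbs it even more cleanly, since $\norm{a}_\infty\norm{a}_2L^{1/2+1/\theta}\le\tfrac12\bigl(\norm{a}_2^2L+\norm{a}_\infty^2L^{2/\theta}\bigr)$. So the first claim is established for $\theta\in(0,1]\cup\{2\}$. Two of your other steps do not go through.

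First, the case $\theta\in(1,2)$. Passing to $\mathcal{G}_{1,cM}$ makes the proposition applicable, but it produces the term $\norm{a}_\infty L^{2}$. The inequality $L^{2/\theta}\le L^{2}$ shows this is \emph{weaker} than the target by the unbounded factor $L^{2-2/\theta}$, so the comparison runs the wrong way.

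No other argument can repair this, because the first claim is false for $\theta\in(1,2)$. Take $p=1$, $a=\mathbf{1}_n$, and $Z_i=bX_i$ with $X_i$ symmetric, $\Pb(|X_i|>t)=e^{-t^\theta}$, and $b$ fixing unit variance. The event $\{X_i\ge(\ell/n)^{1/\theta}\text{ for all } i\}$ has probability $2^{-n}e^{-\ell}$, which exceeds $\delta:=e^{-\ell-n}$. On this event $(\sum_iZ_i)^2\gtrsim n^{2-2/\theta}\ell^{2/\theta}$. At this $\delta$, and once $\ell\ge n$, the claimed bound is of order $n\ell+\ell^{2/\theta}$. For $\ell=n^k$ with $k>1$, the former exceeds the latter by a positive power of $n$. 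The underlying reason is that for $\theta>1$ linear forms are governed by $\norm{a}_{\theta/(\theta-1)}$, not $\norm{a}_\infty$.

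The paper's proof shares this gap, as it invokes the proposition outside its stated range. What does hold for $\theta\in(1,2)$ is the $\theta=1$ bound with $M$ replaced by $c_\theta M$, i.e.\ with $\log^{2}(1/\delta)$. Your AM--GM step handles its cross term as before.

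Second, the square-root bookkeeping. When $\norm{a}_2>1$, the term $\norm{a}_2\sqrt{L}$ can be absorbed neither into $\norm{a}_2\sqrt p$ (that needs $L\lesssim p$) nor into $\sqrt{\norm{a}_2L}$. In fact the displayed second bound is false. Take Gaussian entries, $p=1$, $a=\mathbf{1}_n$ and $\delta=e^{-n}$. Then $|\sum_iZ_i|\ge n$ with probability roughly $e^{-n/2}\gg\delta$, whereas the display gives $O(n^{3/4})$.

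The paper's own proof ends with $\norm{a}_2\sqrt{\log(1/\delta)}$, so $\sqrt{\norm{a}_2\log(1/\delta)}$ in the statement is a typo. State the second claim with $\norm{a}_2\sqrt{\log(1/\delta)}$, which your square-root argument gives directly.
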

\begin{proof}
    First we note that
  $$\norm{\sum_{i \leq n} a_iZ_i}_2^2=\sum_{j\leq p} \bigg(\sum_{i\leq n}  a_{i}Z_{ij}\bigg)^2=\sum_{j\leq p} \sum_{i\leq n} \sum_{k\leq n}  a_{i}a_{k}Z_{ij}Z_{kj}\overset{d}{=}\sum_{s=1}^{np}\sum_{t=1}^{np} B_{st}Z_{s}Z_t,$$
  where $\{Z_s\}_{i=1}^{np}$ are independent and identically distributed as $Z_{11}$ and $B\in\R^{np\times np}$ is the following block diagonal matrix 
   $$B=\begin{pmatrix}
        B^{\text{block}} & 0 & \cdots & 0\\
        0 & B^{\text{block}} & \cdots & 0\\
        \vdots& \vdots & \ddots & \vdots\\
        0 & 0 & \cdots & B^{\text{block}}
    \end{pmatrix}$$
    and $B_{\text{block}}=aa^\top\in\R^{n\times n}$ . By evaluating the matrix norms of $B$, namely
    \begin{align*}
        \norm{B}_2&=\norm{a}_2^2, \quad\norm{B}_{\mathrm{max}}=\norm{a}_\infty,\\
        \norm{B}_F^2&=\sum_{i=1}^p \sum_{j,k=1}^n a_j^2a_k^2=p\norm{a}_2^4,\\
         \norm{B}_{2\rightarrow\infty}&=\max_{j\in[n]}\sqrt{\sum_{k=1}^n(B^{\text{block}}_{jk})^2}=\max_{j\in[n]}\sqrt{\sum_{k=1}^n(a_ja_k)^2}=\norm{a}_2\norm{a}_\infty,
    \end{align*}
    and consequently by \Cref{prop:quadraticweibulltail}, with probability at least $1-\delta$, we have
    \begin{align*}
        & \Biggl|\norm{\sum_{i \leq n} a_iZ_i}_2^2-p\norm{a}_2^2\Biggr| \\
        \leq & C_\theta M^2(\norm{a}_2^2\sqrt{p\log(1/\delta)}+\norm{a}_2^2\log(1/\delta)+\norm{a}_2\norm{a}_\infty\log^{\frac{2\theta}{2+\theta}}(1/\delta)+\norm{a}_\infty\log^{2/\theta}(1/\delta))\\
        \leq & 2C_\theta M^2(\norm{a}_2^2\sqrt{p\log(1/\delta)}+\norm{a}_2^2\log(1/\delta)+\norm{a}_\infty\log^{2/\theta}(1/\delta)).\\
    \end{align*}
    Thus we have 
    \begin{align*}
        \norm{\sum_{i \leq n} a_iZ_i}_2&\leq \norm{a}_2\sqrt{p}+\sqrt{2C_\theta} M((\norm{a}_2^4p\log(1/\delta))^{1/4}+\norm{a}_2\sqrt{\log(1/\delta)}+\norm{a}_\infty^{1/2}\log^{1/\theta}(1/\delta))\\
        &\leq O(\norm{a}_2\sqrt{p}+\norm{a}_2\sqrt{\log(1/\delta)}+\norm{a}_\infty^{1/2}\log^{1/\theta}(1/\delta)),
    \end{align*}
    where the second inequality follows from using AM-GM inequality on the term $(\norm{a}_2^4p\log(1/\delta))^{1/4}$.
\end{proof}

\begin{lemma}\label{lem:crossterm}
Suppose $\{Z_i\}_{i=1}^m$ are i.i.d. random vectors in $\R^p$ where each coordinate of $Z_1$ is independent with  $\norm{Z_{11}}_{\psi_\theta}\leq M$, zero mean and unit variance. For any subset $\mathcal{M}\subset [m]$ of size at most $\tau\leq m$, 
    $$\left|\iprod{\sum_{i\in \mathcal{M}}\sqrt{w_i}X_i, \sum_{i=1}^m\sqrt{w_i}X_i}\right|= p\norm{w_\mathcal{M}}_1 \pm O(\tau\sqrt{mp}\log^{2/\theta}(2m/\delta)),$$
with probability $1-\delta$.
\end{lemma}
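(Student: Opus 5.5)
The plan is to split the bilinear form into a diagonal part and an off-diagonal part, and to treat each as a quadratic form in the $mp$ independent sub-Weibull coordinates, controlled by \Cref{prop:quadraticweibulltail}. Write
\[
\Big\langle \sum_{i\in\mathcal{M}}\sqrt{w_i}Z_i,\ \sum_{j=1}^m\sqrt{w_j}Z_j\Big\rangle
= \sum_{i\in\mathcal{M}} w_i\norm{Z_i}_2^2 + \sum_{i\in\mathcal{M}}\sum_{j\neq i}\sqrt{w_iw_j}\,\langle Z_i,Z_j\rangle .
\]
Throughout, the weights $w$ are treated as fixed, exactly as in the calling proofs (Lemmas~\ref{lem:rowsumfilter-1}), where the conditioning is removed afterwards. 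The set $\mathcal{M}$ is also fixed, and the uniform statement follows from the way the lemma is used, one $\mathcal{M}$ at a time.

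\textbf{Diagonal part.} For each $i$, $\norm{Z_i}_2^2-p=\sum_{k=1}^p(Z_{ik}^2-1)$. By \Cref{cor:squared_norm} and \Cref{prop:centering_norm}, each summand is a centred $\psi_{\theta/2}$ variable with norm $O(M^2)$. One could apply \Cref{lemma:weib_concentration} with $\theta/2$ to the weighted sum over $i\in\mathcal{M}$, $k\in[p]$. A cleaner route is to view $\sum_{i\in\mathcal{M}} w_i\norm{Z_i}^2$ as a quadratic form $Z^\top A Z$ with $A$ diagonal and entries $w_i\le1$ on the $\tau p$ coordinates. Then $\norm{A}_F\le\sqrt{\tau p}$ and $\norm{A}_2=\norm{A}_{2\to\infty}=\norm{A}_{\max}\le1$. \Cref{prop:quadraticweibulltail} then gives
\[
\sum_{i\in\mathcal{M}} w_i\norm{Z_i}_2^2 = p\norm{w_\mathcal{M}}_1 \pm O\big(\sqrt{\tau p\log(1/\delta)}+\log^{2/\theta}(1/\delta)\big),
\]
which is well within $\tau\sqrt{mp}\log^{2/\theta}$.

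\textbf{Off-diagonal part.} The same proposition applies to the symmetric matrix $A\in\R^{mp\times mp}$ that is block diagonal over coordinates $k\in[p]$. Each block $A^{\rm blk}$ is $m\times m$ with $(A^{\rm blk})_{ij}=\tfrac12\sqrt{w_iw_j}\,(\mathbbm{1}_{i\in\mathcal{M}}+\mathbbm{1}_{j\in\mathcal{M}})$ for $i\ne j$ and zero diagonal. Its row norms are at most $\sqrt{m}$, its Frobenius norm is $O(\sqrt{\tau m})$, its operator norm is $O(\sqrt{\tau m})$ (it has rank at most $2$ up to a diagonal correction of size $\le1$), and its entries are at most $1$. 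Across $p$ blocks, $\norm{A}_F=O(\sqrt{\tau mp})$, $\norm{A}_2=O(\sqrt{\tau m})$, $\norm{A}_{2\to\infty}\le\sqrt m$ and $\norm{A}_{\max}\le1$, and the diagonal mean term is zero. The proposition gives a deviation of
\[
O\big(\sqrt{\tau mp\log(1/\delta)}+\sqrt{\tau m}\log(1/\delta)+\sqrt m\log^{2\theta/(2+\theta)}(1/\delta)+\log^{2/\theta}(1/\delta)\big).
\]
Since $\tau\ge1$, each of these terms is at most $\tau\sqrt{mp}\log^{2/\theta}(2m/\delta)$ up to constants.

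\textbf{Main obstacle.} \Cref{prop:quadraticweibulltail} is stated only for $\theta\in(0,1]\cup\{2\}$. For $\theta\in(1,2)$, I would use $\mathcal{G}_{\theta,M}\subset\mathcal{G}_{1,CM}$ and apply the $\theta=1$ case. This costs only changing $\log^{2/\theta}$ to $\log^{2}$, which is absorbed if the exponent is read as $\max(2/\theta,2)$. If the exact $2/\theta$ is required, I would instead union bound over $i\in\mathcal{M}$ on each row sum $\langle Z_i,\sum_{j\ne i}\sqrt{w_j}Z_j\rangle$. Conditional on the other vectors, each row sum is a linear form handled by \Cref{lemma:weib_concentration}, with $\norm{\sum_{j\ne i}\sqrt{w_j}Z_j}_2$ controlled by \Cref{lem:normbound_subW}. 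A union bound over the $m$ rows explains the $\log(2m/\delta)$ in the statement. Summing $\tau$ such row bounds of size $\sqrt{mp}\log^{2/\theta}$ yields the claimed rate.
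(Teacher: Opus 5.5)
Your Hanson--Wright computation is correct for a \emph{fixed} set $\mathcal{M}$ (and fixed $w$), and for such a set it is sharper than the lemma, with $\sqrt{\tau}$ in place of $\tau$. The gap is your sentence dismissing uniformity. The paper proves this lemma, and needs it, as a bound holding simultaneously for all $\mathcal{M}$ with $|\mathcal{M}|\le\tau$; the linear factor $\tau$ and the $\log(2m/\delta)$ reflect this. It is not used one $\mathcal{M}$ at a time. In the proof of \Cref{lem:rowsumfilter}, the bound of \Cref{lem:rowsumfilter-1}, and hence this lemma, is applied to the data-dependent sets $\mathcal{M}^{\pm}$, defined by the sign of $\langle\sqrt{w_i}Y_i,\Sum(w,\mathcal{Y})\rangle-w_ip$. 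Through the sorting argument it is also applied to whichever good indices carry the largest row-sum scores. A fixed-$\mathcal{M}$ tail bound says nothing about such random sets. A union bound over the $\binom{m}{\tau}$ candidate sets does not recover the stated rate either. Replacing $\log(1/\delta)$ by $\tau\log(em/\tau)+\log(1/\delta)$ turns your $\norm{A}_2$ term into order $\tau^{3/2}\sqrt{m}\log m$. This exceeds $\tau\sqrt{mp}\,\mathrm{polylog}$ once $\tau$ is larger than $p$ by more than polylogarithmic factors, for example $\tau\asymp\varepsilon n$ with $n>p$. Your $\norm{A}_{\max}$ term becomes $(\tau\log m)^{2/\theta}$, which is superlinear in $\tau$ for $\theta<2$.

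The argument that actually proves the lemma is the one you list only as a fallback, and it is the paper's proof. You bound the $2m$ row quantities $w_i(\norm{Z_i}_2^2-p)$ and $\sqrt{w_i}\,\iprod{Z_i,\sum_{j\neq i}\sqrt{w_j}Z_j}$ for all $i\in[m]$ at once, by a union bound over rows; this is where $\log(2m/\delta)$ comes from. Each is at most $O(\sqrt{mp}\log^{2/\theta}(2m/\delta))$. The bound for every $\mathcal{M}$ then follows deterministically by summing $\tau$ rows, which is why the rate is linear in $\tau$.

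Your conditioning step is more careful than the paper's. The paper applies \Cref{lemma:weib_concentration} directly to the products $X_{ij}X_{kj}$, $k\neq i$, but these share the factor $X_{ij}$ and are not independent. It is also cleaner to condition on $Z_i$ rather than on the other vectors. The row sum is then linear in the independent coordinates $\{Z_{jk}\}_{j\neq i}$, with coefficient vector of norm $\sqrt{\sum_{j\neq i}w_j}\,\norm{Z_i}_2$, which the diagonal bound already controls. You then need neither \Cref{lem:normbound_subW} nor \Cref{prop:quadraticweibulltail}, so the restriction $\theta\in(0,1]\cup\{2\}$ disappears as well.
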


\begin{proof}
    For any $i = 1, \ldots, m$, note that
    \begin{align*}
    \iprod{\sqrt{w_i}X_i, \Sum (w,\mathfrak{G})} &= w_i\norm{X_i}^2 + \iprod{\sqrt{w_i}X_i, \Sum (w,\mathfrak{G}\setminus \{i\})} \; .        
    \end{align*}
    
    To bound the first term on the LHS, we first note that $\norm{X_{ij}}_{\psi_\theta}=M$ implies that $\norm{X_{ij}^2}_{\psi_{\theta/2}}=M^2$ by \cref{cor:multiplied_norm}. Thus, for a given $i\in [m]$, by \cref{lemma:weib_concentration}, with probability at least $1-\delta$, 
\begin{equation}\label{proof_eqn:chi2X}
    \left|\|\sqrt{w_i}X_i\|^2 - w_ip\right| \leq O\left(\sqrt{w_ip\log(1/\delta)} + \log^{2/\theta}(1/\delta)\right)\leq O(\sqrt{w_ip}\log^{2/\theta}(1/\delta)),
\end{equation}
Therefore, with probability at least $1-\delta$, we have for all $i \in [m]$ that
\begin{equation*}
    \left|\|\sqrt{w_i}X_i\|^2 - w_ip\right|\leq O(\sqrt{w_ip}\log^{2/\theta}(n/\delta)).
\end{equation*}
Assuming this holds for all $i\in [m]$, then for any subset $\mathcal{M} \subset [m]$ such that $|\mathcal{M}| \le \tau$, we have
$$\sum_{i \in \mathcal{M}} w_i\|X_i\|^2 = p\norm{w_\mathcal{M}}_1 \pm O(\sqrt{p}|\mathcal{M}|  \log^{2/\theta}(m/\delta))= p \norm{w_\mathcal{M}}_1 \pm O(\tau \sqrt{p} \log^{2/\theta} (m/\delta)).$$

To bound the second term on the RHS, we note that for any $i,k\in[m]$ with $i\neq k$ and $j\in[p]$, we have $\norm{X_{ij}X_{kj}}_{\psi_{\theta/2}}\leq \norm{X_{ij}}^2_{\psi_{\theta}}=M^2$ by \Cref{cor:multiplied_norm}. Using \Cref{lemma:weib_concentration}, for fixed $i$, with probability $1-\delta$,
    $$\left|\sum_{k=1, k\neq i}^m\sum_{j=1}^p\sqrt{w_iw_k}X_{ij}X_{kj}\right|\leq O(\sqrt{pw_i\norm{w}_1\log(1/\delta)}+\log^{2/\theta}(1/\delta))\leq O(\sqrt{pw_i\norm{w}_1}\log^{2/\theta}(1/\delta)).$$
    Thus, with probability $1-\delta$, we have for all $i\in[m]$ that
    \begin{align*}
        \left|\iprod{\sqrt{w_i}X_i, \Sum (w,[m]\setminus\{i\})}\right|&=\left|\sum_{k=1, k\neq i}^m \sum_{j=1}^p\sqrt{w_iw_k}X_{ij}X_{kj}\right|\\
        &\leq O(\sqrt{pw_i\norm{w}_1}\log^{2/\theta}(m/\delta))\\
        &\leq O(\sqrt{mp}\log^{2/\theta}(m/\delta)).
    \end{align*}
    Condition on the two events above holding. Then, for any fixed $\mathcal{M}$ satisfying $|\mathcal{M}| \leq \varepsilon n$, we have with probability $1-2\delta$ that 
    \begin{align}
        \left\langle \Sum (w,\mathcal{M}), \Sum(w,\mathfrak{G}) \right\rangle &= \sum_{i \in \mathcal{M}} \iprod{\sqrt{w_i}X_i, \Sum (w,\mathfrak{G})} \nonumber\\
        &= \sum_{i \in \mathcal{M}} w_i\norm{X_i}_2^2 + \sum_{i \in \mathcal{M}} \iprod{\sqrt{w_i}X_i, \Sum (w,\mathfrak{G} \setminus \{i\})} \nonumber\\
        &= p\norm{w_\mathcal{M}}_1 \pm O(\tau\sqrt{mp}\log^{2/\theta}(m/\delta)), \label{eq:a1-bound}
    \end{align}
    as claimed.
\end{proof}

\begin{lemma}[Concentration of covariance matrix]\label{lem:cov_matrix_conc}
Let $\delta\in(0,1)$. Given the setup, 
\begin{enumerate}[label=(\textbf{\alph*})]
    \item if we assume further that $F\in\mathcal{P}_{v,\phi}^p$ for $v\geq 4$, then by picking
        $$R^2=p+O\left(\sqrt{\max(n,p)}\right),$$
    with probability at least $1-\delta$, we have
    $$\norm{\sum_{i\in \mathfrak{G}}(X_i-\mu)(X_i-\mu)^\top-nI}_\op\leq O((\sqrt{np}+p)\log(2p/\delta))+\varepsilon n.$$
    \item if we assume further that $F\in\mathcal{G}_{\theta,M}^p$, then by picking
$$R^2=p+M^2 \cdot O\left(\log^{2/\theta}(n)+\sqrt{p\log(n)}\right),$$ 
    with probability at least $1-\delta$, we have
    \begin{align*}
        \norm{\sum_{i\in \mathfrak{G}}(X_i-\mu)(X_i-\mu)^\top-nI}_\op\leq O((\sqrt{np}+p)\log(2p/\delta))+\varepsilon n.
    \end{align*}    
\end{enumerate}
\end{lemma}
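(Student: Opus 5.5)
We first reduce to a statement about the inliers alone, and then use a truncated matrix Bernstein argument. Write $Z_i=X_i-\mu$ and $\zeta_i=\mathbbm{1}\{d_i=0,\norm{Z_i}_2\le R\}$. The sum over $\mathfrak{G}$ is then $S=\sum_{i=1}^n\zeta_iZ_iZ_i^\top$, and we decompose
\[
S-nI=\sum_{i=1}^n\bigl(\zeta_iZ_iZ_i^\top-\E[\zeta_iZ_iZ_i^\top]\bigr)+\bigl(n\E[\zeta_1Z_1Z_1^\top]-nI\bigr).
\]
Each summand $\zeta_iZ_iZ_i^\top$ depends only on $(d_i,X_i)$. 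The contamination enters only through $\zeta_i=0$ when $d_i=1$, so the summands are i.i.d. regardless of the $H_i$.

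\textbf{Bias term.} We have $\E[\zeta_1Z_1Z_1^\top]=(1-\varepsilon)\E_F[ZZ^\top\mathbbm{1}\{\norm{Z}_2\le R\}]$ and $\E_F[ZZ^\top]=I$. This gives
\[
\norm{n\E[\zeta_1Z_1Z_1^\top]-nI}_\op\le \varepsilon n+n\,\sup_{\norm{v}_2=1}\E_F\bigl[\iprod{v,Z}^2\mathbbm{1}\{\norm{Z}_2>R\}\bigr].
\]
By Cauchy--Schwarz, the supremum is at most $\sqrt{\E\iprod{v,Z}^4}\sqrt{\gamma}\lesssim\sqrt{\gamma}$, using independent coordinates with bounded fourth moment (this is the content of \Cref{lem:approx_cov}).

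For (a), Chebyshev applied to $\norm{Z}_2^2-p$, whose variance is $O(p)$ under $\mathcal{P}^p_{4,\psi}$, gives $\gamma\lesssim p/(R^2-p)^2$. With $R^2-p\asymp\sqrt{\max(n,p)}$ this yields $\gamma\lesssim p/\max(n,p)$. Hence $n\sqrt{\gamma}\lesssim n\sqrt{p/n}=\sqrt{np}$ when $n\ge p$, and $n\sqrt\gamma\le n\le p$ otherwise.

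For (b), \Cref{prop:quadraticweibulltail} with $A=I_p$ gives $\gamma\le 1/n^2$ for the stated $R$, so $n\sqrt\gamma\le 1$.

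\textbf{Fluctuation term.} We apply matrix Bernstein (\Cref{thm:matrix_bernstein}) to the centered summands $W_i$. Truncation gives $\norm{W_i}_\op\le 2R^2$. For the variance,
\[
\norm{\E W_i^2}_\op\le\norm{\E[\zeta_i\norm{Z_i}_2^2Z_iZ_i^\top]}_\op\le R^2\norm{\E[ZZ^\top]}_\op=R^2,
\]
so $\norm{\sum_i\E W_i^2}_\op\le nR^2$. Matrix Bernstein then yields, with probability $1-\delta$,
\[
\norm{\textstyle\sum_i W_i}_\op\lesssim\sqrt{nR^2\log(2p/\delta)}+R^2\log(2p/\delta).
\]
In case (a), $R^2\asymp p+\sqrt{\max(n,p)}\lesssim p+\sqrt n$. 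Then $\sqrt{nR^2}\lesssim\sqrt{np}+n^{3/4}$, which is a problem: when $n\gg p^2$, the $n^{3/4}$ term exceeds $\sqrt{np}$. This is the main obstacle.

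To remove it, we sharpen the variance proxy using fourth moments instead of the crude bound $\norm{Z}_2^2\le R^2$. For a unit vector $v$,
\[
v^\top\E[\norm{Z}_2^2ZZ^\top]v=\E[\norm{Z}_2^2\iprod{v,Z}^2].
\]
Expanding coordinatewise and using independence and bounded fourth moments gives $\E[\norm{Z}_2^2\iprod{v,Z}^2]\le p+O(1)$. This yields the variance proxy $n(p+O(1))$, giving the $\sqrt{np\log(2p/\delta)}$ term. The $R^2\log(2p/\delta)$ term is $(p+\sqrt{\max(n,p)})\log(2p/\delta)$. In case (b), $R^2\lesssim p+\mathrm{polylog}(n)$, which is fine directly.

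The remaining $\sqrt{n}\log$ contribution from the boundedness term is dominated by $\sqrt{np}\log(2p/\delta)$ when $p\ge1$. Combining the bias and fluctuation terms then gives the bound $O((\sqrt{np}+p)\log(2p/\delta))+\varepsilon n$ in both cases, as claimed.
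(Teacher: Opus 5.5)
Your proposal is correct and follows essentially the same route as the paper: matrix Bernstein on the truncated inlier summands with boundedness $2R^2$ and the fourth-moment variance proxy $n(p+O(1))$, plus a bias term of size $\varepsilon n + O(n\sqrt{\gamma})$ with $\gamma$ controlled by a tail bound for $\|X-\mu\|_2^2-p$. The only cosmetic difference is that you bound $\gamma$ and the truncation bias using fourth moments alone (Chebyshev and Cauchy--Schwarz), whereas the paper uses Markov/Rosenthal and H\"older with the $v$-th moment; your version already suffices, since $n\sqrt{\gamma}\lesssim\sqrt{np}+p$.
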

\begin{proof}
To simplify notation, write $\bar{X}_i=X_i-\mu$. We can write
    \begin{align}
        \norm{\sum_{i\in \mathfrak{G}}\bar{X}_i\bar{X}_i^\top-nI}_\op\leq&\norm{\sum_{i=1}^n \zeta_i \bar{X}_i\bar{X}_i^\top-n(1-\varepsilon)(1-\gamma)\Sigma}_\op+n(1-\varepsilon)(1-\gamma)\norm{\Sigma-I}_\op+(\varepsilon+\gamma) n. \label{eqn:emp_decompose}
    \end{align}
    Since for all $i$, we have $\norm{\zeta_i \bar{X}_i\bar{X}_i^\top-\E[\zeta_i \bar{X}_i\bar{X}_i^\top]}_\op \leq 2R^2$ and
    \begin{align*}
        \E[\zeta_i \bar{X}_i\bar{X}_i^\top]&=\E[\mathbbm{1}(\zeta_i=1) \bar{X}_i\bar{X}_i^\top]\\
        &=\Pb(d_i=0)\E_{X_i\sim F}[\mathbbm{1}(\norm{\bar{X}_i}_2\le R) \bar{X}_i\bar{X}_i^\top]\\
        &=\Pb(d_i=0)\Pb_{X_i\sim F}(\norm{\bar{X}_i}_2\le R)\E_{X_i\sim F}[ \bar{X}_i\bar{X}_i^\top|\norm{\bar{X}_i}_2\le R]=(1-\varepsilon)(1-\gamma)\Sigma,
    \end{align*} 
    we can apply matrix Bernstein inequality (\Cref{thm:matrix_bernstein}) to bound the first term on the RHS of \eqref{eqn:emp_decompose}. With probability $1-\delta$, we have
    \begin{align*}
        \norm{\sum_{i=1}^n \zeta_i \bar{X}_i\bar{X}_i^\top-n(1-\varepsilon)(1-\gamma)\Sigma}_\op\leq\sqrt{2\sigma^2\log(2p/\delta)}+\frac{4R^2}{3}\log(2p/\delta),
    \end{align*}
    and the variance parameter $\sigma^2$ satisfies
    \begin{align*}
        \sigma^2&=\norm{\sum_{i=1}^n \{\E[\zeta_i\bar{X}_i\bar{X}_i^\top\norm{\bar{X}_i}_2^2]-(1-\varepsilon)^2(1-\gamma)^2\Sigma^2\}}_\op\\
        &\leq\norm{\sum_{i=1}^n \E_{X_i\sim F}[\bar{X}_i\bar{X}_i^\top\norm{\bar{X}_i}_2^2]}_\op\\
        &\leq n(\E[X_{1i}^4]+p-1),
    \end{align*}
    where the first inequality follows from $(1-\varepsilon)\E_{X_i\sim F}[\bar{X}_i\bar{X}_i^\top\norm{\bar{X}_i}_2^2]\succeq\E[\zeta_i\bar{X}_i\bar{X}_i^\top\norm{\bar{X}_i}_2^2]-(1-\varepsilon)^2\gamma^2\Sigma^2\succeq0$ and the last inequality follows from evaluating the expectation. For $(i,j)\in\{1,\ldots,n\}$ such that $i\neq j$, we have
    \begin{align*}
        (\E_{\bar{X}_1\sim F}[\bar{X}_1\bar{X}_1^\top\norm{\bar{X}_1}_2^2)_{ij}&=\sum_{k=1}^p\E_{X_1\sim F}[\bar{X}_{1i}\bar{X}_{1j}\bar{X}_{1k}^2]=0,
    \end{align*}
    while for $i=j$, we have
    \begin{align*}
        (\E_{\bar{X}_1\sim F}[\bar{X}_1\bar{X}_1^\top\norm{\bar{X}_1}_2^2)_{ii}&=\sum_{k=1}^p\E_{\bar{X}_1\sim F}[\bar{X}_{1i}^2\bar{X}_{1k}^2]\\
        &=\E_{\bar{X}_1\sim F}[\bar{X}_{1i}^4]+\sum_{k\neq i}\E_{X_1\sim F}[\bar{X}_{1i}^2\bar{X}_{1k}^2]\\
        &\leq \E_{X_1\sim F}[\bar{X}_{1i}^4]+p-1=O(p).
    \end{align*}

    (a) Consider the finite $v$-th moment assumption for some $v\geq 4$, i.e. $\E[\bar{X}_{11}^v]\leq \phi^v$. The second term on the RHS of \eqref{eqn:emp_decompose} can be bounded by \Cref{lem:approx_cov}:
    \begin{align*}
        \norm{I-\Sigma}_\op&\leq O(\gamma^{1-2/v}).
    \end{align*}
    By Markov's inequality, we can bound $\gamma$ by
    $$\gamma=\Pb(\norm{\bar{X}_1}_2>R)\leq \frac{\E\left[\left(\norm{\bar{X}_1}^2_2-p\right)^{v/2}\right]}{(R^2-p)^{v/2}}\lesssim \frac{p^{v/4}}{(R^2-p)^{v/2}}.$$
    For the final inequality, since $\E_{X_1\sim F}[X_{1i}^2-1]=0$, we can use Rosenthal inequality (\Cref{lemma:rosenthal}) to bound the numerator.
    \begin{align*}
        & \E\left[\left(\norm{\bar{X}_1}^2_2-p\right)^{v/2}\right] \\
        =& \E\left[\left(\sum_{i=1}^p(X_{1i}^2-1)\right)^{v/2}\right]\leq \left(C_v\left(p^{2/v}\E[|X_{1i}^2-1|^{v/2}]^{2/v}+\sqrt{p(\E[X_{1i}^4]-1)}\right)\right)^{v/2}\\
        \lesssim & p+p^{v/4}\asymp p^{v/4},
    \end{align*}
    since we have that $\E[X_{1i}^4]=O(1)$ for $v\geq 4$ and
    \begin{align*}
        \E[|X_{1i}^2-1|^{v/2}]=2^{v/2-1}(1+\E[|X_{1i}|^v])\leq 2^{v/2-1}(1+\phi^v)=O(1).
    \end{align*}

    Therefore, using \eqref{eqn:emp_decompose}, we have 
    \begin{align*}
        \norm{M(\mathfrak{G})-nI}_\op\lesssim \sqrt{np\log(2p/\delta)}+R^2\log(2p/\delta)+\frac{np^{v/4}}{(R^2-p)^{v/2}}+\varepsilon n.
    \end{align*}
    with probability $1-\delta$. By choosing $R^2=p+O(\sqrt{\max(n,p)})$,
    with probability at least $1-\delta$, we have
    \begin{align*}
        \norm{M(\mathfrak{G})-nI}_\op&\leq O((\sqrt{np}+p)\log(2p/\delta))+\varepsilon n.
    \end{align*}
    
    (b) Consider the sub-Weibull assumption. The second term on the RHS of \eqref{eqn:emp_decompose} can be bounded by \Cref{lem:approx_cov} since sub-Weibull distributions have finite 4th moment.
    \begin{align*}
        \norm{I-\Sigma}_\op&\leq 2 \sqrt{\gamma(\psi^4-1)}. 
    \end{align*}
    By \Cref{prop:quadraticweibulltail}, we have that 
    \begin{align*}
       \gamma= \Pb_{X\sim F}(\norm{\bar{X}}_2>R)
        &\leq\Pb_{X\sim F}(|\norm{\bar{X}}_2^2-p|>R^2-p)\\
        &\leq 2 \exp\left(-C_{\theta}\min
        \left\{\frac{(R^2-p)^2}{pM^4}, \left(\frac{R^2-p}{M^2}\right)^{\theta/2}\right\}\right)\\
        &\leq 2 \exp \left[-C_{\theta} \left(\frac{R^2-p}{M^2}\right)^{\theta/2}\right] 
            +2 \exp\left[-C_{\theta}\frac{(R^2-p)^2}{pM^4}\right].
    \end{align*}
    Therefore, using \eqref{eqn:emp_decompose}, we have 
    \begin{align*}
        \norm{M(\mathfrak{G})-nI}_\op\leq& \sqrt{2np\log(2p/\delta)}+\frac{4R^2}{3}\log(2p/\delta)+{\sqrt{2}n\phi^2}\exp \left[-\frac{C_{\theta}}{2}\left(\frac{R^2-p}{M^2}\right)^{\theta/2}\right]\\
        &+\sqrt{2}n\exp\left[-C_{\theta}\frac{(R^2-p)^2}{pM^4}\right]+\varepsilon n,
    \end{align*}
    with probability $1-\delta$. By choosing 
    $$R^2=p+M^2 \left(C_\theta^{-2/\theta}\log^{2/\theta}(n)+C_\theta^{-1/2}\sqrt{p\log(n)}\right),$$ 
    then with probability at least $1-\delta$, we have
    \begin{align*}
        \norm{M(\mathfrak{G})-nI}\leq O((\sqrt{np}+p)\log(2p/\delta))+\varepsilon n.
    \end{align*}    
\end{proof}

\begin{lemma}\label{fact:small-subset-deviations}
    Let $\delta>0$ and $\lceil 10 un \rceil\leq |\mathfrak{G}|$. Consider a finite set $J\subset \{w:w\in \Gamma_n, \norm{w}\leq 10 un\}$. With probability at least $1-\delta$,  
    \begin{equation*}
        \sup_{w\in J}\norm{\sum_{i\in \mathfrak{G}}w_iX_iX_i^\top}_\op\lesssim \sqrt{unp\log(2p/\delta)+unp\log(|J|)}+R^2(\log(2p/\delta)+\log(|J|))+un.
    \end{equation*}
    In particular, if $|J|=\exp[O(un\log(1/u))]$, then with probability at least $1-\delta$, we have
    \begin{align*}
        \sup_{w\in J}\norm{\sum_{i\in \mathfrak{G}}w_iX_iX_i^\top}_\op&\lesssim \sqrt{unp\log(2p/\delta)+u^2n^2p\log(1/u)}+R^2(\log(2p/\delta)+un\log(1/u))+un.
    \end{align*}
\end{lemma}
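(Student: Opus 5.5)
The plan is to bound each fixed weight vector $w\in J$ by matrix Bernstein (\Cref{thm:matrix_bernstein}), and then take a union bound over the finite set $J$. For fixed $w\in J$, I will write $\sum_{i\in\mathfrak{G}}w_iX_iX_i^\top=\sum_{i=1}^n w_i\zeta_i\bar X_i\bar X_i^\top$, with $\bar X_i=X_i-\mu$. I center the $\mu$-dependent cross terms exactly as in the proof of \Cref{lem:cov_matrix_conc0}; alternatively one can work with $\bar X_i$ directly, since in the filter applications the covariance deviation is what matters. I then split the sum into its mean and a centred fluctuation. Since $\E[\zeta_i\bar X_i\bar X_i^\top]=(1-\varepsilon)(1-\gamma)\Sigma$, and $\norm{\Sigma}_\op\le 1+\norm{\Sigma-I}_\op=O(1)$ by \Cref{lem:approx_cov}, the mean part has operator norm at most $\norm{w}_1\cdot O(1)\lesssim un$. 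This accounts for the additive $un$ term.

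For the fluctuation, the summands $w_i(\zeta_i\bar X_i\bar X_i^\top-\E[\cdot])$ are independent, mean zero, and bounded in operator norm by $2R^2$, because $w_i\le1$ and $\zeta_i=1$ forces $\norm{\bar X_i}_2\le R$. For the matrix variance I reuse the computation in the proof of \Cref{lem:cov_matrix_conc}. It gives $\norm{\E[\zeta_i\bar X_i\bar X_i^\top\norm{\bar X_i}_2^2]}_\op\le(1-\varepsilon)\norm{\E_F[\bar X\bar X^\top\norm{\bar X}_2^2]}_\op=O(p)$, using the diagonal structure from coordinate independence and $\E\bar X_{1i}^4=O(1)$. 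Since $w_i^2\le w_i$, the variance parameter is at most $\sum_i w_i\cdot O(p)\le O(unp)$. Matrix Bernstein at confidence level $\delta/|J|$ then yields, with probability $1-\delta/|J|$, a fluctuation of order $\sqrt{unp\log(2p|J|/\delta)}+R^2\log(2p|J|/\delta)$.

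A union bound over $w\in J$ makes this hold simultaneously with probability $1-\delta$. Using $\log(2p|J|/\delta)=\log(2p/\delta)+\log|J|$ and $\sqrt{a+b}\le\sqrt a+\sqrt b$ gives the first display. The \emph{in particular} clause follows by substituting $\log|J|=O(un\log(1/u))$.

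The main obstacle is the variance step. I need the bound to be driven by $\norm{w}_1\le 10un$ rather than by $n$. That requires the conditional fourth-moment computation to pass through the truncation indicator $\zeta_i$ and the contamination labels without losing the diagonal structure; this is handled by dominating by the untruncated inlier expectation, as in \Cref{lem:cov_matrix_conc}. A secondary subtlety is that $\mathfrak{G}$ is random. This is avoided because I sum over all $i$ with the indicators $\zeta_i$, so the sets in $J$ are indexed by $[n]$ and are fixed in advance. The hypothesis $\lceil 10un\rceil\le|\mathfrak{G}|$ is needed only to make the later use of the lemma meaningful.
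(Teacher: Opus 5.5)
Your proposal is correct and follows the paper's proof step for step. For fixed $w$ you split off the mean $\norm{w}_1(1-\varepsilon)(1-\gamma)\Sigma$, which is $O(un)$, then apply matrix Bernstein (\Cref{thm:matrix_bernstein}) with almost-sure bound $2R^2$ and variance $\lesssim \norm{w}_2^2\,p\le\norm{w}_1 p\lesssim unp$, and finally take a union bound over $J$ at level $\delta/|J|$.

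One caveat concerns centring. The paper silently identifies $X_i$ with $X_i-\mu$, so what both you and the paper actually prove is the centred statement. Your first option, carrying the $\mu$-dependent cross terms as in \Cref{lem:cov_matrix_conc0}, would add a term of order $\norm{w}_1\kappa^2$ that is absent from the displayed bound. Working with $\bar X_i$ (equivalently $\mu=\mathbf{0}$) is the version that yields the lemma's bound.
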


\begin{proof}
Suppose $w\in \Gamma_n$ is a fixed vector satisfying $\norm{w}_1\leq 10 un$. Then
    \begin{align}\label{eqn:emp_decompose2}
        \norm{\sum_{i\in \mathfrak{G}}w_iX_iX_i^\top}_\op\leq\norm{\sum_{i=1}^n \zeta_i w_i X_iX_i^\top-\norm{w}_1(1-\varepsilon)(1-\gamma)\Sigma}_\op+\norm{w}_1(1-\varepsilon)\norm{(1-\gamma)\Sigma}_\op.
    \end{align}
    Since for all $i$, we have $\norm{\zeta_iw_i X_iX_i^\top-w_i\E[\zeta_i X_iX_i^\top]}_\op \leq 2w_iR^2\leq 2R^2$ and
    \begin{align*}
        \E[\zeta_i w_iX_iX_i^\top]&=w_i\E[\mathbbm{1}(\zeta_i=1) X_iX_i^\top]\\
        &=w_i\Pb(d_i=0)\E_{X_i\sim F}[\mathbbm{1}(\norm{X_i}_2\le R) X_iX_i^\top]\\
        &=w_i\Pb(d_i=0)\Pb_{X_i\sim F}(\norm{X_i}_2\le R)\E_{X_i\sim F}[ X_iX_i^\top|\norm{X_i}_2\le R]=w_i(1-\varepsilon)(1-\gamma)\Sigma,
    \end{align*} 
    we can apply matrix Bernstein inequality (\Cref{thm:matrix_bernstein}) to bound the first term on the RHS of \eqref{eqn:emp_decompose}. With probability $1-\delta$, we have
    \begin{align*}
        \norm{\sum_{i=1}^n \zeta_i X_iX_i^\top-\norm{w}_1(1-\varepsilon)(1-\gamma)\Sigma}_\op\leq\sqrt{2\sigma^2\log(2p/\delta)}+\frac{4R^2}{3}\log(2p/\delta),
    \end{align*}
    and the variance parameter $\sigma^2$ satisfies
    \begin{align*}
        \sigma^2&=\norm{\sum_{i=1}^n \left\{ \E[\zeta_iw_i^2X_iX_i^\top\norm{X_i}_2^2]-w_i^2(1-\varepsilon)^2(1-\gamma)^2\Sigma^2\right\}}_\op\\
        &\leq\norm{\sum_{i=1}^n \E[\zeta_iw_i^2X_iX_i^\top\norm{X_i}_2^2]}_\op\\
        &\leq \norm{w}_2^2 (\psi^4+p-1),
    \end{align*}
    where the first inequality follows from $$(1-\varepsilon)\E[X_iX_i^\top\norm{X_i}_2^2]\succeq\E[\zeta_iX_iX_i^\top\norm{X_i}_2^2]-(1-\varepsilon)^2(1-\gamma)^2\Sigma^2\succeq0,$$ and the last inequality follows from evaluating the expectation, in the similar way as the proof of \Cref{lem:cov_matrix_conc}.
    
    Combining the above results, the upper bound with probability $1-\delta$ in \eqref{eqn:emp_decompose2} for a fixed $w$ becomes
    \begin{align*}
        \norm{\sum_{i\in \mathfrak{G}}w_iX_iX_i^\top}_\op\lesssim \sqrt{unp\log(2p/\delta)}+R^2\log(2p/\delta)+un.
    \end{align*}
    Thus, by replacing $\delta$ by $\delta/|J|$, we have
    \begin{align*}
        \sup_{w\in J}\norm{\sum_{i\in \mathfrak{G}}w_iX_iX_i^\top}_\op\lesssim \sqrt{unp\log(2p/\delta)+unp\log(|J|)}+R^2(\log(2p/\delta)+\log(|J|))+un.
    \end{align*}
    with probability $1-\delta$.
\end{proof}

\begin{lemma}[Vector Bernstein, {\citealp[Corollary~8.45]{foucart2013mathematical}}]\label{thm:vector_berstein}
    Let $a$ be a fixed vector with entries $a_i\in[0,1], \forall i\in\{1,\ldots,n\}$. Let $\mathbf{Y}_1, \dots, \mathbf{Y}_n$ be independent copies of a random vector $\mathbf{Y}$ on $\mathbb{R}^p$ satisfying $\mathbb{E}\mathbf{Y} = 0$. Assume $\|\mathbf{Y}\|_2 \le R$ for some $K > 0$. Let
\begin{equation*}
Z = \left\| \sum_{\ell=1}^n a_\ell \mathbf{Y}_\ell \right\|_2, \quad \mathbb{E}Z^2 = n\mathbb{E}\|\mathbf{Y}\|_2^2, 
\end{equation*}
and
\[
\sigma^2 = \sup_{\|\mathbf{x}\|_2 \le 1} \mathbb{E} |\langle \mathbf{x}, \mathbf{Y} \rangle|^2=\norm{\E[\mathbf{Y} \mathbf{Y}^\top]}_\op.
\]
Then, for $t > 0$,
\begin{equation*}
\mathbb{P}(Z \ge \sqrt{\mathbb{E}Z^2} + t) \le \exp \left( - \frac{t^2/2}{n\sigma^2 + 2R\sqrt{\mathbb{E}Z^2} + tR/3} \right). 
\end{equation*}
\end{lemma}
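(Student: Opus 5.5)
This is the vector Bernstein inequality. I will derive it from Talagrand's concentration inequality for suprema of empirical processes in its Bousquet form, writing the norm as a supremum over the unit ball. Since $\|\sum_\ell a_\ell \mathbf{Y}_\ell\|_2=\sup_{\|\mathbf{x}\|_2\le 1}\sum_\ell a_\ell\iprod{\mathbf{x},\mathbf{Y}_\ell}$, we can set $f_{\mathbf{x}}(\mathbf{y})=\iprod{\mathbf{x},\mathbf{y}}$. The supremum may be restricted to a countable dense subset of the ball, so measurability is not an issue.

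The summands $a_\ell f_{\mathbf{x}}(\mathbf{Y}_\ell)$ are independent and centred, because $\mathbb{E}\mathbf{Y}=0$. They are also uniformly bounded: $|a_\ell\iprod{\mathbf{x},\mathbf{Y}_\ell}|\le\|\mathbf{Y}_\ell\|_2\le R$, using $a_\ell\in[0,1]$. Their weak variance satisfies $\sup_{\mathbf{x}}\sum_\ell a_\ell^2\mathbb{E}\iprod{\mathbf{x},\mathbf{Y}}^2\le n\sigma^2$. With these inputs, Bousquet's inequality gives
\[
\mathbb{P}\big(Z\ge \mathbb{E}Z+t\big)\le\exp\Big(-\frac{t^2/2}{n\sigma^2+2R\,\mathbb{E}Z+tR/3}\Big).
\]

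It then remains to replace $\mathbb{E}Z$ by $\sqrt{\mathbb{E}Z^2}$. Jensen's inequality gives $\mathbb{E}Z\le\sqrt{\mathbb{E}Z^2}$. This makes the event $\{Z\ge\sqrt{\mathbb{E}Z^2}+t\}$ contained in $\{Z\ge\mathbb{E}Z+t\}$. It also means the denominator only grows, so the bound still holds after the substitution.

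We also compute $\mathbb{E}Z^2$. Independence and zero mean kill the cross terms, leaving $\mathbb{E}Z^2=\sum_\ell a_\ell^2\,\mathbb{E}\|\mathbf{Y}\|_2^2\le n\,\mathbb{E}\|\mathbf{Y}\|_2^2$. The statement writes this as an equality, which holds when $a\equiv\mathbf{1}$. For general $a$, replacing $\mathbb{E}Z^2$ by the larger quantity $n\mathbb{E}\|\mathbf{Y}\|_2^2$ keeps the inequality valid, since both the threshold and the denominator increase.

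The main obstacle is getting the exact constants of Bousquet's form (the factors $2R\,\mathbb{E}Z$ and $tR/3$) with the right variance proxy. I would cite Foucart–Rauhut Theorem 8.42 directly, for the centred class $\{a_\ell\iprod{\mathbf{x},\cdot}\}$, rather than re-derive it by the entropy method.
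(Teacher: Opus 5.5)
Your proposal is correct and is the standard derivation of the cited Foucart--Rauhut Corollary~8.45; the paper itself gives no proof beyond that citation, and the derivation is Theorem~8.42 applied to the functionals $\mathbf{y}\mapsto\iprod{\mathbf{x},\mathbf{y}}$ over a countable dense subset of the unit ball, followed by $\mathbb{E}Z\le\sqrt{\mathbb{E}Z^2}$. Your handling of the weights is also sound: formally you should apply Theorem~8.42 to the independent, non-identically distributed vectors $a_\ell\mathbf{Y}_\ell$, which it allows with per-summand variance bounds $a_\ell^2\sigma^2\le\sigma^2$, and you are right that the paper's identity $\mathbb{E}Z^2=n\,\mathbb{E}\|\mathbf{Y}\|_2^2$ holds only for $a\equiv\mathbf{1}$, while in general $\mathbb{E}Z^2=\|a\|_2^2\,\mathbb{E}\|\mathbf{Y}\|_2^2\le n\,\mathbb{E}\|\mathbf{Y}\|_2^2$ and the conclusion survives by monotonicity.
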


\begin{lemma}[Matrix Bernstein inequality, {\citealp[Theorem~1.4]{tropp2012user}}]\label{thm:matrix_bernstein}
Consider a finite sequence $\{M_k\}_{k=1}^n$ of independent, random, self-adjoint matrices in $\mathbb{R}^{p\times p}$. Assume that each random matrix satisfies
\[
\mathbb{E}M_k = 0
\quad\text{and}\quad
\norm{M_k}_{\op} \le R
\quad \text{almost surely}.
\]
Then, for all $t \ge 0$,
\[
\mathbb{P}\!\left\{
\norm{\sum_{k=1}^n M_k}_\op \ge t
\right\}
\le
2p \exp\!\left(
\frac{-t^2/2}{\sigma^2 + Rt/3}
\right),
\]
where
\[
\sigma^2 = \left\| \sum_{k=1}^n \mathbb{E}(M_k^2) \right\|_\op.
\]
\end{lemma}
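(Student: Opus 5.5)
The statement to prove is the matrix Bernstein inequality (\Cref{thm:matrix_bernstein}). I would follow the matrix Laplace transform method of Tropp. By symmetry, $\norm{\sum_k M_k}_\op\ge t$ holds iff $\lambda_{\max}(\sum_k M_k)\ge t$ or $\lambda_{\max}(-\sum_k M_k)\ge t$. The family $\{-M_k\}$ satisfies the same hypotheses, so it suffices to show
\[
\Pb\{\lambda_{\max}(\textstyle\sum_k M_k)\ge t\}\le p\exp\{-(t^2/2)/(\sigma^2+Rt/3)\}
\]
and then apply a union bound, which produces the factor $2p$.

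For fixed $\lambda>0$, Markov's inequality together with $e^{\lambda\lambda_{\max}(A)}=\lambda_{\max}(e^{\lambda A})\le \mathrm{tr}\, e^{\lambda A}$ gives
\[
\Pb\{\lambda_{\max}(S)\ge t\}\le e^{-\lambda t}\,\E\,\mathrm{tr}\exp(\lambda S),\qquad S=\textstyle\sum_k M_k .
\]
This reduces the problem to controlling the trace of the matrix moment generating function.

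The key step, and the main obstacle, is that $\exp(\lambda S)$ does not factor over independent summands, because the $M_k$ need not commute. I would use Lieb's concavity theorem: for fixed self-adjoint $H$, the map $A\mapsto \mathrm{tr}\exp(H+\log A)$ is concave on positive definite matrices. Conditioning on all summands but one and applying Jensen's inequality iteratively yields the subadditivity of matrix cumulant generating functions,
\[
\E\,\mathrm{tr}\exp\Big(\textstyle\sum_k \lambda M_k\Big)\le \mathrm{tr}\exp\Big(\textstyle\sum_k \log \E\, e^{\lambda M_k}\Big).
\]
I would cite Lieb's theorem rather than reprove it.

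Next comes a semidefinite bound on each matrix mgf. Since $f(x)=(e^{\lambda x}-\lambda x-1)/x^2$ is increasing and $\norm{M_k}_\op\le R$, the transfer rule gives $e^{\lambda M_k}\preceq I+\lambda M_k+f(R)M_k^2$. Using $\E M_k=0$ and $\log(I+A)\preceq A$ (operator monotonicity of the logarithm), this becomes $\log \E e^{\lambda M_k}\preceq g(\lambda)\,\E M_k^2$ with
\[
g(\lambda)=\frac{e^{\lambda R}-\lambda R-1}{R^2}\le \frac{\lambda^2/2}{1-R\lambda/3}\qquad\text{for } 0<\lambda<3/R .
\]
The last inequality follows by comparing Taylor coefficients with $k!\ge 2\cdot 3^{k-2}$. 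Because $\mathrm{tr}\exp$ is monotone with respect to the semidefinite order, the trace is at most $\mathrm{tr}\exp(g(\lambda)\sum_k\E M_k^2)\le p\,e^{g(\lambda)\sigma^2}$.

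Combining these bounds gives $\Pb\{\lambda_{\max}(S)\ge t\}\le p\exp\{-\lambda t+\lambda^2\sigma^2/(2(1-R\lambda/3))\}$. The choice $\lambda=t/(\sigma^2+Rt/3)$, which lies in $(0,3/R)$, turns the exponent into $-(t^2/2)/(\sigma^2+Rt/3)$ after routine algebra, and the union bound then completes the proof.
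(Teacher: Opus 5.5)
Your proposal is correct and is exactly the argument behind the cited result. The paper gives no proof of its own and simply invokes Tropp's Theorem~1.4, whose proof is the matrix Laplace transform method via Lieb's theorem and the Bernstein mgf bound you describe. Your union bound over $\pm\sum_k M_k$ then correctly turns Tropp's one-sided $\lambda_{\max}$ bound with factor $p$ into the stated two-sided bound with factor $2p$; this is where the hypothesis $\norm{M_k}_\op\le R$, rather than only $\lambda_{\max}(M_k)\le R$, is used.

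The only loose end is degenerate. When $t=0$ the bound is trivial. When $\sigma^2=0$, your choice $\lambda=t/(\sigma^2+Rt/3)$ equals $3/R$ and is not in $(0,3/R)$, but then $\sum_k\E M_k^2=0$ forces $M_k=0$ almost surely, so the probability is zero and nothing is lost.
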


\begin{lemma}[General mean shift, \citealp{prasad_2019_unified}] \label{lem:approx_mean}
Suppose that $Z\in\mathbb{R}^p$ is sampled from a distribution $F\in\mathcal{P}_{2,\sigma}^p$ with mean $\mu$. Then, for any event $E$ which occurs with probability at least $1 - \delta \geq \frac{1}{2}$,
\[ \norm{\mu - \E[Z|E]}_2 \leq  2\sigma \delta^{1/2}. \]
Furthermore, if $F\in\mathcal{P}_{4,\psi}^p$ and $\sigma=1$, then with probability at least $1 - \delta \geq \frac{1}{2}$,
\[ \norm{\mu - \E[Z|E]}_2 \leq  2\psi \delta^{3/4}. \]
In general, if $F\in\mathcal{P}_{v,\phi}^p$ for some $v\geq 2$ and $\sigma=1$, then with probability at least $1 - \delta \geq \frac{1}{2}$,
\[ \norm{\mu - \E[Z|E]}_2 \leq  O( \delta^{1-1/v}). \]
\end{lemma}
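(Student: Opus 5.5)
The plan is to reduce the problem to the scalar, mean-zero case and then combine Jensen/Cauchy--Schwarz with Hölder. First, by translating we may take $\mu=\mathbf{0}$. For any unit vector $x\in\R^p$, set $W=\iprod{x,Z}$; then $\E W=0$, and it suffices to bound $|\E[W\mid E]|$ uniformly over $x$. The final bound then follows by choosing $x$ parallel to $\E[Z\mid E]$, so that $\norm{\E[Z\mid E]}_2=\iprod{x,\E[Z\mid E]}=\E[W\mid E]$.

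The key identity is the one used in the proof of \Cref{lem:rume_moment}. Since $\E W=0$,
\[
\E[W\mathbbm{1}_E]=-\E[W\mathbbm{1}_{E^c}],
\qquad\text{so}\qquad
|\E[W\mid E]|=\frac{|\E[W\mathbbm{1}_{E^c}]|}{\Pb(E)}\le 2\,|\E[W\mathbbm{1}_{E^c}]|,
\]
where the inequality uses $\Pb(E)\ge 1/2$. Write $\Pb(E^c)\le\delta$.

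For the $\mathcal{P}_{2,\sigma}^p$ case, Cauchy--Schwarz gives
\[
|\E[W\mathbbm{1}_{E^c}]|\le (\E W^2)^{1/2}\,\Pb(E^c)^{1/2}\le \sigma\delta^{1/2},
\]
which yields $2\sigma\delta^{1/2}$. For the general $\mathcal{P}_{v,\phi}^p$ case, Hölder with exponents $v$ and $v/(v-1)$ gives
\[
|\E[W\mathbbm{1}_{E^c}]|\le (\E|W|^v)^{1/v}\,\delta^{1-1/v},
\]
and the $v=4$ statement is the special case $(\E W^4)^{1/4}\le\psi$, giving $2\psi\delta^{3/4}$.

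The main obstacle is controlling $(\E|W|^v)^{1/v}$ for an arbitrary direction $x$. The definition of $\mathcal{P}^p_{v,\phi}$ only bounds the $v$-th moment of each coordinate, not of every projection, so this bound must come from coordinate independence. I would apply Rosenthal's inequality (\Cref{lemma:rosenthal}) to $W=\sum_k x_k Z_k$, whose summands are independent and mean zero. This gives
\[
(\E|W|^v)^{1/v}\lesssim \sqrt{v}\,\max\Bigl\{\Bigl(\sum_k |x_k|^v\phi^v\Bigr)^{1/v},\ \Bigl(\sum_k x_k^2\Bigr)^{1/2}\Bigr\}\le C_v\max(\phi,1).
\]
Here I use $\sum_k|x_k|^v\le\sum_k x_k^2=1$ for $v\ge2$, together with unit variance ($\sigma=1$) for the second term. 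The result is a constant depending only on $v,\phi$, which gives the $O(\delta^{1-1/v})$ bound. For the explicit constant $\psi$ at $v=4$, I would either read $\psi$ as this projected fourth-moment constant or compute directly that $\E W^4=\sum_k x_k^4\E Z_k^4+3\sum_{k\ne l}x_k^2x_l^2\le\max(\E Z_k^4,3)$.
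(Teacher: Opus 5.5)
Your proposal is correct and follows the paper's proof essentially step for step: the identity $\E[(Z-\mu)\mathbbm{1}(E)]=-\E[(Z-\mu)\mathbbm{1}(E^c)]$, the factor $1/\Pb(E)\le 2$, Hölder's inequality along unit directions, and Rosenthal's inequality (\Cref{lemma:rosenthal}) with $\norm{u}_v\le\norm{u}_2=1$ for general $v$. Your remark that the projected fourth moment is only bounded by $\max(\psi^4,3)$ is more careful than the paper. The paper asserts that the supremum over unit $u$ equals $\psi^4$, which holds only when $\psi^4\ge 3$, so as stated both arguments give the explicit constant $2\psi$ only in that case.
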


\begin{proof}
For any event $E$, let $\mathbbm{1}(E)$ denote the indicator variable for $E$. 
\begin{align*}
    \norm{\E [Z|E ] - \mu}_2 &= \frac{\norm{\E((Z - \mu ) \mathbbm{1}(E))}_2}{P(E)} \\
    &=P(E)^{-1}\norm{\E[(Z-\mu)\mathbbm{1}(E^c)]} \\
    &=P(E)^{-1}\sup_{u\in\mathbb{R}^p:\norm{u}_2=1} |\E[u^\top(Z-\mu)\mathbbm{1}(E^c)]| 
    \\
    &\leq P(E)^{-1}\sup_{u\in\mathbb{R}^p:\norm{u}_2=1} \E[(u^\top(Z-\mu))^{v}]^{1/v} \E[\mathbbm{1}(E^c)^{v/(v-1)}]^{(v-1)/v} \\
    &\leq 2\sup_{u\in\mathbb{R}^p:\norm{u}_2=1} \E[(u^\top(Z-\mu))^{v}]^{1/v} \delta^{1-1/v}.
\end{align*}
where the second equality follows from $$\E[(Z-\mu)\mathbbm{1}(E)]+ \E[(Z-\mu)\mathbbm{1}(E^c)]= \E[Z-\mu]=0,$$ and the first inequality follows from Hölder's inequality. 

For $F\in\mathcal{P}_{2,\sigma}^p$, we have 
\begin{align*}
    \norm{\E [Z|E ] - \mu}_2 \leq 2\sup_{u\in\mathbb{R}^p:\norm{u}_2=1} \E[(u^\top(Z-\mu))^{2}]^{1/2} \delta^{1/2}\leq 2 \norm{\sigma^2I_p}_\op^{1/2}\delta^{1/2}.
\end{align*}

For $F\in\mathcal{P}_{4,\psi}^p$, denote $\bar{Z}=Z-\mu$. Then we have
\begin{align*}
    \E[(u^\top(Z-\mu))^{4}]&=\E(\sum_{k=1}^p u_k \bar{Z}_k)^4\\
    &=\sum_{k=1}^p u_k^4 \E[\bar{Z}_k^4]+3\sum_{m,k\in[p],m\neq k} u_k^2 u_m^2\E[\bar{Z}^2_k]^2\\
    &=\psi^4\sum_{k=1}^p u_k^4+3\left(\left(\sum_{m=1}^p u_k^2\right)^2- \sum_{k=1}^p u_k^4\right)\\
    &=3+(\psi^4-3)\norm{u}_4^4
\end{align*}
Since $\norm{u}_4\leq \norm{u}_2=1$,
\begin{align*}
    \max_{u\in\R^p:\norm{u}_2=1}\E[(u^\top(Z-\mu))^{4}]&=\psi^4,
\end{align*}
Concluding the above, we have
\begin{align*}
    \norm{\E [Z|E ] - \mu}_2 \leq 2\sup_{u\in\mathbb{R}^p:\norm{u}_2=1} \E[(u^\top(Z-\mu))^{4}]^{1/4} \delta^{3/4}\leq 2 \psi\delta^{3/4}.
\end{align*}

For general $v\geq 2$, we note that for any fixed $u\in\R^p$ with $\norm{u}_2=1$, we have $\E[u_i (Z_i-\mu_i)]=0$ for any $i$. Thus, we can apply Rosenthal inequality (\Cref{lemma:rosenthal}) to get
$$\E|\iprod{u, Z-\mu}|^v=\E\left[\left|\sum_{i=1}^p u_i(Z_i-\mu_i)\right|^v\right]\leq [C_v (\phi\norm{u}_v+\norm{u}_2)]^v\leq C_v'(\phi+1)^v,$$
where $C_v, C_v'$ are constants depending only on $v$ and the last inequality follows from $\norm{u}_v\leq \norm{u}_2=1$. Thus, we have
$$\sup_{u\in\mathbb{R}^p:\norm{u}_2=1} \E[|\iprod{u, Z-\mu}|^{v}]^{1/v}\leq [C_v'(\phi+1)^v]^{1/v}=O(1),$$
leading to the final result
$$\norm{\E [Z|E ] - \mu}_2\leq O(\delta^{1-1/v}).$$
\end{proof}

\begin{lemma}[General sample covariance matrix shift] \label{lem:approx_cov}
Let $v\geq 4$. Suppose that $Z\in\mathbb{R}^p$ is sampled from a distribution $F\in \mathcal{P}_{v,\phi}^p$ with mean $\mathbf{0}$ and variance $I$. Then, for any event $E$ which occurs with probability at least $1 - \delta \geq \frac{1}{2}$,
\[ \norm{I - \E[ZZ^\top|E]}_\op \leq  O(\delta^{1-2/v}). \]
In particular, if $v=4$, 
\[ \norm{I - \E[ZZ^\top|E]}_\op \leq  2\sqrt{(\psi^4-1)\delta}. \]
\end{lemma}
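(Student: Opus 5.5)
We reduce the statement to a scalar bound over unit directions. The plan is to mirror the proof of Lemma~\ref{lem:approx_mean}, replacing the first moment by the second moment, and to apply H\"older with exponents $v/2$ and $v/(v-2)$.

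First, since $\E[ZZ^\top]=I$, we can write
\[
\E[ZZ^\top\mid E]-I=\frac{\E[ZZ^\top\mathbbm{1}(E)]-P(E)I}{P(E)}=\frac{\delta_E I-\E[ZZ^\top\mathbbm{1}(E^c)]}{P(E)},
\]
where $\delta_E=P(E^c)\le\delta$ and $P(E)\ge 1/2$. The matrix inside is symmetric, so its operator norm equals $\sup_{\norm{u}_2=1}|u^\top(\cdot)u|$. For each unit $u$ we therefore only need to control $|\delta_E-\E[\iprod{u,Z}^2\mathbbm{1}(E^c)]|$, and this is at most $\delta+\E[\iprod{u,Z}^2\mathbbm{1}(E^c)]$.

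Second, we bound the truncated second moment by H\"older:
\[
\E[\iprod{u,Z}^2\mathbbm{1}(E^c)]\le \bigl(\E|\iprod{u,Z}|^v\bigr)^{2/v}\delta^{1-2/v}.
\]
The directional moment $\sup_{\norm{u}_2=1}\E|\iprod{u,Z}|^v$ is handled exactly as in the last step of Lemma~\ref{lem:approx_mean}: the coordinates are independent with mean zero, so Rosenthal's inequality (Lemma~\ref{lemma:rosenthal}) applies and gives a bound $C_v'(\phi+1)^v=O(1)$. Combining the two steps, and noting that $\delta\le\delta^{1-2/v}$ because $\delta\le 1/2$ and the factor $1/P(E)$ is at most $2$, we get the bound $O(\delta^{1-2/v})$.

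For $v=4$ we want the sharper constant $2\sqrt{(\psi^4-1)\delta}$. Here we would not bound the $\delta_E$ term separately. Instead, we write $u^\top(\E[ZZ^\top\mathbbm{1}(E^c)]-\delta_E I)u=\E[(\iprod{u,Z}^2-1)\mathbbm{1}(E^c)]$ and apply Cauchy--Schwarz to get at most $\sqrt{\Var(\iprod{u,Z}^2)}\sqrt{\delta}$. The computation in Lemma~\ref{lem:approx_mean} gives $\E\iprod{u,Z}^4=3+(\psi^4-3)\norm{u}_4^4\le\psi^4$ when $\psi^4\ge3$. This yields $\Var(\iprod{u,Z}^2)\le\psi^4-1$, and dividing by $P(E)\ge1/2$ gives the constant $2$.

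The main obstacle is this last sharp-constant claim. If $\psi^4<3$, the supremum of $\E\iprod{u,Z}^4$ is $3$ rather than $\psi^4$, and then $\psi^4-1$ has to be read with $\psi^4$ replaced by $\max(\psi^4,3)$. I would first check whether the paper's convention for $\psi$ already implicitly assumes $\psi^4\ge 3$, for example by enlarging $\psi$. The general-$v$ part is routine.
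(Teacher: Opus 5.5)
Your proposal is correct and follows essentially the same route as the paper: rewrite the deviation as $P(E)^{-1}\E[(ZZ^\top-I)\mathbbm{1}(E^c)]$ and reduce to quadratic forms. Then apply H\"older with exponents $v/2$ and $v/(v-2)$ (Cauchy--Schwarz when $v=4$) and bound the directional $v$-th moment by Rosenthal; splitting off the $\delta_E I$ term before H\"older is only a cosmetic difference from the paper's use of $|x^2-1|^{v/2}\le 2^{v/2-1}(1+|x|^v)$.

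Your caveat for $v=4$ is well founded. The paper's proof silently uses $\sup_{\norm{u}_2=1}\E\iprod{u,Z}^4=\psi^4$, which requires $\psi^4\ge 3$. For instance, Rademacher coordinates with $p=2$, $\psi=1$ and $E=\{Z_1=Z_2\}$ violate the stated constant. The bound should therefore be read with $\psi^4$ replaced by $\max(\psi^4,3)$, which is harmless because $\psi$ can always be enlarged and only the order $\sqrt{\delta}$ is used downstream.
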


\begin{proof}
For any event $E$, let $\mathbbm{1}(E)$ denote the indicator variable for $E$. 
\begin{align}
    \norm{\E [ZZ^\top|E ] - I}_\op &= \frac{\norm{\E [(ZZ^\top-I)  \mathbbm{1}(E)]}_\op}{P(E)} \nonumber \\
    &=P(E)^{-1}\norm{\E [(ZZ^\top-I)  \mathbbm{1}(E^c)]}_\op \nonumber\\
    &=P(E)^{-1}\sup_{u\in\mathbb{R}^p:\norm{u}_2=1} |\E[ (\iprod{u, Z}^2-1)\mathbbm{1}(E^c)]| \nonumber
    \\
    &\leq P(E)^{-1}\sup_{u\in\mathbb{R}^p:\norm{u}_2=1} \E[|\iprod{u, Z}^2-1|^{v/2}]^{2/v} \E[\mathbbm{1}(E^c)^{v/(v-2)}]^{1-2/v} \nonumber\\
    &\leq 2\sup_{u\in\mathbb{R}^p:\norm{u}_2=1} \E[|\iprod{u, Z}^2-1|^{v/2}]^{2/v} {\delta}^{1-2/v} \label{eqn:cov_shift_v}.
\end{align}
where the second equality follows from $$\E [(ZZ^\top-I)  \mathbbm{1}(E)]+ \E [(ZZ^\top-I)  \mathbbm{1}(E^c)]= \E[ZZ^\top-I]=0,$$ and the first inequality follows from Hölder's inequality. For the case of $v=4$, we can evaluate \eqref{eqn:cov_shift_v} exactly using a similar argument as \Cref{lem:approx_mean}:
\begin{align*}
    \sup_{u\in\mathbb{R}^p:\norm{u}_2=1}\E[(\iprod{u, Z}^2-1)^{2}]&=\sup_{u\in\mathbb{R}^p:\norm{u}_2=1}\E[\iprod{u, Z}^4-2\iprod{u, Z}^2+1]\\
    &=\sup_{u\in\mathbb{R}^p:\norm{u}_2=1} 3+(\psi^4-3)\sum_{k=1}^p u_k^4-2+1\\
    &=\psi^4-1.
\end{align*}
allowing us to conclude that 
$$\norm{\E [ZZ^\top|E ] - I}_\op\leq 2\sqrt{(\psi^4-1)\delta}.$$

For general $v\geq 4$, we note that for any fixed $u\in\R^p$ with $\norm{u}_2=1$, we have 
$$|\iprod{u, Z}^2-1|^{v/2}\leq 2^{v/2-1}(1+|\iprod{u, Z}|^v).$$
Since $\E[u_i Z_i]=0$, we can apply Rosenthal inequality (\Cref{lemma:rosenthal}) to get
$$\E|\iprod{u, Z}|^v=\E\left[\left|\sum_{i=1}^p u_iZ_i\right|^v\right]\leq [C_v (\phi\norm{u}_v+\norm{u}_2)]^v\leq C_v'(\phi+1)^v,$$
where $C_v, C_v'$ are constants depending only on $v$ and the last inequality follows from $\norm{u}_v\leq \norm{u}_2=1$. Thus, we have
$$\sup_{u\in\mathbb{R}^p:\norm{u}_2=1} \E[|\iprod{u, Z}^2-1|^{v/2}]^{2/v}\leq [2^{v/2-1}(1+C_v'(\phi+1)^v)]^{2/v}=O(1),$$
leading to the final result
$$\norm{\E [ZZ^\top|E ] - I}_\op\leq O(\delta^{1-2/v}).$$
\end{proof}

\end{document}